\documentclass{amsart}

\usepackage[margin = 1in]{geometry}

\usepackage{hyperref}
\hypersetup{
	colorlinks=true,
	linkcolor=blue,
	filecolor=magenta,      
	urlcolor=cyan,
}
\usepackage[mathscr]{euscript}
\usepackage[all,cmtip]{xy}
\usepackage{cleveref}

\usepackage{tikz}
\usepackage{tikz-cd}
\usetikzlibrary{cd, arrows.meta, calc}
\usepackage{url, hyperref}
\usepackage{float}

\tikzset{->-/.style={decoration={
  markings,
  mark=at position .45 with {\arrow{>}}},postaction={decorate}}}
\usepackage{amsmath}
\usepackage{stmaryrd}
\usepackage{amssymb}
\usepackage{enumitem}
\usepackage{graphicx}
\usepackage{mathdots}
\usepackage{color}
\usepackage{diagbox}
\usepackage{array, makecell}
\usepackage{rotating}
\usepackage{amsthm}
\usepackage{dsfont}
\usepackage{adjustbox}
\usetikzlibrary{arrows}
\usepackage{mathtools}

\theoremstyle{definition}
\newtheorem{definition}{Definition}[section]

\newtheorem{example}[definition]{Example}
\newtheorem{remark}[definition]{Remark}

\newtheorem{construction}[definition]{Construction}

\theoremstyle{theorem}
\newtheorem{theorem}[definition]{Theorem}

\newtheorem{proposition}[definition]{Proposition}
\newtheorem{corollary}[definition]{Corollary}
\newtheorem{lemma}[definition]{Lemma}

\newtheorem{question}[definition]{Question}

\def\A{\mathbb{A}}

\def\Z{\mathbb{Z}}

\def\P{\mathbb{P}}

\def\Gm{\mathbb{G}_m}
\def\GL{\mathrm{GL}}

\def\cO{\mathcal{O}}

\def\cC{\mathcal{C}}

\def\cX{\mathcal{X}}
\def\cY{\mathcal{Y}}
\def\cZ{\mathcal{Z}}

\def\cV{\mathcal{V}}
\def\cU{\mathcal{U}}

\def\cP{\mathcal{P}}
\def\cI{\mathcal{I}}

\def\Aut{\mathrm{Aut}}
\def\cN{\mathcal{N}}

\def\cXDM{\mathcal{X}_{\mathrm{DM}}}
\def\cXMOD{\mathcal{X}_{\mathrm{mod}}}

\def\cXUDM{\cX_{\mathrm{UDM}}}

\def\PhiDM{{\Phi_{\mathrm{DM}}}}
\def\PhiUDM{{\Phi_{\mathrm{UDM}}}}
\def\cYMOD{\mathcal{Y}_{\mathrm{mod}}}
\def\cUMOD{\mathcal{U}_{\mathrm{mod}}}
\def\cZMOD{\mathcal{Z}_{\mathrm{mod}}}

\def\UDM{\mathrm{UDM}}
\def\Spec{\mathrm{Spec}\ \!}

\def\DM{\mathrm{DM}}
\def\cG{\mathcal{G}}
\def\cW{\mathcal{W}}
\def\cWMOD{\mathcal{W}_{\mathrm{mod}}}
\def\cJ{\mathcal{J}}
\def\cA{\mathcal{A}}
\def\cB{\mathcal{B}}

\def\colim{\mathrm{colim}\ \!}
\def\lim{\mathrm{lim}\ \!}

\title{Factorizations of Moduli Morphisms and Universal Maps to Deligne-Mumford Stacks}

\author{Alberto Landi}
\begin{document}

\begin{abstract}
    Let $\cX$ be an algebraic stack admitting a moduli space $\cXMOD$. We study the factorizations of the moduli space morphism $\cX\rightarrow\cXMOD$ to construct intermediate stacks that simplify the stacky structure of $\cX$ while retaining more structural information than $\cXMOD$.
    Under mild assumptions, we prove the existence of a universal morphism from $\cX$ to stacks satisfying well-behaved `modular properties' (such as being Deligne-Mumford, having finite inertia, or being uniformizable), and show that this universal map is itself an adequate moduli space morphism.
    We achieve this by proving that ascending chains of adequate moduli space morphisms from a Noetherian stack stabilize if they are cohomologically affine or with target Deligne-Mumford stacks.
    Finally, we demonstrate that stabilization completely fails for general adequate moduli space morphisms. We construct a simple Noetherian, Deligne-Mumford stack admitting an infinite, non-stabilizing chain of adequate moduli space morphisms, whose limit is a non-algebraic fpqc stack.
\end{abstract}

\maketitle

\tableofcontents

\section{Introduction}

A fundamental problem in algebraic geometry is the classification of isomorphism classes of objects of a given type, such as curves or vector bundles. Traditionally, this is achieved by constructing a moduli space—a scheme or algebraic space whose closed points are in bijection with these isomorphism classes. Historically, such spaces were constructed using Geometric Invariant Theory (GIT), as developed by Mumford~\cite{GIT}.

The modern approach is to construct an algebraic stack $\cX$ that parametrizes families of these objects, thereby retaining more structural information than a coarse moduli space. Nevertheless, recovering a moduli space from $\cX$ remains geometrically significant. When $\cX$ has finite inertia, this was first achieved by Keel and Mori via coarse moduli spaces (\cite{KM97,Con05,GeneralKeelMori_Ryd07}). For the general case, Alper and others introduced the notions of good and adequate moduli spaces \cite{goodmoduli_Alp13,AdequateModAlp14,ExistenceModSpacesAHLH23,LocalStructureStacks_AHR25}. This framework provides a well-behaved morphism $\pi_\cX:\cX\rightarrow\cXMOD$ that is universal among morphisms to algebraic spaces, a theory that has since proven highly successful.

However, given an algebraic stack $\cX$ admitting a moduli space $\cXMOD$, it is natural to ask whether there exist intermediate stacks that retain more of the structural information of $\cX$ than $\cXMOD$, yet remain geometrically simpler than $\cX$ itself. This amounts to studying all the possible factorizations of the moduli morphism $\pi_{\cX}$, which can be organized in a 2-category $\cC_{\cX}$. This parametrizes \emph{adequate moduli space morphisms} $\cX\rightarrow\cY$, that are relative versions of moduli spaces; see Definition~\ref{def: moduli space morphisms}. We can then ask whether any of these simplifications of the stacky structure of $\cX$ has some universal properties.

The most immediate example is a morphism $\PhiDM:\cX\rightarrow\cXDM$ that is universal among morphisms to Deligne-Mumford stacks. In this paper, we establish the existence of such intermediate stacks under relatively weak assumptions.

\begin{theorem}[Deligne-Mumford Reduction]\label{thm: intro existence XDM}
    Let $\cX$ be a quasi-separated locally-Noetherian algebraic stack admitting an adequate moduli space $\pi_{\cX}:\cX\rightarrow\cXMOD$. Then, there exists an adequate moduli space morphism $\PhiDM:\cX\rightarrow\cXDM$ that is universal among morphisms to Deligne-Mumford stacks with affine diagonal.
\end{theorem}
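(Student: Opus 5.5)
The construction is iterative. Consider the 2-category $\cC_{\cX}$ of adequate moduli space morphisms $\cX\rightarrow\cY$ factoring $\pi_\cX$. Inside it, take the full subcategory of those $\cY$ that are Deligne-Mumford with affine diagonal. This subcategory is nonempty, since it contains $\cXMOD$. The goal is to produce a maximal object and then show that maximality forces universality.

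\textbf{Step 1 (pairs have a common refinement).} Given $\cX\rightarrow\cY_1$ and $\cX\rightarrow\cY_2$ in the subcategory, I would consider the induced map $\cX\rightarrow\cY_1\times_{\cXMOD}\cY_2$. The product is Deligne-Mumford with affine diagonal. Next take the relative adequate moduli space of $\cX$ over it, in the sense of the scheme-theoretic image/Stein-type construction. Concretely, let $\cY_{12}$ be the relative Spec over $\cY_1\times\cY_2$ of the pushforward of $\cO_\cX$; this is affine over the product. Then $\cX\rightarrow\cY_{12}$ is an adequate moduli space morphism, because adequacy descends along the affine map to the product and the composition to $\cXMOD$ is adequate. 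Moreover $\cY_{12}$ is still Deligne-Mumford with affine diagonal. This makes the poset of such factorizations directed.

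\textbf{Step 2 (stabilization).} The crucial input is the result, stated in the abstract, that ascending chains of adequate moduli space morphisms from a Noetherian stack with Deligne-Mumford targets stabilize. Working étale-locally on $\cXMOD$ (which is locally Noetherian), I would reduce to the Noetherian, quasi-compact case. There the chain condition, combined with directedness, yields a maximal element $\PhiDM:\cX\rightarrow\cXDM$. The local maximal objects glue by uniqueness up to unique 2-isomorphism, which follows from Step 3 applied locally, and this uses quasi-separatedness. I expect this step, the chain stabilization, to be the main obstacle if proven from scratch. My approach would be to use the fact that over a DM target the stabilizers of $\cY$ are finite quotients of the stabilizers of $\cX$ at closed points. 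Each strict refinement enlarges some such finite quotient, and Noetherian induction on the locus where the stabilizer sizes jump bounds the length of the chain.

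\textbf{Step 3 (universality).} Let $f:\cX\rightarrow\cZ$ be any morphism to a Deligne-Mumford stack with affine diagonal. First factor $f$ through the adequate-moduli-space image: take $\cZ'=\underline{\mathrm{Spec}}_{\cXMOD\times\cZ}(\pi_*\cO_\cX)$. Since $\cXMOD\times\cZ$ is DM with affine diagonal, $\cX\rightarrow\cZ'$ lies in the subcategory. Applying Step 1 to $\cZ'$ and $\cXDM$ and using maximality, the refinement of the two is $\cXDM$ itself, so $\cXDM\rightarrow\cZ'\rightarrow\cZ$ gives the factorization. Uniqueness of the factorization up to unique 2-isomorphism comes from the universal property of adequate moduli space morphisms among maps to stacks with affine diagonal. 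Namely, $\PhiDM$ is a relative adequate moduli space, so maps out of it are determined by maps from $\cX$, as in Alper's universality argument applied relatively over $\cZ$.
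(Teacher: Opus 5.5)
\textbf{Where you agree with the paper.} Your Steps~1 and~3 follow the paper. Step~1 is Lemma~\ref{lem: refinement property of CX}. Step~3 is the proof of Theorem~\ref{thm: existence of universal morphism strongly modular case}, except that you replace the groupoid construction of Theorem~\ref{thm: existence of relative moduli spaces} by $\underline{\mathrm{Spec}}$ over $\cXMOD\times\cZ$. That shortcut is legitimate. The projection $\cXMOD\times\cZ\rightarrow\cXMOD$ has affine diagonal because $\cZ$ does, so Lemma~\ref{lem: cancellation properties}\eqref{lem: cancellation properties 2} makes $\cX\rightarrow\cXMOD\times\cZ$ adequately affine, and $\cZ'$ is affine over a DM stack.

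One caveat. Under the stated hypotheses $\cXMOD$ need not have affine diagonal; take $\cX=\cXMOD$ the plane with doubled origin. Then, by Lemma~\ref{lem: inheriting affine diagonal}, your subcategory is empty, so your claim that it contains $\cXMOD$ fails. The paper avoids this by working with Zariski-locally affine diagonal (Corollary~\ref{cor: DM-fication}).

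\textbf{The genuine gap: Step~2.} The problem is the passage from the Noetherian to the locally Noetherian case. You build maximal objects locally over $\cXMOD$ and glue them, using Step~3 to identify them on overlaps. This fails because the formation of $\cXDM$ does not commute even with restriction to open subsets of $\cXMOD$.

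In Example~\ref{exm: Rydh counterexample for DM and fi}, $\cX=[\A^2/\Gm]$ with weights $(2,-2)$ has $\cXDM\simeq\A^1$. But $\cU=\cX\times_{\cXMOD}(\A^1\setminus0)$ is already Deligne-Mumford, so $\cU_{\DM}=\cU\not\simeq\A^1\setminus0=\cXDM|_{\A^1\setminus0}$. In general, for $V'\subset V$ the universal property only yields a comparison map $(\cX_{V'})_{\DM}\rightarrow(\cX_V)_{\DM}|_{V'}$, which need not be an isomorphism. So the local maxima carry no gluing data; working étale-locally you would additionally need descent data.

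The fix, as in Lemma~\ref{lem: passing to quasicompact case principle} and Proposition~\ref{prop: generalization of stabilization}, is to use the Noetherian case only to control a \emph{global} chain.
\begin{enumerate}
\item Take an ascending chain in the global poset and restrict it to the Noetherian opens $\pi_{\cX}^{-1}(V_\lambda)$, where $V_\lambda\subset\cXMOD$ runs over quasi-compact opens. Each restricted chain stabilizes.
\item The stable values are restrictions of the same global objects, so they are compatible and glue along open immersions to an upper bound. This upper bound is still DM with (Zariski-locally) affine diagonal, since these conditions can be checked on an increasing union of opens.
\item Zorn's lemma then gives a maximal element, and Step~1 makes it the maximum.
\end{enumerate}

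\textbf{On the stabilization sketch.} Your from-scratch sketch is the right idea, but ``Noetherian induction on jump loci'' is not yet an argument. The paper's bookkeeping is cleaner. Encode $\cX\rightarrow\cY$ by $\cI_{\cX/\cY}\subset\cI_{\cX}$, which is open and closed because $\cY$ is DM with separated diagonal; a descending chain of these in the Noetherian $\cI_{\cX}$ stabilizes. Then equal relative inertia, together with surjectivity $G_x\twoheadrightarrow G_y$ at points closed in their fibre, makes each transition map representable, hence an isomorphism (Lemma~\ref{lem: nice chains are determined by relative inertia}). That surjectivity is Lemma~\ref{lem: surjectivity stabilizers good and smooth stab case}; it uses that DM targets have smooth stabilizers, and it fails for general adequate morphisms.
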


See Corollary~\ref{cor: DM-fication} for a slightly more general statement and its proof. Notice that $\cXDM$ has automatically finite inertia by~\cite[Theorem 8.3.2]{AdequateModAlp14}.
More generally, we address the following question.

\begin{question}\label{question: intro universal objects}
    Given a property $\cP$ of algebraic stacks, does there exist a morphism $\Phi_{\cP}:\cX\rightarrow\cX_{\cP}$ that is universal among morphisms from $\cX$ to stacks satisfying $\cP$?
\end{question}

We answer Question~\ref{question: intro universal objects} in Theorems~\ref{thm: technical existence of initial morphisms} and~\ref{thm: existence of universal morphism strongly modular case}, for a class of well-behaved properties, that we call \emph{modular properties} (Definition~\ref{def: modular property}). We also need to assume that $\pi_{\cX}$ is a good moduli space, or that $\cP$ implies the property of being a Deligne-Mumford stack. In~\S\ref{sec: counterexample adequate} we will see how things can go very wrong without the above assumptions, even for stacks that are arguably simple.

Beyond the case of Deligne-Mumford stacks, we specialize this discussion to morphisms to stacks with finite inertia (Corollary~\ref{cor: finite inertia initial map}), as well as to uniformizable stacks (\S\ref{subsec: XUDM})—algebraic stacks admitting a finite étale cover by an algebraic space. The latter case is tightly connected to the étale fundamental group of $\cX$, making the existence of a universal map to uniformizable stacks a valuable tool for studying $\pi_1(\cX)$.

Unfortunately, the formation of $\cX_{\cP}$ does not commute with base change along morphisms to $\cX_{\cP}$ or $\cXMOD$, even when the morphisms are fppf or open immersions. This is in stark contrast with what happens for moduli spaces, and shows that the construction of $\cX_{\cP}$ cannot be local in nature.
For $\cX$ of finite type over an algebraically closed field, in~\S\ref{subsec: base change properties} we completely characterize when $\cX_{\DM}$ commutes with base change, and similarly for other properties $\cP$. As an application, we recover~\cite[Proposition B.2]{ER17} over an algebraically closed field, see Corollary~\ref{cor: Edidin Rydh result}.

\subsection{The Category $\cC_{\cX}^{\cP}$}

As mentioned above, to prove Theorem~\ref{thm: intro existence XDM}, we are led to study factorizations of the morphism $\pi_{\cX}:\cX\rightarrow\cXMOD$, that satisfy some property $\cP$. Then, we can set up a 2-category $\cC_{\cX}^{\cP}$ that parametrizes those factorizations, which turns out to be equivalent to a partially ordered set (poset) by Lemma~\ref{lem: poset structure}. It is a fundamental problem to understand the poset structure of $\cC_{\cX}^{\cP}$.

An answer to the following question would provide us with a candidate for a universal morphism to algebraic stacks satisfying $\cP$.

\begin{question}\label{question: intro existence maximum}
    Under what assumptions on $\cX$ and $\cP$, does $\cC_{\cX}^{\cP}$ admit a maximum?
\end{question}

The standard way to answer such a question is to first prove the existence of maximal elements via Zorn's lemma, and then show the uniqueness. The uniqueness holds for modular properties $\cP$, as in that case $\cC_{\cX}^{\cP}$ is filtered by Lemma~\ref{lem: refinement property of CX}. The existence part boils down to answering the following questions.

\begin{question}\label{question: intro stabilization}
    Does every ascending chain in $\cC_{\cX}^{\cP}$ admit an upper bound? If $\cX$ is Noetherian, does the chain stabilize?
\end{question}

If the answer to Question \ref{question: intro stabilization} is positive, the same holds for Question \ref{question: intro existence maximum} when $\cX$ is locally-Noetherian, and we say that $\cC_{\cX}^{\cP}$ has the \emph{stabilization property}. This means that the morphisms in the chain are isomorphisms, except possibly for a finite number of them.

\begin{proposition}[Stabilization Property]\label{thm: intro stabilization property}
    Let $\cX$ be a Noetherian algebraic stack with affine diagonal, and suppose given an ascending chain in $\cC_{\cX}$ where all morphisms are good moduli space morphisms, or all stacks are Deligne-Mumford (except possibly for $\cX$). Then, the chain stabilizes.
\end{proposition}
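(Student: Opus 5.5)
The plan is to reduce stabilization to a statement about stabilizer groups, and then to use the fact that a \emph{representable} moduli space morphism is essentially an isomorphism. Write the chain as $\cX\rightarrow\cdots\rightarrow\cY_{n+1}\rightarrow\cY_n\rightarrow\cdots\rightarrow\cY_1\rightarrow\cXMOD$. All transition maps $\cY_{n+1}\rightarrow\cY_n$ are adequate moduli space morphisms, and in the first case good ones, since compositions and cancellations of these behave well by the results recalled around Definition~\ref{def: moduli space morphisms}.

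\textbf{Step 1 (representable transition maps are isomorphisms).} In the good case, a representable good moduli space morphism $f:\cY_{n+1}\rightarrow\cY_n$ is cohomologically affine and representable, hence affine. Together with $f_*\cO=\cO$, this forces $f$ to be an isomorphism. In the Deligne--Mumford case, $f$ representable and adequate makes it affine with $f_*\cO$ agreeing with $\cO$ up to a universal homeomorphism, so $f$ is an integral universal homeomorphism. I will then use that $\cX\rightarrow\cY_n$ and $\cX\rightarrow\cY_{n+1}$ are both adequate, and that the adequate moduli space is determined by $\cX$ up to such homeomorphisms, with the DM stacks being obtained étale locally as quotients of the moduli space. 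From this I expect $f$ to be an isomorphism; if necessary I will pass to the reduced or seminormal setting implicit in $\cC_{\cX}$. Hence it suffices to show that $\cY_{n+1}\rightarrow\cY_n$ is representable for $n\gg0$. This reduces to injectivity on stabilizers at all points, and by the closedness/openness properties of the representable locus it is enough to check closed points.

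\textbf{Step 2 (stabilizers at a closed point form a descending chain).} Fix a closed point $x\in\cX$ and let $y_n$ be its image in $\cY_n$. Using the local structure theorem (\'etale locally $\cX\rightarrow\cY_n$ looks like $[\Spec A/G_x]\rightarrow[\Spec A'/G_{y_n}]$ near $x$), I will show that $G_x\rightarrow G_{y_n}$ is surjective. Thus $G_{y_n}=G_x/K_n$ with $K_n\supseteq K_{n+1}$ a descending chain of closed normal subgroup schemes of $G_x$. Since $G_x$ is of finite type over a field, this chain stabilizes: dimension and number of components stabilize first, and then the Noetherian property of closed subschemes finishes the argument. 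In the DM case this is immediate, because $G_x$ is finite. Once $K_{n+1}=K_n$, the map $G_{y_{n+1}}\rightarrow G_{y_n}$ is an isomorphism.

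\textbf{Step 3 (uniformity over $\cX$), the main obstacle.} Step 2 gives stabilization pointwise, but the index depends on $x$, and $\cX$ has infinitely many closed points. I would argue by Noetherian induction on $|\cX|$, using the affine diagonal and Noetherianity. Let $Z_n\subseteq\cXMOD$ be the locus over which $\cY_{n+1}\rightarrow\cY_n$ fails to be representable. This locus is closed, being the image of the non-representable locus, and closed maps are preserved by adequate moduli spaces. If some point survives in $Z_n$ for infinitely many $n$, I restrict to the closure of that point and use the generic constancy of stabilizer group schemes on a dense open subset, namely generic flatness of the inertia over the reduced irreducible base. This shows the chain of kernels $K_n$ stabilizes uniformly over a dense open subset. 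Removing that open subset contradicts minimality in the Noetherian induction, provided I also check that restricting the chain to an open saturated substack preserves the setup. The delicate point is that the kernels $K_n$ should spread out as flat closed subgroup schemes over a dense open subset. I would first attempt this using the group scheme $I_{\cX}\rightarrow I_{\cY_n}$ pulled back to a smooth cover of the closed-point locus. If that fails, I would instead use the boundedness of the number of stabilizer "jumps" given by semicontinuity of $\dim G$ and of the component count.

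Once every $Z_n$ is empty for $n\gg0$, Step 1 applies and the chain stabilizes.
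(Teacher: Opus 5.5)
\textbf{There is a genuine gap in Step 3.} Your Steps 1 and 2 supply the right local ingredients: representable moduli space morphisms are isomorphisms, and $G_x\to G_y$ is surjective at points closed in their fiber, so kernels control injectivity. But Step 3, which you rightly call the main obstacle, is not carried out, and the sketch would not close it.

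Pointwise stabilization of $K_n\subseteq G_x$ gives an index that depends on $x$. Your Noetherian induction does not produce a uniform one, for two reasons. First, you give no reason why the non-representable locus of $\cY_{n+1}\to\cY_n$ should be closed; the relative inertia need not be proper. Second, spreading out $K_n$ by generic flatness gives, for each fixed $n$, a dense open set that depends on $n$. Nothing forces a single dense open on which the whole chain is constant, and that uniformity is exactly what has to be proved.

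The missing idea is to stop working fiberwise. Your $K_n$ is the fiber at $x$ of the relative inertia $\cI_{\cX/\cY_n}=\ker(\cI_{\cX}\to\varphi_n^*\cI_{\cY_n})$. This is a \emph{closed} subgroup of $\cI_{\cX}$: $\cY_n$ inherits affine, in particular separated, diagonal from $\cX$ (Lemma~\ref{lem: inheriting affine diagonal}), so the identity section of $\cI_{\cY_n}$ is a closed immersion. Since $\cI_{\cX}\to\cX$ is of finite type and $\cX$ is Noetherian, the descending chain of closed substacks $\cI_{\cX/\cY_n}$ stabilizes globally. No induction or flatness is needed.

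Now take an arbitrary point $y$ of $\cY_{n+1}$ with image $y'$ in $\cY_n$, and choose $x$ closed in the fiber over $y$. Surjectivity of $G_x\to G_y$, together with equality of the kernels of $G_x\to G_y$ and $G_x\to G_{y'}$, gives injectivity of $G_y\to G_{y'}$. This covers every point directly, so your unproven reduction to closed points of $\cX$ via openness of the representable locus is unnecessary. The map is then representable, hence an isomorphism by Step 1.

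\textbf{Smaller points in Steps 1 and 2.}

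In Step 1 the Deligne--Mumford case is muddled. For an adequate moduli space morphism, $f_*\cO=\cO$ holds exactly, not up to homeomorphism. A representable adequately affine morphism is affine by \cite[Theorem 4.3.1]{AdequateModAlp14}. So the same two-line argument as in the good case applies, and no seminormalization enters.

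In Step 2, $\cX$ is not assumed Deligne--Mumford, so $G_x$ need not be finite. The chain of $K_n$ still stabilizes, because $G_x$ is Noetherian.

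More importantly, you do not say where the Deligne--Mumford hypothesis enters the surjectivity of $G_x\to G_{y_n}$, and it must enter there. For adequate morphisms, surjectivity can fail when the target has non-reduced stabilizers. The local structure theorem of \cite{LocalStructureStacks_AHR25} also requires linearly reductive stabilizers, which closed points of $\cX$ need not have when $\pi_{\cX}$ is only adequate.

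The workable route is through residual gerbes. Base changing to $\cG_y$ and then to a geometric point turns $[G_{\bar y}/G_{\bar x}]\to\Spec k(\bar y)$ into a good moduli space in the good case. In the adequate case it becomes an adequate moduli space followed by an adequate homeomorphism. There, reducedness of $G_{\bar y}$ (guaranteed by the Deligne--Mumford hypothesis) forces $G_{\bar y}/G_{\bar x}$ to be a point.
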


See Proposition~\ref{prop: generalization of stabilization} for a more general statement, where $\cX$ is only assumed to be locally-Noetherian, and the property of being a Deligne-Mumford stack is replaced by the weaker property of having smooth stabilizers.

The stabilization property is used to construct a maximum $\Phi_{\cP}:\cX\rightarrow\cX_{\cP}$ of $\cC_{\cX}^{\cP}$. This does not yet guarantee that $\cX_{\cP}$ is the universal object of Question~\ref{question: intro universal objects}. The main missing piece is to construct for every morphism $f:\cX\rightarrow\cZ$ to an algebraic stack a factorization $\cX\xrightarrow{\varphi}\cY\xrightarrow{g}\cZ$ of $f$, where $\varphi$ is an adequate moduli space morphism and $g$ is representable. We can do this in a universal way, which is interesting in its own right and extends~\cite[Theorem 3.1]{AOV08b}. See Theorem~\ref{thm: existence of relative moduli spaces} for a more precise and general statement.

\begin{theorem}\label{thm: intro relative moduli space}
    Let $\cX$ be an algebraic stack with an adequate moduli space $\pi_{\cX}:\cX\rightarrow\cXMOD$. Let $f:\cX\rightarrow\cZ$ be a morphism of algebraic stacks with $\cZ$ having affine diagonal. Then, there exists a universal factorization $\cX\xrightarrow{\varphi}\cY\xrightarrow{g}\cZ$ of $f$, where $\varphi$ is an adequate moduli space morphism and $g$ is representable. Moreover, the construction commutes with base change along representable flat morphisms to $\cY$ or $\cZ$. 
\end{theorem}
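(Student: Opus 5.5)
The plan is to build $\cY$ as a relative adequate moduli space of $\cX$ over $\cZ$, first locally and then by gluing, following the strategy of~\cite[Theorem 3.1]{AOV08b}. Relative moduli spaces are not local over $\cZ$ in a naive sense, so the key reduction is to use the inertia. Write $I_f=\ker(I_\cX\to f^*I_\cZ)$ for the relative inertia; it is a closed subgroup of $I_\cX$ because $\cZ$ has affine, hence separated, diagonal. A factorization $\cX\to\cY\to\cZ$ with $\cY\to\cZ$ representable must kill $I_f$. Conversely, the universal such factorization should be the ``rigidification'' of $\cX$ along the part of the stabilizers that maps trivially to $\cZ$, taken in the adequate-moduli sense.

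For the construction, I would take a smooth (or flat, representable) atlas $Z\to\cZ$ by an affine scheme and set $\cX_Z=\cX\times_\cZ Z$. Since $Z\to\cZ$ is affine (as $\cZ$ has affine diagonal), $\cX_Z\to\cX$ is affine. Hence $\cX_Z$ admits an adequate moduli space $\cX_{Z,\mathrm{mod}}$, namely the relative $\mathrm{Spec}$ over $\cXMOD$ of the adequate pushforward of the algebra defining $\cX_Z\to\cX$; this uses the stability of adequate moduli spaces under affine morphisms from~\cite{AdequateModAlp14}. Define $\cY_Z:=\cX_{Z,\mathrm{mod}}$, an algebraic space over $Z$.

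The next step is descent. On $R=Z\times_\cZ Z$, the space $\cX_R$ is affine over both copies of $\cX_Z$. Base change for adequate moduli spaces along flat morphisms (\cite[Proposition 5.2.9]{AdequateModAlp14}) shows that $\cX_{R,\mathrm{mod}}\cong \cY_Z\times_Z R$ via either projection. This yields a groupoid in algebraic spaces whose quotient is an algebraic stack $\cY$ with a representable map $g:\cY\to\cZ$, together with a map $\varphi:\cX\to\cY$. The property that $\varphi$ is an adequate moduli space morphism can be checked flat-locally on $\cY$, and this reduces to the statement that $\cX_Z\to\cY_Z$ is one. For independence of the atlas, and for base change along representable flat $\cZ'\to\cZ$ or $\cY'\to\cY$, I would use the same flat base-change result. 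A representable flat $\cY'\to\cY$ is handled by noting that $\cX\times_\cY\cY'\to\cY'$ is again an adequate moduli space morphism and that the construction is idempotent.

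Universality comes next. Given another factorization $\cX\xrightarrow{\varphi'}\cY'\xrightarrow{g'}\cZ$ with $\varphi'$ adequate and $g'$ representable, pull back to $Z$. Then $\cY'_Z$ is an algebraic space, and $\cX_Z\to\cY'_Z$ is an adequate moduli space morphism by base change, so it is universal for maps to algebraic spaces. This gives an isomorphism $\cY_Z\cong\cY'_Z$ compatible with descent data, and hence $\cY\simeq\cY'$. For a general factorization $\cX\to\cY''\to\cZ$ with $\cY''\to\cZ$ representable, the same argument produces $\cY_Z\to\cY''_Z$ from the universal property of $\cX_Z\to\cY_Z$ among maps to algebraic spaces, and this descends.

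I expect the main obstacle to be the descent step. It requires checking that $\cX_{R,\mathrm{mod}}$ agrees with both pullbacks of $\cY_Z$, so that the groupoid is well defined, and this is only true when $Z\to\cZ$ is flat. One also has to verify that the resulting quotient is algebraic with representable map to $\cZ$. A further subtlety is that $\cY_Z\to Z$ need not be separated or of finite type in the non-Noetherian case. I would first try to control this using the fact that $\cY_Z\to\cX_{\mathrm{mod}}$ is affine.
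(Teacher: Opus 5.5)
Your proposal is correct and follows the paper's proof essentially step by step. The shared steps are: a smooth atlas $Z\to\cZ$ from an affine scheme, affine because $\cZ$ has affine diagonal; adequate moduli spaces for $\cX_Z$ and $\cX_R$ because they are affine over $\cX$; the identification $(\cX_R)_{\mathrm{mod}}\simeq(\cX_Z)_{\mathrm{mod}}\times_Z R$ under both projections by flat base change; the quotient of the resulting groupoid; and universality from the universal property of adequate moduli spaces after pulling a representable $\cY'\to\cZ$ back to $Z$ and $R$.

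Your closing worry about separatedness or finite type of $\cY_Z\to Z$ is unnecessary. The source and target maps of $(\cX_R)_{\mathrm{mod}}\rightrightarrows(\cX_Z)_{\mathrm{mod}}$ are base changes of the smooth maps $R\rightrightarrows Z$, and that is all you need for the quotient to be an algebraic stack. For the same reason, keep the atlas smooth rather than merely flat, since otherwise the quotient need not be algebraic.
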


When $\cX$ admits a good moduli space morphism, we can completely characterize the category $\cC_{\cX}^{\DM}$ of good moduli space morphisms with targets Deligne-Mumford stacks. This extends results of Rydh~\cite[Theorem A.1.3]{RydhAppendixATW20}.

\begin{theorem}\label{thm: intro characterization CXDM}
    Let $\cX$ be an algebraic stack with affine diagonal, admitting a good moduli space $\pi_{\cX}:\cX\rightarrow\cXMOD$ of finite presentation. Then, the category $\cC_{\cX}^{\DM}$ is equivalent to the set of open and closed substacks $N\subset\cI_{\cX}$. Moreover, every object $\cY$ of $\cC_{\cX}^{\DM}$ has affine diagonal and finite inertia.
\end{theorem}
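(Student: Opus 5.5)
We interpret $\cC_{\cX}^{\DM}$ as the poset of good moduli space morphisms $\varphi:\cX\to\cY$ with $\cY$ Deligne-Mumford, factoring $\pi_{\cX}$. We construct mutually inverse order-preserving maps between this poset and the set of open and closed substacks $N\subset\cI_{\cX}$.

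\emph{From factorizations to substacks.} Given $\varphi:\cX\to\cY$, consider the induced morphism of inertias $\cI_{\cX}\to\varphi^*\cI_{\cY}$ and set $N_\varphi=\ker(\cI_{\cX}\to\cX\times_{\cY}\cI_{\cY})$. Since $\cY$ is Deligne-Mumford, $\cI_{\cY}\to\cY$ is unramified, so its unit section is open. Hence $N_\varphi$ is an open subgroup stack of $\cI_{\cX}$. We must also show it is closed. Because $\varphi$ is a good moduli space and $\cY$ has affine diagonal, the stabilizer maps $G_x\to G_{\varphi(x)}$ are surjective onto the linearly reductive, finite, étale group $G_{\varphi(x)}$ at closed points, with kernel containing $G_x^0$. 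By the local structure theorem \cite{LocalStructureStacks_AHR25}, we may work étale locally on $\cXMOD$ around $[\Spec A/G_x]$. There $\varphi$ corresponds to the quotient by a normal subgroup $H\supset G_x^0$, and $N_\varphi$ is the corresponding union of components, which is open and closed. Injectivity follows from reconstructing $\cY$ as the rigidification $\cX\!\!\fatslash N_\varphi$: we have $\cX\to\cX\!\!\fatslash N_\varphi\to\cY$, where the second map is representable and is a good moduli space. A representable good moduli space morphism is an isomorphism, since it is a representable morphism whose fibers have trivial good moduli spaces, and hence it is an isomorphism by \cite[Theorem 8.3.2]{AdequateModAlp14}-type arguments.

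\emph{From substacks to factorizations.} Given an open and closed $N\subset\cI_{\cX}$, we first must show that it is a normal flat subgroup stack. Closed points of $N$ over $x$ form a union of cosets of $G_x^0$, but $N$ need not be a subgroup a priori. Here I expect the intended statement to identify $\cC^{\DM}_{\cX}$ with those $N$ that arise as saturations, or to argue that, since $\pi_{\cX}$ is of finite presentation, $\cI_{\cX}$ has the minimal open and closed subgroup generated by components. This compatibility is the main obstacle. I would first reduce étale locally on $\cXMOD$ to $\cX=[\Spec A/G]$ with $G$ linearly reductive, where $\pi_0$ of the inertia is controlled by $G/G^0$ and the fixed loci. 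There I would check the group and normality conditions directly and then glue, using the uniqueness already shown. Given such $N$, rigidification $\cX\to\cX\!\!\fatslash N$ exists, and flatness follows because $N\supset\cI_{\cX}^0$ is an open subgroup. This morphism is a good moduli space, since it is cohomologically affine because $N$ is linearly reductive, and it satisfies $\cO$-pushforward. Its target has inertia $\cI_{\cX}/N$, which is unramified, so it is Deligne-Mumford.

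\emph{Finite inertia and affine diagonal.} For the final assertion, the diagonal of $\cY$ is affine because $\cX$ has affine diagonal and $\cX\to\cY$ is a good moduli space; this follows from the local structure description as a quotient $[\Spec B/(G/H)]$. Finite inertia then follows from \cite[Theorem 8.3.2]{AdequateModAlp14}, as noted after Theorem~\ref{thm: intro existence XDM}.
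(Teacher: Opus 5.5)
\textbf{There is a genuine gap: the construction of $\cY$ from $N$ (and with it your injectivity argument) rests on rigidification, which does not apply here.}

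Rigidifying $\cX$ along a subgroup $N\subset\cI_{\cX}$ requires $N$ to be flat and of finite presentation over $\cX$, and what it produces is always an fppf gerbe. Your claim that flatness follows because $N$ is an open subgroup containing the identity components is false. An open subgroup of $\cI_{\cX}$ has the same fibre dimensions as $\cI_{\cX}$, so it cannot be flat when stabilizer dimensions jump, and that is the typical situation for stacks with good moduli spaces. Take $\cX=[\A^1/\Gm]$ with the standard action. Its inertia is connected: the pieces $\Gm\times\{0\}$ and $\{1\}\times\A^1$ meet. Hence the only open and closed subgroup is $\cI_{\cX}$ itself, which corresponds to $\cXMOD=\Spec k$. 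Yet $\cI_{\cX}$ is not flat over $\cX$, and $\cX\to\Spec k$ is not a gerbe, since $\cX$ has two points. So objects of $\cC_{\cX}^{\DM}$ are in general not rigidifications. Neither the existence of $\cY$ for a given $N$ nor the factorization ``$\cX\to$ rigidification $\to\cY$'' that you use for injectivity is available.

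\textbf{The missing idea is an étale-local construction followed by descent of a stabilizer-preserving groupoid.}
\begin{enumerate}
\item For an open and closed subgroup $\cN$, first build an affine, étale, surjective $h:\cW\to\cX$ with $\cI_{\cW}=h^*\cN$, as in Proposition~\ref{prop: generalization Rydh Prop A15}. At each $x$, the subgroup $\cN_x\subset G_x$ gives a representable étale map to the residual gerbe $\cG_x$. Then \cite[Theorem 1.1]{LocalStructureStacks_AHR25} extends this to an affine étale map $[\Spec A/\GL_n]\to\cX$.
\item The locus where $\cI_{\cW}\neq h^*\cN$ is a union of components of $h^*\cI_{\cX}$. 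Since $\cX$ has unpunctured inertia, a saturated shrinking removes it.
\item Now $\cW\times_{\cX}\cW\rightrightarrows\cW$ is étale and inert. Luna's fundamental lemma \cite[Theorem 3.14]{LocalStructureStacks_AHR25} gives an étale groupoid $(\cW\times_{\cX}\cW)_{\mathrm{mod}}\rightrightarrows\cWMOD$ of algebraic spaces.
\item Its quotient is the Deligne--Mumford stack $\cY$, with $\cX\times_{\cY}\cWMOD\simeq\cW$. By descent, $\cX\to\cY$ is a good moduli space, and $\cI_{\cX/\cY}=\cN$.
\end{enumerate}

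\textbf{Injectivity} should instead come from \cite[Theorem 7.22]{LocalStructureStacks_AHR25}. It applies once $\varphi$ is of finite presentation, by \cite[Theorem 6.3.3]{AdequateModAlp14}, and $\cY$ has affine diagonal.

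\textbf{Affine diagonal of $\cY$.} Your appeal to a local quotient description is unjustified. Use Lemma~\ref{lem: inheriting affine diagonal} instead: cancellation makes $\cY\to\cY\times\cY$ adequately affine, and Serre's criterion then makes it affine. This also gives closedness of $\cI_{\cX/\cY}$ directly, with no local-structure argument. The unit section of $\cI_{\cY}$ is open because $\cY$ is Deligne--Mumford, and closed because $\cY$ has separated diagonal.

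\textbf{Reading of the statement.} Take $N$ to be an open and closed \emph{subgroup}; an arbitrary open and closed substack is not intended. Normality is automatic for subgroups of the inertia stack, so no saturation step is needed.
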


The correspondence is given by sending any object $\varphi:\cX\rightarrow\cY$ of $\cC_{\cX}^{\DM}$ to the relative inertia $\cI_{\cX/\cY}\subset\cI_{\cX}$.
The fact that this map is injective holds more generally for adequate moduli space morphisms to Deligne-Mumford stacks, see Theorem~\ref{thm: analogous thm 722 DM case}. This theorem is an analogue of~\cite[Theorem 7.22]{LocalStructureStacks_AHR25} and generalizes~\cite[Theorems 2.2.3, 2.3.6]{ToroidalOrbifoldsATW20}. The correspondence between moduli space morphisms and relative inertia completely fails when the target is not assumed to be Deligne-Mumford, as we explain in the following subsection.

\subsection{Inadequacy of Adequate Moduli Space Morphisms}\label{subsec: intro main counterexample}

The results we mentioned above show that $\cC_{\cX}$ is very well-behaved when we restrict to good moduli space morphisms or require the targets to be Deligne-Mumford stacks. In contrast, if we require neither of these properties, we open the door to wild, pathological examples.

For instance, it is known that in general adequate moduli space morphisms do not yield a surjection between geometric stabilizers (see for example~\cite[\S4.5]{ComplexityFlatGroupoid_RRZ18}). This undermines the intuition that moduli space morphisms simplify stabilizers. However, this intuition does hold true when considering good moduli space morphisms or Deligne-Mumford targets; see Lemma~\ref{lem: surjectivity stabilizers good and smooth stab case}.

Similarly, the stabilization property fails in general. The core issue is that adequate moduli space morphisms are not generally determined by their relative inertia, see~\cite[\S A.2.3]{RydhAppendixATW20}. In~\S\ref{sec: counterexample adequate}, we construct an example of an ascending chain in $\cC_{\cX}$ that does not stabilize even if $\cX$ is Noetherian and has a very simple structure, showing that the assumptions in the key Proposition~\ref{thm: intro stabilization property} are necessary. This surprising example is essentially an infinite iteration of the one in~\cite[\S4.5]{ComplexityFlatGroupoid_RRZ18}, and it is inspired by Nagata's famous example (\cite{RationalActions_Nagata},\cite[Example 6.5.1]{QuotientSpaces_Kol97}) of a Noetherian ring with non-Noetherian ring of invariants under a finite group action.

Explicitly, let $k$ be an infinite field of positive characteristic $p$, and let $y$ be transcendental over $k$. Let $\cX$ be the Deligne-Mumford stack $\cX=B(\Z/p\Z)_{k(y)[\epsilon]/(\epsilon^2)}$, which is a trivial gerbe over its non-reduced moduli space $\cXMOD=\Spec k(y)[\epsilon]/(\epsilon^2)$. Because $\Z/p\Z$ is not linearly-reductive, the moduli space morphism $\pi_{\cX}:\cX\rightarrow\cXMOD$ is adequate, but not good.

\begin{theorem}\label{thm: intro main counterexample}
    For every integer $n\geq1$, there exists an algebraic stack $\cY_n$ and adequate moduli space morphisms $\cX\rightarrow\cY_n\rightarrow\cY_{n-1}$ with $\cY_0=\cXMOD$, such that the following chain in $\cC_{\cX}$ does not stabilize:
    \[
    \begin{tikzcd}
        \cX\arrow[r] & \ldots\arrow[r] & \cY_n\arrow[r] & \cY_{n-1}\arrow[r] & \ldots\arrow[r] & \cY_1\arrow[r] & \cXMOD
    \end{tikzcd}
    \]
    Moreover, the maps $\cY_n\rightarrow\cXMOD$ are good moduli space morphisms, and the limit $\cY=\lim\cY_n$ is not an algebraic stack. 
\end{theorem}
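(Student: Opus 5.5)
The plan is to build the chain by hand, as an infinite iteration of the example in~\cite[\S4.5]{ComplexityFlatGroupoid_RRZ18}. Each $\cY_n$ will be presented as a quotient stack, equivalently as a finite flat groupoid quotient, of an affine scheme by a twisted $\Z/p\Z$-action. Write $K=k(y)$ and $\sigma$ for a generator of $\Z/p\Z$. The basic mechanism is that in characteristic $p$ the $\Z/p\Z$-action on $\cX=B(\Z/p\Z)_{K[\epsilon]/(\epsilon^2)}$ can be deformed only along $\epsilon$: for any derivation $\delta$ of a $k$-algebra $B$, the automorphism $\sigma(f+\epsilon g)=f+\epsilon(g+\delta f)$ of $B[\epsilon]/(\epsilon^2)$ satisfies $\sigma^p=\mathrm{id}$. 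Its invariant ring is $B^{\delta}\oplus\epsilon B$.

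\textbf{Construction.} Consider the purely inseparable tower $K_n=k(y^{1/p^n})$. Inside $K_n[\epsilon]$ I will take Nagata-type subrings $A_n$: the $\Z/p\Z$-invariants of a twisted action as above, with $\delta_n$ a $k$-derivation normalized so that $\delta_n(y^{1/p^n})=1$. Taking invariants kills $y^{1/p^n}$ in degree $0$, but in degree $\epsilon$ it keeps all of $\epsilon K_n$. These rings are arranged so that
\[
K[\epsilon]\subset\cdots\subset A_{n-1}\subset A_n\subset\cdots,
\]
with each $A_n$ containing $K[\epsilon]$ and $\epsilon A_n\supset\epsilon K$. I then set
\[
\cY_n=\big[\Spec A_n/H_n\big],
\]
where $H_n$ is a linearly reductive infinitesimal group scheme, for instance a Frobenius kernel of type $\mu_{p^n}$, chosen so that $A_n^{H_n}=K[\epsilon]$. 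The map $\cX\to\cY_n$ comes from the $\Z/p\Z$-torsor over $\cY_n$ given by the twisted action on the $\epsilon$-part. The morphism $\cY_n\to\cY_{n-1}$ is induced by $A_{n-1}\subset A_n$ together with a compatible surjection $H_n\to H_{n-1}$.

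\textbf{Verifications, in order.}
\begin{enumerate}
\item $\cY_n\to\cXMOD$ is a good moduli space: the group $H_n$ is linearly reductive and its invariants are $K[\epsilon]$.
\item $\cX\to\cY_n$ and $\cY_n\to\cY_{n-1}$ are adequate. Adequacy is checked smooth-locally on the target, where it reduces to the invariant-ring computation above. The needed integrality is that $f^p$ is invariant for every $f$, because $\sigma$ is the identity modulo $\epsilon$.
\item The chain lies in $\cC_{\cX}$, since all the maps factor the moduli morphism $\pi_{\cX}$.
\item The chain does not stabilize. The maps $\cY_n\to\cY_{n-1}$ are not isomorphisms because the atlases differ, $A_{n-1}\subsetneq A_n$. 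Equivalently, the relative inertias $\cI_{\cX/\cY_n}$ differ, and the degree of $\cY_n$ over $\cXMOD$ grows with $n$.
\end{enumerate}

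\textbf{Non-algebraicity of the limit.} The limit $\cY=\lim\cY_n$ is the fpqc quotient of $\Spec A_\infty$, where $A_\infty=\bigcup_n A_n$, by the profinite-type group $\lim H_n$. Suppose $\cY$ were algebraic. Then the induced map $\cX\to\cY$ would still be adequate, and $\cY$ would have a smooth atlas of finite type over the Noetherian base $\cXMOD$. Base-changing to the residue field, the unit of $\epsilon A_\infty$ would then be a finitely generated module over a Noetherian ring. But $\epsilon A_\infty\supset\epsilon K^{1/p^\infty}$ is not finitely generated, which is Nagata's phenomenon~\cite{RationalActions_Nagata}. Alternatively, one can argue that the stabilizer $\lim H_n$ is not of finite type, which already rules out algebraicity.

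\textbf{Main obstacle.} I expect the hardest step to be the precise choice of $\delta_n$ and $H_n$ so that three things hold simultaneously: the maps $\cY_n\to\cY_{n-1}$ really exist, i.e. the actions are compatible; the maps are adequate but not isomorphisms; and the $\Z/p\Z$-torsor over each $\cY_n$ recovers exactly $\cX$. My first attempt will be to write everything at the level of groupoids $R_n\rightrightarrows\Spec K_n[\epsilon]$, copying the explicit groupoid of~\cite[\S4.5]{ComplexityFlatGroupoid_RRZ18} with $K$ replaced by $K_n$. I would then check the compatibilities by direct computation on coordinate rings.
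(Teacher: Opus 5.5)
\textbf{The tower cannot carry compatible twisted actions.} Several pieces match the paper: the $\epsilon$-twisted action $\sigma(f+\epsilon g)=f+\epsilon(g+\delta f)$, its invariants $B^{\delta}\oplus\epsilon B$, presentations $\cY_n=[\Spec A_n/H_n]$ with $H_n$ infinitesimal and linearly reductive, and the good-moduli check. But your concrete tower cannot work, and the step you set aside as the ``main obstacle'' is exactly where it fails.

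For $K_n=k(y^{1/p^n})=K(t)$ with $t=y^{1/p^n}$, every $k$-derivation of $K_n$ kills $t^p$, hence all of $K_{n-1}=k(t^p)$. Equivariance of $K_{n-1}[\epsilon]\subset K_n[\epsilon]$ would therefore force $\delta_{n-1}=\delta_n|_{K_{n-1}}=0$. The composite $\cX\to\cY_n\to\cY_{n-1}$ only sees $\delta_n|_{K_{n-1}}=0$, i.e.\ the untwisted action. So it cannot be $2$-isomorphic to the $\delta_{n-1}$-twisted map $\cX\to\cY_{n-1}$: the pulled-back atlases have different adequate moduli spaces, $K_{n-1}[\epsilon]$ versus $A_{n-1}$. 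Hence your $\cY_n$ do not form a chain in $\cC_{\cX}$. Worse, $K_\infty=\bigcup_n K_n$ has no nonzero $K$-derivation at all, so there is no twisted action in the limit. Even for a single $n$, the map $\cX\to\cY_n$ needs the twisted action to commute with $H_n$, and $\delta_n(t)=1$ is not equivariant for $t\mapsto\zeta t$.

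The missing idea is Nagata's. Adjoin $p$-th roots $x_i$ of infinitely many distinct general $q_i(y)\in K$. Then $K_n=K(x_1,\dots,x_n)$ has exponent one and $\Spec K_n\to\Spec K$ is a $\mu_p^n$-torsor, so $H_n=\mu_p^n$ rather than $\mu_{p^n}$. A single $K$-derivation $D=\sum_i q_ix_i\partial_{x_i}$, i.e.\ $D(x_i)=x_i^{p+1}$, lives on $K_\infty$, restricts to every $K_n$, and commutes with $\mu_p^n$ (Lemma~\ref{lem: commutativity of actions}). Then $\cY_n=[\Spec R_n^{\Z/p\Z}/\mu_p^n]$ with $R_n=K_n[\epsilon]/(\epsilon^2)$, and all maps come from equivariant maps of atlases.

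\textbf{The non-stabilization and non-algebraicity checks also fail.} The relative inertias $\cI_{\cX/\cY_n}$ form a descending chain of closed subgroups of $\cI_{\cX}$, which is finite over the Noetherian $\cX$, so they must stabilize. That is exactly the mechanism of Proposition~\ref{prop: generalization of stabilization}. The example matters precisely because non-stabilization is invisible to relative inertia: on geometric points, $\Z/p\Z$ maps trivially to an infinitesimal $H_n$. Nor does $A_{n-1}\subsetneq A_n$ alone show that $\cY_n\to\cY_{n-1}$ is not an isomorphism, since the groups change as well.

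The paper instead gets non-stabilization for free from non-algebraicity of $\cY=\lim\cY_n$, since a stabilized chain has an algebraic limit. Non-algebraicity is proved by a finite-type argument. $R_\infty$ is not of finite type over $R_\infty^{\Z/p\Z}=F_\infty\oplus\epsilon K_\infty$, because $F_\infty=\ker D$ has infinite index in $K_\infty$ (Lemma~\ref{lem: invariants Z/pZ action}, via Nagata's linear independence of the $x_i$). Hence $\psi:\cX\to\cY$ is not of finite type, whereas for algebraic $\cY$, \cite[Theorem 6.3.3]{AdequateModAlp14} would force it to be.

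Your version (``$\epsilon A_\infty\supset\epsilon K^{1/p^\infty}$ is not finitely generated'') targets the wrong object. What must fail is finiteness of the reduced part, $K_\infty$ over the kernel of the derivation. In your tower $A_\infty=K_\infty[\epsilon]$, so nothing fails. Your alternative via the non-finite-type group $\lim H_n$ presupposes that the stabilizers of the $\cY_n$ are all of $H_n$, which you would still have to verify.
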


The proof of the above theorem is constructive and carried out in~\S\ref{sec: counterexample adequate}.

We comment on the last part of Theorem~\ref{thm: intro main counterexample}. In \S\ref{sec: counterexample adequate}, we construct the limit $\cY$ as a quotient of an algebraic space $U$ by an fpqc-algebraic group $G$. Because $G$ is not finitely presented, this quotient is not an algebraic stack, and is merely an fpqc stack (a stack admitting a representable fpqc presentation by an algebraic space, see~\S\ref{subsec: fpqc stacks}) with representable diagonal. If the presentation were of finite presentation, then $\cY$ would be an algebraic stack by Artin's theorem (\cite[Theorem 6.1]{ArtinStacks_Art74}, \cite[Tag 06DC]{Sta24}). Moreover, $\cY$ fits into a cartesian diagram
\[
\begin{tikzcd}[row sep=small, column sep=large]
    \cU\arrow[d]\arrow[r] & U\arrow[d]\\
    \cX\arrow[r] & \cY
\end{tikzcd}
\]
where $\cU\rightarrow U$ is an adequate moduli space morphism. Since adequate moduli space morphisms descend under fpqc morphisms by~\cite[Proposition 5.2.9 (2)]{AdequateModAlp14}, it is natural to view $\cX\rightarrow\cY$ itself as an adequate moduli space morphism. This shows that very simple algebraic stacks can admit adequate moduli space morphisms to fpqc targets that are not even algebraic. This decisively shows that viewing these morphisms simply as `simplifications' of the stacky structure can be highly misleading.

Finally, we note that we explicitly leverage the fact that $\cX$ is non-reduced in this construction. Whether a similar pathological chain can exist when $\cX$ is assumed to be reduced remains an open and interesting question.

\subsection{Structure of the Paper}

In \S\ref{sec: preliminaries} we cover some definitions and technical results we will need in the rest of the paper. In particular, in \S\ref{subsec: good and adequate moduli spaces} we recall the definitions and properties of good and adequate moduli space morphisms, while in \S\ref{subsec: fpqc stacks} we give the basics of fpqc stacks.
In \S\ref{sec: positive results}, we introduce the category $\cC_{\cX}^{\cP}$ of adequate moduli space morphisms satisfying a property $\cP$, and study its fundamental features. In particular, we answer Questions~\ref{question: intro existence maximum} and~\ref{question: intro stabilization}, and prove Proposition~\ref{thm: intro stabilization property} and Theorem~\ref{thm: intro characterization CXDM}.
In \S\ref{subsec: examples and consequences}, we give applications of the results of the previous sections and provide some examples. We answer Question~\ref{question: intro universal objects} and prove Theorems~\ref{thm: intro existence XDM} and~\ref{thm: intro relative moduli space}.
Finally, in \S\ref{sec: counterexample adequate}, we construct the counterexample in Theorem~\ref{thm: intro main counterexample}.

\subsection{Conventions}

For algebraic stacks, we use the conventions of~\cite{Sta24}. In particular, algebraic stacks and spaces are not assumed to be quasi-separated, unlike some other texts like~\cite{champsalgebrique}. However, by Noetherian algebraic stack we mean a quasi-separated, quasi-compact, locally-Noetherian algebraic stack, as in~\cite[Tag 0510]{Sta24}.

\subsection{Acknowledgments}
    I am indebted to David Rydh for many invaluable insights and suggestions over correspondence, particularly for suggesting Example~\ref{exm: Rydh counterexample for DM and fi} and suggesting an approach to Theorems~\ref{thm: intro characterization CXDM} and \ref{thm: analogous thm 722 DM case}. I am deeply thankful to my advisor, Dan Abramovich, for his constant support, guidance, and feedback, which substantially improved the presentation of this work. I am also very grateful to Michele Pernice for his extensive and insightful discussions and feedback. Finally, I thank Roberta Pagliaro and Angelo Vistoli for their useful comments.
\section{Preliminaries}\label{sec: preliminaries}
    In this section we collect a few definitions and results that will be used in the rest of the paper.
    
\subsection{Good and Adequate Moduli Spaces}\label{subsec: good and adequate moduli spaces}
    
    The following results are likely well-known to specialists; however, we include their proofs here for completeness and the reader's convenience.

    We start by recalling what are good and adequate moduli space morphisms. We use the convention of~\cite[Definition 1.13]{LocalStructureStacks_AHR25}, where cohomologically and adequately affine morphisms are imposed to be stable under base change.
\begin{definition}\label{def: universally adequate homomorphism}
    Let $\cX$ be an algebraic stack. A morphism $\cA\rightarrow\cB$ of quasi-coherent $\cO_{\cX}$-algebras is \emph{universally adequate} if every section $s$ of $\cB$ over a smooth morphism $\Spec R\rightarrow\cY$ has a positive power that lifts to a section of $\cA$.
\end{definition}
\begin{definition}\label{def: moduli space morphisms}
    A quasi-compact and quasi-separated morphism $f:\cX\rightarrow\cY$ of algebraic stacks is \emph{cohomologically affine} (resp.\ \emph{adequately affine}) if:
    \begin{enumerate}
        \item\label{point 1 def} $f_*$ is exact on the category of quasi-coherent $\cO_{\cX}$-modules (resp.\ for every surjection $\cA\rightarrow\cB$ of quasi-coherent $\cO_{\cX}$-algebras, $f_*\cA\rightarrow f_*\cB$ is universally adequate); and,
        \item property \eqref{point 1 def} holds after arbitrary base change $\cY'\rightarrow\cY$.
    \end{enumerate}
    We say that $f$ is a \emph{good} (resp.\ \emph{adequate}) \emph{moduli space morphism} if $f$ is cohomologically affine (resp.\ adequately affine) and $f_*\cO_{\cX}\simeq\cO_{\cY}$. If further $\cY$ is an algebraic space, we simply call it a good (resp.\ adequate) moduli space, and denote it by $\pi_{\cX}:\cX\rightarrow\cXMOD$.
\end{definition}

    All properties above commute with flat base change $\cY'\rightarrow\cY$, and are fpqc-local on the target, see~\cite[Proposition 4.7]{goodmoduli_Alp13} and~\cite[Proposition 5.2.9]{AdequateModAlp14}. In particular, $\cX\rightarrow\cY$ is a good (resp.\ adequate) moduli space morphism if and only if for every flat morphism $U\rightarrow\cY$ from an algebraic space $\cX\times_{\cY}U\rightarrow U$ is a good (resp.\ adequate) moduli space. Good moduli space morphisms commute with arbitrary base change (\cite[Proposition 4.7]{goodmoduli_Alp13}), while adequate moduli space morphisms do so only up to adequate homeomorphism.

    \begin{definition}[{\cite[Definition 3.3.1]{AdequateModAlp14}}]
        A morphism $f:\cX\rightarrow\cY$ of algebraic stacks is an \emph{adequate homeomorphism} if $f$ is a representable, integral, universal homeomorphism which is a local isomorphism at all points with residue field 0. A morphism $\cA\rightarrow\cB$ of quasi-coherent $\cO_{\cX}$-algebras is an adequate homeomorphism if $\Spec\!_{\cX}(\cB)\rightarrow\Spec\!_{\cX}(\cA)$ is.
    \end{definition}
    Given $f:\cX\rightarrow\cY$ adequate moduli space morphism and a morphism $\cZ\rightarrow\cY$, there is a factorization $\cX\times_{\cY}\cZ\xrightarrow{\varphi}\cW\xrightarrow{g}\cZ$ where $\varphi$ is adequate moduli space morphism and $g$ is an adequate homeomorphism (\cite[Proposition 5.2.9]{AdequateModAlp14}).

\subsubsection{Cancellation Properties}\label{subsec: cancellation properties}

Consider morphisms $\cX\xrightarrow{\varphi}\cY\xrightarrow{\psi}\cZ$ and their composite, and suppose two of them are adequate moduli space morphism. Is the third an adequate moduli space morphism? The following lemma provides an answer.

\begin{lemma}\label{lem: cancellation properties}
    Let $\cX\xrightarrow{\varphi}\cY\xrightarrow{\psi}\cZ$ be quasi-compact and quasi-separated morphisms between algebraic stacks.
    \begin{enumerate}
        \item\label{lem: cancellation properties 1}  If both $\psi$ and $\varphi$ are adequately affine (respectively, cohomologically affine), then so is $\psi\circ\varphi$.
        \item\label{lem: cancellation properties 2}
        Suppose $\psi$ has affine diagonal, for instance if $\cY$ has affine diagonal and $\cZ$ has separated diagonal. If $\psi\circ\varphi$ is adequately affine (respectively, cohomologically affine), then so is $\varphi$.
        \item\label{lem: cancellation properties 3}  Suppose that $\varphi$ and $\psi\circ\varphi$ are adequately affine (respectively, cohomologically affine), and that $\cO_{\cY}\rightarrow\varphi_*\cO_{\cX}$ is an isomorphism. Then, $\psi$ is adequately affine (respectively, cohomologically affine), and $\psi_*\cO_{\cY}\simeq\cO_{\cZ}$ if and only if $\psi_*\varphi_*\cO_{\cX}\simeq\cO_{\cZ}$.
    \end{enumerate}
\end{lemma}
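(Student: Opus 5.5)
For part~\eqref{lem: cancellation properties 1}, both conditions in Definition~\ref{def: moduli space morphisms} are stable under arbitrary base change of the target. It therefore suffices to check the pushforward condition for $\psi\circ\varphi$ itself, because the same argument then applies to every base change $\cZ'\to\cZ$; indeed $\cX\times_\cZ\cZ'\to\cY\times_\cZ\cZ'\to\cZ'$ is again a composite of adequately (resp.\ cohomologically) affine morphisms. In the cohomologically affine case, exactness of $(\psi\varphi)_*=\psi_*\varphi_*$ is immediate. In the adequately affine case, take a surjection $\cA\to\cB$ of quasi-coherent $\cO_\cX$-algebras. Then $\varphi_*\cA\to\varphi_*\cB$ is universally adequate, and I would factor it as $\varphi_*\cA\to\cC\to\varphi_*\cB$, where $\cC$ is the image. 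Here $\varphi_*\cA\to\cC$ is surjective, so $\psi_*\varphi_*\cA\to\psi_*\cC$ is universally adequate. The map $\cC\hookrightarrow\varphi_*\cB$ is an injective universally adequate map, and I would invoke Alper's result \cite[Lemma 4.1.x/Proposition 4.2.x]{AdequateModAlp14} that $\psi_*$ of an adequately affine morphism preserves universally adequate homomorphisms. Alternatively, I would reduce smooth-locally on $\cZ$ to an affine $\Spec A$ and use that integral-type power lifting is preserved by $\Gamma$ over adequately affine stacks. Universal adequacy is stable under composition: take powers twice. This gives the claim.

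For part~\eqref{lem: cancellation properties 2}, I would write $\varphi$ as the graph $\cX\to\cX\times_\cZ\cY$ followed by the projection $\cX\times_\cZ\cY\to\cY$. The projection is a base change of $\psi\circ\varphi$, so it is adequately (resp.\ cohomologically) affine. The graph is a base change of the diagonal $\Delta_\psi:\cY\to\cY\times_\cZ\cY$, which is affine by hypothesis. Affine morphisms are both cohomologically and adequately affine, and part~\eqref{lem: cancellation properties 1} then concludes. For the parenthetical: if $\cY$ has affine diagonal and $\cZ$ has separated diagonal, then $\Delta_\psi$ factors $\cY\to\cY\times_\cZ\cY\to\cY\times\cY$. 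The composite is affine, and $\cY\times_\cZ\cY\to\cY\times\cY$ is a base change of $\Delta_\cZ$, hence separated. Cancellation for affine morphisms then gives that $\Delta_\psi$ is affine.

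Part~\eqref{lem: cancellation properties 3} is the main obstacle, because base change does not preserve $\cO_\cY\simeq\varphi_*\cO_\cX$ in the adequate setting. I would first reduce to $\cZ$ affine and check condition~(1) directly. Since the hypotheses are stable under flat base change on $\cZ$ and the conditions are fpqc local (\cite[Proposition 5.2.9]{AdequateModAlp14}), I would argue as in \cite[Proposition 5.2.9]{AdequateModAlp14} by replacing the base-changed $\cY'$ by the factorization through an adequate homeomorphism. For a quasi-coherent $\cO_\cY$-algebra $\cA'$, the pullback $\varphi^*\cA'$ satisfies $\varphi_*\varphi^*\cA'\simeq\cA'$ up to adequate homeomorphism (the projection formula for adequate moduli spaces). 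Given a surjection $\cA'\to\cB'$ on $\cY$, I would pull back to $\cX$ to obtain a surjection $\varphi^*\cA'\to\varphi^*\cB'$. Applying $(\psi\varphi)_*$ gives a universally adequate map. Comparing with $\psi_*\cA'\to\psi_*\cB'$ via the adequate homeomorphisms $\cA'\to\varphi_*\varphi^*\cA'$ (where powers lift) then shows that $\psi_*\cA'\to\psi_*\cB'$ is universally adequate. In the cohomologically affine case the argument is simpler: $\varphi_*\varphi^*\simeq\mathrm{id}$ exactly on quasi-coherent sheaves, so exactness of $\psi_*=(\psi\varphi)_*\varphi^*$ on right-exact sequences gives the result, and left exactness is automatic. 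The final statement is formal, since $\psi_*\cO_\cY=\psi_*\varphi_*\cO_\cX$.
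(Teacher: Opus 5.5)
Parts (1) and (2) are fine. The paper simply cites \cite[Proposition 3.10]{goodmoduli_Alp13} and \cite[Proposition 4.2.1, Lemma 4.2.3]{AdequateModAlp14}, and your graph factorization is the standard proof, including your check that $\Delta_\psi$ is affine in the parenthetical case. In (3), your cohomologically affine argument via $\psi_*\simeq(\psi\circ\varphi)_*\varphi^*$ on quasi-coherent sheaves is correct and more direct than the paper's route through surjections of algebras. Your remark that after base change $\cO_{\cY'}\to\varphi'_*\cO_{\cX'}$ is only an adequate homeomorphism also addresses a point the paper leaves implicit. The adequately affine case follows the paper's route: Alper's Lemma 5.2.6 plus the comparison Lemma~\ref{lem: adequate properties are invariant under adequate homeomorphism}. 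However, you leave out the one step that carries the content.

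\textbf{The gap.} The comparison takes place \emph{after} applying $\psi_*$. The square you need has:
\begin{itemize}
\item top row $\psi_*\cA'\to\psi_*\cB'$;
\item bottom row $\psi_*\varphi_*\varphi^*\cA'\to\psi_*\varphi_*\varphi^*\cB'$, which is universally adequate;
\item vertical maps equal to $\psi_*$ of the adequate homeomorphisms $\cA'\to\varphi_*\varphi^*\cA'$ and $\cB'\to\varphi_*\varphi^*\cB'$.
\end{itemize}
To transfer power-lifting from the bottom row to the top, two things are required. The left vertical map must be adequate on global sections over $\cY\times_{\cZ}\Spec A$. The right vertical map must have nil kernel, so that the lift you obtain maps to a power of the original section.

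Your phrase ``where powers lift'' only gives lifting locally on $\cY$, and local lifts need not glue. You also cannot fall back on the fact you used in part (1), that pushforward along an adequately affine morphism preserves universally adequate maps. That would be circular, since adequate affineness of $\psi$ is the conclusion.

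\textbf{What is needed.} You need that $\psi_*$, for $\psi$ merely quasi-compact and quasi-separated, sends adequate homeomorphisms to adequate homeomorphisms. The paper takes this from \cite[Lemmas 3.4.7, 4.1.7]{AdequateModAlp14}. It can also be seen directly, at least in characteristic $p$:
\begin{itemize}
\item The kernel of an adequate homeomorphism is locally nilpotent, because the induced map on spectra is surjective.
\item Cover $\cY\times_{\cZ}\Spec A$ by finitely many smooth affine charts and raise the local lifts to a common $p^r$-th power.
\item The differences of these lifts on the quasi-compact overlaps lie in the nil kernel. A further $p^m$-th power therefore makes them agree, and they glue.
\item The same nilpotence, on a quasi-compact source, gives the nil kernel on the $\cB'$ side.
\end{itemize}
With this step supplied, your argument is complete.
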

\begin{proof}
    The fact that classes of adequately and good affine morphisms are closed under composition is~\cite[Proposition 3.10]{goodmoduli_Alp13}, \cite[Proposition 4.2.1]{AdequateModAlp14}. Part~\eqref{lem: cancellation properties 2} is~\cite[Lemma 4.2.3]{AdequateModAlp14}.
    For the third part, let $\cB_1\rightarrow\cB_2$ be a surjection of quasi-coherent $\cO_{\cY}$-algebras; we need to show that $\psi_*\cB_1\rightarrow\psi_*\cB_2$ is universally adequate. By~\cite[Lemma 5.2.6]{AdequateModAlp14},
    $\cB_i\rightarrow\varphi_*\varphi^*\cB_i$ are adequate homeomorphisms for $i=1,2$. By~\cite[Lemmas 3.4.7, 4.1.7]{AdequateModAlp14}), $\psi_*\cB_i\rightarrow\psi_*\varphi_*\varphi^*\cB_i$ are adequate homeomorphism as well. Then, the statement follows from the next Lemma~\ref{lem: adequate properties are invariant under adequate homeomorphism}. When $\varphi$ and $\psi\circ\varphi$ are cohomologically affine, $\cB_i\rightarrow\varphi_*\varphi^*\cB_i$ are isomorphisms by~\cite[Proposition 4.5]{goodmoduli_Alp13}, and the same argument shows that $\psi_*\cB_1\rightarrow\psi_*\cB_2$ is surjective, hence $\psi$ is cohomologically affine by~\cite[Lemma 4.1.5]{AdequateModAlp14}.
\end{proof}

\begin{lemma}\label{lem: adequate properties are invariant under adequate homeomorphism}
    Suppose given a commutative diagram of homomorphism of quasi-coherent $\cO_{\cX}$-algebras
    \[
    \begin{tikzcd}
        \cA_1\arrow[r]\arrow[d,"\alpha"] & \cA_2\arrow[d,"\beta"]\\
        \cA_3\arrow[r] & \cA_4.
    \end{tikzcd}
    \]
    with the horizontal arrows being adequate homeomorphism. Then, $\alpha$ is universally adequate if and only if $\beta$ is.
\end{lemma}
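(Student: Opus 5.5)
The plan is to reduce the statement to a local, elementwise criterion and then move sections back and forth along the horizontal adequate homeomorphisms, at the cost of taking powers. Universal adequacy is smooth-local on $\cX$. Adequate homeomorphisms are stable under flat base change. So we may pull back along a smooth morphism $\Spec R\rightarrow\cX$ and assume $\cX=\Spec R$ is affine. The diagram then becomes a commutative square of $R$-algebras $A_1\rightarrow A_2$, $A_3\rightarrow A_4$, with vertical maps $\alpha,\beta$.

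We need two facts about an adequate homeomorphism of algebras $u:A\rightarrow B$:
\begin{enumerate}
\item[(a)] every $b\in B$ has a positive power lying in the image of $u$;
\item[(b)] the kernel of $u$ is locally nilpotent, so if $u(a)=0$ then $a^n=0$ for some $n>0$.
\end{enumerate}
Both should be contained in \cite[\S3.3]{AdequateModAlp14}. Part (a) uses that $\Spec B\rightarrow\Spec A$ is an integral universal homeomorphism that is a local isomorphism at points of characteristic $0$. This makes $B$ a "purely inseparable" extension up to nilpotents, and in positive characteristic one gets $p$-power roots. Part (b) holds because a homeomorphism has nilpotent kernel on the level of reduced rings. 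Establishing these two facts, which are \cite[Lemma 3.3.3 / Proposition 3.3.5]{AdequateModAlp14} type statements, is the main obstacle. If they are not quoted directly, I would prove them by reducing to the universal homeomorphism criterion together with the characteristic-zero local isomorphism condition.

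Suppose first that $\alpha$ is universally adequate, and let $s\in A_4$. By (a), some power $s^{n}$ is the image of some $t\in A_3$. By adequacy of $\alpha$, some power $t^{m}$ equals $\alpha(r)$ with $r\in A_1$. Its image in $A_2$ then maps under $\beta$ to $s^{nm}$ by commutativity, so $\beta$ is universally adequate.

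Conversely, suppose $\beta$ is universally adequate, and let $t\in A_3$ with image $s\in A_4$. There is some $x\in A_2$ with $\beta(x)=s^{m}$. By (a), some power $x^k$ is the image of some $r\in A_1$. Then $\alpha(r)$ and $t^{mk}$ have the same image in $A_4$. Here (b) alone is not enough to conclude that they agree after taking powers, since $(a-b)^N=0$ does not imply that $a^N=b^N$. Instead I would use the stronger form of \cite[\S3.3]{AdequateModAlp14}: elements with the same image under an adequate homeomorphism satisfy $a^{q}=b^{q}$ for some $q$. In positive characteristic this comes from the Frobenius identity $(a-b)^{p^e}=a^{p^e}-b^{p^e}$ applied to a nilpotent difference. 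In characteristic $0$ it comes from the kernel being zero locally. With this, $\alpha(r^{q})=t^{mkq}$, which proves that $\alpha$ is universally adequate. A mixed characteristic ring is handled by working locally, separating the locus where $p$ is invertible from the locus where it is nilpotent.
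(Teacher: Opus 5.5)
Your proof is correct in structure but takes a different route from the paper, and one auxiliary step needs a different justification than the one you sketch.

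\textbf{Comparison with the paper.} The paper gives no argument of its own; it simply invokes \cite[Lemma 3.4.7]{AdequateModAlp14}. You instead unpack that lemma elementwise. You first reduce to sections over smooth affines, which is legitimate: adequate homeomorphisms are stable under base change, and universal adequacy is tested on such sections. You then use two properties of an adequate homeomorphism $u\colon A\to B$:
\begin{enumerate}
\item[(a)] every element of $B$ has a positive power in $u(A)$;
\item[(a$'$)] if $u(a)=u(a')$, then $a^q=a'^q$ for some $q>0$.
\end{enumerate}
The implication from $\alpha$ to $\beta$ needs only (a) for $A_3\to A_4$. The converse is the direction actually used in the proof of Lemma~\ref{lem: cancellation properties}. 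It needs (a) for $A_1\to A_2$ and (a$'$) for $A_3\to A_4$. You rightly observe that a locally nilpotent kernel alone does not give (a$'$). What your route buys over the citation is transparency: it shows exactly where the characteristic-$0$ local isomorphism condition in the definition of adequate homeomorphism is needed.

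\textbf{The step that fails.} Your justification of (a$'$) in mixed characteristic does not work as written. The loci where $p$ is invertible and where $p$ is nilpotent need not cover $\Spec A$. For example, $\mathbb{Z}_{(p)}[x]/(x^2,px)\to\mathbb{Z}_{(p)}$ is an adequate homeomorphism, since it is an isomorphism after inverting $p$. Yet the closed point has no neighbourhood on which $p$ is nilpotent, so the Frobenius identity cannot be invoked there.

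\textbf{The fix.} The correct argument is global. Let $c=a-a'\in\ker u$.
\begin{enumerate}
\item Since $\Spec B\to\Spec A$ is surjective, $\ker u$ lies in every prime, so $c$ is nilpotent; say $c^N=0$.
\item Since $u$ is a local isomorphism at every prime of residue characteristic $0$, $c$ vanishes in $A_{\mathfrak p}$ for all such $\mathfrak p$. Hence $c$ vanishes in $A\otimes_{\mathbb{Z}}\mathbb{Q}$, so $nc=0$ for some integer $n\neq 0$.
\item Take $q=n\cdot N!$. The identity $j\binom{q}{j}=q\binom{q-1}{j-1}$ gives, for every prime $\ell$ and $0<j<N$, $v_\ell\binom{q}{j}\geq v_\ell(q)-v_\ell(j)\geq v_\ell(n)$, because $j$ divides $N!$. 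So $n$ divides $\binom{q}{j}$ for $0<j<N$.
\item Therefore every term $\binom{q}{j}a'^{\,q-j}c^j$ with $j\geq 1$ vanishes, and $(a'+c)^q=a'^q$.
\end{enumerate}
In characteristic $0$ this recovers $c=0$, and in characteristic $p$ it recovers the Frobenius argument. With this replacement, your proposal is a complete, self-contained proof of the lemma.
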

\begin{proof}
    This follows from~\cite[Lemma 3.4.7]{AdequateModAlp14}.
\end{proof}

\begin{remark}\label{rmk: necessity of affine diagonal}
    In Lemma~\ref{lem: cancellation properties} part~\eqref{lem: cancellation properties 2}, requiring $\psi$ to have affine diagonal is not superfluous. Indeed, let $\cY$ be a quasi-compact and quasi-separated algebraic stack whose diagonal is not affine, and admitting a good moduli space $\psi:\cY\rightarrow\Spec A$. Then, there exists a non-affine smooth presentation $\varphi:\Spec B\rightarrow\cY$ from an affine scheme, and the pair $(\varphi,\psi)$ yields a counterexample. Indeed, if $\varphi$ was adequately affine it would also be affine by the generalized Serre's criterion for affineness~\cite[Theorem 4.3.1]{AdequateModAlp14}.
\end{remark}

The following two lemmas show that affineness of the diagonal is preserved along adequate moduli space morphisms.

\begin{lemma}\label{lem: inheriting affine diagonal}
    Let $\varphi:\cX\rightarrow\cY$ be an adequate moduli space morphism between algebraic stacks, and suppose that $\cX$ has (Zariski-locally) affine diagonal. Then, $\cY$ has (Zariski-locally) affine diagonal, and $\varphi$ has affine diagonal.
\end{lemma}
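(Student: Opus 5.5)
The plan is to deduce the statement about $\cY$ from a known result on adequate moduli spaces, and then to get the affineness of the diagonal of $\varphi$ by a diagonal argument. Both properties are fpqc (in particular smooth) local on $\cY$, so the first step is to reduce to a local situation. Affineness of the diagonal of $\cY$ can be checked after pulling back $\Delta_\cY$ along $U\times U\to\cY\times\cY$ for a smooth atlas. This is less convenient, though, than using the known result: for an adequate moduli space $\cX\to X$ with $X$ an algebraic space and $\cX$ with affine diagonal, $X$ has affine diagonal (\cite[Proposition 5.3.1/Theorem 5.3.1]{AdequateModAlp14}, the analogue of \cite[Proposition 4.16]{goodmoduli_Alp13}). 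So I would argue via the relative diagonal, as follows.

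The first step is to show that $\varphi$ has affine diagonal. Factor $\Delta_\cX$ as
\[
\cX\xrightarrow{\Delta_{\varphi}}\cX\times_{\cY}\cX\xrightarrow{j}\cX\times\cX.
\]
Here $j$ is the base change of $\Delta_\cY$, hence representable and separated. Since $\Delta_\cX=j\circ\Delta_\varphi$ is affine and $j$ is separated, $\Delta_\varphi$ is affine by the usual cancellation: $\Delta_\varphi$ factors as the graph $\cX\to\cX\times_{\cX\times\cX}(\cX\times_\cY\cX)$, a base change of $\Delta_j$, which is a closed immersion, followed by the projection, which is affine. I must check that this cancellation works for stacks using only representable separated $j$; it does. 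No moduli hypothesis is needed for this step, and the Zariski-local version is identical after restricting to opens.

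The second step is to show that $\cY$ has affine diagonal. Take a smooth (or flat) cover $V=\Spec A\to\cY$. Then $\cX_V=\cX\times_\cY V\to V$ is an adequate moduli space, and $\cX_V$ has affine diagonal by the first step, since $\cX_V\to\cX$ is representable. It suffices to show that for affine $V,W$ over $\cY$ the space $V\times_\cY W$ is affine. Consider $\cX_V\times_\cX\cX_W=\cX\times_\cY(V\times_\cY W)$. Its map to $\cX_V\times\cX_W$ is a base change of $\Delta_\cX$, hence affine, so it is cohomologically affine over an affine target. Meanwhile $\cX\times_\cY(V\times_\cY W)\to V\times_\cY W$ is an adequate moduli space, by base change along the flat map $V\times_\cY W\to\cY$. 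Next, $\cX_V\times\cX_W$ is adequately affine over $V\times W$, because it is a product of adequate moduli spaces and adequately affine morphisms compose and are stable under base change. Therefore the composite $\cX\times_\cY(V\times_\cY W)\to V\times W$ is adequately affine, by Lemma~\ref{lem: cancellation properties}\eqref{lem: cancellation properties 1}. Applying Lemma~\ref{lem: cancellation properties}\eqref{lem: cancellation properties 3} with $\varphi$ the adequate moduli space $\cX\times_\cY(V\times_\cY W)\to V\times_\cY W$ shows that $V\times_\cY W\to V\times W$ is adequately affine. Serre's criterion \cite[Theorem 4.3.1]{AdequateModAlp14} then gives that $V\times_\cY W$ is affine.

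The main obstacle is the Zariski-local variant. There, $\cX$ only admits a cover by opens $\cX_i$ with affine diagonal, and these must be made saturated with respect to $\varphi$ so that they descend to opens $\cY_i$ with $\cX_i=\varphi^{-1}(\cY_i)$. I would try to deal with this by working directly with opens of $\cY$. For each point of $\cY$, first choose an open of $\cX$ with affine diagonal containing the fibre, then shrink it to a saturated neighbourhood using that $\varphi$ is universally closed and submersive, which gives $\varphi(\cX\setminus\cX_i)$ closed. Whether the whole fibre fits in one such open is the delicate point. If it fails, I would instead interpret "Zariski-locally" as being defined by such saturated covers.
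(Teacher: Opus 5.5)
Your global argument in Step 2 is essentially the paper's, pulled back along $V\times W\to\cY\times\cY$. The paper applies Lemma~\ref{lem: cancellation properties}\eqref{lem: cancellation properties 3} directly to $\cX\xrightarrow{\varphi}\cY\xrightarrow{\Delta_{\cY}}\cY\times\cY$ and then Serre's criterion.

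The genuine gap is the Zariski-local case. You leave it open and propose to settle it by reinterpreting ``Zariski-locally'' in terms of saturated covers. That changes the statement, since the hypothesis only gives a cover of $\cX$ by arbitrary opens with affine diagonal. The missing idea is that each fibre of $\varphi$ is controlled by a single closed point. By \cite[Theorem 5.3.1 (5)]{AdequateModAlp14}, the fibre over $y\in|\cY|$ has a unique closed point $x$. By \cite[Theorem 5.3.1 (4)]{AdequateModAlp14}, for every open $\cU\ni x$ one has $y\notin\varphi(\cX\setminus\cU)$, i.e.\ $\cU$ contains the whole fibre. This answers your ``delicate point'' affirmatively.

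The repair is then as follows. Take $\cU$ to be any open neighbourhood of $x$ with affine diagonal. Since $\varphi$ is closed, $\cY\setminus\varphi(\cX\setminus\cU)$ is an open neighbourhood of $y$. Its preimage is a saturated open contained in $\cU$, over which $\varphi$ is still an adequate moduli space. The global case now applies there.

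Two smaller points concern Step 1. First, the same saturation is needed for the Zariski-local version of ``$\varphi$ has affine diagonal''. For arbitrary opens $\cU_i\subset\cX$ with affine diagonal, your cancellation only shows that $\Delta_\varphi$ is affine over the opens $\cU_i\times_{\cY}\cU_i$. These do not cover $\cX\times_{\cY}\cX$ unless the $\cU_i$ are saturated, so the claim that this case is identical after restricting to opens is not right as stated.

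Second, you assert that $j$ is separated because it is a base change of $\Delta_{\cY}$. With the paper's conventions an algebraic stack need not have separated diagonal (e.g.\ $BG$ for a non-separated group scheme $G$), so this is not available a priori. It is easily fixed. Step 2 never actually uses Step 1: the remark that $\cX_V$ has affine diagonal plays no role afterwards. So prove first that $\Delta_{\cY}$ is affine; then $j$ is affine, hence separated, and your cancellation gives that $\Delta_\varphi$ is affine.
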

\begin{proof}
    Assume that $\cX$ has affine diagonal only Zariski-locally, and let $y$ be a geometric point of $\cY$. Let $x$ be the unique closed point in the fiber of $\varphi$ over $y$, see~\cite[Theorem 5.3.1 (5)]{AdequateModAlp14}, and $\cU\subset\cX$ an open neighborhood of $x$ that has affine diagonal. The image under $\varphi$ of $\cZ:=\cX\setminus\cU$ does not contain $y$ by~\cite[Theorem 5.3.1 (4)]{AdequateModAlp14}, hence $\varphi^{-1}(\cY\setminus\varphi(\cZ))$ is a saturated open neighborhood of $x$ contained in $\cU$. Therefore, we can assume that $\cX$ has affine diagonal. 
    By assumption the composite $\cX\rightarrow\cX\times\cX\rightarrow\cY\times\cY$ is adequately affine, hence also $\cY\rightarrow\cY\times\cY$ is by Lemma~\ref{lem: cancellation properties}~\eqref{lem: cancellation properties 3}. Then, $\cY\rightarrow\cY\times\cY$ is affine by the generalized Serre's criterion for affineness~\cite[Theorem 4.3.1]{AdequateModAlp14}. Clearly, $\varphi$ has affine diagonal as well.
\end{proof}

\begin{lemma}\label{lem: inheriting affine diagonal to mod space}
    Let $\varphi:\cX\rightarrow\cY$ be an adequate moduli space morphism between algebraic stacks admitting an adequate moduli space $\cXMOD\simeq\cYMOD$. If $\cX\rightarrow\cXMOD$ has affine diagonal, then $\cY\rightarrow\cYMOD$ has affine diagonal.
\end{lemma}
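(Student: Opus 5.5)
Let $\pi_{\cX}:\cX\to\cXMOD$ and $\pi_{\cY}:\cY\to\cYMOD$ be the moduli spaces. I will identify $\cXMOD$ with $\cYMOD$ and take $\pi_{\cX}=\pi_{\cY}\circ\varphi$. This is justified because $\pi_{\cY}\circ\varphi$ is an adequate moduli space (by Lemma~\ref{lem: cancellation properties}~\eqref{lem: cancellation properties 1}, together with $\pi_{\cY*}\varphi_*\cO_{\cX}\simeq\cO_{\cYMOD}$), and adequate moduli spaces are universal for maps to algebraic spaces. Write $M$ for this common space. The statement is then a relative version of Lemma~\ref{lem: inheriting affine diagonal}, with everything taking place over $M$.

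The plan is to run the proof of Lemma~\ref{lem: inheriting affine diagonal} over the base $M$. There are two diagonals to compare:
\[
\Delta_{\cX/M}:\cX\to\cX\times_M\cX \quad\text{and}\quad \Delta_{\cY/M}:\cY\to\cY\times_M\cY.
\]
The hypothesis says that $\Delta_{\cX/M}$ is affine.

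\textbf{Step 1.} Show that $\varphi\times_M\varphi:\cX\times_M\cX\to\cY\times_M\cY$ is adequately affine. It factors as
\[
\cX\times_M\cX\to\cY\times_M\cX\to\cY\times_M\cY.
\]
Each factor is a base change of $\varphi$. Adequately affine morphisms are stable under arbitrary base change by Definition~\ref{def: moduli space morphisms}, and they compose by Lemma~\ref{lem: cancellation properties}~\eqref{lem: cancellation properties 1}. So the product map is adequately affine.

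\textbf{Step 2.} The composite
\[
\cX\xrightarrow{\Delta_{\cX/M}}\cX\times_M\cX\xrightarrow{\varphi\times_M\varphi}\cY\times_M\cY
\]
is affine followed by adequately affine, hence adequately affine. This composite equals $\Delta_{\cY/M}\circ\varphi$.

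\textbf{Step 3.} Apply Lemma~\ref{lem: cancellation properties}~\eqref{lem: cancellation properties 3} to the pair $(\varphi,\Delta_{\cY/M})$. Its hypotheses hold: $\varphi$ is adequately affine, $\cO_{\cY}\simeq\varphi_*\cO_{\cX}$, and the composite is adequately affine by Step 2. The conclusion is that $\Delta_{\cY/M}$ is adequately affine.

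\textbf{Step 4.} Conclude that $\Delta_{\cY/M}$ is affine by the generalized Serre criterion~\cite[Theorem 4.3.1]{AdequateModAlp14}. That criterion requires $\Delta_{\cY/M}$ to be representable, quasi-compact and quasi-separated. Representability holds because it is a diagonal of a morphism of algebraic stacks.

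The main obstacle is quasi-compactness and quasi-separatedness of $\Delta_{\cY/M}$. These are needed both in Step 3, since Lemma~\ref{lem: cancellation properties} assumes qcqs morphisms, and in Step 4. Quasi-separatedness of $\Delta_{\cY/M}$ means that the double diagonal of $\pi_{\cY}$ is quasi-compact. My approach is to descend quasi-compactness along $\varphi$, in three parts.
\begin{enumerate}
\item $\varphi$ is surjective and universally closed by~\cite[Theorem 5.3.1]{AdequateModAlp14}.
\item The map $\cX\times_{\cY\times_M\cY}\cX\to\cX\times_M\cX$ is a base change of $\Delta_{\cY/M}$, and it factors through the relative inertia of $\varphi$.
\item Quasi-compactness can be checked after the surjective base change along $\varphi\times_M\varphi$. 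The base-changed map is $\cX\times_{\cY}\cX\to\cX\times_M\cX$, and I will show it is quasi-compact using that $\Delta_{\cX/M}$ is affine and that $\varphi$ is quasi-compact.
\end{enumerate}
If this descent becomes too delicate, the fallback is to reduce étale-locally on $M$ to the case where $M$ is affine. In that case the total spaces are quasi-compact, the qcqs issues are easier, and affineness of the diagonal can be checked fpqc-locally on $M$.
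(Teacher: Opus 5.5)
Your proposal is correct and is essentially the paper's proof: show that $\Delta_{\cY/\cXMOD}\circ\varphi=(\varphi\times_{\cXMOD}\varphi)\circ\Delta_{\cX/\cXMOD}$ is adequately affine, cancel $\varphi$ using Lemma~\ref{lem: cancellation properties}~\eqref{lem: cancellation properties 3}, and conclude with the generalized Serre criterion \cite[Theorem 4.3.1]{AdequateModAlp14}. The quasi-compactness and quasi-separatedness issue you flag as the main obstacle is in fact immediate, so your descent argument is unnecessary: $\pi_{\cY}$ is an adequate moduli space, hence quasi-separated by Definition~\ref{def: moduli space morphisms}, and under the Stacks project conventions this means exactly that $\Delta_{\cY/\cXMOD}$ is quasi-compact and quasi-separated.
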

\begin{proof}
    By assumption, the composite $\cX\rightarrow\cX\times_{\cXMOD}\cX\rightarrow\cY\times_{\cXMOD}\cY$ is adequately affine, hence also $\cY\rightarrow\cY\times_{\cXMOD}\cY$ is by Lemma~\ref{lem: cancellation properties}~\eqref{lem: cancellation properties 3}. Then, $\cY\rightarrow\cY\times_{\cXMOD}\cY$ is affine by~\cite[Theorem 4.3.1]{AdequateModAlp14}.
\end{proof}

\subsubsection{Surjectivity on Stabilizers along Moduli Space Morphisms}\label{subsec: surjection stabilizers}

In general, one thinks of adequate moduli space morphisms $\varphi:\cX\rightarrow\cY$ as operations that simplify the stabilizers of $\cX$. In particular, one would expect it to induce surjections between stabilizers. In general, this is false, see~\cite[\S4.5]{ComplexityFlatGroupoid_RRZ18}, and the intuition is fallacious, as we will see in~\S\ref{sec: counterexample adequate}.
However, when $\varphi$ is cohomologically affine or $\cY$ has smooth stabilizers, the behaviour is much nicer.

\begin{lemma}\label{lem: surjectivity stabilizers good and smooth stab case}
    Let $\varphi:\cX\rightarrow\cY$ an adequate moduli space morphism between quasi-separated algebraic stacks, and let $x\in|\cX|$ be a point that is closed in the fiber over its image $\varphi(x)=y\in|\cY|$. Suppose at least one of the following holds:
    \begin{enumerate}
        \item\label{lem: surjectivity stabilizers good and smooth stab case 1} $\varphi$ is a good moduli space morphism, or
        \item\label{lem: surjectivity stabilizers good and smooth stab case 2} $G_y$ is smooth.
    \end{enumerate}
    Then, $G_x\rightarrow G_y$ is surjective.
\end{lemma}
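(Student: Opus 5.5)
The plan is to reduce to a statement about a single geometric fiber and then compare the image of $G_x$ with $G_y$ through a homogeneous space. After passing to an algebraically closed field, fix a geometric point $\bar y:\Spec k\rightarrow\cY$ representing $y$. Form the base change $\cX_k:=\cX\times_{\cY}\Spec k$. By~\cite[Proposition 5.2.9]{AdequateModAlp14}, $\cX_k\rightarrow\Spec k$ factors as an adequate moduli space morphism $\cX_k\rightarrow\cW$ followed by an adequate homeomorphism $\cW\rightarrow\Spec k$. In the good case the base change is already a good moduli space morphism, so $\cW=\Spec k$. Let $H\subset G_y$ be the scheme-theoretic image of $G_x\rightarrow G_y$, a closed subgroup, and let $K$ be the kernel.

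\textbf{Step 1: identify the fiber.} Let $\cG_x\subset\cX$ be the reduced closed substack supported on $x$; we may take it closed after restricting to the fiber over $y$, since $x$ is closed in that fiber. We will describe $\cG_x\times_{\cY}\Spec k$ explicitly. This is the step where quasi-separatedness and the standard description of residual gerbes over an algebraically closed field (after a further field extension) are used. We expect it to be isomorphic to the quotient $[(G_y/H)/K]$, up to nilpotent structure, where $K$ acts trivially on $G_y/H$. Geometrically, the lifts of $x$ to the fiber over $\bar y$ form one $G_y$-orbit with stabilizer the image $H$. The composite closed immersion $\cG_x\times_{\cY}\Spec k\hookrightarrow\cX_k\rightarrow\Spec k$ is adequately affine (respectively cohomologically affine), since closed immersions are affine and we can apply Lemma~\ref{lem: cancellation properties}. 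Hence $G_y/H\rightarrow\Spec k$ is adequately affine, because it is obtained from the previous map by base change along $G_y/H\rightarrow[(G_y/H)/K]$ and composing appropriately. By the generalized Serre criterion~\cite[Theorem 4.3.1]{AdequateModAlp14}, $G_y/H$ is then an affine scheme.

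\textbf{Step 2: global sections.} Pushforward to $\cW$ of a surjection $\cO_{\cX_k}\rightarrow\cO_{\cG_x\times_{\cY}\Spec k}$ is universally adequate. In the good case it is even surjective, by exactness. Since $\Gamma(\cX_k,\cO)$ is $k$ (good case) or adequately homeomorphic to $k$ (adequate case), every element of $\Gamma(G_y/H,\cO)^K=\Gamma(G_y/H,\cO)$ has a power lying in $k$. In the good case the ring equals $k$ outright. Since $G_y/H$ is affine of finite type, it follows that $G_y/H$ is a single point topologically.

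\textbf{Step 3: conclude, the main obstacle.} In case~\eqref{lem: surjectivity stabilizers good and smooth stab case 1}, $\Gamma(G_y/H,\cO)=k$ with $G_y/H$ affine forces $G_y/H=\Spec k$, so $H=G_y$. In case~\eqref{lem: surjectivity stabilizers good and smooth stab case 2} we only know that $G_y/H$ is topologically a point, so $H$ contains the reduced identity component and meets every component of $G_y$. Since $G_y$ is smooth, hence reduced, the closed subgroup $H$ contains $(G_y)_{\mathrm{red}}=G_y$, and again $H=G_y$. In both cases surjectivity of $G_x\rightarrow H$ onto its scheme-theoretic image then gives the claim. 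The main difficulty is Step 1: making the identification of the fiber precise scheme-theoretically, including possible nonreducedness and field extensions, and checking that adequate affineness really passes to $G_y/H$. I would handle this by working with the residual gerbe $\cG_y\simeq BG_y$ and the induced morphism $BG_x\rightarrow BG_y$, whose fiber is $G_y/H\times BK$ in the appropriate sense.
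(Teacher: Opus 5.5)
Your proposal is essentially the paper's argument. You pass to residual gerbes and base change to a geometric point to reduce to $[G_{\bar y}/G_{\bar x}]\rightarrow\Spec k$. You then show the homogeneous space $G_{\bar y}/H$ is trivial, using $\Gamma=k$ in the good case, and ``every function has a power in $k$'' plus reducedness of $G_{\bar y}$ in the smooth case; your $|H|=|G_{\bar y}|$ argument is a fine substitute for the paper's reducedness of $G_{\bar y}/G_{\bar x}$.

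The one step to repair is Step 1. The fiber $[G_{\bar y}/G_{\bar x}]$ is in general only a $K$-gerbe over $G_{\bar y}/H$, not $G_{\bar y}/H\times BK$, so there need not be a section $G_{\bar y}/H\rightarrow[G_{\bar y}/G_{\bar x}]$ (and a section would only be affine if $K$ is). Instead, note that this gerbe is fppf-locally $BK\times T\rightarrow T$, and $BK\rightarrow\Spec k$ is adequately affine because $BK$ is the closed fiber over the identity coset. Hence the gerbe is an adequate moduli space morphism, and Lemma~\ref{lem: cancellation properties}\eqref{lem: cancellation properties 3} shows $G_{\bar y}/H\rightarrow\Spec k$ is adequately affine. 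Equivalently, $G_{\bar y}/H$ is the adequate moduli space of the fiber, which is how the paper concludes.
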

\begin{proof}
    Let $\cG_y$ be the reduced residual gerbe of $\cY$ at $y$ (\cite[Theorem B.2]{technical_Rydh11}). First, suppose we are in case~\eqref{lem: surjectivity stabilizers good and smooth stab case 1}. Then, the base change $\cX\times_{\cY}\cG_y\rightarrow\cG_y$ is a good moduli space morphism (\cite[Proposition 4.7(1)]{goodmoduli_Alp13}). As $\cG_y$ is reduced, the same holds after passing to the reduced structure of $\cX\times_{\cY}\cG_y$ (\cite[Theorem 4.16(8)]{goodmoduli_Alp13}). Let $\cG_x\hookrightarrow(\cX\times_{\cY}\cG_y)_{\mathrm{red}}$ be the residual gerbe at $x$, which is the reduced closed substack corresponding to $\{x\}\subset|\cX|$, by assumption. Since $\cG_x\rightarrow\cG_y$ is surjective, we get that $\cG_x\rightarrow\cG_y$ is a good moduli space morphism (\cite[Lemma 4.14]{goodmoduli_Alp13}). The same holds after pullback along $\Spec k(\overline{y})\rightarrow\cG_y$, that is, for the morphism $[G_{\overline{y}}/G_{\overline{x}}]\rightarrow\Spec k(\overline{y})$. Then, $G_{\overline{y}}/G_{\overline{x}}\simeq\Spec k(\overline{y})$, hence $G_x\rightarrow G_y$ is surjective.
    
    Now, assume we are in the adequate case~\eqref{lem: surjectivity stabilizers good and smooth stab case 2}. By~\cite[Proposition 5.2.9.(3), Lemma 5.2.11]{AdequateModAlp14}, the same argument shows that $\cG_x\rightarrow\cG_y$ is the composite of an adequate moduli space morphism and a universal homeomorphism. The same holds for the  base change $[G_{\overline{y}}/G_{\overline{x}}]\rightarrow\Spec k(\overline{y})$, by~\cite[Proposition 5.2.9.(3)]{AdequateModAlp14}. As $G_{\overline{y}}/G_{\overline{x}}\rightarrow\Spec k(\overline{y})$ is an adequate homeomorphism, it is affine, so we can write $G_{\overline{y}}/G_{\overline{x}}\simeq\Spec B$, for some $k(\overline{y})$-algebra $B$. By definition of adequate homeomorphism, the injective morphism $\phi:k(\overline{y})\hookrightarrow B$ is adequate, and an isomorphism if the characteristic of $k(\overline{y})$ is 0. Therefore, we can assume the characteristic $p$ of $k(\overline{y})$ to be positive. By~\cite[Lemma 3.2.3]{AdequateModAlp14}, for every $b\in B$ there is some $r>0$ and $a\in k(\overline{y})$ such that $\phi(a)=b^{p^r}$. Since $k(\overline{y})$ is algebraically closed, there exists $a_1\in k(\overline{y})$ with $a_1^{p^r}=a$, thus $(\phi(a_1)-b)^{p^r}=0$. As $G_y/G_x$ is reduced by assumption and~\cite[Proposition 5.4.1]{AdequateModAlp14}, $\phi(a_1)=b$ and $\phi$ is an isomorphism.
\end{proof}

\subsection{Fpqc Stacks}\label{subsec: fpqc stacks}

In this subsection, we briefly introduce the notion of an fpqc stack. Since this material is only used in \S\ref{sec: counterexample adequate}, the reader may safely skip it until then.

Let $\cP$ be a property of morphisms of algebraic spaces that is stable under base change. Recall that a representable morphism $f:\cX\rightarrow\cY$ of (not necessarily algebraic) stacks is said to satisfy $\cP$ if for all morphisms $U\rightarrow\cY$ from an algebraic space $U$ the pullback morphism $\cX\times_{\cY}U\rightarrow U$ has property $\cP$.

\begin{definition}\label{def: fpqc stacks}
    Let $\cY$ be a stack in the fpqc topology. We say that $\cY$ is an \emph{fpqc stack} if there exists an algebraic space $U$ and a representable fpqc morphism $\pi:U\rightarrow\cY$. We call $\pi$ an fpqc presentation.
    We say that an fpqc stack $\cY$ is \emph{quasi-algebraic} if the diagonal is representable by algebraic spaces.
\end{definition}

\begin{example}\label{exm: example fpqc gerbe}
    An example of a quasi-algebraic stack is the classifying stack of an fpqc group scheme over a base scheme $S$. It is algebraic if and only if it is also finitely presented over $S$, see~\cite[Tag 06PL]{Sta24}.
\end{example}

\begin{remark}
    Recall that if $\cY$ admits a representable fppf morphism from a scheme, then its diagonal is automatically representable, and indeed $\cY$ is an algebraic stack by Artin's Theorem~\cite[Theorem 6.1]{ArtinStacks_Art74} (see also~\cite[Tag 06DC]{Sta24}). In contrast, not all fpqc stacks are quasi-algebraic. Indeed, having an fpqc presentation only guarantees the diagonal to be representable by fpqc spaces, that is, fpqc stacks that are equivalent to a sheaf. Since fpqc spaces are not algebraic spaces in general, as shown in~\cite{stacks-blog}, the diagonal is not guaranteed to be representable.
\end{remark}

Fpqc stacks are much less common and used in practice than algebraic stacks. Nevertheless, they can naturally arise as limits of algebraic stacks. An important example is the Nori fundamental gerbe associated to a geometrically integral algebraic stack over a field, as defined by Borne and Vistoli~\cite{Norifundgerbefiberedcategory_BV12,fundamentalegerbes_BV19}; in general, this is not algebraic by Example~\ref{exm: example fpqc gerbe}.

\subsubsection{Properties of Morphisms of Fpqc Stacks}\label{subsec: finiteness properties}

Recall that given a morphism $\phi:\cX\rightarrow\cY$ of algebraic stacks, by definition, $\phi$ is of finite type (respectively, finite presentation) if and only if for all fppf presentations $Y\rightarrow\cY$ and $X\rightarrow\cX\times_{\cY}Y$, the morphism $X\rightarrow Y$ is of finite type (respectively, of finite presentation).
This is well-defined and independent of the choice of fppf presentations because being of finite type (respectively, of finite presentation) is fppf-local on the target and the source.

Although these properties are also fpqc-local on the target, locality on the source generally fails. Consequently, for fpqc stacks, one should talk about finiteness properties only for morphisms that are representable by algebraic stacks. This motivates the following natural definition.

\begin{definition}\label{def: properties of fpqc stacks}
    Let $\cX$ and $\cY$ be fpqc stacks, let $\phi:\cX\rightarrow\cY$ be a morphism representable by algebraic stacks. Let $\cP$ be a property of morphisms of algebraic stacks that is stable under base change along fpqc morphisms, and is fpqc-local on the base. We say that $\phi$ has property $\cP$ if for any fpqc presentation $\pi:U\rightarrow\cY$ by a scheme, the base change $\cX\times_{\cY}U\rightarrow U$ has property $\cP$.
\end{definition}

This definition is independent of the choice of $U$, which can also be taken to be an algebraic stack. Moreover, the extended property $\cP$ is again stable under base change along fpqc morphisms, and is fpqc local on the base. It is stable under composition and arbitrary base change if the same holds for $\cP$ when regarded as a property of morphisms between algebraic stacks.

Notice that flatness is fpqc-local on the source; thus, to define flat morphisms of fpqc stacks, there is no need to assume representability by algebraic stacks. However, we will not need to use this broader notion.

By the following lemma, Definition~\ref{def: properties of fpqc stacks} applies to any morphism from an algebraic stacks to a quasi-algebraic stack.

\begin{lemma}\label{lem: quasi-algebraicity and representability by algebraic stacks}
    Let $\cY$ be a quasi-algebraic stack and let $f:\cX\rightarrow\cY$ be a morphism from an algebraic stack. Then, $f$ is representable by algebraic stacks.
\end{lemma}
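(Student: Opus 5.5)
To show that $f$ is representable by algebraic stacks, I have to check one thing. For every morphism $T\rightarrow\cY$ from a scheme (or algebraic space) $T$, the fiber product $\cX\times_{\cY}T$ must be an algebraic stack. The plan is to express this fiber product through the diagonal of $\cY$, which is representable by algebraic spaces because $\cY$ is quasi-algebraic.

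I would use the standard identity
\[
\cX\times_{\cY}T\simeq(\cX\times T)\times_{\cY\times\cY}\cY,
\]
where $\cX\times T\rightarrow\cY\times\cY$ is $f\times g$, with $g:T\rightarrow\cY$ the given map, and $\cY\rightarrow\cY\times\cY$ is the diagonal $\Delta_{\cY}$. Here products are taken over the base scheme (or over $\Spec\Z$). Since $\Delta_{\cY}$ is representable by algebraic spaces, its base change
\[
\cW:=(\cX\times T)\times_{\cY\times\cY}\cY\longrightarrow\cX\times T
\]
is representable by algebraic spaces as well.

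The source $\cX\times T$ is an algebraic stack, being a product of an algebraic stack with an algebraic space. It then remains to show that a stack $\cW$ admitting a morphism to an algebraic stack that is representable by algebraic spaces is itself algebraic, which is the standard result \cite[Tag 05UM]{Sta24}. Concretely, I would choose a smooth surjective presentation $V\rightarrow\cX\times T$. The base change $\cW\times_{\cX\times T}V$ is then an algebraic space, and it gives a smooth surjective morphism to $\cW$ that is representable by algebraic spaces. Its diagonal is handled in the same way, by pulling back the diagonal of $\cX\times T$.

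I do not expect a genuine obstacle here. The only point needing care is the topology: $\cY$ is only an fpqc stack, so the fiber products must be formed in the fpqc (big) site. Their restrictions to the fppf site are what the algebraicity statement refers to, and this is harmless because algebraic stacks satisfy fpqc descent for the objects in question. The same reasoning shows $f$ itself is representable by algebraic stacks, which is exactly what is needed to apply Definition~\ref{def: properties of fpqc stacks} to morphisms from algebraic stacks to quasi-algebraic stacks.
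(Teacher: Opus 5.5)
Your argument is correct and is the same as the paper's: both write $\cX\times_{\cY}T$ (the paper tests against an arbitrary algebraic stack $\cZ$ instead of your $T$) as the base change of the diagonal $\Delta_{\cY}$ along $(f,g)\colon\cX\times T\to\cY\times\cY$. Both then conclude it is algebraic because it is representable by algebraic spaces over an algebraic stack, a step you make explicit via \cite[Tag 05UM]{Sta24}. Your closing remark on topology is unnecessary and does not need fpqc descent for algebraic stacks: the 2-fiber product is formed on fibered categories, and all inputs are already fppf stacks, since an fpqc stack is in particular an fppf stack.
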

\begin{proof}
    Let $g:\cZ\rightarrow\cY$ be another morphism from an algebraic stack. The cartesian square
    \[
    \begin{tikzcd}
        \cX\times_{\cY}\cZ\arrow[r]\arrow[d] & \cX\times\cZ\arrow[d,"(f{,}g)"]\\
        \cY\arrow[r,"\Delta"] & \cY\times\cY
    \end{tikzcd}
    \]
    shows that $\cX\times_{\cY}\cZ$ is an algebraic stack.
\end{proof}

\begin{example}\label{exm: moduli for fpqc stacks}
    Recall that being a (good or adequate) moduli space morphism is a property fpqc-local on the target and stable under fpqc base change (see~\cite[Proposition 5.2.9]{AdequateModAlp14}). In particular, Definition~\ref{def: properties of fpqc stacks} extends the notion of good and adequate moduli space morphisms to morphisms of fpqc stacks that are representable by algebraic stacks.
\end{example}

\section{The Category $\cC_{\cX}^{\cP}$ and the Stabilization Property}\label{sec: positive results}

In this section we introduce and study the category $\cC_{\cX}^{\cP}$ of adequate moduli space morphisms with source $\cX$ and satisfying a property $\cP$, and we answer Questions~\ref{question: intro existence maximum},~\ref{question: intro stabilization}.
In~\S\ref{subsec: basics on category CX} we prove the first properties of $\cC_{\cX}^{\cP}$. In~\S\ref{subsec: good case} we specialize the discussion to the case of good moduli space morphism, and prove Theorem~\ref{thm: intro characterization CXDM}. The case of adequate moduli space morphisms with Deligne-Mumford target is considered in~\S\ref{subsec: DM case}. Finally, in~\S\ref{subsec: generalizations} we prove a more general form of Proposition~\ref{thm: intro stabilization property}.

\subsection{The Category of Moduli Morphisms}\label{subsec: basics on category CX}
    The main objects of study in this paper are good and adequate moduli space morphisms, which naturally form a 2-category, as in~\cite[Appendix A]{RydhAppendixATW20}.

    \begin{definition}\label{def: category mod space morphisms}
        Let $\cX$ be an algebraic stack. We denote by $\cC_{\cX}$ the 2-category such that:
        \begin{itemize}
            \item the objects are adequate moduli space morphisms $\varphi:\cX\rightarrow\cY$,
            \item the 1-morphisms between two moduli space morphisms $\varphi_1:\cX\rightarrow\cY_1$ and $\varphi_2:\cX\rightarrow\cY_2$ consist of a morphism $\phi:\cY_1\rightarrow\cY_2$ and a 2-morphism $\alpha:\phi\circ\varphi_1\xrightarrow{\sim}\varphi_2$,
            \item a 2-morphism between two 1-morphisms $(\phi,\alpha)$ and $(\phi',\alpha')$ as above is a 2-morphism $\gamma:\phi\rightarrow\phi'$ such that $\alpha'\circ\gamma=\alpha$.
        \end{itemize}

        Let $\cP$ be a property of algebraic stacks or morphisms between them. We denote by $\cC_{\cX}^{\cP}$ the full subcategory of $\cC_{\cX}$ of objects $\varphi:\cX\rightarrow\cY$ for which $\cY$ and $\varphi$ satisfy $\cP$.
    \end{definition}

    \begin{example}\label{exm: morphism properties}
        When $\cX$ has finite inertia and $\cP$ is the property $\DM$ of being a Deligne-Mumford stack, the category $\cC_{\cX}^{\DM}$ has been studied by David Rydh in~\cite[Appendix A]{RydhAppendixATW20}.
        The main properties of morphisms that we will consider are $\cP^{\mathrm{ca}}$ of being cohomologically affine, and $\cP^{\mathrm{fp}}$ of being of finite presentation. When $\cX$ has finite inertia, $\cC_{\cX}^{\cP^{\mathrm{ca}}}$ has also been studied in loc.\! cit.
    \end{example}

    It is well-known that the categorical structure of $\cC_{\cX}$ is simpler than a 2-category.

    \begin{lemma}\label{lem: poset structure}
        Let $\cX$ be an algebraic stack and $\cP$ a property of algebraic stacks. Then, $\cC_{\cX}^{\cP}$ is equivalent to a partially-ordered set (poset).
    \end{lemma}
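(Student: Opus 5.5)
To show that $\cC_{\cX}^{\cP}$ is equivalent to a poset, it suffices to prove two things for any two objects $\varphi_1:\cX\rightarrow\cY_1$ and $\varphi_2:\cX\rightarrow\cY_2$. First, the groupoid of $1$-morphisms from $(\cY_1,\varphi_1)$ to $(\cY_2,\varphi_2)$ is either empty or contractible. Second, once each such hom-groupoid is equivalent to a point or to the empty set, the equivalence classes of objects form a preorder whose associated partial order gives the desired poset. Since $\cC_{\cX}^{\cP}$ is a full subcategory of $\cC_{\cX}$, we can work in $\cC_{\cX}$ throughout, and the property $\cP$ plays no role.

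The key input is the universal property of adequate moduli space morphisms relative to arbitrary targets: precomposition with $\varphi_1$,
\[
\varphi_1^*:\mathrm{Hom}(\cY_1,\cZ)\longrightarrow\mathrm{Hom}(\cX,\cZ),
\]
should be fully faithful for every algebraic stack $\cZ$. I would deduce this from $\cO_{\cY_1}\simeq\varphi_{1*}\cO_{\cX}$ together with the fact that $\varphi_1$ is universally submersive and surjective (\cite[Theorem 5.3.1]{AdequateModAlp14}). Concretely, for $g,h:\cY_1\rightarrow\cZ$ the sheaf $\mathrm{Isom}(g,h)$ is represented by $I:=\cY_1\times_{(g,h),\cZ\times\cZ,\Delta}\cZ\rightarrow\cY_1$, which is representable. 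A $2$-morphism $g\circ\varphi_1\Rightarrow h\circ\varphi_1$ is then a section of $I\times_{\cY_1}\cX\rightarrow\cX$, and the task is to show it descends uniquely to a section of $I\rightarrow\cY_1$.

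For uniqueness, any two sections of $I\rightarrow\cY_1$ that agree after pullback to $\cX$ must agree. I would reduce, by flat base change, to $\cY_1$ an affine scheme $\Spec A$, where $\varphi_1$ is an adequate moduli space and is universal for maps to algebraic spaces; the equalizer of the two sections is then a monomorphism through which $\cX$ factors, hence is all of $\cY_1$. For existence, the section over $\cX$ is a morphism $\cX\rightarrow I$ over $\cY_1$, with $I$ an algebraic space over $\cY_1$. After flat base change to an affine $\Spec A\rightarrow\cY_1$, this becomes a morphism from the adequate moduli space source to an algebraic space, which factors uniquely through $\Spec A$ by \cite[Theorem 6.3.3]{AdequateModAlp14}. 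The local factorizations glue by uniqueness and by fpqc descent, since adequate moduli space morphisms are stable under flat base change.

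With full faithfulness in hand, a $1$-morphism $(\phi,\alpha)$ from $(\cY_1,\varphi_1)$ to $(\cY_2,\varphi_2)$ is an object of the fiber of $\varphi_1^*$ over $\varphi_2$. Any two such, $(\phi,\alpha)$ and $(\phi',\alpha')$, admit a unique $\gamma:\phi\Rightarrow\phi'$ with $\varphi_1^*\gamma=\alpha'^{-1}\circ\alpha$, which is exactly the condition $\alpha'\circ\gamma=\alpha$. So the hom-groupoid is empty or contractible. Antisymmetry up to equivalence is formal: if there are $1$-morphisms in both directions, their composites are $1$-morphisms to the identity objects, and by contractibility they are $2$-isomorphic to the identities, so the two objects are equivalent.

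I expect the main obstacle to be the existence half of descending the $2$-morphism: $\cY_1$ is only a stack, so the universal property for algebraic-space targets must be applied after base change, and one must check that the relative isom space $I$ is an algebraic space over $\cY_1$ so that this base change is legitimate.
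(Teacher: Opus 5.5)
Your proposal is correct and follows the paper's route: the paper simply cites \cite[Lemma 7.23]{LocalStructureStacks_AHR25} for the full faithfulness of precomposition with an adequate moduli space morphism, which you instead derive by descending sections of the Isom space, and then it draws the same formal conclusions (contractible hom-groupoids, and morphisms in both directions being equivalences). One small correction: the universal property for maps to arbitrary algebraic spaces that your descent step needs is \cite[Theorem 3.12]{LocalStructureStacks_AHR25}, the result the paper uses throughout, not \cite[Theorem 6.3.3]{AdequateModAlp14}, which is the finite-type statement.
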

    \begin{proof}
        Clearly, we can assume $\cP$ to be the trivial property. By~\cite[Lemma 7.23]{LocalStructureStacks_AHR25}, every pair of morphisms in $\cC_{\cX}$ with same source and target are 2-isomorphic via a unique 2-isomorphism. This immediately implies that $\cC_{\cX}$ is equivalent to the 1-category whose objects are isomorphism classes of objects, and arrows are isomorphism classes of 1-morphisms. Moreover, the same lemma implies that two morphisms in opposite directions between two objects in $\cC_{\cX}$ are isomorphisms, and the statement follows.
\end{proof}

    We will sometimes identify $\cC_{\cX}^{\cP}$ with its associated poset. When $\cX$ admits an adequate moduli space, we can rewrite $\cC_{\cX}^{\cP}$ as follows.

    \begin{lemma}
        Let $\pi_{\cX}:\cX\rightarrow\cXMOD$ be an adequate moduli space. Then, the 2-category $\cC_{\cX}$ is equivalent to the 2-category whose objects are factorizations $\cX\xrightarrow{\varphi}\cY\xrightarrow{\psi}\cXMOD$ of $\pi_{\cX}$, 1-morphisms are arrows as in $\cC_{\cX}$ that are compatible with $\psi$, and 2-arrows are the same as in $\cC_{\cX}$. Moreover, $\psi$ is always an adequate moduli space.
    \end{lemma}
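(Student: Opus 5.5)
The plan is to construct a functor $F$ from $\cC_{\cX}$ to the 2-category of factorizations, check that it is an equivalence (fully faithful and essentially surjective), and then show separately that $\psi$ is always an adequate moduli space. The inverse direction is simply forgetting $\psi$; it is clearly compatible with 1- and 2-morphisms, since those are defined in the same way in both categories.

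\emph{Construction of $F$.} Let $\varphi:\cX\rightarrow\cY$ be an object of $\cC_{\cX}$.
\begin{enumerate}
    \item First I would produce $\psi$. Adequate moduli spaces are universal for maps to algebraic spaces (\cite[Theorem 6.3.1]{AdequateModAlp14}), and $\cXMOD$ is an algebraic space. So if $\cY$ admits an adequate moduli space $\cY\rightarrow\cYMOD$, the composite $\cX\rightarrow\cY\rightarrow\cYMOD$ is an adequate moduli space by Lemma~\ref{lem: cancellation properties}~\eqref{lem: cancellation properties 1}, together with $\psi_*\varphi_*\cO_{\cX}\simeq\cO$. Hence $\cYMOD\simeq\cXMOD$, and $\psi$ is the moduli map of $\cY$.
    \item This relies on $\cY$ having an adequate moduli space, which is the main obstacle. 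I would argue directly that $\pi_{\cX}$ descends along $\varphi$. The morphism $\varphi$ is adequately affine with $\varphi_*\cO_{\cX}=\cO_{\cY}$, and it is surjective and universally submersive by \cite[Theorem 5.3.1]{AdequateModAlp14}. Thus $\pi_{\cX}$ is constant on the fibers of $\varphi$ in the sense of the groupoid $\cX\times_{\cY}\cX\rightrightarrows\cX$.
    \item To make this precise, take a smooth cover $V\rightarrow\cY$ by a scheme. The base change $\cX_V\rightarrow V$ is an adequate moduli space, and $\pi_{\cX}$ restricted to $\cX_V$ factors uniquely through $V$ by universality of $\cX_V\rightarrow V$ among maps to algebraic spaces. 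Applying uniqueness again on $V\times_{\cY}V$ (using $\cX_{V\times_{\cY}V}\rightarrow V\times_{\cY}V$) shows that the induced maps $V\rightarrow\cXMOD$ agree on $V\times_{\cY}V$. Hence they descend to $\psi:\cY\rightarrow\cXMOD$, and $\psi\circ\varphi\simeq\pi_{\cX}$.
\end{enumerate}

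\emph{$\psi$ is an adequate moduli space.} This follows from Lemma~\ref{lem: cancellation properties}~\eqref{lem: cancellation properties 3}. We know that $\varphi$ and $\psi\circ\varphi=\pi_{\cX}$ are adequately affine and that $\cO_{\cY}\simeq\varphi_*\cO_{\cX}$. The lemma then gives that $\psi$ is adequately affine and that $\psi_*\cO_{\cY}\simeq\psi_*\varphi_*\cO_{\cX}\simeq\cO_{\cXMOD}$. Quasi-compactness and quasi-separatedness of $\psi$ should follow from those of $\pi_{\cX}$ together with the surjectivity of $\varphi$.

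\emph{Fully faithful.} Let $\phi:\cY_1\rightarrow\cY_2$ be a 1-morphism over $\cX$. Then $\psi_2\circ\phi$ and $\psi_1$ are two factorizations of $\pi_{\cX}$ through $\cY_1$. Since $\cXMOD$ is an algebraic space, the universality of $\cX\rightarrow\cY_1\rightarrow\cXMOD$ (equivalently, the uniqueness in the descent step above) gives $\psi_2\circ\phi=\psi_1$ uniquely, and the 2-morphisms agree automatically. So the compatibility with $\psi$ is a condition that always holds rather than extra data, which gives full faithfulness. Essential surjectivity is clear, because every factorization has first arrow in $\cC_{\cX}$.
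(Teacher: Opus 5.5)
Your proposal is correct and follows the paper's route: the unique $\psi$ with $\psi\circ\varphi=\pi_{\cX}$ comes from the universality of adequate moduli spaces for maps to algebraic spaces, which also makes compatibility with $\psi$ automatic for 1-morphisms, and $\psi$ is an adequate moduli space by Lemma~\ref{lem: cancellation properties}\eqref{lem: cancellation properties 3}. The only difference is that you derive the relative universality from the absolute one by descent along a smooth cover $V\rightarrow\cY$, whereas the paper quotes it directly from \cite[Theorem 3.12, Lemma 7.23]{LocalStructureStacks_AHR25}; that is also the reference to cite for the absolute statement without Noetherian hypotheses, rather than Alper's theorem.
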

    \begin{proof}
        This follows immediately from the fact that for every object $\varphi:\cX\rightarrow\cY$ of $\cX$ there exists a unique morphism $\psi:\cY\rightarrow\cXMOD$ such that $\psi\circ\varphi=\pi_{\cX}$, by~\cite[Theorem 3.12, Lemma 7.23]{LocalStructureStacks_AHR25}. In particular, all 1-morphisms of $\cC_{\cX}$ are 1-morphisms in the new category.
        The fact that $\psi$ is an adequate moduli space follows from Lemma~\ref{lem: cancellation properties}\eqref{lem: cancellation properties 3}.
    \end{proof}

    Now, we introduce a class of properties $\cP$ of stacks for which $\cC_{\cX}^{\cP}$ is well-behaved.

    \begin{definition}\label{def: modular property}
        Let $\cX$ be an algebraic stack admitting an adequate moduli space $\pi_{\cX}:\cX\rightarrow\cXMOD$.
        A property of stacks $\cP$ is said to be \emph{modular} (resp.\ \emph{strongly modular}) with respect to $\cX$ if it satisfies:
        \begin{itemize}
            \item\label{condition: moduli} $\cXMOD$ has $\cP$;
            \item\label{condition: affine morphism} if $\cZ$ has $\cP$ and $\cY\rightarrow\cZ$ is affine (resp.\ representable), then $\cY$ has $\cP$;
            \item\label{condition: refinement} if $\cY_1$, $\cY_2$ have property $\cP$ and adequate moduli space $\cXMOD$, then $\cY_1\times_{\cXMOD}\cY_2$ has $\cP$.
        \end{itemize}
    \end{definition}

    \begin{example}\label{exm: modular properties}
    Consider the following properties of algebraic stacks:
    \begin{enumerate}
        \item\label{property: quotients} the property $\cP_{\mathrm{gq}}$ (resp.\ $\UDM$) of being a global quotient (resp.\ uniformizable);
        \item\label{property: stabilizers} the property $\DM$ of being a Deligne-Mumford stack, and $\cP_{\mathrm{mult}}$ (resp.\ $\cP_{\mathrm{ab}}$) of having stabilizers of multiplicative type (resp.\ abelian stabilizers);
        \item\label{property: finite inertia} the property $\cI_{\mathrm{fin}}$ of having finite inertia over its base;
        \item\label{property: moduli map} the property $\cP_{\mathrm{mod}}$ (resp.\ $\cP_{\mathrm{good}}$) of having an adequate (resp.\ good) moduli space;
        \item\label{property: moduli map with affine diagonal} the property $\cP_{\mathrm{adm}}$ of having an adequate moduli space whose structure morphism has affine diagonal;
        \item\label{property: diagonals} the property of having a diagonal that is quasi-separated ($\Delta_{\mathrm{qs}}$), separated ($\Delta_{\mathrm{sep}}$), affine ($\Delta_{\mathrm{aff}}$), or Zariski-locally affine ($\Delta_{\mathrm{aff}}^{\mathrm{loc}}$).
    \end{enumerate}
    
    The properties in~\eqref{property: quotients} and~\eqref{property: stabilizers} are strongly modular, the properties in~\eqref{property: finite inertia},~\eqref{property: moduli map} and~\eqref{property: moduli map with affine diagonal} are modular, and the properties in~\eqref{property: diagonals} are modular provided $\cXMOD$ satisfies $\cP$, which is always true for $\Delta_{\mathrm{qs}}$ and $\Delta_{\mathrm{sep}}$. Notice that $\Delta_{\mathrm{sep}}$ is equivalent to the identity section of the inertia being a closed immersion, which in turn is equivalent to having separated inertia. In particular, $\Delta_{\mathrm{sep}}$ is a property local on $\cX$, and is implied by $\Delta_{\mathrm{aff}}^{\mathrm{loc}}$. Also, recall that $\cP_{\mathrm{mod}}\cup\Delta_{\mathrm{aff}}^{\mathrm{loc}}$ implies $\cP_{\mathrm{adm}}$, by Lemma~\ref{lem: inheriting affine diagonal}, and $\cP_{\mathrm{adm}}$ implies $\Delta_{\mathrm{sep}}$.
    \end{example}

    \begin{example}\label{exm: non-modular properties}
        Let $\cP_{\mathrm{sm}}$ be the property of having smooth stabilizers. Then, $\cP_{\mathrm{sm}}$ is not modular with respect to $\cX$. The reason is that subgroups of smooth algebraic groups can be non-smooth, e.g. $\mu_p\subset\Gm$ over a field of characteristic $p$.
    \end{example}

    By definition, modular properties are preserved under taking refinements, in the following sense. 

    \begin{lemma}\label{lem: refinement property of CX}
        Let $\cX$ be an algebraic stack admitting an adequate moduli space $\pi_{\cX}:\cX\rightarrow\cXMOD$, and $\cP$ a property modular with respect to $\cX$. Then:
        \begin{enumerate}
            \item $\cXMOD$ is the final object of $\cC_{\cX}^{\cP}$;
            \item suppose $\cP$ is modular and implies $\cP_{\mathrm{adm}}$.
            Then $\cC_{\cX}^{\cP}$ is filtered. Explicitly, given objects $\cX\xrightarrow{\varphi_i}\cY_i\xrightarrow{\psi_i}\cXMOD$ of $\cC_{\cX}^{\cP}$ for $i=1,2$, there exists an object $\cY_{1,2}$ of $\cC_{\cX}^{\cP}$ fitting into a diagram
            \[
            \begin{tikzcd}[column sep=large,row sep=small]
            	& & \cY_1\arrow[rd,"\psi_1"]\\
            	\cX\arrow[rru,bend left=20,"\varphi_1"]\arrow[rrd,bend right=20,"\varphi_2"']\arrow[r,"\varphi_{1,2}"] & \cY_{1,2}\arrow[ru,"\theta_1"]\arrow[rd,"\theta_2"']\arrow[rr,"\psi_{1,2}"] & & \cXMOD\\
            	& & \cY_2\arrow[ru,"\psi_2"']
            \end{tikzcd}
            \]
            in $\cC_{\cX}^{\cP}$. Moreover, we can choose $\cY_{1,2}$ so that the induced morphism $\cY_{1,2}\rightarrow\cY_1\times_{\cXMOD}\cY_2$ is affine.
        \end{enumerate}
    \end{lemma}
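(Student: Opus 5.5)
Part (1) comes straight from the definitions. By the first condition of Definition~\ref{def: modular property}, $\cXMOD$ has $\cP$, and $\pi_{\cX}$ is an adequate moduli space morphism, so $\pi_{\cX}$ is an object of $\cC_{\cX}^{\cP}$. For any object $\varphi:\cX\rightarrow\cY$, the lemma preceding Definition~\ref{def: modular property} gives a unique $\psi:\cY\rightarrow\cXMOD$ with $\psi\circ\varphi\simeq\pi_{\cX}$, and it is unique up to unique $2$-isomorphism by \cite[Lemma 7.23]{LocalStructureStacks_AHR25}. This is exactly finality in the poset of Lemma~\ref{lem: poset structure}.

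For part (2) I would use an ``image'' construction. Put $\cW:=\cY_1\times_{\cXMOD}\cY_2$. By the third modular condition, $\cW$ has $\cP$. Let $f=(\varphi_1,\varphi_2):\cX\rightarrow\cW$. The aim is to factor $f$ as an adequate moduli space morphism followed by an affine morphism. Since $\cP$ implies $\cP_{\mathrm{adm}}$, each $\psi_i$ has affine diagonal, so the projection $\cW\rightarrow\cY_1$, being a base change of $\psi_2$, has affine diagonal. Since $\varphi_1$ is adequately affine, Lemma~\ref{lem: cancellation properties}\eqref{lem: cancellation properties 2} applied to $\cX\xrightarrow{f}\cW\rightarrow\cY_1$ shows that $f$ is adequately affine. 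It is also quasi-compact and quasi-separated: it is a section-like morphism over $\cY_1$ composed with maps of affine diagonal, so quasi-separatedness follows, and quasi-compactness follows from $\varphi_1$ being quasi-compact and $\cW\rightarrow\cY_1$ being quasi-separated.

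Now set $\cY_{1,2}:=\mathrm{Spec}_{\cW}(f_*\cO_{\cX})$, with $\varphi_{1,2}:\cX\rightarrow\cY_{1,2}$ the canonical map and $\cY_{1,2}\rightarrow\cW$ affine. The affine map $\cY_{1,2}\rightarrow\cW$ has affine diagonal, so Lemma~\ref{lem: cancellation properties}\eqref{lem: cancellation properties 2} makes $\varphi_{1,2}$ adequately affine. By construction $(\varphi_{1,2})_*\cO_{\cX}=\cO_{\cY_{1,2}}$, because pushforward to a relative Spec over $\cW$ is computed by $f_*$. Hence $\varphi_{1,2}$ is an adequate moduli space morphism. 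By the second modular condition, $\cY_{1,2}$ has $\cP$, since it is affine over $\cW$. Composing with the projections gives $\theta_i$, and $\theta_i\circ\varphi_{1,2}\simeq\varphi_i$, so $\theta_i$ are $1$-morphisms in $\cC_{\cX}^{\cP}$. The composite $\psi_{1,2}$ is then forced by part (1). The morphism $\cY_{1,2}\rightarrow\cY_1\times_{\cXMOD}\cY_2$ is affine by construction.

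The main obstacle is the adequate affineness of $f$, specifically checking that $\cW\rightarrow\cY_1$ has affine diagonal. This is where the hypothesis $\cP_{\mathrm{adm}}$ is essential, as Remark~\ref{rmk: necessity of affine diagonal} warns. The point to verify is that the relative diagonal of $\cY_2\rightarrow\cXMOD$ is affine, which is precisely the content of $\cP_{\mathrm{adm}}$, and that affine diagonal is preserved under base change. Filteredness then follows, since any two objects admit a common refinement and the category is a poset.
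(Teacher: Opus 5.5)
Your proposal is correct and follows essentially the same route as the paper. You form $\cY_1\times_{\cXMOD}\cY_2$, use Lemma~\ref{lem: cancellation properties}\eqref{lem: cancellation properties 2} (with the affine diagonal supplied by $\cP_{\mathrm{adm}}$) to get that the induced map from $\cX$ is adequately affine, take the relative Spec of its pushforward of $\cO_{\cX}$, and invoke the modular conditions to land in $\cC_{\cX}^{\cP}$. You simply make explicit details the paper leaves implicit, namely the quasi-compactness and quasi-separatedness checks and why $(\varphi_{1,2})_*\cO_{\cX}\simeq\cO_{\cY_{1,2}}$.
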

    \begin{proof}
        The first part follows from the universal property of moduli spaces. For the second, set $\cY':=\cY_1\times_{\cXMOD}\cY_2$ with an induced morphism $\varphi':\cX\rightarrow\cY'$. By Lemma~\ref{lem: cancellation properties}\eqref{lem: cancellation properties 2}, $\varphi'$ is adequately affine, hence $\varphi_{1,2}:\cX\rightarrow\Spec\!_{\cY'}(\varphi'_*\cO_{\cX})=:\cY_{1,2}$ is an adequate moduli space morphism. By the definition of modular properties, the object constructed is in $\cC_{\cX}^{\cP}$.
    \end{proof}

    The fact that $\cC_{\cX}^{\cP}$ is filtered and equivalent to a partially ordered set, implies that there is at most one maximal element, which would then be the maximum. It is an interesting question whether this maximum exists, and if it satisfies some universal property. This motivates Questions~\ref{question: intro universal objects} and~\ref{question: intro existence maximum}.

    For instance, when $\cP=\DM$, the maximum will turn out to be the initial morphism to Deligne-Mumford stacks, representing an interesting intermediate step that retains more information on the stacky structure of $\cX$ than passing to $\cXMOD$. This and other cases are studied in \S\ref{subsec: examples and consequences}.
    
    The standard way to prove that a maximal object exists is to show that any chain of objects is dominated by another object of $\cC_{\cX}^{\cP}$, and then apply Zorn's lemma. This leads us to Question~\ref{question: intro stabilization}, that asks under which assumptions any ascending chain $\cC_{\cX}^{\cP}$ admits an upper bound, or even stabilizes. In the latter case---that is, if all morphisms in any ascending chain are isomorphisms except for a finite number of them---we say that $\cC_{\cX}^{\cP}$ satisfies the stabilization property. This property is easier to study but harder to achieve.
    
    For instance, when $\cX$ is not quasi-compact, it is hard to expect the stabilization property to hold. Nevertheless, the upper bound could still be achieved by restricting to quasi-compact open subsets, prove the stabilization property, and then glue along open immersions. A priori, the object so obtained may not have the property $\cP$, so we will need to ask $\cP$ to satisfy the following condition.

    \begin{definition}\label{def: semilocal property}
        A property of stacks $\cP$ is \emph{semi-local} if for every algebraic stack $\cX$ and every directed system of quasi-compact open substacks $\{\cU_{\lambda}\}$ of $\cX$ such that $\cX=\colim \cU_{\lambda}$, we have that $\cX$ satisfies $\cP$ if and only if $\cU_{\lambda}$ does for every $\lambda$. 
    \end{definition}

    \begin{example}\label{exm: semi-local}
        The property $\DM$ of being a Deligne-Mumford stack is clearly semi-local. The property $\UDM$ of being uniformizable is not semi-local.
    \end{example}

    \begin{lemma}\label{lem: passing to quasicompact case principle}
        Let $\cX$ be an algebraic stack and $\{\cU_{\lambda}\}$ a directed system of quasi-compact open substacks of $\cX$ such that $\cX=\colim\cU_{\lambda}$. Let $\cP$ be a semi-local property, and suppose given a chain of objects $\cY_{\theta}$ in $\cC_{\cX}^{\cP}$ whose restriction to $\cU_{\lambda}$ stabilizes in $\cC_{\cU_{\lambda}}^{\cP}$ for every $\lambda$. Then, the chain $\{\cY_{\theta}\}$ admits an upper bound in $\cC_{\cX}^{\cP}$.
    \end{lemma}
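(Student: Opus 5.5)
For each $\lambda$, the plan is to restrict the chain to $\cU_\lambda$, pick a stable value there, and glue these stable values along the open immersions $\cU_\lambda\subset\cU_\mu$ into a stack $\cY$ receiving a map from $\cX$. The resulting $\cX\to\cY$ should dominate every $\cY_\theta$.

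The first step is to set up the restrictions. Each $\varphi_\theta:\cX\to\cY_\theta$ is an adequate moduli space morphism. Saturated opens correspond to opens of the target, and in particular $\cU_\lambda$ itself may not be saturated. So I would work with the restriction to the open $\cV_{\theta,\lambda}\subset\cY_\theta$ that is the image of $\cU_\lambda$, in the sense used by the hypothesis that ``the restriction of the chain to $\cU_\lambda$'' is a chain in $\cC_{\cU_\lambda}^{\cP}$. Since adequate moduli space morphisms are stable under flat base change, restricting along the open $\cV_{\theta,\lambda}$ gives an object of $\cC_{\cU_\lambda}$. I would read the hypothesis as providing these objects with $\cP$ and their stabilization. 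Let $\theta(\lambda)$ be an index beyond which the chain over $\cU_\lambda$ consists of isomorphisms, and set $\cW_\lambda:=\cV_{\theta(\lambda),\lambda}$, which is well defined up to unique isomorphism by Lemma~\ref{lem: poset structure}.

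Next, for $\lambda\le\mu$, I take $\theta\ge\theta(\lambda),\theta(\mu)$. Then $\cW_\lambda\simeq\cV_{\theta,\lambda}$ is an open substack of $\cV_{\theta,\mu}\simeq\cW_\mu$. The identifications are unique up to unique 2-isomorphism, by Lemma~\ref{lem: poset structure}, applied via \cite[Lemma 7.23]{LocalStructureStacks_AHR25}. Hence the open immersions $\cW_\lambda\hookrightarrow\cW_\mu$ are canonical, and they satisfy the cocycle conditions strictly up to the unique 2-cells. I then define $\cY:=\colim\cW_\lambda$, a directed union of open substacks, which is an algebraic stack. The morphisms $\cU_\lambda\to\cW_\lambda$ glue to $\varphi:\cX=\colim\cU_\lambda\to\cY$.

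I then check that $\varphi$ lies in $\cC^{\cP}_{\cX}$. Being an adequate moduli space morphism is local on the target, and the $\cW_\lambda$ cover $\cY$. Over $\cW_\lambda$ the preimage should be exactly the saturated preimage realizing $\cW_\lambda$, so the restriction is an adequate moduli space morphism. For $\cP$, semi-locality applies because each $\cW_\lambda$ has $\cP$ and is quasi-compact (it is the image of a quasi-compact $\cU_\lambda$). For the upper bound, I fix $\theta$ and build maps $\cW_\lambda\to\cV_{\theta,\lambda}\subset\cY_\theta$. These come from the chain maps $\cY_{\theta'}\to\cY_\theta$ for $\theta'\ge\theta(\lambda),\theta$, and they are compatible in $\lambda$ again by uniqueness of 2-morphisms. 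They glue to $\cY\to\cY_\theta$ over $\cX$, and compatibility with the chain follows in the same way.

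I expect the main obstacle to be the bookkeeping with saturated versus non-saturated opens. One has to make precise that $\cW_\lambda\subset\cW_\mu$ is genuinely an open immersion and that $\varphi^{-1}(\cW_\lambda)$ is the right saturated open. I would handle this using \cite[Theorem 5.3.1]{AdequateModAlp14}: images of closed sets are closed, and saturated opens correspond to opens downstairs. Compatibility of gluing data in the 2-category is not an issue, since all 2-morphisms are unique.
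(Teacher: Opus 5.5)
Your architecture is the paper's. You take the stable value of the chain over each $\cU_\lambda$, use uniqueness of 2-morphisms \cite[Lemma 7.23]{LocalStructureStacks_AHR25} to get a directed system of open immersions, glue, apply semi-locality, and read off domination from the colimit. The gap is in the one step where you go beyond the paper: your handling of a non-saturated $\cU_\lambda$. You define $\cV_{\theta,\lambda}$ as the open image of $\cU_\lambda$ in $\cY_\theta$. But the image of a non-saturated quasi-compact open under an adequate, or even good, moduli space morphism need not be open. Take $\cX=[\Spec k[t,x]/(tx)\,/\,\Gm]$, with $\Gm$ acting on $x$ with weight $1$ and trivially on $t$. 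Its good moduli space is $\A^1_t$. The open $\cU=\{x\neq 0\}$ is isomorphic to $\Spec k$, and its image is the closed point $t=0$. Universal closedness \cite[Theorem 5.3.1]{AdequateModAlp14} only says that images of closed sets are closed, so $\varphi_\theta(\cU)$ is open when $\cX\setminus\cU$ is saturated, i.e.\ when $\cU$ is. Even when the image happens to be open, $\varphi_\theta^{-1}(\varphi_\theta(\cU_\lambda))$ is the saturation of $\cU_\lambda$. This can be strictly larger than $\cU_\lambda$ and depends on $\theta$. Restricting over $\cV_{\theta,\lambda}$ therefore gives an object of $\cC$ of the saturation, not of $\cC_{\cU_\lambda}$, and your later identity $\varphi^{-1}(\cW_\lambda)=\cU_\lambda$ fails. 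Another reading of ``restriction'' does not help either. In the example, the natural candidate, the moduli space $\Spec k$ of $\cU$, maps to $\A^1_t$ by a closed immersion, so the stable values would not form a system of open immersions.

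The fix is to build saturation into the setup. This is the tacit reading under which the paper's phrases ``restriction to $\cU_\lambda$'' and ``directed system of open immersions'' make sense: each $\cU_\lambda$ should be $\varphi_\theta$-saturated for every $\theta$. In the applications such systems can always be chosen. Pass to a tail of the chain starting at some $\theta_0$; it has the same upper bounds. Then take $\cU_\lambda:=\varphi_{\theta_0}^{-1}(V_\lambda)$, with $V_\lambda$ ranging over the quasi-compact opens of $\cY_{\theta_0}$. These are quasi-compact because $\varphi_{\theta_0}$ is. They are $\varphi_\theta$-saturated for all $\theta\geq\theta_0$ because $\varphi_{\theta_0}$ factors through $\varphi_\theta$. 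Alternatively, use preimages of quasi-compact opens of $\cXMOD$ when it exists. With saturated $\cU_\lambda$, the image $\varphi_\theta(\cU_\lambda)$ is open with preimage exactly $\cU_\lambda$, and the restriction is a base change to an open, hence an object of $\cC_{\cU_\lambda}$. The rest of your argument then goes through unchanged and coincides with the paper's proof.
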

    \begin{proof}
        For every $\lambda$, let $\widetilde{\cY}^{\lambda}$ be the maximum of the restricted chain $\{\cY_{\theta}|_{\cU_{\lambda}}\}$. By construction, $\widetilde{\cY}^{\mu}|_{\cU_{\lambda}}\simeq\widetilde{\cY}^{\lambda}$ whenever $\mu\geq\lambda$, where the isomorphism is unique up to unique 2-isomorphism by~\cite[Lemma 7.23]{LocalStructureStacks_AHR25}. This gives a directed system of open immersions $\widetilde{\cY}^{\lambda}\hookrightarrow\widetilde{\cY}^{\mu}$, whose colimit is an algebraic stack $\widetilde{\cY}$ receiving an adequate moduli space morphism from $\cX$. As $\cP$ is semi-local, $\widetilde{\cY}$ is in $\cC_{\cX}^{\cP}$. Finally, $\widetilde{\cY}$ dominates the chain $\{\cY_{\theta}\}$ by construction and the definition of colimit.
    \end{proof}
    \begin{remark}\label{rmk: semi-localizing a property}
        For every property $\cP$ preserved by open immersions, we can define a semi-local property $\widetilde{\cP}$ by stating that $\cX$ has $\widetilde{\cP}$ if and only if all its quasi-compact open substacks satisfy $\cP$. Clearly, when $\cX$ is quasi-compact, this simply reduces to $\cP$.
    \end{remark}

    We conclude the subsection by noting that the association $\cX\mapsto\cC_{\cX}$ is (contravariantly) functorial for stacks with adequate moduli spaces. Indeed, let $f:\cZ\rightarrow\cX$ be a morphism of algebraic stacks with adequate moduli spaces, and $\varphi:\cX\rightarrow\cY$ an object of $\cC_{\cX}^{\cP}$. Then, we get a commutative diagram with cartesian square
    \[
    \begin{tikzcd}
        \cZ\arrow[r,"\cC_{f}(\varphi)"]\arrow[dr,"f"'] & \cC_f(\cY)\arrow[r] & \cY'\arrow[d]\arrow[r] & \cZMOD\arrow[d]\\
        & \cX\arrow[r,"\varphi"] & \cY\arrow[r] & \cXMOD
    \end{tikzcd}
    \]
    where $\cC_f(\cY)=\Spec\!_{\cY'}(\varphi'_*\cO_{\cZ})$, with $\varphi':\cZ\rightarrow\cY'$. If $\cP$ implies $\cP_{\mathrm{adm}}$, then $\cC_f(\varphi)$ is an adequate moduli space morphism by Lemma~\ref{lem: cancellation properties}\eqref{lem: cancellation properties 2}.
    Moreover, any 1-morphism in $\cC_{\cX}^{\cP}$ induces one between the associated objects in $\cC_{\cZ}^{\cP}$, and similarly for 2-morphisms, proving the following lemma.

    \begin{lemma}\label{lem: functoriality CX}
        Let $f:\cZ\rightarrow\cX$ be a morphism of algebraic stacks with adequate moduli spaces, and $\cP$ a strongly modular property that implies $\cP_{\mathrm{adm}}$.
        The above construction defines a functor $\cC_f^{\cP}:\cC_{\cX}^{\cP}\rightarrow\cC_{\cZ}^{\cP}$. Moreover, it is compatible with composition of morphisms of algebraic stacks.
    \end{lemma}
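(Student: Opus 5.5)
We have to check three things: that $\cC_f(\varphi)$ lands in $\cC_{\cZ}^{\cP}$, that 1- and 2-morphisms of $\cC_{\cX}^{\cP}$ induce 1- and 2-morphisms of $\cC_{\cZ}^{\cP}$ compatibly with composition, and that the construction is compatible with composition $\cW\xrightarrow{g}\cZ\xrightarrow{f}\cX$.

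\emph{Objects.} Let $\varphi:\cX\rightarrow\cY$ be an object of $\cC_{\cX}^{\cP}$ and set $\cY'=\cY\times_{\cXMOD}\cZMOD$. Since $\cP$ implies $\cP_{\mathrm{adm}}$, the morphism $\cY\rightarrow\cXMOD$ has affine diagonal, and hence so does its base change $\cY'\rightarrow\cZMOD$. The composite $\cZ\xrightarrow{\varphi'}\cY'\rightarrow\cZMOD$ is $\pi_{\cZ}$, which is adequately affine. Lemma~\ref{lem: cancellation properties}\eqref{lem: cancellation properties 2} therefore shows that $\varphi'$ is adequately affine. It follows that $\cZ\rightarrow\Spec\!_{\cY'}(\varphi'_*\cO_{\cZ})=\cC_f(\cY)$ is adequately affine with $\cO$ pushing forward to $\cO$, so it is an adequate moduli space morphism.

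For property $\cP$, the morphism $\cC_f(\cY)\rightarrow\cY'$ is affine and $\cY'\rightarrow\cY$ is a base change of $\cZMOD\rightarrow\cXMOD$, a morphism of algebraic spaces. Hence $\cY'\rightarrow\cY$ is representable, and so is the composite $\cC_f(\cY)\rightarrow\cY$. Since $\cY$ has $\cP$ and $\cP$ is strongly modular, $\cC_f(\cY)$ has $\cP$.

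\emph{Morphisms.} Let $(\phi,\alpha):\varphi_1\rightarrow\varphi_2$ be a 1-morphism. It induces $\phi':\cY_1'\rightarrow\cY_2'$ over $\cZMOD$ together with a 2-isomorphism $\phi'\circ\varphi_1'\simeq\varphi_2'$. This gives a map of $\cO_{\cY_2'}$-algebras $\varphi'_{2*}\cO_{\cZ}\rightarrow\phi'_*\varphi'_{1*}\cO_{\cZ}$, hence a morphism of relative spectra $\cC_f(\cY_1)\rightarrow\cC_f(\cY_2)$ compatible with the maps from $\cZ$. The same works for 2-morphisms, by functoriality of $\Spec$ and of pushforward. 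Functoriality and compatibility with composition then come essentially for free: $\cC_{\cZ}^{\cP}$ is equivalent to a poset by Lemma~\ref{lem: poset structure}, and any two 1-morphisms between the same objects are uniquely 2-isomorphic by \cite[Lemma 7.23]{LocalStructureStacks_AHR25}. So it is enough that the image of each arrow exists, and identities and composites are preserved up to unique 2-isomorphism.

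\emph{Compatibility with composition.} For $\cW\xrightarrow{g}\cZ\xrightarrow{f}\cX$, we compare $\cC_g(\cC_f(\cY))$ with $\cC_{f\circ g}(\cY)$, and the main work is producing a canonical map between them. The object $\cC_g(\cC_f(\cY))$ is affine over $\cC_f(\cY)\times_{\cZMOD}\cWMOD$. That stack is affine over $\cY\times_{\cXMOD}\cWMOD$, because $\cC_f(\cY)\rightarrow\cY'$ is affine and base change preserves affineness. Thus both $\cC_g(\cC_f(\cY))$ and $\cC_{f\circ g}(\cY)$ are relative spectra over $\cY\times_{\cXMOD}\cWMOD$ of the pushforward of $\cO_{\cW}$: the first is obtained by pushing forward in two steps, which by composition of pushforwards gives the same algebra. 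So they are canonically isomorphic as objects of $\cC_{\cW}^{\cP}$. The 2-categorical coherences are automatic because of the uniqueness of 2-isomorphisms in the poset $\cC_{\cW}$.

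The one step that needs care is this identification of iterated relative spectra, specifically checking that the pushforward of $\cO_{\cW}$ along the affine morphism is compatible. The rest is formal.
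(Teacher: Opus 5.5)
Your proposal is correct and follows the paper's argument. You use the same relative spectrum over $\cY\times_{\cXMOD}\cZMOD$, get adequacy of $\cC_f(\varphi)$ from Lemma~\ref{lem: cancellation properties}(2) via the affine diagonal supplied by $\cP_{\mathrm{adm}}$, and apply strong modularity to the representable composite $\cC_f(\cY)\rightarrow\cY$, with functoriality formal from the poset structure. You go slightly beyond the paper in two places: you spell out why $\cC_f(\cY)$ has $\cP$, and you check compatibility with composition by identifying the iterated relative spectra over $\cY\times_{\cXMOD}\cWMOD$, whereas the paper only asserts both.
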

    
\subsection{Case of Good Moduli Morphism}\label{subsec: good case}
    In this subsection we study $\cC_{\cX}^{\cP}$ when $\cP$ implies $\cP^{\mathrm{ca}}$, that is we restrict $\cC_{\cX}$ to objects $\varphi:\cX\rightarrow\cY$ that are good moduli space morphisms. When $\cX$ admits a good moduli space and $\cP=\Delta_{\mathrm{sep}}\cup\DM$, we identify $\cC_{\cX}^{\cP}$ with the set of open and closed subgroups of $\cI_{\cX}$. See Examples~\ref{exm: morphism properties}, \ref{exm: modular properties} for the definition of the various properties.
    As a simple consequence, we give a first answer to Question~\ref{question: intro stabilization}. This will be generalized in~\S\ref{subsec: generalizations}.

    The following result is a direct consequence of~\cite[Theorem 7.22]{LocalStructureStacks_AHR25}.

    \begin{lemma}\label{lem: inclusion CX for good moduli spaces}
        Let $\cX$ be an algebraic stack, and let $\cP$ be a property that implies $\Delta_{\mathrm{qs}}$, $\cP^{\mathrm{ca}}$ and $\cP^{\mathrm{fp}}$. Then, $\cC_{\cX}^{\cP}$ is equivalent to a partially ordered subset of the set of subgroups of the inertia $\cI_{\cX}$, by sending an object $\varphi:\cX\rightarrow\cY$ of $\cC_{\cX}^{\cP}$ to the relative inertia $\cI_{\cX/\cY}\subset\cI_{\cX}$. Moreover, the corresponding subgroups are closed (resp.\ open) if $\cP$ implies $\Delta_{\mathrm{sep}}$ (resp.\ if $\cP$ implies $\DM$).
    \end{lemma}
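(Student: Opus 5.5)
The plan is to reduce to \cite[Theorem 7.22]{LocalStructureStacks_AHR25}. That result says, roughly, that a good moduli space morphism $\varphi:\cX\rightarrow\cY$ of finite presentation, with $\cY$ having quasi-separated diagonal, is determined up to unique isomorphism by its relative inertia $\cI_{\cX/\cY}\subset\cI_{\cX}$. More precisely, given two such morphisms $\varphi_i:\cX\rightarrow\cY_i$, a morphism $\cY_1\rightarrow\cY_2$ under $\cX$ exists if and only if $\cI_{\cX/\cY_1}\subset\cI_{\cX/\cY_2}$.

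First, I would check that the hypotheses match. Since $\cP$ implies $\cP^{\mathrm{ca}}$, every object $\varphi$ of $\cC_{\cX}^{\cP}$ is cohomologically affine with $\varphi_*\cO_{\cX}\simeq\cO_{\cY}$, so it is a good moduli space morphism. Finite presentation comes from $\cP^{\mathrm{fp}}$, and quasi-separated diagonal from $\Delta_{\mathrm{qs}}$.

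Next, I would define the map $\varphi\mapsto\cI_{\cX/\cY}=\ker(\cI_{\cX}\rightarrow\varphi^*\cI_{\cY})$. This is well defined on isomorphism classes, because isomorphic objects have equal relative inertia as subgroups of $\cI_{\cX}$. It is order-preserving: if $\varphi_2$ factors through $\varphi_1$, then $\cI_{\cX/\cY_1}\subset\cI_{\cX/\cY_2}$. By Lemma~\ref{lem: poset structure}, $\cC_{\cX}^{\cP}$ is a poset. The converse direction of \cite[Theorem 7.22]{LocalStructureStacks_AHR25} (containment of relative inertia gives a factorization) then shows the map is an order-embedding, hence injective. This identifies $\cC_{\cX}^{\cP}$ with a partially ordered subset of the set of subgroups of $\cI_{\cX}$.

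For the last claims, $\cI_{\cX/\cY}$ is the pullback of the identity section $e:\cY\rightarrow\cI_{\cY}$ along $\cI_{\cX}\rightarrow\cI_{\cY}\times_{\cY}\cX$.
\begin{itemize}
\item If $\cP$ implies $\Delta_{\mathrm{sep}}$, then $\cI_{\cY}\rightarrow\cY$ is separated, so $e$ is a closed immersion and $\cI_{\cX/\cY}$ is closed.
\item If $\cP$ implies $\DM$, then $\cI_{\cY}\rightarrow\cY$ is unramified, so $e$ is an open immersion and $\cI_{\cX/\cY}$ is open.
\end{itemize}
Both properties are preserved by base change.

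The main obstacle I expect is matching the precise formulation of \cite[Theorem 7.22]{LocalStructureStacks_AHR25}: which finiteness and diagonal hypotheses it needs, and whether it gives factorization from containment of relative inertia or only uniqueness. If it only gives uniqueness, injectivity still follows. For the order-embedding, I would then argue separately. Given $\cI_{\cX/\cY_1}\subset\cI_{\cX/\cY_2}$, the relative inertia of $\cX\rightarrow\cY_1\times_{\cXMOD}\cY_2$ (or its relative spectrum, as in Lemma~\ref{lem: refinement property of CX}) equals $\cI_{\cX/\cY_1}$. Uniqueness then identifies this object with $\cY_1$, and projecting to $\cY_2$ yields the required morphism.
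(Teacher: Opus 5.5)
Your proposal is correct and is essentially the paper's argument. Both apply \cite[Theorem 7.22]{LocalStructureStacks_AHR25} to the good, finitely presented morphism $\varphi_1$ with target $\cY_2$ of quasi-separated diagonal; that theorem is a genuine factorization criterion in terms of relative inertia, so it also gives your order-reflecting direction. Both then get closedness, resp.\ openness, from the identity section of $\cI_{\cY}$ being a closed, resp.\ open, immersion. Your fallback through $\cY_1\times_{\cXMOD}\cY_2$ is therefore unnecessary, which is fortunate: it would need $\cXMOD$ to exist and $\cP$ to be stable under that refinement, and the lemma assumes neither.
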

    \begin{proof}
        The assumptions allow us to apply~\cite[Theorem 7.22]{LocalStructureStacks_AHR25}, which implies that the map of posets $(\varphi:\cX\rightarrow\cY)\mapsto\cI_{\cX/\cY}$ is injective. The last part of the statement follows from the fact that the zero section of the inertia is closed if the inertia is separated over its base, and open if the stack is Deligne-Mumford.
    \end{proof}

    This is already enough to answer Question~\ref{question: intro stabilization} for good moduli space morphisms.

    \begin{proposition}\label{prop: stabilization good case}
        Let $\cX$ be an algebraic stack of finite presentation over a quasi-separated and locally-Noetherian algebraic space $S$.
        Let $\cP$ be a property that implies $\Delta_{\mathrm{sep}}$ and $\cP^{\mathrm{ca}}$. Assume further that $S$ is Noetherian or $\cP$ is semi-local.
        Then, every ascending chain in $\cC_{\cX}^{\cP}$ admits an upper bound, and if $S$ is Noetherian, then every ascending chain in $\cC_{\cX}^{\cP}$ stabilizes. In particular, $\cC_{\cX}^{\cP}$ is either empty or admits a maximal element.
    \end{proposition}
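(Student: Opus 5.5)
The plan is to use Lemma~\ref{lem: inclusion CX for good moduli spaces} to turn an ascending chain in $\cC_{\cX}^{\cP}$ into an ascending chain of closed subgroups of $\cI_{\cX}$, and then use Noetherianity. There is one wrinkle: the Lemma needs $\cP^{\mathrm{fp}}$, which is not among the hypotheses. So the first step is to show that every object $\varphi:\cX\rightarrow\cY$ of $\cC_{\cX}^{\cP}$ is automatically of finite presentation when $\cX$ is of finite presentation over a locally Noetherian $S$. Since $\varphi$ is a good moduli space morphism, I would do this locally on $S$ and argue as follows. The composite $\cY\rightarrow S$ factors through a good moduli space of $\cX$ over $S$, and such a space is of finite type over $S$ by Alper's finiteness results for good moduli spaces over Noetherian bases. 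Then $\cY$ is of finite type over $S$ by the analogous relative statement for good moduli space morphisms, using that $\varphi$ is surjective and that $\varphi_*$ preserves coherence. Because $S$ is locally Noetherian, finite type gives finite presentation. Together with $\Delta_{\mathrm{sep}}$ (which implies $\Delta_{\mathrm{qs}}$), Lemma~\ref{lem: inclusion CX for good moduli spaces} then applies. It identifies $\cC_{\cX}^{\cP}$ order-reversingly (or order-preservingly, depending on convention) with a poset of \emph{closed} subgroups $\cI_{\cX/\cY}\subset\cI_{\cX}$. A chain $\cY_1\leftarrow\cY_2\leftarrow\cdots$ that ascends towards $\cX$ therefore gives a descending chain of closed substacks $\cI_{\cX/\cY_1}\supset\cI_{\cX/\cY_2}\supset\cdots$.

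Next, suppose $S$ is Noetherian. Then $\cX$ is Noetherian, and $\cI_{\cX}\rightarrow\cX$ is of finite type, so $\cI_{\cX}$ is a Noetherian algebraic stack. A descending chain of closed substacks of a Noetherian stack stabilizes. To see this, pull back to a smooth atlas $\Spec A\rightarrow\cI_{\cX}$ and use that ideals of a Noetherian ring satisfy ACC. Injectivity in Lemma~\ref{lem: inclusion CX for good moduli spaces} then forces the objects $\cY_n$ to be isomorphic from some point on, and that is the stabilization. Chains indexed by arbitrary ordinals reduce to this case, since a well-ordered strictly monotone chain in a Noetherian poset is finite.

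If instead $\cP$ is semi-local, I would cover $\cX$ by quasi-compact opens $\cU_\lambda$ lying over Noetherian opens of $S$, forming a directed system with $\cX=\colim\cU_\lambda$. One first has to check that the restriction of each $\cY_\theta$ to $\cU_\lambda$ lies in $\cC_{\cU_\lambda}^{\cP}$. I would restrict over saturated opens pulled back from Noetherian opens $S_\lambda\subset S$, so each $\cU_\lambda=\cX\times_S S_\lambda$ is saturated and the restrictions are open substacks of $\cY_\theta$, hence still have $\cP$ by semi-locality. The previous paragraph gives stabilization on each $\cU_\lambda$, and Lemma~\ref{lem: passing to quasicompact case principle} produces an upper bound. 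The final claim then follows from Zorn's lemma applied to the poset of Lemma~\ref{lem: poset structure}.

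I expect the main obstacle to be the finite presentation step, since it is not among the hypotheses but is needed to invoke \cite[Theorem 7.22]{LocalStructureStacks_AHR25}. If the finite-type argument for $\cY$ turns out to be delicate, the fallback is to reduce to the case $\cY\rightarrow\cXMOD$ and use that good moduli spaces of finite type stacks over Noetherian bases are of finite type.
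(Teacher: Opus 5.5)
Your proof follows the paper's route. You get finite presentation of every object from Alper's finiteness theorem. You then use Lemma~\ref{lem: inclusion CX for good moduli spaces} to turn the chain into a descending chain of closed subgroups of the Noetherian stack $\cI_{\cX}$. The semi-local case goes through Lemma~\ref{lem: passing to quasicompact case principle}, and Zorn's lemma finishes. Your choice $\cU_\lambda=\cX\times_S S_\lambda$ is a useful addition. It makes explicit two points the paper's one-line reduction leaves implicit: that the $\cU_\lambda$ are saturated, and that the restricted objects lie in $\cC_{\cU_\lambda}^{\cP}$.

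One step needs repair. The proposition does not assume that $\cX$ admits a good moduli space. So you cannot factor $\cY\rightarrow S$ through one, and your fallback via $\cXMOD$ is unavailable for the same reason. Instead, apply \cite[Theorem 6.3.3]{AdequateModAlp14} to $\varphi$ itself, as the paper does, smooth-locally on $\cY$:
\begin{itemize}
\item For a smooth morphism $\Spec B\rightarrow\cY$, the stack $\cX\times_{\cY}\Spec B$ is quasi-compact and quasi-separated because $\varphi$ is.
\item It is locally of finite presentation over $S$, being smooth over $\cX$.
\item Its good moduli space is $\Spec B$, so the theorem gives that $\Spec B$ is of finite type over $S$.
\end{itemize}
Hence $\cY\rightarrow S$ is locally of finite type, and it is quasi-compact because $\varphi$ is surjective. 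Then $\varphi$ is locally of finite presentation, and it is quasi-compact and quasi-separated by definition, so it is of finite presentation. With this substitution your argument is complete.
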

    \begin{proof}
        By~\cite[Theorem 6.3.3]{AdequateModAlp14}, for every object $\varphi:\cX\rightarrow\cY$ of $\cC_{\cX}^{\cP}$, the induced morphism $\cY\rightarrow S$ is of finite type, hence $\varphi$ is of finite presentation. By Lemma~\ref{lem: passing to quasicompact case principle}, we can assume that $S$ is Noetherian. By Lemma~\ref{lem: inclusion CX for good moduli spaces}, $\cC_{\cX}^{\cP}$ is equivalent to a partially ordered subset of the set of closed subgroups of the inertia. As $\cI_{\cX}\rightarrow\cX$ is of finite type, $\cI_{\cX}$ is also Noetherian, and the stabilization property follows. The existence of a maximal element follows by Zorn's lemma.
    \end{proof}

    \begin{corollary}\label{cor: existence maximum good moduli space case}
        Let $\cX$ be an algebraic stack of finite presentation over a quasi-separated locally-Noetherian algebraic space $S$, and admitting a good moduli space $\pi_{\cX}:\cX\rightarrow\cXMOD$. Let $\cP$ be a modular property that implies $\cP_{\mathrm{adm}}$.
        Suppose that $S$ is Noetherian or $\cP$ is semi-local. Then, $\cC_{\cX}^{\cP}$ admits a maximum.
    \end{corollary}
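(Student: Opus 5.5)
The plan is to apply Proposition~\ref{prop: stabilization good case} to the full subcategory $\cC_{\cX}^{\cP\cup\cP^{\mathrm{ca}}}$ and then use the filtered structure of Lemma~\ref{lem: refinement property of CX} to upgrade a maximal element to a maximum.

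\textbf{Step 1: every object is good.} Let $\varphi:\cX\rightarrow\cY$ be an object of $\cC_{\cX}^{\cP}$. Since $\cP$ implies $\cP_{\mathrm{adm}}$, the stack $\cY$ admits an adequate moduli space, which is $\cXMOD$ by the universal property, and $\psi:\cY\rightarrow\cXMOD$ has affine diagonal. The composite $\psi\circ\varphi=\pi_{\cX}$ is cohomologically affine. So Lemma~\ref{lem: cancellation properties}\eqref{lem: cancellation properties 2} shows that $\varphi$ is cohomologically affine, hence a good moduli space morphism. It follows that $\cC_{\cX}^{\cP}=\cC_{\cX}^{\cP\cup\cP^{\mathrm{ca}}}$.

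\textbf{Step 2: existence of a maximal element.} As noted in Example~\ref{exm: modular properties}, $\cP_{\mathrm{adm}}$ implies $\Delta_{\mathrm{sep}}$. Therefore $\cP':=\cP\cup\cP^{\mathrm{ca}}$ implies both $\Delta_{\mathrm{sep}}$ and $\cP^{\mathrm{ca}}$. The remaining hypotheses of Proposition~\ref{prop: stabilization good case} hold as given:
\begin{itemize}
\item $\cX$ is of finite presentation over $S$;
\item $S$ is Noetherian, or $\cP$ is semi-local.
\end{itemize}
In the semi-local case I must check that $\cP'$ is also semi-local. Cohomological affineness of $\varphi$ is local on the target, and the targets $\varphi(\cU_\lambda)$ form the relevant open cover. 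Alternatively, Step~1 applied on each $\cU_\lambda$ makes the property $\cP^{\mathrm{ca}}$ automatic, so $\cP'$ coincides with $\cP$ on these categories.

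The category is nonempty, since $\cXMOD$ lies in it by Lemma~\ref{lem: refinement property of CX}(1). Hence Proposition~\ref{prop: stabilization good case} (via Zorn's lemma) gives a maximal element $\cY_{\max}$.

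\textbf{Step 3: uniqueness.} By Lemma~\ref{lem: refinement property of CX}(2), $\cC_{\cX}^{\cP}$ is filtered, and by Lemma~\ref{lem: poset structure} it is a poset. Take any object $\cY$. There is an object $\cY_{1,2}$ dominating both $\cY$ and $\cY_{\max}$. Maximality forces $\cY_{1,2}\to\cY_{\max}$ to be an isomorphism, so $\cY_{\max}\geq\cY$. Thus $\cY_{\max}$ is the maximum.

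The main obstacle is Step~1: Proposition~\ref{prop: stabilization good case} requires $\cP^{\mathrm{ca}}$, and this must be derived from the affine-diagonal hypothesis built into $\cP_{\mathrm{adm}}$ through the cancellation lemma. A smaller point is the semi-local bookkeeping in Step~2, where one must confirm that adding $\cP^{\mathrm{ca}}$ does not destroy semi-locality.
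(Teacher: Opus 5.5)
Your proposal is correct and follows the paper's own proof: Lemma~\ref{lem: cancellation properties}\eqref{lem: cancellation properties 2}, applied with the affine diagonal of $\cY\rightarrow\cXMOD$ coming from $\cP_{\mathrm{adm}}$, gives $\cC_{\cX}^{\cP}=\cC_{\cX}^{\cP\cup\cP^{\mathrm{ca}}}$; Proposition~\ref{prop: stabilization good case} then yields a maximal element, and the filteredness in Lemma~\ref{lem: refinement property of CX} upgrades it to a maximum. Your extra checks are details the paper leaves implicit: non-emptiness via $\cXMOD$, and, in the semi-local case, that the glued upper bound is still good because cohomological affineness is local on the target.
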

    \begin{proof}
        Since $\cP$ implies $\cP_{\mathrm{adm}}$ or $\Delta_{\mathrm{aff}}$, and $\pi_{\cX}$ is a good moduli space morphism, by Lemma~\ref{lem: cancellation properties} we have $\cC_{\cX}^{\cP}=\cC_{\cX}^{\cP\cup\cP^{\mathrm{ca}}}$. Therefore, we can apply Proposition~\ref{prop: stabilization good case} to get a maximal element. Finally, by Lemma~\ref{lem: refinement property of CX} the maximal element is unique and it is a maximum.
    \end{proof}

    We will consider applications of Corollary~\ref{cor: existence maximum good moduli space case} in~\S\ref{subsec: examples and consequences}, in particular proving the existence of initial morphisms to stacks with finite inertia.

    \subsubsection{Characterization of $\cC_{\cX}^{\DM}$}
    In the remaining part of this subsection we answer the following question in an important case: is the association $(\cX\rightarrow\cY)\mapsto(\cI_{\cX/\cY}\subset\cI_{\cX})$ bijective? This is known to be true when $\cX$ has finite inertia and $\cP=\DM\cup\Delta_{\mathrm{sep}}$, see~\cite[Theorem A.1.3]{RydhAppendixATW20}. We prove Theorem~\ref{thm: intro characterization CXDM}, which answers the above question in the case where $\cX$ admits a good moduli space and $\cP=\DM$. The strategy of the proof was suggested by David Rydh.
    The first step is to generalize~\cite[Proposition A.1.5]{RydhAppendixATW20} to stacks admitting good moduli spaces.

    \begin{proposition}\label{prop: generalization Rydh Prop A15}
        Let $\cX$ be an algebraic stack with affine diagonal, admitting a good moduli space $\pi_{\cX}:\cX\rightarrow\cXMOD$ locally of finite presentation.
        Let $\cN\subset\cI_{\cX}$ be an open and closed subgroup of the inertia. Then there is an affine, étale and surjective morphism $h:\cW\rightarrow\cX$ such that $\cI_{\cW}=h^*\cN$ as subgroups of $h^*\cI_{\cX}$.
    \end{proposition}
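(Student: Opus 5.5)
The plan is to follow the strategy of \cite[Proposition A.1.5]{RydhAppendixATW20}, replacing the use of finite inertia by the local structure theorem for stacks with good moduli spaces \cite{LocalStructureStacks_AHR25}. The problem is étale local on $\cXMOD$, because $\pi_{\cX}$ commutes with arbitrary base change and the inertia commutes with flat base change. Moreover, an affine étale surjection onto $\cX$ can be assembled from affine étale morphisms onto the members of an étale cover of $\cXMOD$. Concretely, suppose $X'\rightarrow\cXMOD$ is an étale cover by a disjoint union of affines, and set $\cX'=\cX\times_{\cXMOD}X'$. If $\cW\rightarrow\cX'$ is affine étale surjective with $\cI_{\cW}$ equal to the pullback of $\cN$, then the composite $\cW\rightarrow\cX$ is étale surjective. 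It is affine once we arrange $X'\rightarrow\cXMOD$ affine, or pass to a Zariski cover and take disjoint unions. Hence we may assume $\cXMOD=\Spec A$ affine and work étale locally around each closed point $x\in|\cX|$.

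Fix a closed point $x$ with linearly reductive stabilizer $G_x$. Since $\cX$ has affine diagonal and $\pi_{\cX}$ is locally of finite presentation, the local structure theorem \cite{LocalStructureStacks_AHR25} gives, after an étale base change on $\cXMOD$, an identification $\cX\simeq[\Spec B/G_x]$ near $x$. Here $G_x$ is possibly replaced by an embedded linearly reductive group scheme over the base, and $x$ is a fixed point. The fibre $\cN_x\subset G_x$ is an open and closed subgroup, so it has finite index and contains $G_x^0$. Put $H=\cN_x$ and consider
\[
h:\cW=[\Spec B/H]\rightarrow[\Spec B/G_x],
\]
which is finite étale and affine, because $G_x/H$ is finite étale. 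The stabilizer of $\cW$ at the preimage of $x$ is $H=\cN_x$, so $\cI_{\cW}$ and $h^*\cN$ agree at that point. Both are open and closed subgroups of $h^*\cI_{\cX}$; in fact $\cI_{\cW}\subset h^*\cI_{\cX}$ is open and closed because $h$ is étale and representable. Two open and closed subgroups that agree on the fibre over $x$ agree on a neighbourhood of the fibre. Because every point of $\cW$ specializes to a point over the closed point, we can shrink to a saturated open neighbourhood, pulled back from an open of $\cWMOD$, on which $\cI_{\cW}=h^*\cN$. To make this precise, note that the locus where two open and closed subgroups differ is closed in $\cW$ after projecting along the finite-type morphism $\cI\rightarrow\cW$. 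The image of this locus in the good moduli space is closed and misses the image of $x$.

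The images of these neighbourhoods in $\Spec A$, composed with the étale maps, cover $\cXMOD$ as $x$ ranges over the closed points, and finitely many suffice in the quasi-compact case. Taking the disjoint union gives the desired $h$. Surjectivity holds because every point of $\cX$ lies over some point of $\cXMOD$ covered by one of the neighbourhoods. Affineness holds because each piece is a composite of affine morphisms: the map $[\Spec B/H]\rightarrow[\Spec B/G_x]$, the étale neighbourhood given by the local structure theorem, which is affine since its good moduli space map is affine and it is strongly étale, and the base change from $\cXMOD$.

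I expect the main obstacle to be the local step where the stabilizer at $x$ is replaced by a group scheme over an étale neighbourhood. One has to make sure that $H$ extends to an open and closed subgroup scheme of the structure group $G$, matching $\cN$ on the chart. I would first try to take $H$ to be $\cN$ restricted to the fixed locus, using that $G\rightarrow$ base has affine fibres. If that fails, I would pass to a further étale cover on which $G$ becomes constant modulo its identity component, so that its component group is constant, and take the corresponding union of components.
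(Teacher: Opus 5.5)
Your overall strategy is close to the paper's: use the local structure theorem around $x$, compare $\cI_{\cW}$ and $h^*\cN$ as open and closed subgroups of $h^*\cI_{\cX}$, then shrink to a saturated open pulled back from $\cWMOD$. However, the shrinking step rests on a false claim, so there is a genuine gap.

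\textbf{The gap.} You assert that the image in $\cW$ of the locus $\cZ\subset h^*\cI_{\cX}$ where the two subgroups differ is closed, because $p\colon h^*\cI_{\cX}\rightarrow\cW$ is of finite type. Finite type morphisms are not closed, and inertia is only affine over the stack, not proper. So $\cZ$ can be open and closed in $h^*\cI_{\cX}$ and disjoint from the fibre over $w$, yet still have $w\in\overline{p(\cZ)}$.

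The standard example is the étale group scheme over $\A^1$ obtained from $\Z/2\Z$ by deleting the non-identity point over $0$. Its identity component and the whole group are open and closed subgroups that agree exactly over $0$. The locus where they differ projects to $\A^1\setminus 0$, which is not closed. So your sentence ``two open and closed subgroups that agree on the fibre over $x$ agree on a neighbourhood of the fibre'' is false for general group schemes; this is the phenomenon of punctured inertia.

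What you actually need is $w\notin\overline{p(\cZ)}$, and this requires the good moduli space hypothesis. (The classifying stack of the group above has no good moduli space.) The paper supplies it by citing that stacks with good moduli spaces have unpunctured inertia \cite[Definition 3.52, Theorem 4.1]{ExistenceModSpacesAHLH23}, and arguing as in \cite[Proposition 4.3 (2)]{ExistenceModSpacesAHLH23}. Once $w\notin\overline{p(\cZ)}$ is known, your saturation argument goes through. Since $\pi_{\cW}$ is closed and $w$ is the unique closed point of its fibre, the image of $w$ in $\cWMOD$ does not lie in $\pi_{\cW}(\overline{p(\cZ)})$.

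\textbf{The obstacle you flag.} At the end you worry about extending $\cN_x$ to an open and closed subgroup scheme of the chart's structure group. The paper bypasses this entirely. The subgroup $\cN_x\subset G_x$ defines a representable étale morphism of gerbes $\cW_0\rightarrow\cG_x$ with stabilizer $\cN_x$. The refined local structure theorem \cite[Theorem 1.1]{LocalStructureStacks_AHR25} then directly produces an affine étale morphism $h\colon[\Spec A/\GL_n]\rightarrow\cX$ and a point $w$ whose residual gerbe is $\cW_0$. This gives $(\cI_{\cW})_w=\cN_x=(h^*\cN)_w$ without passing through $[\Spec B/H]$ or choosing a group scheme over an étale neighbourhood.
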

    \begin{proof}
        Let $x\in|\cX|$ with image $x_0\in|\cXMOD|$, and let $\cG_x$ be the residual gerbe of $\cX$ at $x$. Now, the inclusion $\cN\subset\cI_{\cX}$ yields a representable étale morphism $h_0:\cW_0\rightarrow\cG_x$, where $\cW_0$ is a gerbe over $\Spec k(x)$ with linearly reductive stabilizer $\cN_x$. Then,~\cite[Theorem 1.1]{LocalStructureStacks_AHR25} applies to give a cartesian diagram
        \[
        \begin{tikzcd}
            \cG_w=\cW_0\arrow[d]\arrow[r,"h_0"] & \cG_x\arrow[d]\\
            {[\Spec A/\GL_n]}=\cW\arrow[r,"h"] & \cX
        \end{tikzcd}
        \]
        and $w\in|\cW|$ such that $\cG_w\simeq\cW_0$, $w$ maps to $x$ and is closed in the fiber over $x_0$, and $h$ is étale and affine.
        By construction, at $w$ the inclusion $\cI_{\cW}\hookrightarrow\cI_{\cX}\times_{\cX}\cW=h^*\cI_{\cX}$ is $G_w\simeq\cN_w\subset G_x$. Now, $h$ is separated and étale, therefore $\cI_{\cW}\rightarrow\cI_{\cX}\times_{\cX}\cW$ is an open and closed immersion, as there is a cartesian diagram
        \[
        \begin{tikzcd}
            \cI_{\cW}\arrow[r]\arrow[d] & \cI_{\cX}\times_{\cX}\cW\arrow[d]\\
            \cW\arrow[r] & \cW\times_{\cX}\cW.
        \end{tikzcd}
        \]
        Therefore, the locus $\cZ\subset h^*\cI_{\cX}$ where $\cI_{\cW}\not=h^*\cN$ is the union of a number of connected components of $h^*\cI_{\cX}$. Let $p_1:\cZ\rightarrow\cW$ the projection to $\cW$, whose image does not contain $w$ by construction. As $\cX$ has unpunctured inertia (see~\cite[Definition 3.52, Theorem 4.1]{ExistenceModSpacesAHLH23}), it follows that $w\not\in\overline{p_1(\cZ)}$, as can be shown exactly as in~\cite[Proposition 4.3 (2)]{ExistenceModSpacesAHLH23}. Now, $w$ is closed in its fiber over $\cXMOD$, hence it is the unique closed point lying in the fiber over $w_0=\pi_{\cW}(w)$ in $\cWMOD\simeq\Spec A^{\GL_n}$. Then, by \cite[Theorem 4.16]{goodmoduli_Alp13}, $w_0$ is disjoint from $\pi_{\cW}(\overline{p_1(\cZ)})$. Therefore, there is an affine open neighborhood $U_0$ of $w_0$ disjoint from $\pi_{\cW}(\overline{p_1(\cZ)})$, and $\cU:=\pi_{\cW}^{-1}(U_0)\hookrightarrow\cW$ is an affine open immersion.
        In particular, after replacing $\cW$ with $\cU$ we can assume that $\cI_{\cW}\simeq h^*\cN$, but maintaining all the properties of $\cW\rightarrow\cX$. By varying $x\in|\cX|$ and taking the disjoint union, we have found $h:\cW\rightarrow\cX$ as in the statement.
    \end{proof}
    
    \begin{proof}[Proof of Theorem~\ref{thm: intro characterization CXDM}]
        Notice that by Lemma~\ref{lem: inheriting affine diagonal} all objects of $\cC_{\cX}^{\cP}$ have affine diagonal, thus satisfy $\Delta_{\mathrm{sep}}$ as well. Moreover, they satisfy $\cP^{\mathrm{fp}}$, by~\cite[Theorem 6.3.3]{AdequateModAlp14}. By Lemma~\ref{lem: inclusion CX for good moduli spaces}, it is enough to produce for every $\cN\subset\cI_{\cX}$ as in the statement a good moduli space morphism $\varphi:\cX\rightarrow\cY$ with $\cI_{\cX/\cY}=\cN$ satisfying property $\cP$.

        By Proposition~\ref{prop: generalization Rydh Prop A15}, there is an affine, étale and surjective morphism $h:\cW\rightarrow\cX$ such that $\cI_{\cW}=h^*\cN$ as subgroups of $h^*\cI_{\cX}$. As in the proof of~\cite[Theorem A.1.3]{RydhAppendixATW20}, the two projections $p_i:\cW\times_{\cX}\cW\rightarrow\cW$ are étale and inert, that is, $\cI_{\cW\times_{\cX}\cW}\simeq p_i^*\cI_{\cW}$. Moreover, both stacks admit good moduli spaces by~\cite[Lemma 4.14]{goodmoduli_Alp13}. Then, by Luna's fundamental lemma~\cite[Theorem 3.14]{LocalStructureStacks_AHR25}, we get an étale groupoid $(\cW\times_{\cX}\cW)_{\mathrm{mod}}\rightrightarrows\cWMOD$, whose quotient is a Deligne-Mumford stack $\cY$. By the properties of groupoids, we get a quotient morphism $\varphi:\cX\rightarrow\cY$ such that $\cX\times_{\cY}\cWMOD\simeq\cW$. In particular, $\varphi$ is a good moduli space morphism by~\cite[Proposition 4.7 (2)]{goodmoduli_Alp13}, and $\cI_{\cX/\cY}\simeq\cN$ since $h^*\cI_{\cX/\cY}\simeq\cI_{\cW}\simeq h^*\cN$. By Lemma~\ref{lem: inheriting affine diagonal}, the diagonal of $\cY$ is affine, and in particular separated.
        Therefore, $\varphi$ is the object of $\cC_{\cX}^{\cP}$ we were looking for.

        The finiteness of the inertia follows from the fact that $\cY$ admits a good moduli space by Lemma~\ref{lem: cancellation properties}, the diagonal is quasi-finite and separated, and from~\cite[Theorem 8.3.2]{AdequateModAlp14}.
    \end{proof}
    
\subsection{Case of Adequate Moduli Space Morphisms to Deligne-Mumford Stacks}\label{subsec: DM case}

In this subsection we replicate the discussion above when the moduli space $\pi_{\cX}:\cX\rightarrow\cXMOD$ is only assumed to be adequate. In this case, we need to assume that $\cP$ implies the property $\DM$. In particular, we give an answer to Question~\ref{question: intro stabilization} when $\cP$ implies the property $\DM$, later generalized in~\S\ref{subsec: generalizations}.

The first step is proving a version of~\cite[Theorem 7.22]{LocalStructureStacks_AHR25} for adequate moduli space morphisms with Deligne-Mumford target, as suggested by Rydh. The proof is similar to loc.\! cit., and the result is already known only when $\cX$ has finite inertia, see~\cite[Theorems 2.2.3, 2.3.6]{ToroidalOrbifoldsATW20} and~\cite[Appendix A]{RydhAppendixATW20}.

\begin{theorem}\label{thm: analogous thm 722 DM case}
    Let $\pi:\cX\rightarrow\cY$ be an adequate moduli space morphism of finite presentation between algebraic stacks. Let $\cZ$ be a Deligne-Mumford stack with quasi-separated diagonal, and $f:\cX\rightarrow\cZ$ a morphism. Then, $f$ factors through $\pi$ if and only if the induced map on inertia $\cI_{\cX/\cY}\rightarrow\pi^*\cI_{\cZ}$ factors through the identity section. Moreover, the factorization is unique up to unique 2-isomorphism and if $\cY$ is quasi-compact and quasi-separated, then the condition is equivalent to:
    \[
        \ker(\Aut_{\cX}(x)\rightarrow\Aut_{\cY}(\pi(x)))\subset\ker(\Aut_{\cX}(x)\rightarrow\Aut_{\cZ}(f(x)))
    \]
    for every closed point $x\in|\cX|$.
\end{theorem}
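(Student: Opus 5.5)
The plan follows the structure of the proof of \cite[Theorem 7.22]{LocalStructureStacks_AHR25}: first reduce to the case where $\cY$ is an algebraic space, then use descent. Necessity is immediate: if $f\simeq g\circ\pi$, then $\cI_{\cX/\cY}\rightarrow\pi^*\cI_{\cZ}$ factors through $\cI_{\cX}\rightarrow\pi^*\cI_{\cY}\rightarrow\pi^*\cI_{\cZ}$, and $\cI_{\cX/\cY}$ maps to the identity section of $\pi^*\cI_{\cY}$.

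Uniqueness and the reduction use the same mechanism. Since adequate moduli space morphisms are fpqc-local on the target and stable under flat base change, morphisms $\cY\rightarrow\cZ$ form an fppf (even fpqc) stack over $\cY$. Any two factorizations are uniquely 2-isomorphic by \cite[Lemma 7.23]{LocalStructureStacks_AHR25}, whose argument only uses $\pi_*\cO_{\cX}=\cO_{\cY}$ and the hypothesis that $\cZ$ has quasi-separated diagonal. Hence it suffices to construct the factorization smooth-locally on $\cY$, and we may assume $\cY=\Spec A$ is affine. The condition on $\cI_{\cX/\cY}$ is preserved by this base change.

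In the affine case, $\pi$ is an adequate moduli space to $\Spec A$ and $\cZ$ is Deligne-Mumford. By \cite[Theorem 3.12]{LocalStructureStacks_AHR25}, $\pi$ is universal for maps to algebraic spaces, so we want to reduce to $\cZ$ being an algebraic space. Choose an étale cover $Z'\rightarrow\cZ$ by a scheme and set $\cX'=\cX\times_{\cZ}Z'$. The hypothesis on inertia is what makes $\cX'\rightarrow\cX$ behave well over $\cY$. Étale-locally on $\Spec A$, around the unique closed point $x$ in a fiber, the map $G_x\rightarrow G_{f(x)}$ is trivial, since $G_x=\cI_{\cX/\cY}$ there. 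So $\cX'\rightarrow\cX$ is stabilizer-preserving near $x$ after shrinking, and it should be étale-locally of the form $\cX\times_{\cY}Y'$ for an étale $Y'\rightarrow\cY$. This is the analogue of Luna's fundamental lemma \cite[Theorem 3.14]{LocalStructureStacks_AHR25}, in its adequate form (cf.\ \cite{AdequateModAlp14}); Deligne-Mumford targets give étale-locally finite stabilizers, and the inertia condition makes $f$ inert. On such a $Y'$ the map $\cX\times_{\cY}Y'\rightarrow Z'$ goes to a scheme and so factors through $Y'$. The factorizations glue by uniqueness and descend to $\cY\rightarrow\cZ$.

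The main obstacle is this local step: showing that the inertia condition makes $\cX\times_{\cZ}Z'\rightarrow\cX$ étale-locally a pullback from $\cY$. In the adequate setting, stabilizers need not surject, and fiber products only have adequate moduli spaces up to adequate homeomorphism. Here I would use that $Z'\rightarrow\cZ$ is étale and that $\cZ$ is Deligne-Mumford, so $\cX'\rightarrow\cX$ is étale, representable and, where inert, stabilizer preserving. I would then apply the étale-local structure on adequate moduli spaces together with the unpunctured-inertia argument from the proof of Proposition~\ref{prop: generalization Rydh Prop A15} to spread out from closed points.

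For the final equivalence when $\cY$ is quasi-compact and quasi-separated, one direction is clear. Conversely, let $K$ be the kernel of $\cI_{\cX/\cY}\rightarrow\pi^*\cI_{\cZ}$, and let $Q$ be the preimage of the identity section. Because $\cZ$ is Deligne-Mumford, its identity section is open, so $Q$ is open and closed in $\cI_{\cX/\cY}$. Its complement is therefore closed, and since $\pi$ is of finite presentation its image in $\cX$ is constructible. If that image is nonempty, it should specialize to a closed point using unpunctured-inertia type arguments as above, with quasi-compactness ensuring closed points exist. This contradicts the pointwise condition at closed points.
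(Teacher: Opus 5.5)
Your overall architecture matches the paper's: necessity, uniqueness and descent via \cite[Lemma 7.23]{LocalStructureStacks_AHR25}, reduction to $\cY$ affine, an étale chart of $\cZ$, the universal property for maps to algebraic spaces, and gluing. Citing Luna's fundamental lemma \cite[Theorem 3.14]{LocalStructureStacks_AHR25} for $\cX'=\cX\times_{\cZ}Z'\to\cX$ at a single point $x'$ over $x$ would be a legitimate repackaging of the paper's local step.

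The route you actually sketch for the ``main obstacle'' is off target, though. Once $\cY$ is affine the hypothesis holds globally, so $\cX'\to\cX$ is already inert everywhere and nothing needs to be spread out. Étale-local quotient presentations are not available in general at the non-linearly-reductive stabilizers that adequate moduli spaces allow. Unpunctured inertia is a good-moduli-space input, and Proposition~\ref{prop: generalization Rydh Prop A15} uses it only under that hypothesis.

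What is missing is producing the étale neighborhood of $\pi(x)$ from the condition at the single closed point $x$. The paper does this directly:
\begin{itemize}
\item Triviality of $G_x\to G_{f(x)}$ means $\cG_x\to\cZ$ factors through $\Spec k(x)$ after a separable extension. So $q\colon\cX\times_{\cZ}U\to\cX$ has a section over $\cG_x$.
\item Since $(\cX^h,\cG_x)$ is henselian for $\cX^h=\cX\times_{\cXMOD}\cXMOD^h$ \cite[Theorem 3.10]{LocalStructureStacks_AHR25}, this section extends over $\cX^h$.
\item The resulting $\cX^h\to U$ descends to $\cXMOD^h\to U$, and a limit argument spreads it to an étale neighborhood of $\pi(x)$.
\end{itemize}
If you use Luna instead, check its hypotheses. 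Such statements are usually formulated for separated étale maps, and $Z'\to\cZ$ need not be separated when $\cZ$ only has quasi-separated diagonal.

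The final paragraph has a genuine gap. A nonempty constructible subset of $|\cX|$ need not contain a closed point. So constructibility of the bad locus does not let you reach the hypothesis at closed points; you would need closure under specialization, and ``unpunctured-inertia type arguments'' do not supply it here. Also, $Q$ is open, but it is closed only if $\cZ$ has separated diagonal. Openness, plus quasi-separatedness of $\Delta_{\cZ}$ for retrocompactness, is what constructibility actually uses.

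The paper defers this part to the argument of \cite[Theorem 7.22]{LocalStructureStacks_AHR25}. The way to close it is to note that the local construction at $x$ only uses triviality of $\ker(G_x\to G_{\pi(x)})\to G_{f(x)}$ at that one point. On a smooth chart $Y'\to\cY$ with $y'$ closed over the closed point $y=\pi(x)$, the stabilizer of the relevant point of $\cX\times_{\cY}Y'$ is exactly $\ker(G_x\to G_y)$. Hence $f$ factors over a neighborhood of every closed point of $\cY$. Since $\cY$ is quasi-compact and quasi-separated, every point specializes to a closed point, so these neighborhoods cover $\cY$ and the local factorizations glue by uniqueness.
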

\begin{proof}
    The uniqueness is~\cite[Lemma 7.23]{LocalStructureStacks_AHR25}. By uniqueness, we can work smooth-locally on $\cY$, hence assume $\cY=\cXMOD$ is an affine scheme. As $\cZ$ is a Deligne-Mumford stack, we can take $p:U\rightarrow\cZ$ to be an étale morphism from an affine scheme. Let $x\in|\cX|$ be a point that is closed in the fiber over a closed point in $\cXMOD$, and denote by $\cG_x$ the residual gerbe of $x$ at $\cX$. By assumption, the morphism $\cG_x\rightarrow\cZ$ factors through $\Spec k(x)$, yielding a section to the restriction of $q:\cX\times_{\cZ}U\rightarrow\cX$ over $\cG_x$, eventually after a separable extension of $k(x)$. By assumption, the image $X_0$ of $\cG_x$ via $\pi_{\cX}$ is a closed subscheme of $\cXMOD$; let $\cXMOD^h$ be the henselization of $\cXMOD$ along $X_0$, and set $\cX^h:=\cX\times_{\cXMOD}\cXMOD^h$. Consider the induced morphism $q^h:U\times_{\cZ}\cX^h\rightarrow\cX^h$, which is again étale. By~\cite[Theorem 3.10]{LocalStructureStacks_AHR25}, the pair $(\cX^h,\cG_x)$ is henselian, hence by~\cite[Proposition 3.11]{LocalStructureStacks_AHR25} or~\cite[Proposition 5.4]{HR23}, the section of $q^h|_{\cG_x}$ extends to a section of $q^h$, yielding $\cX^h\rightarrow U$ by composition. As $\cXMOD^h$ is the limit of all affine étale neighborhoods of $X_0$, the morphism $\cXMOD^h\rightarrow U$ induced by the universal property of adequate moduli spaces extends to an étale neighborhood of $X_0$ in $\cXMOD$. Composing with $p:U\rightarrow\cZ$, this gives the desired map. The last part of the statement can be proved as in~\cite[Theorem 7.22]{LocalStructureStacks_AHR25}.
\end{proof}

Similarly to \S\ref{subsec: good case}, the above result provides an answer to Question~\ref{question: intro stabilization}.

\begin{corollary}\label{cor: inclusion CX for DM case}
Let $\cX$ be an algebraic stack of finite presentation over a quasi-separated locally-Noetherian algebraic space $S$.
Let $\cP$ be a property that implies $\DM$ (resp.\ $\Delta_{\mathrm{sep}}$ and $\DM$). Then, $\cC_{\cX}^{\cP}$ is equivalent to a partially ordered subset of the set of open (resp.\ closed and open) subgroups of the inertia $\cI_{\cX}$, by sending an object $\varphi:\cX\rightarrow\cY$ of $\cC_{\cX}^{\cP}$ to the relative inertia $\cI_{\cX/\cY}\subset\cI_{\cX}$.
\end{corollary}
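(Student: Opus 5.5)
The plan mirrors the proof of Lemma~\ref{lem: inclusion CX for good moduli spaces}, with Theorem~\ref{thm: analogous thm 722 DM case} in place of~\cite[Theorem 7.22]{LocalStructureStacks_AHR25}. Consider the poset map $(\varphi:\cX\rightarrow\cY)\mapsto\cI_{\cX/\cY}\subset\cI_{\cX}$. It is order-preserving: a morphism $\cY_1\rightarrow\cY_2$ in $\cC_{\cX}^{\cP}$ gives $\cI_{\cX/\cY_1}\subset\cI_{\cX/\cY_2}$, since the kernel of $\cI_{\cX}\rightarrow\varphi_1^*\cI_{\cY_1}$ maps into the kernel of the composite to $\varphi_2^*\cI_{\cY_2}$. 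Because $\cC_{\cX}^{\cP}$ is a poset by Lemma~\ref{lem: poset structure}, it remains to show the map is an order embedding: $\cI_{\cX/\cY_1}\subset\cI_{\cX/\cY_2}$ should force a morphism $\cY_1\rightarrow\cY_2$ in $\cC_{\cX}^{\cP}$. Injectivity then follows by antisymmetry.

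For the embedding, first check that Theorem~\ref{thm: analogous thm 722 DM case} applies to $\pi=\varphi_1$, $\cZ=\cY_2$, $f=\varphi_2$.
\begin{itemize}
\item $\cY_2$ is Deligne-Mumford by $\cP$, and algebraic stacks in the Stacks Project convention are assumed to have quasi-separated diagonal, so $\cZ$ qualifies.
\item We need $\varphi_1$ of finite presentation. Since $\cX$ is of finite presentation over the locally Noetherian space $S$, \cite[Theorem 6.3.3]{AdequateModAlp14} (as used in Proposition~\ref{prop: stabilization good case}) shows $\cY_1\rightarrow S$ is locally of finite type, so $\cY_1$ is locally Noetherian. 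Then $\varphi_1$ is locally of finite type between locally Noetherian stacks, hence locally of finite presentation, and it is quasi-compact and quasi-separated by definition.
\end{itemize}
The factorization criterion asks that $\cI_{\cX/\cY_1}\rightarrow\varphi_1^*\cI_{\cY_2}$ factor through the identity section. This holds because $\cI_{\cX/\cY_1}\subset\cI_{\cX/\cY_2}$, and $\cI_{\cX/\cY_2}$ is exactly the preimage of the identity section under $\cI_{\cX}\rightarrow\varphi_2^*\cI_{\cY_2}$. The theorem then gives $\phi:\cY_1\rightarrow\cY_2$ with $\phi\circ\varphi_1\simeq\varphi_2$, unique up to unique 2-isomorphism, which is a morphism in $\cC_{\cX}^{\cP}$.

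Next, each $\cI_{\cX/\cY}$ is open. It is the pullback of the identity section $e:\cY\rightarrow\cI_{\cY}$ along $\cI_{\cX}\rightarrow\cI_{\cY}$. Since $\cY$ is Deligne-Mumford, $\cI_{\cY}\rightarrow\cY$ is unramified, so $e$ is an open immersion. If $\cP$ also implies $\Delta_{\mathrm{sep}}$, then $\cI_{\cY}\rightarrow\cY$ is separated, so $e$ is also a closed immersion and $\cI_{\cX/\cY}$ is open and closed.

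The main obstacle is the finiteness hypothesis. The reduction to locally Noetherian targets covers local finite presentation, but quasi-compactness of $\varphi$ comes only from the definition of adequately affine, and~\cite[Theorem 6.3.3]{AdequateModAlp14} is needed in the adequate rather than good setting. If finite presentation of $\varphi_1$ cannot be secured directly, the fallback is to apply Theorem~\ref{thm: analogous thm 722 DM case} after an étale-local reduction. This is possible because the factorization is unique and therefore descends, and because adequate moduli space morphisms are compatible with flat base change on the target.
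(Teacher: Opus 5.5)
Your proposal follows the paper's proof. The paper simply reruns Lemma~\ref{lem: inclusion CX for good moduli spaces} with Theorem~\ref{thm: analogous thm 722 DM case} in place of \cite[Theorem 7.22]{LocalStructureStacks_AHR25}, gets finite presentation from \cite[Theorem 6.3.3]{AdequateModAlp14} as you do (this is noted in the proof of Corollary~\ref{cor: stabilization DM case}), and gets openness and closedness from unramified and separated inertia. Your order-embedding formulation only makes explicit the injectivity the paper asserts.

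One correction. The paper follows the Stacks Project conventions, under which algebraic stacks need \emph{not} have quasi-separated diagonal. So your justification of the quasi-separated-diagonal hypothesis on $\cZ=\cY_2$ is wrong, although the hypothesis does hold here. Argue it this way instead: $\cX$ is quasi-separated, being of finite presentation over the quasi-separated $S$, and quasi-separatedness descends along the surjective, quasi-compact, quasi-separated morphism $\varphi_2$. With that fix, your \'etale-local fallback is unnecessary.
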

\begin{proof}
    The proof is the same as Lemma~\ref{lem: inclusion CX for good moduli spaces}, using Theorem~\ref{thm: analogous thm 722 DM case} in place of~\cite[Theorem 7.22]{LocalStructureStacks_AHR25}.
\end{proof}

\begin{corollary}\label{cor: stabilization DM case}
    Let $\cX$ be an algebraic stack of finite presentation over a quasi-separated locally-Noetherian algebraic space $S$.
    Let $\cP$ be a property that implies $\Delta_{\mathrm{sep}}$ and
    $\DM$. Assume further that $\cP$ is semi-local or that $S$ is Noetherian. Then, every ascending chain in $\cC_{\cX}^{\cP}$ admits an upper bound, and if $S$ is Noetherian, then every ascending chain in $\cC_{\cX}^{\cP}$ stabilizes. In particular, $\cC_{\cX}^{\cP}$ is either empty or admits a maximal element.
\end{corollary}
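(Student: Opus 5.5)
The proof will mirror that of Proposition~\ref{prop: stabilization good case}, with Corollary~\ref{cor: inclusion CX for DM case} used in place of Lemma~\ref{lem: inclusion CX for good moduli spaces}.

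\textbf{Step 1: finite presentation of the objects.} We want to apply Theorem~\ref{thm: analogous thm 722 DM case}, whose injectivity statement needs finite presentation. So we first check that every object $\varphi:\cX\rightarrow\cY$ of $\cC_{\cX}^{\cP}$ is of finite presentation. By~\cite[Theorem 6.3.3]{AdequateModAlp14} (finite type of adequate moduli spaces over a locally Noetherian base), applied in the relative form, $\cY\rightarrow S$ is of finite type. Since $\cX\rightarrow S$ is of finite presentation and $S$ is locally Noetherian, $\varphi$ is then of finite presentation. This is precisely what Corollary~\ref{cor: inclusion CX for DM case} presupposes.

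\textbf{Step 2: reduction to $S$ Noetherian.} If $\cP$ is semi-local, we cover $S$ by quasi-compact opens, hence Noetherian ones. The corresponding opens of $\cX$ are quasi-compact, and their union is $\cX$. Lemma~\ref{lem: passing to quasicompact case principle} then reduces the existence of upper bounds to the stabilization statement over each Noetherian piece. Restriction of objects of $\cC_{\cX}^{\cP}$ to saturated opens preserves adequate moduli space morphisms. It also preserves $\DM$ and $\Delta_{\mathrm{sep}}$, and the pieces are in fact open preimages of opens of $S$, hence saturated.

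\textbf{Step 3: stabilization for $S$ Noetherian.} By Corollary~\ref{cor: inclusion CX for DM case}, the poset $\cC_{\cX}^{\cP}$ embeds, order-reversingly, into the closed and open subgroups of $\cI_{\cX}$ via $\varphi\mapsto\cI_{\cX/\cY}$. An ascending chain $\cY_1\leftarrow\cY_2\leftarrow\cdots$ in $\cC_{\cX}^{\cP}$ therefore gives a descending chain of closed subsets $\cI_{\cX/\cY_1}\supseteq\cI_{\cX/\cY_2}\supseteq\cdots$ of $\cI_{\cX}$; the inclusions hold because $\cX\rightarrow\cY_{n+1}\rightarrow\cY_n$ factors. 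Since $\cI_{\cX}\rightarrow\cX$ is of finite type and $\cX$ is Noetherian, $|\cI_{\cX}|$ is a Noetherian topological space. The descending chain of closed subsets therefore stabilizes. Since the subgroups are open and closed, they are determined by their underlying closed subsets, so the subgroups themselves stabilize. Injectivity then forces the objects to stabilize, up to isomorphism. Zorn's lemma yields a maximal element whenever $\cC_{\cX}^{\cP}$ is nonempty.

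\textbf{Main obstacle.} The delicate point is Step 1: checking that~\cite[Theorem 6.3.3]{AdequateModAlp14} applies to adequate (not good) moduli space morphisms with stacky target, to give finite type of $\cY$ over $S$. If this is only available for adequate moduli spaces, I would pass smooth-locally on $\cY$. Concretely, I would pull back along a smooth atlas $V\rightarrow\cY$ and apply the theorem to the adequate moduli space $\cX\times_{\cY}V\rightarrow V$. I would then use fppf-locality of finite type on the source.
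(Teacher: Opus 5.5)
Your proposal is correct and matches the paper's proof: the paper likewise first uses \cite[Theorem 6.3.3]{AdequateModAlp14} to see that every object of $\cC_{\cX}^{\cP}$ is of finite presentation, reduces to $S$ Noetherian via Lemma~\ref{lem: passing to quasicompact case principle}, and then repeats the proof of Proposition~\ref{prop: stabilization good case} with Corollary~\ref{cor: inclusion CX for DM case} in place of Lemma~\ref{lem: inclusion CX for good moduli spaces}, concluding by Noetherianity of $\cI_{\cX}$ and Zorn's lemma. Your smooth-local fallback for the finite-type step is a sound way to justify a point the paper leaves implicit, namely applying the finite-type theorem when the target is a stack.
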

\begin{proof}
    First, notice that under this assumption $\cP$ implies $\cP^{\mathrm{fp}}$ as well. Then, the proof is the same as Proposition~\ref{prop: stabilization good case}, using Corollary~\ref{cor: inclusion CX for DM case} in place of Lemma~\ref{lem: inclusion CX for good moduli spaces}.
\end{proof}

\begin{corollary}\label{cor: existence maximum DM case}
    Let $\cX$ be an algebraic stack of finite presentation over a quasi-separated locally Noetherian algebraic space $S$ admitting an adequate moduli space $\pi_{\cX}:\cX\rightarrow\cXMOD$. Let $\cP$ be a modular property that implies $\DM$ and $\cP_{\mathrm{adm}}$.
    Assume further that $S$ is Noetherian or $\cP$ is semi-local. Then, $\cC_{\cX}^{\cP}$ admits a maximum.
\end{corollary}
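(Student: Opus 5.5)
The argument mirrors the proof of Corollary~\ref{cor: existence maximum good moduli space case}, replacing the good-case stabilization with Corollary~\ref{cor: stabilization DM case}. The work is to check that $\cP$ implies the hypotheses of Corollary~\ref{cor: stabilization DM case}, to produce a maximal element, and then to upgrade it to a maximum using filteredness.

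\emph{Step 1.} We check that $\cP$ implies $\Delta_{\mathrm{sep}}$ and $\DM$. By hypothesis $\cP$ implies $\DM$ and $\cP_{\mathrm{adm}}$. As recalled in Example~\ref{exm: modular properties}, $\cP_{\mathrm{adm}}$ implies $\Delta_{\mathrm{sep}}$. Concretely, an object $\cY$ has an adequate moduli space $\cY\to\cY_{\mathrm{mod}}$ with affine diagonal. The diagonal of the algebraic space $\cY_{\mathrm{mod}}$ is quasi-separated, and in fact separated if needed. This gives separated inertia, so the identity section of $\cI_{\cY}$ is closed.

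\emph{Step 2.} Corollary~\ref{cor: stabilization DM case} now applies, since $S$ is Noetherian or $\cP$ is semi-local. Hence every ascending chain in $\cC_{\cX}^{\cP}$ has an upper bound, and $\cC_{\cX}^{\cP}$ is either empty or has a maximal element. It is nonempty: by Lemma~\ref{lem: refinement property of CX}(1), since $\cP$ is modular, $\cXMOD$ has $\cP$ and $\pi_{\cX}$ is an object of $\cC_{\cX}^{\cP}$. Zorn's lemma then gives a maximal element $\cY_{\max}$.

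\emph{Step 3.} We show $\cY_{\max}$ is a maximum. Since $\cP$ is modular and implies $\cP_{\mathrm{adm}}$, Lemma~\ref{lem: refinement property of CX}(2) says $\cC_{\cX}^{\cP}$ is filtered. Let $\cY$ be any object. There is an object $\cY_{1,2}$ dominating both $\cY_{\max}$ and $\cY$. By maximality, $\cY_{1,2}\to\cY_{\max}$ is an isomorphism, where we use the poset structure of Lemma~\ref{lem: poset structure}. Hence $\cY_{\max}$ dominates $\cY$, and $\cY_{\max}$ is the maximum.

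\emph{Main obstacle.} The delicate point is Step 1. We must be sure that $\cP_{\mathrm{adm}}$, together with $\DM$, gives the separatedness of the diagonal needed to identify objects with closed and open subgroups of $\cI_{\cX}$ in Corollary~\ref{cor: inclusion CX for DM case}. If $\Delta_{\mathrm{sep}}$ were unavailable, we would use only the ``open subgroups'' version of Corollary~\ref{cor: inclusion CX for DM case}, and stabilization would need a different Noetherian argument. The affine diagonal of $\cY\to\cY_{\mathrm{mod}}$ should settle this.
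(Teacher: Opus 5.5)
Your proposal is correct and follows the paper's route. The paper's proof just says it is the same as for Corollary~\ref{cor: existence maximum good moduli space case}: get a maximal element from the stabilization result (here Corollary~\ref{cor: stabilization DM case}), then use the filteredness of Lemma~\ref{lem: refinement property of CX} to show it is the unique maximal element and hence a maximum. Your explicit checks that $\cP_{\mathrm{adm}}$ forces $\Delta_{\mathrm{sep}}$ and that $\cC_{\cX}^{\cP}$ is nonempty (it contains $\pi_{\cX}$) are exactly the details the paper leaves implicit.
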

\begin{proof}
    The proof is the same as for Corollary~\ref{cor: existence maximum good moduli space case}.
\end{proof}

\begin{remark}\label{rmk: case of smooth stabilizers}
    By looking at the counterexample~\cite[Example A.2.3]{RydhAppendixATW20} (or~\cite[\S4.5]{ComplexityFlatGroupoid_RRZ18}) to Theorem~\ref{thm: analogous thm 722 DM case}, one could think that the only obstruction to results as~\cite[Theorem 7.22]{LocalStructureStacks_AHR25} and Theorem~\ref{thm: analogous thm 722 DM case} is the non-reducedness of the stabilizers of the target, even when they are of higher dimension. This is not true.
Indeed, recall that in~\cite[\S4.5]{ComplexityFlatGroupoid_RRZ18} the authors construct a wild DM-stack $\cX$ over $\mathbb{F}_p$ and a morphism $f:\cX\rightarrow B\mu_p$ that is trivial on inertia groups but does not factor through $\cXMOD$. Then, one could compose with the $\Gm$-torsor $B\mu_p\rightarrow B\Gm$ to obtain $g:\cX\rightarrow B\Gm$, whose target has then smooth stabilizer. By construction, $g$ induces a trivial morphism between inertia groups, but still does not factor through $\cXMOD$. Indeed, $f$ and $g$ correspond to the same line bundle $L$ over $\cX$, which is not a pullback from $\cXMOD$.
Nevertheless, the analogue of Corollary~\ref{cor: stabilization DM case} still holds, as we will see in the next subsection. 
\end{remark}

Again, we postpone to~\S\ref{subsec: examples and consequences} for examples and applications of the stabilization property.

\subsection{Generalization of the Stabilization Property}\label{subsec: generalizations}

In this subsection we prove a more general form of Corollaries~\ref{cor: existence maximum good moduli space case} and~\ref{cor: existence maximum DM case}, in particular proving Proposition~\ref{thm: intro stabilization property}. More precisely, we get rid of the assumption of $\cX$ being of finite presentation over a quasi-separated locally-Noetherian algebraic space, and only require $\cX$ to be quasi-separated and locally-Noetherian itself. Moreover, in the adequate case we can replace $\DM$ with the weaker property $\cP_{\mathrm{sm}}$ of having smooth stabilizers. The argument is different and based on Lemma~\ref{lem: surjectivity stabilizers good and smooth stab case}.

\begin{lemma}\label{lem: nice chains are determined by relative inertia}
    Let $\cX\xrightarrow{\varphi}\cY\xrightarrow{\psi}\cZ$ be adequate moduli space morphisms between quasi-separated algebraic stacks. Suppose that at least one of the following holds:
    \begin{enumerate}
        \item $\varphi$ and $\psi$ are good moduli space morphisms, or
        \item $\cY$ and $\cZ$ have smooth geometric stabilizers.
    \end{enumerate}
    If the relative inertia $\cI_{\cX/\cY}$ and $\cI_{\cX/\cZ}$ coincide as subgroups of $\cI_{\cX}$, then $\psi$ is an isomorphism.
\end{lemma}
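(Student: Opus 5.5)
Our plan is to show that $\psi$ is an adequate moduli space morphism with trivial relative inertia, i.e.\ representable, and then to conclude that such a morphism is an isomorphism.

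\textbf{Step 1: $\psi$ is representable.} Since $\cI_{\cX/\cY}=\cI_{\cX/\cZ}$, for every closed point $x$ (closed in its fiber over $\cZ$, hence over $\cY$) with image $y=\varphi(x)$ and $z=\psi(y)$, the kernels of $G_x\to G_y$ and of $G_x\to G_z$ coincide. By Lemma~\ref{lem: surjectivity stabilizers good and smooth stab case}, applied to $\varphi$ at $x$ and to $\psi\circ\varphi$ at $x$, both maps are surjective. In case (1) we use that compositions of good moduli space morphisms are good, by Lemma~\ref{lem: cancellation properties}. In case (2) we use that $G_y$ and $G_z$ are smooth. The image of $x$ in $\cY$ is closed in its fiber over $z$ by \cite[Theorem 5.3.1]{AdequateModAlp14}, so $G_y\to G_z$ is also surjective. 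The resulting factorization $G_x\twoheadrightarrow G_y\to G_z$ identifies $G_y\simeq G_x/K\simeq G_z$ for $K$ the common kernel, so $G_y\to G_z$ is an isomorphism at points $y$ that are closed in their fibers over $\cZ$.

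We must spread this to all points. The locus where $\cI_{\cY/\cZ}\to\cY$ is trivial should be open, or at least stable under generization in the right sense. Every point of $\cY$ specializes within its $\psi$-fiber to a closed one, and its stabilizer embeds by upper semicontinuity. I expect this step to be the main obstacle: one has to show that $\cI_{\cY/\cZ}$ is trivial everywhere, not just at closed points. My first attempt would be to argue that the locus where the relative inertia is trivial is open, which requires the relative inertia to be unramified or of finite type near those points, and then use that any open containing all closed points of the fibers of an adequate moduli space morphism is saturated and equals everything \cite[Theorem 5.3.1]{AdequateModAlp14}. Alternatively, I would pull back the equality $\cI_{\cX/\cY}=\cI_{\cX/\cZ}$ along $\varphi$. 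Since $\varphi$ is surjective and $\cI_{\cX/\cZ}\to\varphi^*\cI_{\cY/\cZ}$ is surjective at closed points, triviality of the image of $\cI_{\cX/\cZ}$ in $\varphi^*\cI_{\cY/\cZ}$ forces $\varphi^*\cI_{\cY/\cZ}$ to be trivial. The delicate part is getting the surjectivity at every point, not just at closed ones.

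\textbf{Step 2: conclusion.} With $\psi$ representable, quasi-compact and adequately affine, the generalized Serre criterion \cite[Theorem 4.3.1]{AdequateModAlp14} makes $\psi$ affine. Then $\psi_*\cO_{\cY}\simeq\cO_{\cZ}$ gives $\cY\simeq\Spec\!_{\cZ}(\psi_*\cO_{\cY})\simeq\cZ$, so $\psi$ is an isomorphism.
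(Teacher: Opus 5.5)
Your overall plan (show $\psi$ is representable, then conclude by Serre's criterion and $\psi_*\cO_{\cY}\simeq\cO_{\cZ}$) matches the paper, and Step 2 is fine. Step 1, however, has a genuine gap. You only obtain $G_y\simeq G_z$ at points $y$ that are closed in their $\psi$-fiber. The extension to arbitrary $y$, which you yourself flag as the main obstacle, is never carried out.

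Your first suggested route does not work as stated. Even for an affine étale group scheme of finite type, the locus of trivial fibers need not be open: delete the non-identity point over the origin from $(\Z/2\Z)_{\A^1}$, and you get an open subgroup scheme whose fiber is trivial exactly at $0$. So neither unramifiedness nor finite type of $\cI_{\cY/\cZ}$ would suffice. Your second route is the right idea, but you misdiagnose what it needs. You do not need surjectivity of stabilizers at every point of $\cX$, only at one point over each $y$.

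The missing observation is that representability of $\psi$ only requires \emph{injectivity} of $G_{\overline{y}}\to G_{\overline{z}}$. This needs neither closedness over $\cZ$ nor surjectivity onto $G_z$. Given an arbitrary $y\in|\cY|$ with image $z$, take $x$ closed in the fiber of $\varphi$ over $y$. Apply Lemma~\ref{lem: surjectivity stabilizers good and smooth stab case} to $\varphi$ alone: $\varphi$ is good in case (1), and $G_y$ is smooth in case (2). This gives $G_{\overline{x}}\twoheadrightarrow G_{\overline{y}}$, so $G_{\overline{y}}\simeq G_{\overline{x}}/K$ with $K=\ker(G_{\overline{x}}\to G_{\overline{y}})$. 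The hypothesis $\cI_{\cX/\cY}=\cI_{\cX/\cZ}$ says $K=\ker(G_{\overline{x}}\to G_{\overline{z}})$. Hence the induced map $G_{\overline{x}}/K\to G_{\overline{z}}$, i.e.\ $G_{\overline{y}}\to G_{\overline{z}}$, is a monomorphism.

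Thus $\cI_{\cY/\cZ}\to\cY$ has trivial geometric fibers everywhere. Being locally of finite type, it is unramified with a surjective open identity section, hence trivial. So $\psi$ is representable, and your Step 2 finishes the proof. This is in essence the paper's argument. Note that in this form only the hypothesis on $\varphi$ (resp.\ on $\cY$) is actually used.
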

\begin{proof}
    The morphism $\cY\rightarrow\cZ$ is an isomorphism if and only if it is representable, which in turn is equivalent to $\cI_{\cY}\rightarrow\psi^*\cI_{\cZ}$ being a monomorphism. It is enough to check this at any geometric point, for instance by~\cite[Corollary 5.5.11]{Alp25}. By Lemma~\ref{lem: surjectivity stabilizers good and smooth stab case}, for every $y\in|\cY|$ with image $z\in|\cZ|$, if $x\in|\cX|$ is closed in the fiber over $y$ we have that the morphisms $G_{\overline{x}}\twoheadrightarrow G_{\overline{y}}\twoheadrightarrow G_{\overline{z}}$ between geometric stabilizers are all surjective. Therefore, $G_{\overline{y}}\rightarrow G_{\overline{z}}$ is injective if and only if $\ker(G_{\overline{x}}\rightarrow G_{\overline{y}})=\ker(G_{\overline{x}}\rightarrow G_{\overline{z}})$, which follows from $\cI_{\cX/\cY}=\cI_{\cX/\cZ}$.
\end{proof}

\begin{proposition}[Stabilization Property]\label{prop: generalization of stabilization}
    Let $\cX$ be a quasi-separated locally-Noetherian algebraic stack.
    Let $\cP$ be a property that implies $\Delta_{\mathrm{sep}}$, and at least one between
    $\cP_{\mathrm{sm}}$ or $\cP^{\mathrm{ca}}$. Suppose further that $\cX$ is Noetherian or $\cP$ is semi-local. Then, every ascending chain in $\cC_{\cX}^{\cP}$ admits an upper bound, and if $\cX$ is Noetherian, then every ascending chain in $\cC_{\cX}^{\cP}$ stabilizes. In particular, $\cC_{\cX}^{\cP}$ is either empty or admits a maximal element.
\end{proposition}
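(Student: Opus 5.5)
First I would reduce to the Noetherian case. When $\cP$ is semi-local and $\cX$ is only locally-Noetherian and quasi-separated, write $\cX=\colim\cU_\lambda$ over its quasi-compact open substacks. Each $\cU_\lambda$ is Noetherian. Restricting an adequate moduli space morphism $\cX\rightarrow\cY$ to the saturated open $\varphi^{-1}(\varphi(\cU_\lambda))$ is not quite $\cU_\lambda$, so I would instead work with the directed system of quasi-compact saturated opens $\varphi^{-1}(\cV)$. Alternatively, I would restrict the chain to $\cU_\lambda$ via pullback along the open immersion. This stays in $\cC_{\cU_\lambda}^{\cP}$ after passing to the image, because open immersions preserve $\Delta_{\mathrm{sep}}$, $\cP_{\mathrm{sm}}$ and $\cP^{\mathrm{ca}}$. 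Once stabilization holds in the Noetherian case, Lemma~\ref{lem: passing to quasicompact case principle} gives the upper bound, and Zorn's lemma then yields a maximal element. So the heart of the matter is to show that, for $\cX$ Noetherian, every ascending chain $\cX\rightarrow\cdots\rightarrow\cY_{n+1}\rightarrow\cY_n\rightarrow\cdots$ in $\cC_{\cX}^{\cP}$ stabilizes.

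For the Noetherian case, I would attach to each object $\cX\rightarrow\cY_n$ its relative inertia $\cI_{\cX/\cY_n}\subset\cI_{\cX}$. Since $\cY_n$ has separated inertia by $\Delta_{\mathrm{sep}}$, the identity section of $\cI_{\cY_n}$ is a closed immersion, and hence $\cI_{\cX/\cY_n}$ is a closed substack of $\cI_{\cX}$. Going down the chain $\cX\rightarrow\cY_{n+1}\rightarrow\cY_n$, these subgroups form an ascending sequence of closed subgroups. Reading the chain in the ascending direction, they form a descending sequence of closed substacks $\cI_{\cX/\cY_1}\supseteq\cI_{\cX/\cY_2}\supseteq\cdots$. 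Now $\cI_{\cX}\rightarrow\cX$ is of finite type, because $\cX$ is quasi-separated and locally Noetherian. Since $\cX$ is Noetherian, $\cI_{\cX}$ is a Noetherian algebraic stack, so its closed substacks satisfy the descending chain condition. This chain of closed substacks therefore stabilizes, giving $\cI_{\cX/\cY_n}=\cI_{\cX/\cY_{n+1}}$ for $n\gg0$.

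Once the relative inertias agree, Lemma~\ref{lem: nice chains are determined by relative inertia} applies to $\cX\rightarrow\cY_{n+1}\rightarrow\cY_n$. First I need to check its hypotheses. The morphism $\cY_{n+1}\rightarrow\cY_n$ is an adequate moduli space morphism by Lemma~\ref{lem: cancellation properties}\eqref{lem: cancellation properties 3}, as the composite and $\varphi_{n+1}$ both are. In the $\cP^{\mathrm{ca}}$ case all maps are good, again by cancellation. In the $\cP_{\mathrm{sm}}$ case both targets have smooth stabilizers. The stacks involved are quasi-separated, which follows from $\Delta_{\mathrm{sep}}$ together with quasi-separatedness being inherited along adequate moduli space morphisms. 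The lemma then shows that $\cY_{n+1}\rightarrow\cY_n$ is an isomorphism, so the chain stabilizes.

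The main obstacle I expect is the Noetherian-ness step. Showing that $\cI_{\cX}$ satisfies the descending chain condition on closed substacks needs care with non-quasi-separated inertia. There is also the question of whether "closed substack" is meant set-theoretically or scheme-theoretically. Equality of relative inertias is needed as closed subgroups, not merely as subsets, so I would use the quasi-coherent ideal sheaves on the Noetherian stack $\cI_{\cX}$, which satisfy ACC. A secondary subtlety is making sure the reduction to quasi-compact opens is compatible with saturation; if needed, I would use the saturated quasi-compact opens $\pi^{-1}$ of opens of the final target.
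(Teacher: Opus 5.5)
Your proposal matches the paper's proof: reduce to the Noetherian case via Lemma~\ref{lem: passing to quasicompact case principle}, use $\Delta_{\mathrm{sep}}$ to turn the chain into a descending chain of closed subgroups $\cI_{\cX/\cY_n}\subset\cI_{\cX}$ that stabilizes by Noetherianity, and conclude with Lemma~\ref{lem: nice chains are determined by relative inertia}. You also check hypotheses the paper leaves implicit (cancellation for $\cY_{n+1}\rightarrow\cY_n$, quasi-separatedness, scheme-theoretic equality); the one thing to drop is your ``alternative'' of restricting to a non-saturated $\cU_\lambda$, which fails in general (e.g.\ $\P^1\subset[\A^2/\Gm]$ with weights $(1,1)$ over $\Spec k$), so use only quasi-compact opens pulled back from a fixed member of the chain.
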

\begin{proof}
    By Lemma~\ref{lem: passing to quasicompact case principle}, we can reduce to the case where $\cX$ is Noetherian. Take an ascending chain in $\cC_{\cX}^{\cP}$, which in turn induces a descending chain of closed subgroups of $\cI_{\cX}$, by taking the relative inertia of the various adequate moduli space morphisms. As $\cX$ is Noetherian, this chain stabilizes. Then, the chain stabilizes by Lemma~\ref{lem: nice chains are determined by relative inertia}.
\end{proof}

\begin{corollary}\label{cor: generalization existence maximum}
    Let $\cX$ be a quasi-separated locally-Noetherian algebraic stack admitting an adequate moduli space $\cXMOD$, and $\cP$ a property modular with respect to $\cX$ that implies $\cP_{\mathrm{adm}}$ and at least one between $\cP^{\mathrm{ca}}$ and $\DM$.
    Suppose further that $\cX$ is Noetherian or $\cP$ is semi-local. Then, $\cC_{\cX}^{\cP}$ admits a maximum.
\end{corollary}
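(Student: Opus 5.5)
The plan follows the proofs of Corollaries~\ref{cor: existence maximum good moduli space case} and~\ref{cor: existence maximum DM case}, with Proposition~\ref{prop: generalization of stabilization} in place of Proposition~\ref{prop: stabilization good case} and Corollary~\ref{cor: stabilization DM case}. First I check that $\cC_{\cX}^{\cP}$ is nonempty: by the first condition in Definition~\ref{def: modular property}, $\cXMOD$ has $\cP$, so $\pi_{\cX}$ is an object. By Lemma~\ref{lem: refinement property of CX}(1) it is moreover the final object.

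Next I upgrade $\cP$ to a property to which Proposition~\ref{prop: generalization of stabilization} applies. By Example~\ref{exm: modular properties}, $\cP_{\mathrm{adm}}$ implies $\Delta_{\mathrm{sep}}$, so every object $\cY$ has separated diagonal. For the other hypothesis there are two cases.
\begin{itemize}
\item If $\cP$ implies $\DM$, then every $\cY$ has étale, hence smooth, stabilizers. Thus $\cC_{\cX}^{\cP}=\cC_{\cX}^{\cP\cup\cP_{\mathrm{sm}}}$.
\item If $\cP$ implies $\cP^{\mathrm{ca}}$, there is nothing to show.
\end{itemize}
A slight subtlety is that the property of the morphism $\varphi$ is part of $\cP$ here, but Definition~\ref{def: category mod space morphisms} allows this. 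Alternatively, when $\pi_{\cX}$ is good one could use Lemma~\ref{lem: cancellation properties}\eqref{lem: cancellation properties 2}: the structure map $\cY\to\cXMOD$ has affine diagonal by $\cP_{\mathrm{adm}}$, so $\varphi$ is cohomologically affine. With the upgraded property $\cP'$, Proposition~\ref{prop: generalization of stabilization} applies. The hypothesis that $\cX$ is Noetherian or $\cP$ is semi-local carries over, since adding $\cP_{\mathrm{sm}}$ or $\cP^{\mathrm{ca}}$ is compatible with semi-locality: both are conditions checkable on quasi-compact opens. Hence $\cC_{\cX}^{\cP}$, being nonempty, admits a maximal element.

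Finally, uniqueness. By Lemma~\ref{lem: poset structure}, $\cC_{\cX}^{\cP}$ is a poset. By Lemma~\ref{lem: refinement property of CX}(2), which uses that $\cP$ is modular and implies $\cP_{\mathrm{adm}}$, it is filtered. Given a maximal element $M$ and any object $\cY$, there is a common refinement $\cY_{1,2}$ dominating both. Maximality forces $\cY_{1,2}\simeq M$, so $M$ dominates $\cY$ and $M$ is the maximum.

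The main point needing care is the semi-local reduction in Proposition~\ref{prop: generalization of stabilization} when $\cX$ is only locally Noetherian. There one must ensure that the added property ($\cP_{\mathrm{sm}}$ or cohomological affineness of $\varphi$) is semi-local, so that the glued upper bound from Lemma~\ref{lem: passing to quasicompact case principle} still lies in $\cC_{\cX}^{\cP}$. Both properties are local on the target: stabilizers are pointwise, and cohomological affineness is local by flat base change. So this should be routine.
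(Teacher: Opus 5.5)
Your proposal is correct and matches the paper's proof: apply Proposition~\ref{prop: generalization of stabilization} to get a maximal element, then use the filteredness from Lemma~\ref{lem: refinement property of CX} to see that it is the maximum. Your extra nonemptiness check via $\cXMOD$ is a welcome detail the paper leaves implicit, but the ``upgrade'' to $\cP'$ and the accompanying semi-locality discussion are unnecessary: $\cP$ already implies $\Delta_{\mathrm{sep}}$ (via $\cP_{\mathrm{adm}}$) and either $\cP^{\mathrm{ca}}$ or $\DM$ (hence $\cP_{\mathrm{sm}}$), so $\cP$ itself satisfies the proposition's hypotheses.
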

\begin{proof}
    By Proposition~\ref{prop: generalization of stabilization}, we know that $\cC_{\cX}^{\cP}$ admits a maximal element, which is unique by Lemma~\ref{lem: refinement property of CX}.
\end{proof}

\begin{remark}
    Since $\cP_{\mathrm{sm}}$ is not a modular property by Example~\ref{exm: non-modular properties}, in general there is no maximum in $\cC_{\cX}^{\cP_{\mathrm{sm}}}$, equivalently, the maximal element is not unique. For instance, consider the case where $\cX=B(\Gm\times\mu_2)$ over $\mathbb{F}_2$, and the two good moduli space morphisms $\varphi_1,\varphi_2:\cX\rightarrow B\Gm$ induced by $(\lambda,t)\mapsto\lambda$ and $(\lambda,t)\mapsto\lambda t$, respectively. Then, they are both maximal but non-isomorphic objects of $\cC_{\cX}^{\cP_{\mathrm{sm}}}$. Nevertheless, there is a morphism $B(\Gm\times\mu_2)\rightarrow B(\Gm\times\Gm)$ dominating both $\varphi_i$.
\end{remark}

We can refine Proposition~\ref{prop: generalization of stabilization} to generalize Lemma~\ref{lem: inclusion CX for good moduli spaces} and Corollary~\ref{cor: inclusion CX for DM case}.

\begin{proposition}\label{prop: generalization characterization CX}
    Let $\cX$ be a quasi-separated locally-Noetherian algebraic stack. Let $\cP$ be a property that implies $\cP_{\mathrm{adm}}$, and at least one between properties $\DM$ and $\cP^{\mathrm{ca}}$. Suppose further that $\cX$ is Noetherian or $\cP$ is semi-local. Then, $\cC_{\cX}^{\cP}$ is equivalent to a partially ordered subset of the set of subgroups of the inertia $\cI_{\cX}$, by sending an object $\varphi:\cX\rightarrow\cY$ to $\cC_{\cX}^{\cP}$ to the relative inertia $\cI_{\cX/\cY}\subset\cI_{\cX}$. Moreover, $\cI_{\cX/\cY}\subset\cI_{\cX}$ is closed (resp.\ open) if $\cP$ implies $\Delta_{\mathrm{sep}}$ (resp.\ $\DM$).
\end{proposition}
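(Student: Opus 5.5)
The plan is to show that the poset map $(\varphi:\cX\rightarrow\cY)\mapsto\cI_{\cX/\cY}$ is injective and order-reversing. It is clearly order-reversing: a 1-morphism $\cY_1\rightarrow\cY_2$ in $\cC_{\cX}^{\cP}$ gives $\cI_{\cX/\cY_1}\subset\cI_{\cX/\cY_2}$. The content is injectivity. So suppose $\varphi_1:\cX\rightarrow\cY_1$ and $\varphi_2:\cX\rightarrow\cY_2$ are objects of $\cC_{\cX}^{\cP}$ with $\cI_{\cX/\cY_1}=\cI_{\cX/\cY_2}$; I want to show they are isomorphic in $\cC_{\cX}^{\cP}$.

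The first step is to relate both objects to a common refinement, since Lemma~\ref{lem: nice chains are determined by relative inertia} only compares objects lying on a chain. I would form $\cY':=\cY_1\times_{\cXMOD}\cY_2$ and $\cY_{1,2}:=\Spec\!_{\cY'}(\varphi'_*\cO_{\cX})$, exactly as in the proof of Lemma~\ref{lem: refinement property of CX}. Since $\cP$ implies $\cP_{\mathrm{adm}}$, the maps $\cY_i\rightarrow\cXMOD$ have affine diagonal. Hence $\cY'\rightarrow\cY_i$ has affine diagonal, and Lemma~\ref{lem: cancellation properties}\eqref{lem: cancellation properties 2} makes $\varphi'$ adequately affine. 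It follows that $\varphi_{1,2}:\cX\rightarrow\cY_{1,2}$ is an adequate moduli space morphism. By Lemma~\ref{lem: cancellation properties}\eqref{lem: cancellation properties 3}, the induced maps $\theta_i:\cY_{1,2}\rightarrow\cY_i$ are adequate moduli space morphisms.

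Next I compute the relative inertia of this refinement. Because $\cY_{1,2}\rightarrow\cY'$ is affine, hence representable, we have $\cI_{\cX/\cY_{1,2}}=\cI_{\cX/\cY'}$. Moreover $\cI_{\cX/\cY'}=\cI_{\cX/\cY_1}\cap\cI_{\cX/\cY_2}$, since $\cI_{\cY'}$ is contained in $\cI_{\cY_1}\times\cI_{\cY_2}$. Therefore $\cI_{\cX/\cY_{1,2}}=\cI_{\cX/\cY_i}$ for $i=1,2$.

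I expect the main obstacle to be that $\cY_{1,2}$ need not satisfy $\cP$, because $\cP$ is not assumed modular. However, Lemma~\ref{lem: nice chains are determined by relative inertia} only needs the hypotheses of the surjectivity Lemma~\ref{lem: surjectivity stabilizers good and smooth stab case}, so I split into two cases.

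In the $\cP^{\mathrm{ca}}$ case, $\varphi_i$ is cohomologically affine, and $\varphi'$ is cohomologically affine by Lemma~\ref{lem: cancellation properties}\eqref{lem: cancellation properties 2}. So $\varphi_{1,2}$ is good, and $\theta_i$ is good by Lemma~\ref{lem: cancellation properties}\eqref{lem: cancellation properties 3}.

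In the $\DM$ case, the stabilizers of $\cY'$ are subgroups of products of étale groups, hence étale. The same then holds for $\cY_{1,2}$, which is representable over $\cY'$. So $\cY_{1,2}$ and $\cY_i$ have smooth geometric stabilizers.

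In either case I apply Lemma~\ref{lem: nice chains are determined by relative inertia} to $\cX\rightarrow\cY_{1,2}\xrightarrow{\theta_i}\cY_i$, and conclude that each $\theta_i$ is an isomorphism, so $\cY_1\simeq\cY_2$ in $\cC_{\cX}$. One point to double-check is that quasi-separatedness of the stacks holds, which follows from $\cP_{\mathrm{adm}}$ together with quasi-separatedness of $\cX$.

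Finally, the closed/open assertion follows as in Lemma~\ref{lem: inclusion CX for good moduli spaces}. $\cI_{\cX/\cY}$ is the preimage of the identity section of $\cI_{\cY}$. That section is closed when $\cY$ has separated inertia, which is equivalent to $\Delta_{\mathrm{sep}}$ as in Example~\ref{exm: modular properties}. It is open when $\cI_{\cY}\rightarrow\cY$ is unramified, which holds when $\cY$ is Deligne–Mumford.
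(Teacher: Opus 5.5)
Your proposal is correct and follows essentially the same route as the paper. You form the refinement $\cY_{1,2}$ of Lemma~\ref{lem: refinement property of CX} and observe that it still satisfies $\DM$ or $\cP^{\mathrm{ca}}$ even though it need not satisfy $\cP$. You then get $\cI_{\cX/\cY_{1,2}}=\cI_{\cX/\cY_1}\cap\cI_{\cX/\cY_2}$ from the representability of $\cY_{1,2}\rightarrow\cY_1\times_{\cXMOD}\cY_2$, and conclude with Lemma~\ref{lem: nice chains are determined by relative inertia} applied to each $\theta_i$. Your write-up is in fact more careful than the paper's in three places: checking that $\theta_i$ is good (resp.\ that $\cY_{1,2}$ has \'etale stabilizers), and flagging quasi-separatedness.
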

\begin{proof}
    Let $\varphi_1:\cX\rightarrow\cY_1$ and $\varphi_2:\cX\rightarrow\cY_2$ be two objects of $\cC_{\cX}^{\cP}$ with the same relative inertia; we want to prove that they are isomorphic. If there exists an arrow $\phi:\cY_1\rightarrow\cY_2$ between them in $\cC_{\cX}^{\cP}$, then $\phi$ is an isomorphism by Lemma~\ref{lem: nice chains are determined by relative inertia}. In general, consider the refinement $\varphi_{1,2}:\cX\rightarrow\cY_{1,2}$ of $\varphi_1$ and $\varphi_2$ as in Lemma~\ref{lem: refinement property of CX}. Notice that $\cY_{1,2}$ may not satisfy $\cP$ as we are not requiring $\cP$ to be a modular property; however $\cY_{1,2}$ and $\varphi_{1,2}$ satisfy at least one between $\DM$ and $\cP^{\mathrm{ca}}$. Since $\cY_{1,2}\rightarrow\cY_1\times_{\cXMOD}\cY_2$ is representable, we have $\cI_{\cX/\cY_{1,2}}=\cI_{\cX/\cY_1}\cap\cI_{\cX/\cY_2}=\cI_{\cX/\cY_1}=\cI_{\cX/\cY_2}$, and the statement follows from the previous case.
\end{proof}

Even though the results of this section already work under weak assumptions, it would be useful to get rid of the locally-Noetherian condition. However, one cannot expect chains in $\cC_{\cX}$ to stabilize in the non-Noetherian case, even when $\cX$ is of finite presentation over a quasi-compact scheme.

\section{Existence of Universal Morphisms and Applications}\label{subsec: examples and consequences}

Given an algebraic stack $\cX$ and a property $\cP$ of algebraic stacks, we consider the following fundamental question (Question~\ref{question: intro universal objects}): does there exist a morphism $\Phi_{\cP}:\cX\rightarrow\cX_{\cP}$ that is universal among morphisms from $\cX$ to stacks satisfying $\cP$?

When $\cP$ is the property of being an algebraic space, this is the same as asking about the existence of a moduli space for $\cX$.
We study Question~\ref{question: intro universal objects} when $\cX$ admits an adequate moduli space and $\cP$ is a modular property, using Corollary~\ref{cor: generalization existence maximum} as our main tool.

\subsection{The Main Existence Results}\label{subsec: existence of universal morphisms}

The first positive answer to Question~\ref{question: intro universal objects} is provided by the following theorem, for which we need to assume $\cP$ to be modular and contain $\cP_{\mathrm{mod}}$. This assumption will be later replaced with $\cP$ being strongly modular, that is sometimes more suitable for applications.

\begin{theorem}\label{thm: technical existence of initial morphisms}
    Let $\cX$ be a quasi-separated and locally-Noetherian algebraic stack admitting an adequate moduli space $\pi_{\cX}:\cX\rightarrow\cXMOD$. Let $\cP$ be a modular property of stacks that implies $\cP_{\mathrm{adm}}$.
    Assume that $\cX$ is Noetherian or $\cP$ is semi-local. Suppose that at least one of the following holds:
    \begin{enumerate}
        \item\label{case: technical good case thm main existence of initial morphisms} $\pi_{\cX}$ is a good moduli space morphism, or
        \item $\cP$ implies the property $\DM$.
    \end{enumerate}
    Then, the category of morphisms from $\cX$ to stacks having the property $\cP$ has an initial object
    \[
    \begin{tikzcd}
        \Phi_{\cP}:\cX\arrow[r] & \cX_{\cP}
    \end{tikzcd}
    \]
    that is an adequate moduli space morphism. Moreover, in case~\eqref{case: technical good case thm main existence of initial morphisms}, $\Phi_{\cP}$ is also a good moduli space morphism.
\end{theorem}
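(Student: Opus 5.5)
The plan is to take $\Phi_{\cP}$ to be the maximum of $\cC_{\cX}^{\cP}$ and then show it is initial among all morphisms to $\cP$-stacks.

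\textbf{Step 1: the candidate.} Corollary~\ref{cor: generalization existence maximum} gives a maximum $\Phi_{\cP}:\cX\rightarrow\cX_{\cP}$ of $\cC_{\cX}^{\cP}$. Its hypotheses hold: in case (1), every object of $\cC_{\cX}^{\cP}$ is automatically a good moduli space morphism, because $\pi_{\cX}$ is good and $\cP$ implies $\cP_{\mathrm{adm}}$. Indeed, each $\cY$ has an adequate moduli space with affine-diagonal structure map, so Lemma~\ref{lem: cancellation properties}\eqref{lem: cancellation properties 2} applied to $\cX\to\cY\to\cXMOD$ makes $\varphi$ cohomologically affine. Hence $\cC_{\cX}^{\cP}=\cC_{\cX}^{\cP\cup\cP^{\mathrm{ca}}}$, exactly as in Corollary~\ref{cor: existence maximum good moduli space case}. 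The same observation gives the final claim that $\Phi_{\cP}$ is good in case (1).

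\textbf{Step 2: universality.} Let $f:\cX\rightarrow\cZ$ with $\cZ$ satisfying $\cP$. Since $\cP$ implies $\cP_{\mathrm{adm}}$, Lemma~\ref{lem: inheriting affine diagonal to mod space} shows that $\cZ\to\cZ_{\mathrm{mod}}$ has affine diagonal. The universal property of $\pi_{\cX}$ gives $\cXMOD\to\cZ_{\mathrm{mod}}$. Set $\cZ':=\cZ\times_{\cZ_{\mathrm{mod}}}\cXMOD$. Then $\cZ'\to\cXMOD$ has affine diagonal, and $f$ induces $f':\cX\to\cZ'$ over $\cXMOD$. By Lemma~\ref{lem: cancellation properties}\eqref{lem: cancellation properties 2}, $f'$ is adequately affine, so $f'$ factors as
\[
\cX\xrightarrow{\varphi}\cY:=\Spec\!_{\cZ'}(f'_*\cO_{\cX})\rightarrow\cZ',
\]
with $\varphi$ an adequate moduli space morphism and $\cY\to\cZ'$ affine.

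\textbf{Step 3: $\cY$ satisfies $\cP$.} This is the main obstacle, since modularity only gives closure under affine maps and fibre products over $\cXMOD$ of stacks having $\cXMOD$ as moduli space, while $\cZ'$ is a base change along $\cXMOD\to\cZ_{\mathrm{mod}}$. I would use $\cP_{\mathrm{mod}}$-type reasoning: $\cY$ has adequate moduli space $\cXMOD$ by Lemma~\ref{lem: cancellation properties}\eqref{lem: cancellation properties 3}.
\begin{itemize}
\item In the $\DM$ case, $\cY\to\cZ'\to\cZ$ is representable on stabilizers up to the affine part, so I would argue via stabilizers that $\cY$ is DM.
\item For a general modular $\cP$, I would realise $\cY$ as affine over $\cY\times_{\cXMOD}(\text{something with }\cP)$, or compare with the refinement construction of Lemma~\ref{lem: refinement property of CX}.
\end{itemize}
If this fails in general, I would fall back on the relative moduli construction of Theorem~\ref{thm: intro relative moduli space}, whose output is representable over $\cZ$. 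Granting that $\cY\in\cC_{\cX}^{\cP}$, maximality gives $\cX_{\cP}\to\cY$, and hence $\cX_{\cP}\to\cY\to\cZ$ factors $f$.

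\textbf{Step 4: uniqueness.} The factorization is unique up to unique 2-isomorphism by \cite[Lemma 7.23]{LocalStructureStacks_AHR25}, since $\Phi_{\cP}$ is an adequate moduli space morphism. Therefore $\Phi_{\cP}$ is initial.
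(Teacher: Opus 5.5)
Your Steps 1, 2 and 4 match the paper's proof: the maximum of $\cC_{\cX}^{\cP}$ comes from Corollary~\ref{cor: generalization existence maximum}, with $\cC_{\cX}^{\cP}=\cC_{\cX}^{\cP\cup\cP^{\mathrm{ca}}}$ in case (1). The factorization is $\cX\to\cY=\Spec\!_{\cZ'}(f'_*\cO_{\cX})\to\cZ'\to\cZ$ from Lemma~\ref{lem: technical throw in lemma}, followed by maximality and \cite[Lemma 7.23]{LocalStructureStacks_AHR25}. (Minor point: that $\cZ\to\cZMOD$ has affine diagonal is simply the definition of $\cP_{\mathrm{adm}}$, not Lemma~\ref{lem: inheriting affine diagonal to mod space}.) The gap is Step 3, which you leave open, and none of your proposed fixes closes it. Showing that $\cY$ is $\DM$ says nothing about $\cP$ when $\cP$ merely implies $\DM$. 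The relative moduli construction of Theorem~\ref{thm: existence of relative moduli spaces} only produces a stack representable over $\cZ$, which your $\cY$ already is. Representability transfers $\cP$ only when $\cP$ is strongly modular, and that case is the separate Theorem~\ref{thm: existence of universal morphism strongly modular case}. The paper settles this step in Lemma~\ref{lem: technical throw in lemma} with the single sentence ``Because $\cP$ is modular, also $\cY$ has $\cP$.''

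Your doubt about that sentence is justified. The map $\cY\to\cZ'$ is affine, but $\cZ'\to\cZ$ is the base change of $\cXMOD\to\cZMOD$, which need not be affine. Definition~\ref{def: modular property} gives no way to conclude that $\cZ'$ (or $\cY$) has $\cP$, and in fact the statement fails without an extra hypothesis. Take $\mathrm{char}\,k\neq 2$ and $\cX=\P^1_k\times B\mu_{2,k}$, with good moduli space $\P^1_k$. Say $\cW$ has $\cP$ if it is $\DM$, has $\cP_{\mathrm{adm}}$, and either $\cW_{\mathrm{mod}}$ is an affine scheme or $\cW$ admits an affine morphism to $\P^1_k$. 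This $\cP$ is stable under affine morphisms by \cite[Lemma 5.2.11]{AdequateModAlp14}, and $\P^1_k$ has it. Suppose $\cW$ has $\cP$ and $\cW_{\mathrm{mod}}\simeq\P^1_k$. Then an affine map $\cW\to\P^1_k$ factors through the moduli map, which is therefore affine and hence an isomorphism. So the fibre-product axiom holds trivially, $\cP$ is modular and implies both $\cP_{\mathrm{adm}}$ and $\DM$, and the theorem would force $\cX_{\cP}\simeq\P^1_k$. But $B\mu_{2,k}$ has $\cP$, and the projection $\cX\to B\mu_{2,k}$ is an isomorphism on stabilizers, so it cannot factor through $\P^1_k$. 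Correspondingly, in this example $\cZ'=\cY=\cX$, which does not have $\cP$. What is missing is a hypothesis such as: $\cZ\times_{\cZMOD}\cXMOD$ has $\cP$ whenever $\cZ$ does. This is automatic if $\cXMOD\to\cZMOD$ is affine, and it holds for finite inertia, where $\cI_{\cZ'}=\cI_{\cZ}\times_{\cZ}\cZ'$. With that hypothesis, $\cY$ is affine over a $\cP$-stack, so $\cY\in\cC_{\cX}^{\cP}$, and your Steps 1--4 become a complete proof.
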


To prove the theorem, we need the following simple but technical lemma.

\begin{lemma}\label{lem: technical throw in lemma}
    Let $\cX$ be an algebraic stack with adequate moduli space $\pi_{\cX}:\cX\rightarrow\cXMOD$, and let $\cP$ be a modular property of stacks that implies $\cP_{\mathrm{adm}}$.
    Let $f:\cX\rightarrow\cZ$ be a morphism to an algebraic stack having property $\cP$. Then, there exists an object $\varphi:\cX\rightarrow\cY$ in $\cC_{\cX}^{\cP}$, a representable morphism $g:\cY\rightarrow\cZ$, and a 2-isomorphism $\alpha:g\circ\varphi\xrightarrow{\sim} f$, satisfying the following universal property:
    for any other triple $(\varphi',g',\alpha')$ where $\varphi':\cX\rightarrow\cY'$ is in $\cC_{\cX}^{\cP}$, $g':\cY'\rightarrow\cZ$ is a morphism, and $\alpha':g'\circ\varphi'\xrightarrow{\sim} f$ is a 2-isomorphism, there exists a morphism $\phi:\cY'\rightarrow\cY$, making the diagram
    \[
    \begin{tikzcd}[row sep=tiny]
        & \cY'\arrow[rd,"g'"]\arrow[dd,"\phi"]\\
        \cX\arrow[ru,"\varphi'"]\arrow[rd,"\varphi"'] & & \cZ\\
        & \cY\arrow[ru,"g"']
    \end{tikzcd}
    \]
    2-commutative, compatibly with $\alpha$ and $\alpha'$. Moreover, $\phi$ is unique up to a unique 2-isomorphism compatible with the other 2-isomorphisms.
\end{lemma}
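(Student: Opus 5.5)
The construction will be the relative affine hull of $\cX$ over $\cXMOD\times\cZ$. Let $\cZ':=\cZ\times\cXMOD$ and let $f':=(f,\pi_{\cX}):\cX\rightarrow\cZ'$. Since $\cZ$ has $\cP$ and $\cP$ implies $\cP_{\mathrm{adm}}$, the stack $\cZ$ has an adequate moduli space $\cZ\rightarrow\cZMOD$ whose structure morphism has affine diagonal. Rather than taking the product with $\cXMOD$ over the base, I work over $\cZMOD$. Let $\cZ'':=\cZ\times_{\cZMOD}\cXMOD$, which receives $\cX$ via $(f,\pi_{\cX})$; this uses the induced map $\cXMOD\rightarrow\cZMOD$ given by the universal property of $\pi_{\cX}$. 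The projection $\cZ''\rightarrow\cXMOD$ is a base change of $\cZ\rightarrow\cZMOD$, so it has affine diagonal. The composite $\cX\rightarrow\cZ''\rightarrow\cXMOD$ is $\pi_{\cX}$, which is adequately affine. Lemma~\ref{lem: cancellation properties}\eqref{lem: cancellation properties 2} then shows that $f'':\cX\rightarrow\cZ''$ is adequately affine.

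Set $\cY:=\Spec\!_{\cZ''}(f''_*\cO_{\cX})$, with $\varphi:\cX\rightarrow\cY$ the induced map and $g:\cY\rightarrow\cZ''\rightarrow\cZ$. By construction $\varphi$ is adequately affine and $\varphi_*\cO_{\cX}=\cO_{\cY}$, so $\varphi$ is an adequate moduli space morphism. The map $\cY\rightarrow\cZ''$ is affine, and $\cZ''\rightarrow\cZ$ is a base change of the representable map $\cXMOD\rightarrow\cZMOD$, so $g$ is representable. For $\cY\in\cC_{\cX}^{\cP}$ I will use the modular axioms. $\cXMOD$ and $\cZ$ both have $\cP$, and both have adequate moduli space $\cZMOD$. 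If $\cXMOD\rightarrow\cZMOD$ is not an isomorphism, I first replace $\cZ$ by the stack $\Spec\!_{\cZ\times_{\cZMOD}\cXMOD}(\ldots)$; I expect to handle this by viewing $\cZ''$ and $\cXMOD$ both as having $\cP$ over $\cXMOD$. The refinement axiom then gives that $\cZ''$ has $\cP$, and the affine axiom gives the same for $\cY$. This is the step I expect to be the main obstacle: the refinement axiom of Definition~\ref{def: modular property} is only stated for stacks with moduli space $\cXMOD$. The cleanest route is probably to apply it to $\cZ''$ itself. For that I need $\cZ''$, or at least the scheme-theoretic image piece $\cY$, to have adequate moduli space $\cXMOD$. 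The morphism $\cY\rightarrow\cXMOD$ is an adequate moduli space by Lemma~\ref{lem: cancellation properties}\eqref{lem: cancellation properties 3}. So I will instead show directly that $\cY$ has $\cP$, by writing $\cY\rightarrow\cZ$ as affine over a stack known to have $\cP$. If this fails, I will fall back on the axiom of $\cP$ being preserved under the base change $\cZ\times_{\cZMOD}\cXMOD$.

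For the universal property, take a triple $(\varphi',g',\alpha')$. By \cite[Theorem 3.12, Lemma 7.23]{LocalStructureStacks_AHR25}, the morphism $\cY'\rightarrow\cXMOD$ is unique, and $g'$ and the moduli maps are compatible. This gives $h:\cY'\rightarrow\cZ''$ with $h\circ\varphi'\simeq f''$. Since $\varphi'$ is an adequate moduli space morphism, $\cO_{\cY'}=\varphi'_*\cO_{\cX}$, and hence $h_*\cO_{\cY'}=f''_*\cO_{\cX}$. The identity map of this algebra gives a factorization of $h$ through $\Spec\!_{\cZ''}(h_*\cO_{\cY'})=\cY$, which is the desired $\phi$. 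To check this I use that $\Spec$ is left adjoint-like: morphisms over $\cZ''$ into a relative $\Spec$ correspond to algebra maps $f''_*\cO_{\cX}\rightarrow h_*\cO_{\cY'}$. The relation $\phi\circ\varphi'\simeq\varphi$ follows from the same correspondence applied on $\cX$. Uniqueness of $\phi$ up to unique 2-isomorphism follows from \cite[Lemma 7.23]{LocalStructureStacks_AHR25}, since $\phi$ is a morphism in $\cC_{\cX}$. Compatibility with $\alpha,\alpha'$ follows because all 2-morphisms between maps from $\cY'$ determined by precomposition with $\varphi'$ are unique.
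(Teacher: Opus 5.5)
\textbf{Same construction, with a gap at membership in $\cC_{\cX}^{\cP}$.} Your construction and your proof of the universal property are the same as the paper's. The paper also forms your $\cZ''=\cZ\times_{\cZMOD}\cXMOD$ (called $\cZ'$ there). It uses Lemma~\ref{lem: cancellation properties}\eqref{lem: cancellation properties 2}, via the affine diagonal of $\cZ\rightarrow\cZMOD$, to make $\cX\rightarrow\cZ''$ adequately affine, and sets $\cY=\Spec\!_{\cZ''}(f''_*\cO_{\cX})$. It obtains $\phi$ by factoring $h:\cY'\rightarrow\cZ''$ through $\Spec\!_{\cZ''}(h_*\cO_{\cY'})\simeq\cY$, and gets uniqueness from \cite[Lemma 7.23]{LocalStructureStacks_AHR25}.

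The genuine gap is the claim $\cY\in\cC_{\cX}^{\cP}$: you never prove that $\cY$ has $\cP$, and none of your fallbacks works.
\begin{itemize}
\item Viewing $\cZ''$ as having $\cP$ presupposes what is to be shown.
\item $\cY\rightarrow\cZ$ is affine only if $\cXMOD\rightarrow\cZMOD$ is, which already fails for $\cZMOD=\Spec k$ and $\cXMOD=\P^1$.
\item The refinement axiom does not apply, because $\cZ$ has moduli space $\cZMOD$, not $\cXMOD$.
\item Stability of $\cP$ under $\cZ\mapsto\cZ\times_{\cZMOD}\cXMOD$ is not one of the axioms of Definition~\ref{def: modular property}.
\end{itemize}

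\textbf{The axioms do not give this step.} The paper covers it with the single sentence ``Because $\cP$ is modular, also $\cY$ has $\cP$''. Your suspicion is justified: the three axioms do not imply it, and the lemma as literally stated can fail.

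Over a field $k$, let $\cX=\P^1\times B\Gm$, $\cZ=B\Gm$, and $f$ the projection. Say $\cW$ has $\cP$ if it has $\cP_{\mathrm{adm}}$ and is either an algebraic space or admits an affine morphism to some $B\Gm^n$.
\begin{itemize}
\item The first two axioms clearly hold.
\item A stack affine over $B\Gm^n$ has the affine good moduli space $\Spec\Gamma(\cW,\cO_{\cW})$. So the only stack with $\cP$ and moduli space $\P^1$ is $\P^1$ itself, and the third axiom holds trivially.
\item Hence $\cC_{\cX}^{\cP}$ consists of $\pi_{\cX}$ alone. Yet $f$ does not factor through $\pi_{\cX}$, since it is nontrivial on stabilizers. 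Here $\cY=\cZ''=\cX$ lacks $\cP$.
\end{itemize}

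\textbf{What would close the gap.} Some extra input is needed. One option is strong modularity: then representability of $g$ immediately gives $\cY\in\cC_{\cX}^{\cP}$, which is the route of Theorem~\ref{thm: existence of universal morphism strongly modular case}. The other is a direct check for the specific $\cP$.

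For $\cI_{\mathrm{fi}}$ together with $\cP_{\mathrm{adm}}$, as used in Corollary~\ref{cor: finite inertia initial map}, the check is easy:
\begin{itemize}
\item $\cI_{\cZ''}\simeq\cI_{\cZ}\times_{\cZ}\cZ''$, because $\cZ''\rightarrow\cZ$ is a base change of a morphism of algebraic spaces.
\item $\cZ''\rightarrow\cXMOD$ has affine diagonal, as a base change of $\cZ\rightarrow\cZMOD$.
\item Both properties pass to $\cY$ along the affine morphism $\cY\rightarrow\cZ''$, with $\cY\rightarrow\cXMOD$ an adequate moduli space by Lemma~\ref{lem: cancellation properties}\eqref{lem: cancellation properties 3}.
\end{itemize}
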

\begin{proof}
    Since $\cP$ implies $\cP_{\mathrm{mod}}$, $\cZ$ admits an adequate moduli space, and we get a diagram of solid arrows
    \[
    \begin{tikzcd}
        \cX\arrow[rrd,bend right=20,"f"']\arrow[rr,bend left=30,"f'"]\arrow[r,dashed,"\varphi"'] & \cY\arrow[r,dashed]\arrow[rd,dashed,"g"] & \cZ'\arrow[r]\arrow[d] & \cXMOD\arrow[d]\\
        & & \cZ\arrow[r,"\pi_{\cZ}"] & \cZMOD
    \end{tikzcd}
    \]
    by~\cite[Theorem 3.12]{LocalStructureStacks_AHR25}. Set $\cY:=\underline{\Spec\!}_{\cZ'}(f'_*\cO_{\cX})$, completing the above diagram. Because $\cP$ is modular, also $\cY$ has $\cP$. As $\cP$ implies $\cP_{\mathrm{adm}}$, by Lemma~\ref{lem: cancellation properties}\eqref{lem: cancellation properties 2}, $\varphi$ is an adequate moduli space, and it is a good moduli space morphism if $\pi_{\cX}$ is. Therefore, it is enough to show that $\cY$ together with the induced representable morphism $g:\cY\rightarrow\cZ$ satisfies the universal property of the statement.

    Let $\cY'$, $\varphi'$ and $g'$ as in the statement of the lemma. The moduli space morphism $\cY'\rightarrow\cY'_{\mathrm{mod}}\simeq\cXMOD$ induces a map $h:\cY'\rightarrow\cZ'$ that factors $g'$ up to 2-isomorphism. Now, $h$ is quasi-compact and quasi-separated, hence it factors through $\phi:\cY'\rightarrow\underline{\Spec\!}_{\cZ'}(h_*\cO_{\cY'})\simeq\cY$. This yields the desired map $\phi$. The uniqueness of $\phi$ follows from~\cite[Lemma 7.23]{LocalStructureStacks_AHR25}.
\end{proof}

\begin{proof}[Proof of Theorem~\ref{thm: technical existence of initial morphisms}]
    By Corollary~\ref{cor: generalization existence maximum}, the category $\cC_{\cX}^{\cP}$ admits a maximum $\Phi_{\cP}:\cX\rightarrow\cX_{\cP}$, which is a good moduli space if $\pi_{\cX}$ is. We claim that this is the desired initial morphism.
    Let $f:\cX\rightarrow\cZ$ be a morphism to an algebraic stack with property $\cP$. Let $\varphi:\cX\rightarrow\cY$ be the object in $\cC_{\cX}^{\cP}$ with a morphism $g:\cY\rightarrow\cZ$ as in Lemma~\ref{lem: technical throw in lemma}. By maximality of $\cX_{\cP}$, there is a morphism $\phi:\cX_{\cP}\rightarrow\cY$, and the composite $g\circ\phi$ is the morphism we were looking for. The uniqueness follows from~\cite[Lemma 7.23]{LocalStructureStacks_AHR25}.
\end{proof}

It is not clear a priori whether the hypothesis that $\cP$ implies $\cP_{\mathrm{adm}}$ (and hence $\cP_{\mathrm{mod}}$) can be removed. The main hurdle is producing a morphism $g:\cY\rightarrow\cZ$ from an object $\cY$ of $\cC_{\cX}^{\cP}$. In the previous setting, such a morphism was provided by Lemma~\ref{lem: technical throw in lemma}. When $\cP$ is a strongly modular property, it is enough to find an object $\cY$ in $\cC_{\cX}$ and a representable morphism $g:\cY\rightarrow\cZ$, since $\cY$ will then automatically inherit $\cP$ by the definition of a strongly modular property (Definition~\ref{def: modular property}). The existence of such a morphism $g$ is guaranteed by the following theorem, which extends~\cite[Theorem 3.1]{AOV08b} and is of independent interest.

\begin{theorem}\label{thm: existence of relative moduli spaces}
    Let $\cX$ be an algebraic stack with an adequate moduli space $\pi_{\cX}:\cX\rightarrow\cXMOD$. Let $f:\cX\rightarrow\cZ$ be a morphism of algebraic stacks, with $\cZ$ having Zariski-locally affine diagonal.
    Then, there exists an algebraic stack $\cY$, morphisms $\cX\xrightarrow{\varphi}\cY\xrightarrow{g}\cZ$, and a 2-isomorphism $\alpha:g\circ\varphi\xrightarrow{\sim} f$ such that:
    \begin{enumerate}
        \item $g$ is representable,
        \item\label{point: universality property existence of relative moduli spaces} for any other 2-factorization $\cX\xrightarrow{\varphi'}\cY'\xrightarrow{g'}\cZ$, $\alpha':g'\circ\varphi'\xrightarrow{\sim} f$, with $g'$ representable, there exists a unique morphism $\phi:\cY\rightarrow\cY'$ and 2-morphisms $\lambda:\phi\circ\varphi\xrightarrow{\sim}\varphi'$, $\tau:g'\circ\phi\xrightarrow{\sim} g$ making the following diagram commute
        \[
        \begin{tikzcd}
            g'\circ\phi\circ\varphi\arrow[r,"\tau\circ\varphi"]\arrow[d,"g'(\lambda)"] & g\circ\varphi\arrow[d,"\alpha"]\\
            g'\circ\varphi'\arrow[r,"\alpha'"] & f
        \end{tikzcd}
        \]
        \item $\varphi$ is an adequate moduli morphism,
        \item the construction commutes with base change along representable flat morphisms with target either $\cY$ or $\cZ$.
    \end{enumerate}
\end{theorem}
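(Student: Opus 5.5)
The plan is to mimic the construction in Lemma~\ref{lem: technical throw in lemma}, replacing $\cZ'=\cXMOD\times_{\cZMOD}\cZ$ (which needs a moduli space for $\cZ$) with the relative construction $\cZ':=\cXMOD\times\cZ$. More precisely, consider the morphism $f':=(\pi_{\cX},f):\cX\rightarrow\cXMOD\times\cZ$. I do not want to work over an absolute product, so I will instead work locally and use the graph-type factorization. Since $\cZ$ has Zariski-locally affine diagonal, the projection $\cXMOD\times\cZ\rightarrow\cXMOD$ has Zariski-locally affine diagonal. Via $\Delta_{\cZ}$, the map $f'$ factors through the graph $\cX\rightarrow\cX\times\cZ$, which is (locally) affine. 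I then want to show $f'$ is adequately affine: the composite $\cX\xrightarrow{f'}\cXMOD\times\cZ\rightarrow\cXMOD$ is $\pi_{\cX}$, which is adequately affine. By Lemma~\ref{lem: cancellation properties}\eqref{lem: cancellation properties 2}, applied to $\psi$ the projection (after reducing to the case where $\cZ$ has affine diagonal, so $\psi$ has affine diagonal), $f'$ is adequately affine. Set $\cY:=\underline{\Spec\!}_{\cXMOD\times\cZ}(f'_*\cO_{\cX})$, with $\varphi:\cX\rightarrow\cY$ and $g:\cY\rightarrow\cXMOD\times\cZ\rightarrow\cZ$. Then $\varphi$ is adequately affine by Lemma~\ref{lem: cancellation properties}\eqref{lem: cancellation properties 2} again (since $\cY\rightarrow\cXMOD\times\cZ$ is affine), and $\varphi_*\cO_{\cX}=\cO_{\cY}$ by construction, so $\varphi$ is an adequate moduli space morphism. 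The map $g$ is representable because $\cY\rightarrow\cXMOD\times\cZ$ is affine and $\cXMOD\times\cZ\rightarrow\cZ$ is representable, $\cXMOD$ being an algebraic space.

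For the reduction to affine diagonal, I would argue as in the proof of Lemma~\ref{lem: inheriting affine diagonal}. Working Zariski-locally on $\cXMOD$ via saturated opens, for every closed point of $\cX$ I cover $\cZ$ near its image by opens with affine diagonal. Saturated preimages of opens in $\cXMOD$ avoiding the images of the complementary closed sets give a Zariski cover of $\cXMOD$ on which $f$ lands in an open of $\cZ$ with affine diagonal. On these pieces I perform the construction, and then glue using the uniqueness in part~\eqref{point: universality property existence of relative moduli spaces} together with compatibility with open immersions, which is a special case of the base change statement.

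For universality, let $\cX\xrightarrow{\varphi'}\cY'\xrightarrow{g'}\cZ$ be a factorization with $g'$ representable. The key difficulty, and what I expect to be the main obstacle, is that $\varphi'$ is not assumed to be a moduli space morphism, so $\cY'$ need not map to $\cXMOD$. My approach is to replace $\cY'$ by $\cY'':=\cXMOD\times\cY'$, which maps representably to $\cXMOD\times\cZ$ via $\mathrm{id}\times g'$, and to take $\varphi''=(\pi_{\cX},\varphi')$. The morphism $\varphi''$ is again adequately affine by the cancellation argument, because $\cY''\rightarrow\cXMOD$ has (locally) affine diagonal, being representable over $\cXMOD\times\cZ$. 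So $\varphi''$ factors through $\underline{\Spec\!}_{\cY''}(\varphi''_*\cO_{\cX})$, and it remains to produce a map $\cY\rightarrow\underline{\Spec\!}_{\cY''}(\varphi''_*\cO_{\cX})$ over $\cXMOD\times\cZ$. I would obtain this by descent: smooth-locally on $\cXMOD\times\cZ$ everything becomes a question about algebras, and the universal property of relative $\underline{\Spec\!}$ together with $\varphi_*\cO_\cX=\cO_\cY$ identifies morphisms $\cY\rightarrow\cW$ over the base with $\cO$-algebra maps. Uniqueness of $\phi$ and of the 2-morphisms $\lambda,\tau$ would follow from~\cite[Lemma 7.23]{LocalStructureStacks_AHR25} applied to the adequate moduli space morphism $\varphi$, using that $g'$ is representable, so 2-morphisms are rigid.

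Finally, for base change along a representable flat $\cZ_1\rightarrow\cZ$, formation of $f'_*\cO_{\cX}$ commutes with flat base change and so does relative $\underline{\Spec\!}$. The only extra check is that $\cX_1:=\cX\times_{\cZ}\cZ_1$ has adequate moduli space $\cXMOD$-compatible. Here I would instead note that the construction only depended on $f'$ being adequately affine, and that flat base change preserves adequate moduli space morphisms, by~\cite[Proposition 5.2.9]{AdequateModAlp14}. Base change along a flat $\cY_1\rightarrow\cY$ is analogous.
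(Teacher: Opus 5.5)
Your existence construction is correct, and it is a genuinely more direct route than the paper's. The paper builds $\cY$ as the quotient of the groupoid $(\cX_R)_{\mathrm{mod}}\rightrightarrows(\cX_U)_{\mathrm{mod}}$ for a smooth affine presentation $U\to\cZ$. You instead note that, once $\cZ$ has affine diagonal, $f'=(\pi_{\cX},f)$ is adequately affine: the graph of $f$ is affine and $\pi_{\cX}\times\mathrm{id}_{\cZ}$ is adequately affine. You then take $\cY=\underline{\Spec\!}_{\cXMOD\times\cZ}(f'_*\cO_{\cX})$, whose pullback to $U$ is exactly $(\cX_U)_{\mathrm{mod}}$. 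Your saturated-open reduction to affine diagonal is also sound.

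The gap is in the universal property. Your argument needs $\varphi''=(\pi_{\cX},\varphi')$ to be adequately affine. It then needs $\cW:=\underline{\Spec\!}_{\cY''}(\varphi''_*\cO_{\cX})$ to be affine over $\cXMOD\times\cZ$, since only then are morphisms $\cY\to\cW$ over the base the same as maps of $\cO$-algebras. Both would require hypotheses on $g'$ that are not given: affine diagonal, and quasi-compactness so that Serre's criterion applies. In particular, the claim that $\cY''\to\cXMOD$ has affine diagonal because it is representable over $\cXMOD\times\cZ$ is false.

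For a counterexample, take $\cZ=\Spec k$, $\cX=\A^2$, $\cY'$ the affine plane with doubled origin $U_1\cup U_2$, and $\varphi'$ the inclusion of $U_1$. Then $\varphi''$ is the graph of this open immersion, and its preimage of $\A^2\times U_2$ is $\A^2\setminus\{0\}$. So $\varphi''$ is not affine, hence not adequately affine by \cite[Theorem 4.3.1]{AdequateModAlp14}. Moreover, $\cW$ is again the plane with doubled origin, with structure map $p:\cW\to B=\A^2$. It has two distinct sections over $B$, although $\cO_B\to p_*\cO_{\cW}$ is an isomorphism. So $\cO$-algebra maps do not detect morphisms $\cY\to\cW$.

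The missing ingredient is the \emph{existence} half of the universal property of adequate moduli spaces for maps to arbitrary algebraic spaces, \cite[Theorem 3.12]{LocalStructureStacks_AHR25}. You only invoke the uniqueness statement \cite[Lemma 7.23]{LocalStructureStacks_AHR25}. The paper uses Theorem 3.12, and it makes the detour through $\cY''$ unnecessary:
\begin{enumerate}
\item Pull back along a smooth presentation $U\to\cZ$ and along $R=U\times_{\cZ}U$.
\item Because $g'$ is representable, $\cY'_U$ and $\cY'_R$ are algebraic spaces.
\item Hence $\cX_U\to\cY'_U$ and $\cX_R\to\cY'_R$ factor uniquely through the adequate moduli spaces $\cY_U$ and $\cY_R$.
\item Uniqueness makes these factorizations compatible with the groupoid structure maps, so they descend to $\phi:\cY\to\cY'$.
\end{enumerate}
No condition on the diagonal of $g'$ enters. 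Once this is in place, your base-change argument also goes through. The same reasoning shows that any factorization of $f$ into an adequate moduli space morphism followed by a representable morphism satisfies the universal property, and such factorizations are preserved by flat base change.
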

\begin{proof}
    The proof is similar to the one of~\cite[Theorem 3.1]{AOV08b}. First, by the claimed commutativity of the construction with respect to representable flat morphisms with target $\cZ$, we can assume that $\cZ$ has affine diagonal. Since $\cZ$ has affine diagonal, there exists an affine smooth presentation $U\rightarrow\cZ$ from an algebraic space. Let $R:=U\times_{\cZ}U\rightrightarrows U$ be the corresponding smooth and affine groupoid. Let $\cX_U:=\cX\times_{\cZ}U$ and $\cX_R:=\cX\times_{\cZ}R$, that has again a groupoid structure. Since $\cX_U\rightarrow\cX$ is affine, by~\cite[Lemma 5.2.11]{AdequateModAlp14} $\cX_U$ admits an adequate moduli spaces $(\cX_U)_\mathrm{mod}$ and similarly for $\cX_{R}$. By the universal property of adequate moduli space~\cite[Theorem 3.12]{LocalStructureStacks_AHR25}, we obtain a groupoid $(\cX_R)_\mathrm{mod}\rightrightarrows(\cX_U)_\mathrm{mod}$; we claim that the morphisms are smooth and affine. Let $p:R\rightarrow U$ be any of the two projections, and set $V:=(\cX_U)_\mathrm{mod}\times_{U}R$, which is smooth and affine over $(\cX_U)_\mathrm{mod}$. Since adequate moduli spaces commute with flat base change by~\cite[Proposition 5.2.9 (1)]{AdequateModAlp14}, the morphism $\cX_R\simeq \cX_U\times_{(\cX_U)_\mathrm{mod}}V\rightarrow V$ is an adequate moduli space. By uniqueness of adequate moduli spaces, we get that $(\cX_R)_\mathrm{mod}\simeq(\cX_U)_\mathrm{mod}\times_{U}R$. Applying the same reasoning to the other projection $R\rightarrow U$, we showed that $(\cX_R)_\mathrm{mod}\rightrightarrows(\cX_U)_\mathrm{mod}$ is a smooth groupoid. The following diagram summarizes the situation
    \[
    \begin{tikzcd}
        \cX_R\arrow[d,shift right]\arrow[d,shift left]\arrow[r] & (\cX_R)_\mathrm{mod}\arrow[d,shift right]\arrow[d,shift left]\arrow[r] & R\arrow[d,shift right]\arrow[d,shift left]\\
        \cX_U\arrow[d]\arrow[r] & (\cX_U)_\mathrm{mod}\arrow[d]\arrow[r] & U\arrow[d]\\
        \cX\arrow[r,"\varphi"] & \cY\arrow[r,"g"] & \cZ
    \end{tikzcd}
    \]
    where $\cY$ is the quotient of the groupoid in the middle. Then, there are induced morphisms $\varphi$ and $g$ as above making all the square diagrams 2-cartesian. By descent, $\varphi$ is an adequate moduli space~\cite[Proposition 5.2.9 (2)]{AdequateModAlp14}, and $g$ is representable. Moreover, the construction commutes with representable flat base change along morphisms to either $\cZ$ or $\cY$, since the formation of adequate moduli spaces commute with flat morphisms by~\cite[Proposition 5.2.9 (1)]{AdequateModAlp14}. Therefore, it is enough to prove the universal property of $\cY$.

    Let $g':\cY'\rightarrow\cZ$ be as in point~\eqref{point: universality property existence of relative moduli spaces}, and form $\cY'_U$ and $\cY'_R$ by pulling back $U$ and $R$ along $\cY\rightarrow\cZ$, respectively. Since $g'$ is representable, these are algebraic spaces, and the universal property of adequate moduli spaces~\cite[Theorem 3.12]{LocalStructureStacks_AHR25} yields morphisms $(\cX_U)_\mathrm{mod}\rightarrow\cY'_U$ and $(\cX_R)_\mathrm{mod}\rightarrow\cY'_R$. This gives a cartesian diagram of groupoids, hence an induced morphism $\varphi:\cY\rightarrow\cY'$ between quotients. A simple check shows that this is the desired morphism.
\end{proof}

The following theorem replaces the assumption in Theorem~\ref{thm: technical existence of initial morphisms} of $\cP$ containing $\cP_{\mathrm{mod}}$ with $\cP$ being strongly modular.

\begin{theorem}\label{thm: existence of universal morphism strongly modular case}
    Let $\cX$ be a quasi-separated locally-Noetherian algebraic stack admitting an adequate moduli space $\pi_{\cX}:\cX\rightarrow\cXMOD$. Let $\cP$ be a strongly modular property of stacks that implies $\Delta_{\mathrm{aff}}^{\mathrm{loc}}$, and assume that $\cX$ is Noetherian or $\cP$ is semi-local. Suppose that at least one of the following holds:
    \begin{enumerate}
        \item\label{case: good case thm main existence of initial morphisms} $\pi_{\cX}$ is a good moduli space morphism, or
        \item $\cP$ implies the property $\DM$.
    \end{enumerate}
    Then, the category of morphisms from $\cX$ to stacks having the property $\cP$ has an initial object
    \[
    \begin{tikzcd}
        \Phi_{\cP}:\cX\arrow[r] & \cX_{\cP}
    \end{tikzcd}
    \]
    that is an adequate moduli space morphism. Moreover, in case~\eqref{case: good case thm main existence of initial morphisms}, $\Phi_{\cP}$ is also a good moduli space morphism.
\end{theorem}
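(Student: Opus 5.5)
The plan is to follow the proof of Theorem~\ref{thm: technical existence of initial morphisms}, replacing Lemma~\ref{lem: technical throw in lemma} by Theorem~\ref{thm: existence of relative moduli spaces}.

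\textbf{Step 1: a maximum exists.} We want to apply Corollary~\ref{cor: generalization existence maximum}, which needs $\cP$ to imply $\cP_{\mathrm{adm}}$. This holds for objects of $\cC_{\cX}^{\cP}$. If $\varphi:\cX\rightarrow\cY$ is in $\cC_{\cX}^{\cP}$, then $\cY$ has $\Delta_{\mathrm{aff}}^{\mathrm{loc}}$ by hypothesis. It also has the adequate moduli space $\cXMOD$, by Lemma~\ref{lem: cancellation properties}\eqref{lem: cancellation properties 3}, using the factorization of $\pi_{\cX}$ through $\cY$. By Lemma~\ref{lem: inheriting affine diagonal}, or the remark after Example~\ref{exm: modular properties}, $\cY$ therefore has $\cP_{\mathrm{adm}}$.

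So $\cC_{\cX}^{\cP}=\cC_{\cX}^{\cP\cup\cP_{\mathrm{adm}}}$. The property $\cP':=\cP\cup\cP_{\mathrm{adm}}$ is modular:
\begin{itemize}
\item $\cXMOD$ has $\cP$ and $\cP_{\mathrm{adm}}$.
\item Both $\cP$ and $\cP_{\mathrm{adm}}$ are stable under affine morphisms.
\item Fibre products over $\cXMOD$ of stacks with moduli space $\cXMOD$ stay in $\cP$ by strong modularity and in $\cP_{\mathrm{adm}}$ by modularity of $\cP_{\mathrm{adm}}$.
\end{itemize}
Semi-locality is preserved, or we pass to $\widetilde{\cP'}$ as in Remark~\ref{rmk: semi-localizing a property}. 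In case (1), Lemma~\ref{lem: cancellation properties}\eqref{lem: cancellation properties 2} shows every object is cohomologically affine. Corollary~\ref{cor: generalization existence maximum} then gives a maximum $\Phi_{\cP}:\cX\rightarrow\cX_{\cP}$, which is good in case (1).

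\textbf{Step 2: universality.} Let $f:\cX\rightarrow\cZ$ with $\cZ$ having $\cP$. Since $\cP$ implies $\Delta_{\mathrm{aff}}^{\mathrm{loc}}$, Theorem~\ref{thm: existence of relative moduli spaces} gives a factorization $\cX\xrightarrow{\varphi}\cY\xrightarrow{g}\cZ$ with $\varphi$ an adequate moduli space morphism and $g$ representable. Strong modularity gives that $\cY$ has $\cP$. If required, $\varphi$ has $\cP^{\mathrm{ca}}$ by Lemma~\ref{lem: cancellation properties}\eqref{lem: cancellation properties 2}, since $\cY$ has affine diagonal after a Zariski-local reduction. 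Hence $\varphi\in\cC_{\cX}^{\cP}$. Maximality gives $\phi:\cX_{\cP}\rightarrow\cY$, and $g\circ\phi$ factors $f$. Uniqueness up to unique 2-isomorphism follows from \cite[Lemma 7.23]{LocalStructureStacks_AHR25}, since $\Phi_{\cP}$ is an adequate moduli space morphism.

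\textbf{Main obstacle.} The hardest part is Step 1's requirement that $\cP'$ is modular. In particular, we must check that $\cY_1\times_{\cXMOD}\cY_2$ keeps the locally affine diagonal of its factors. Zariski-local affineness of the diagonal on a fibre product needs care; I would use Lemma~\ref{lem: inheriting affine diagonal to mod space} on each factor relative to $\cXMOD$ to get affine diagonal over $\cXMOD$, which is stable under fibre products.
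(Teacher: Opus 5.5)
Your proposal is correct and follows the paper's proof: you take the maximum of $\cC_{\cX}^{\cP}$ from Corollary~\ref{cor: generalization existence maximum}, factor $f$ via Theorem~\ref{thm: existence of relative moduli spaces}, use strong modularity to place $\cY$ in $\cC_{\cX}^{\cP}$, and conclude by maximality and \cite[Lemma 7.23]{LocalStructureStacks_AHR25}; your Step 1 usefully makes explicit what the paper leaves implicit, namely that objects of $\cC_{\cX}^{\cP}$ automatically satisfy $\cP_{\mathrm{adm}}$ and, in case (1), are good. Your ``main obstacle'' is not one, because $\cY_1\times_{\cXMOD}\cY_2$ has $\cP$, hence $\Delta_{\mathrm{aff}}^{\mathrm{loc}}$, directly by the third condition of strong modularity, and the affine diagonal over $\cXMOD$ should come from Lemma~\ref{lem: inheriting affine diagonal} as in your Step 1, not from Lemma~\ref{lem: inheriting affine diagonal to mod space}, which would require $\pi_{\cX}$ itself to have affine diagonal.
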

\begin{proof}
    Let $\Phi_{\cP}:\cX\rightarrow\cX_{\cP}$ be the maximum in $\cC_{\cX}^{\cP}$, which exists by Corollary~\ref{cor: generalization existence maximum}.
    By Theorem~\ref{thm: existence of relative moduli spaces}, there exists an adequate moduli space morphism $\varphi:\cX\rightarrow\cY$ and a morphism $g:\cY\rightarrow\cZ$ that factors $f$ up to 2-isomorphism. Since $\cP$ is strongly modular and $\cZ$ satisfies $\cP$, also $\cY$ is an object of $\cC_{\cX}^{\cP}$. This yields a morphism $\cX_{\cP}\rightarrow\cY$, whose composite with $g$ gives the desired factorization of $f$. The uniqueness of the factorization follows from~\cite[Lemma 7.23]{LocalStructureStacks_AHR25}.
\end{proof}

Now, we apply the results above to some concrete cases, when $\cP$ is one of the properties in Example~\ref{exm: modular properties}.
Theorem~\ref{thm: technical existence of initial morphisms} can be directly applied to get an initial morphism to stacks with finite inertia.

\begin{corollary}\label{cor: finite inertia initial map}
    Let $\cX$ be a quasi-separated locally-Noetherian algebraic stack admitting a good moduli space $\pi_{\cX}:\cX\rightarrow\cXMOD$. Then, there exists a good moduli space morphism $\Phi_{\mathrm{fi}}:\cX\rightarrow\cX_{\mathrm{fi}}$ to an algebraic stack with finite inertia, that is initial in the category of morphisms from $\cX$ to algebraic stacks with finite inertia and whose morphism to their moduli space has affine diagonal.
\end{corollary}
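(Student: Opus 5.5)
The plan is to apply Theorem~\ref{thm: technical existence of initial morphisms} in case~\eqref{case: technical good case thm main existence of initial morphisms} with $\cP:=\cI_{\mathrm{fin}}\cup\cP_{\mathrm{adm}}$. This is the property of having finite inertia and admitting an adequate moduli space whose structure morphism has affine diagonal. Since $\pi_{\cX}$ is good, that theorem will produce an initial object $\Phi_{\cP}:\cX\rightarrow\cX_{\cP}$ which is a good moduli space morphism. We then set $\cX_{\mathrm{fi}}:=\cX_{\cP}$. Stacks with finite inertia whose moduli morphism has affine diagonal are exactly the targets in the statement, since finite inertia together with $\cP_{\mathrm{adm}}$ means the stack has an adequate moduli space with affine-diagonal structure map.

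Three hypotheses of the theorem need checking. First, $\cP$ must be modular with respect to $\cX$. By Example~\ref{exm: modular properties}, both $\cI_{\mathrm{fin}}$ and $\cP_{\mathrm{adm}}$ are modular, and the three defining conditions of Definition~\ref{def: modular property} are each stable under conjunction, so $\cP$ is modular. For the first condition, note that $\cXMOD$ is an algebraic space, so it has trivial inertia and its identity moduli map has affine diagonal. Second, $\cP$ implies $\cP_{\mathrm{adm}}$ by construction. Third, we need either $\cX$ Noetherian or $\cP$ semi-local.

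The third hypothesis is the main obstacle, since $\cX$ is only assumed locally Noetherian. I would not try to prove semi-locality of $\cP$ directly. Instead I would replace $\cP$ by its semi-localization $\widetilde{\cP}$ from Remark~\ref{rmk: semi-localizing a property}, and argue that the two agree on the relevant objects, namely objects of $\cC_{\cX}$ and stacks admitting an adequate moduli space.

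One direction is that $\cP$ implies $\widetilde{\cP}$. Let $\cY$ have an adequate moduli space and let $\cU\subset\cY$ be a quasi-compact open. Finite inertia restricts to $\cU$. However, $\cU$ need not be saturated, so it need not itself have an adequate moduli space, and this is the delicate point. To get around it, I would define $\widetilde{\cP}$ using the directed system of saturated quasi-compact opens, that is, preimages of quasi-compact opens of the moduli space. Lemma~\ref{lem: passing to quasicompact case principle} only requires some directed system exhausting $\cX$. On saturated opens, the adequate moduli space and the affine diagonal of the moduli morphism both restrict.

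The other direction is gluing. If every saturated quasi-compact open has finite inertia and an affine-diagonal moduli morphism, then so does $\cY$, because finiteness of the inertia and affineness of the diagonal relative to the moduli space are both local on the moduli space.

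With this modification, the proofs of Proposition~\ref{prop: generalization of stabilization}, Corollary~\ref{cor: generalization existence maximum} and Theorem~\ref{thm: technical existence of initial morphisms} run verbatim. The resulting initial object has finite inertia, and its moduli morphism has affine diagonal, which is what the statement requires.
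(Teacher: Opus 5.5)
Your proposal is correct, and its backbone is the paper's: apply Theorem~\ref{thm: technical existence of initial morphisms} in the good case with $\cP=\cI_{\mathrm{fin}}\cup\cP_{\mathrm{adm}}$. You diverge only in how you verify the semi-locality hypothesis.

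The paper handles it by citing Keel--Mori in Rydh's generality \cite{GeneralKeelMori_Ryd07} together with \cite[Theorem 8.3.2]{AdequateModAlp14}. A quasi-separated stack with finite inertia has an adequate moduli space, and that moduli space is a coarse moduli space, so finite inertia is semi-local as it stands. This also shows that the ``delicate point'' you raise does not occur. If $\cY$ has finite inertia and an adequate moduli space $\cY\rightarrow Y$, the latter is a coarse space, so $|\cY|\rightarrow|Y|$ is a homeomorphism. Hence every open substack of $\cY$ is saturated, and by flat base change it inherits an adequate moduli space with affine-diagonal structure map.

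Your workaround, exhausting $\cX$ only by preimages of quasi-compact opens of $\cXMOD$, is nonetheless sound. It avoids Keel--Mori altogether, since the glued stack obtains $\cXMOD$ as its moduli space by locality on the target. It is also the kind of system Lemma~\ref{lem: passing to quasicompact case principle} implicitly needs anyway: restricting an object of $\cC_{\cX}$ to $\cU_\lambda$ gives an adequate moduli space morphism only when $\cU_\lambda$ is saturated.

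The price is that you must rerun Proposition~\ref{prop: generalization of stabilization}, Corollary~\ref{cor: generalization existence maximum} and Theorem~\ref{thm: technical existence of initial morphisms} under a modified hypothesis. The paper instead applies the theorem verbatim, and via Keel--Mori it also explains why the moduli space presupposed in the target condition exists automatically for quasi-separated targets.
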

\begin{proof}
This follows from Theorem~\ref{thm: technical existence of initial morphisms}, as every quasi-separated algebraic stack with finite inertia admits an adequate moduli space by~\cite{GeneralKeelMori_Ryd07} and~\cite[Theorem 8.3.2]{AdequateModAlp14}.
Notice that $\cI_{\mathrm{fi}}$ is a semi-local property.
\end{proof}

Theorem~\ref{thm: existence of universal morphism strongly modular case} instead yields an initial morphism to Deligne-Mumford stacks, thus proving Theorem~\ref{thm: intro existence XDM}.

\begin{corollary}\label{cor: DM-fication}
    Let $\cX$ be a quasi-separated locally-Noetherian algebraic stack admitting an adequate moduli space $\pi_{\cX}:\cX\rightarrow\cXMOD$.
    Then, there exists an adequate moduli space morphism $\Phi_{\DM}:\cX\rightarrow\cX_{\DM}$ to a Deligne-Mumford stack with Zariski-locally affine diagonal, that is initial in the category of morphisms from $\cX$ to Deligne-Mumford stacks with Zariski-locally affine diagonal. Moreover, $\cXDM$ has finite inertia.
\end{corollary}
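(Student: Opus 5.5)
The plan is to apply Theorem~\ref{thm: existence of universal morphism strongly modular case}, case (2), with $\cP:=\DM\cup\Delta_{\mathrm{aff}}^{\mathrm{loc}}$, the property of being a Deligne-Mumford stack with Zariski-locally affine diagonal. This requires four checks.

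\emph{(a) Strong modularity relative to $\cX$.} First, $\cXMOD$ is an algebraic space. An adequate moduli space of a stack with Zariski-locally affine diagonal again has Zariski-locally affine diagonal, by Lemma~\ref{lem: inheriting affine diagonal} applied to $\pi_{\cX}$. Here we need the hypothesis that $\cX$ itself has this property, so the first step is to check that $\cX$ does. If it does not, the category of morphisms to $\cP$-stacks is still well-defined, and one works with $\cXMOD$ directly. More carefully, $\cXMOD$ is quasi-separated, and I would argue that for the application only the existence of the maximum in $\cC_{\cX}^{\cP}$ is needed. This is where I expect some bookkeeping.

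Second, if $\cY\to\cZ$ is representable and $\cZ$ is Deligne-Mumford, then $\cY$ is Deligne-Mumford, since the stabilizers inject into those of $\cZ$, which are unramified. Also, $\Delta_{\cY}$ factors as $\cY\to\cY\times_{\cZ}\cY\to\cY\times\cY$, where the first arrow is a monomorphism. It is a closed immersion if $\cY\to\cZ$ is separated, which need not hold in general, so more care is needed here. Example~\ref{exm: modular properties} already asserts that $\DM$ is strongly modular and that the diagonal properties are modular. The strongly modular part for $\Delta_{\mathrm{aff}}^{\mathrm{loc}}$ under representable maps is the main obstacle.

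To handle it, I would note that in the proof of Theorem~\ref{thm: existence of universal morphism strongly modular case} the only representable maps that occur are the maps $g:\cY\to\cZ$ from Theorem~\ref{thm: existence of relative moduli spaces}, where $\cY$ is the target of an adequate moduli space morphism from $\cX$. If $\cX$ has Zariski-locally affine diagonal, then $\cY$ does too, by Lemma~\ref{lem: inheriting affine diagonal}. So it suffices to know that $\cX$ has this property. I expect the hypotheses of the statement to force this: if $\cX$ is quasi-separated with an adequate moduli space, one can use the local structure of $\cX$ near closed points. Failing that, I would add affine-diagonal hypotheses to $\cX$.

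Third, fibre products over $\cXMOD$ of Deligne-Mumford stacks with Zariski-locally affine diagonal keep both properties. Being Deligne-Mumford is clear. For the diagonal, $\cY_1\times_{\cXMOD}\cY_2$ has diagonal obtained by base change from the diagonals of the $\cY_i$ and of $\cXMOD$, and Zariski-locality is handled via saturated opens as in Lemma~\ref{lem: inheriting affine diagonal}.

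\emph{(b) Semi-locality.} Both $\DM$ and $\Delta_{\mathrm{aff}}^{\mathrm{loc}}$ are Zariski-local conditions, hence semi-local. This removes the need for $\cX$ to be Noetherian.

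\emph{(c) Conclusion.} With (a) and (b) in place, Theorem~\ref{thm: existence of universal morphism strongly modular case} gives an initial adequate moduli space morphism $\PhiDM:\cX\to\cXDM$.

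\emph{(d) Finite inertia.} The stack $\cXDM$ has quasi-finite separated diagonal, since Zariski-locally affine diagonal implies separated diagonal. It admits the adequate moduli space $\cXMOD$ by Lemma~\ref{lem: cancellation properties}\eqref{lem: cancellation properties 3}. Then \cite[Theorem 8.3.2]{AdequateModAlp14} gives that $\cXDM$ has finite inertia, exactly as at the end of the proof of Theorem~\ref{thm: intro characterization CXDM}.
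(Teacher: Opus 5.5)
Your route is the paper's: its proof of this corollary is exactly your steps (b)--(d). It invokes Theorem~\ref{thm: existence of universal morphism strongly modular case}, noting that $\DM$ is semi-local. It then gets finite inertia from Lemma~\ref{lem: cancellation properties}\eqref{lem: cancellation properties 3}, the quasi-finite separated diagonal, and \cite[Theorem 8.3.2]{AdequateModAlp14}.

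The paper never checks that $\DM\cup\Delta_{\mathrm{aff}}^{\mathrm{loc}}$ is strongly modular with respect to $\cX$, so the issue you isolate in (a) is a real omission, not bookkeeping. The gap in your proposal is how you hope to close it. Quasi-separatedness plus an adequate moduli space does \emph{not} force $\cX$ to have Zariski-locally affine diagonal. ``Working with $\cXMOD$ directly'' also fails, because $\cC_{\cX}^{\cP}$ can then be empty.

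Here is an example. Over $\mathbb{C}$, let $P$ be the affine plane with doubled origin. Let $\Z/2\Z$ act freely on $P$ by $(x,y)\mapsto(-x,-y)$, swapping the two origins, and set $Q:=P/(\Z/2\Z)$. This is a Noetherian quasi-separated algebraic space, and it is its own good moduli space.

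Take any open $V\subset Q$ containing the image of the origins, with invariant preimage $\widetilde V\subset P$. The diagonal $\Delta_V$ pulls back to $\Z/2\Z\times\widetilde V\to\widetilde V\times\widetilde V$, which has $\Delta_{\widetilde V}$ as an open and closed piece. Now $\Delta_{\widetilde V}$ is not affine: for affine neighbourhoods $W_1,W_2\subset\widetilde V$ of the two origins, $W_1\cap W_2$ is a punctured affine surface. So $Q$ does not have Zariski-locally affine diagonal. The map $Q\to\Spec\mathbb{C}$ also shows directly that $\DM\cup\Delta_{\mathrm{aff}}^{\mathrm{loc}}$ is not strongly modular.

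Worse, any adequate moduli space morphism $\varphi:Q\to\cY$ is representable, since its source is an algebraic space. It is therefore affine by \cite[Theorem 4.3.1]{AdequateModAlp14}, and hence an isomorphism because $\varphi_*\cO_Q\simeq\cO_{\cY}$. So for $\cX=Q$ no $\PhiDM$ as in the statement exists: the corollary is false as stated, and no argument can avoid an extra hypothesis.

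The natural fix is to assume that $\cX$ has Zariski-locally affine diagonal. With that hypothesis your fallback is exactly the right repair:
\begin{itemize}
\item Lemma~\ref{lem: inheriting affine diagonal} gives $\Delta_{\mathrm{aff}}^{\mathrm{loc}}$ for $\cXMOD$ and for every target of an adequate moduli space morphism from $\cX$.
\item The affine-morphism and fibre-product conditions of modularity then hold, as you sketch.
\item In the proof of Theorem~\ref{thm: existence of universal morphism strongly modular case}, strong modularity is only used for the representable map $g:\cY\to\cZ$ of Theorem~\ref{thm: existence of relative moduli spaces}. Its source $\cY$ is such a target, so it is Deligne--Mumford (being representable over $\cZ$) and has Zariski-locally affine diagonal.
\end{itemize}
So you should state this hypothesis up front rather than expect it to follow from the others. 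With it, your argument is complete.
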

\begin{proof}
    Theorem~\ref{thm: existence of universal morphism strongly modular case} yields the initial morphism $\PhiDM:\cX\rightarrow\cXDM$, as $\DM$ is clearly semi-local. The finiteness of the inertia follows from the fact that $\cXDM$ admits an adequate moduli space by Lemma~\ref{lem: cancellation properties}\eqref{lem: cancellation properties 3}, the diagonal is quasi-finite and separated, and from~\cite[Theorem 8.3.2]{AdequateModAlp14}.
\end{proof}

There are other properties of algebraic stacks which are interesting to apply Theorems~\ref{thm: technical existence of initial morphisms} and~\ref{thm: existence of universal morphism strongly modular case} to, like the property $\cP_{\mathrm{gq}}$ of being a global quotient, which we do not pursue in this paper. The property $\UDM$ of being uniformizable is studied in the next subsection, where we give a more explicit construction and relate it to the fundamental group of $\cX$.

\subsection{Uniformizable Stacks and Fundamental Group}\label{subsec: XUDM}

Recall that an algebraic stack $\cX$ is \emph{uniformizable} if it admits a finite, étale presentation by an algebraic space. In that case, $\cX$ is necessarily Deligne-Mumford. When $\cP=\UDM$, we can reinterpret Theorem~\ref{thm: existence of universal morphism strongly modular case} in terms of the \emph{étale fundamental group} of $\cX$. We refer to~\cite{Noo04,landi2025stacksmonodromysymmetriccubic} for the definition and basic facts on fundamental groups. We note that in~\cite{Noo04}, all stacks are implicitly assumed to be quasi-separated, as the conventions of~\cite{champsalgebrique} are adopted, contrarily to us.

To treat the locally-Noetherian case, let us introduce the following definition, which is a special case of Remark~\ref{rmk: semi-localizing a property}.
\begin{definition}\label{def: semi-local uniformizability}
   We say that an algebraic stack $\cX$ is \emph{semi-locally uniformizable} if every quasi-compact open subscheme of $\cX$ is uniformizable.
\end{definition}

We start by proving that $\Phi_{\UDM}$ induces an isomorphism between fundamental groups.

\begin{proposition}\label{prop: XUDM isomorphism of fundamental groups}
    Let $\cX$ be a quasi-separated locally-Noetherian algebraic stack admitting an adequate moduli space $\pi_{\cX}:\cX\rightarrow\cXMOD$. Let $\Phi_{\UDM}:\cX\rightarrow\cXUDM$ be the initial morphism to a semi-locally uniformizable algebraic stack. Then, $\Phi_{\UDM}$ induces an isomorphism between fundamental groups at any connected component. 
\end{proposition}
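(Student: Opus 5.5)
We will prove the statement by comparing the categories of finite étale covers: the map $\Phi_{\UDM}$ induces an isomorphism on fundamental groups if and only if pullback $\mathrm{FEt}(\cXUDM)\rightarrow\mathrm{FEt}(\cX)$ is an equivalence of Galois categories. Working one connected component at a time, we may assume $\cX$ is connected. Then $\cXUDM$ is also connected, because $\Phi_{\UDM}$ is an adequate moduli space morphism, hence surjective with $\cO_{\cXUDM}\simeq\Phi_{\UDM*}\cO_{\cX}$.

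\emph{Full faithfulness.} Let $\cE\rightarrow\cXUDM$ be finite étale. Since $\Phi_{\UDM}$ is an adequate moduli space morphism, flat base change (\cite[Proposition 5.2.9]{AdequateModAlp14}) shows that $\cE\times_{\cXUDM}\cX\rightarrow\cE$ is again an adequate moduli space morphism. In particular, pushforward of the structure sheaf is preserved, so connected components of $\cE$ correspond bijectively to those of its pullback. Morphisms between two covers $\cE,\cE'$ are open and closed substacks of the fibered product $\cE\times\cE'$ projecting isomorphically onto $\cE$. Because $\cE\times_{\cXUDM}\cE'$ is again finite étale over $\cXUDM$, the same correspondence of components gives full faithfulness. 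Equivalently, we only need that connectedness is preserved under pullback, and this follows from $\Phi_{\UDM*}\cO=\cO$ after flat base change.

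\emph{Essential surjectivity; this is the main step.} Let $\cE\rightarrow\cX$ be a connected finite étale cover; we want to show it descends to $\cXUDM$. Passing to a Galois closure, we may take $\cE\rightarrow\cX$ to be a $G$-torsor for a finite constant group $G$. It is then classified by a morphism $f:\cX\rightarrow BG\times\cXMOD$, or more intrinsically by $f:\cX\rightarrow[\cXUDM\text{-stuff}]$.

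The cleanest target is $\cZ:=\cXUDM\times BG$. This stack is uniformizable: if $V\rightarrow\cXUDM$ is a finite étale cover by an algebraic space, then $V\rightarrow\cZ$ is finite étale, because $\Spec k\rightarrow BG$ is finite étale. Similarly, on quasi-compact opens $\cZ$ is semi-locally uniformizable. The morphism $(\Phi_{\UDM},f):\cX\rightarrow\cZ$ therefore has a uniformizable target, so by the universal property it factors as $\cX\rightarrow\cXUDM\xrightarrow{s}\cZ$. Since $\Phi_{\UDM}$ is an epimorphism up to unique 2-isomorphism (\cite[Lemma 7.23]{LocalStructureStacks_AHR25}), the composite $\mathrm{pr}_1\circ s$ is the identity. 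Hence $s$ is a section, which is the same as a $G$-torsor $\cE'\rightarrow\cXUDM$. Its pullback to $\cX$ is $\cE$ up to the 2-isomorphism, proving descent.

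The point to check carefully is that $\cZ$ satisfies exactly the property for which $\cXUDM$ is initial. In particular, it must have Zariski-locally affine diagonal, which holds because $BG$ has finite diagonal, and it must be semi-locally uniformizable. If instead one descends a general connected cover, the same argument applies with $BG$ replaced by $B\mathfrak{S}_n$. Together with full faithfulness, this gives the equivalence of Galois categories, and hence the isomorphism on $\pi_1$ for any choice of base geometric point.
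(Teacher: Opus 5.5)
Your proof is correct and has the same skeleton as the paper's. Surjectivity on $\pi_1$ (your full faithfulness) comes from the fact that adequate moduli space morphisms preserve and reflect connectedness after flat base change. Injectivity (your essential surjectivity) comes from applying the universal property of $\Phi_{\UDM}$ to a uniformizable stack classifying the given cover.

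The difference is in the choice of that uniformizable target.

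\textbf{The paper's route.} It passes to the Galois closure $\cY\rightarrow\cX$, a $G$-torsor, and uses \cite[Lemma 5.2.11]{AdequateModAlp14} to build the adequate moduli space $\cYMOD$. It then factors $\cX\rightarrow[\cYMOD/G]$ through $\cXUDM$ and descends the intermediate cover as $[\cYMOD/H]\times_{[\cYMOD/G]}\cXUDM$. This requires constructing an auxiliary moduli space and checking that the squares are cartesian. In exchange, it gives a concrete geometric model of the descended cover, which foreshadows Construction~\ref{construction XUDMvarphi}.

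\textbf{Your route.} You use the universal classifying stack $BG$, or $B\mathfrak{S}_n$ for a non-Galois cover of degree $n$. It is uniformizable with finite diagonal for trivial reasons. So no auxiliary moduli space is needed, and arbitrary covers are handled at once. This is more elementary, and it uses nothing about $\cX$ beyond the universal property of $\Phi_{\UDM}$.

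\textbf{A simplification.} The factor $\cXUDM$ in $\cZ=\cXUDM\times BG$ is superfluous. The classifying map $f:\cX\rightarrow BG$ already has a uniformizable target with affine diagonal, so it factors through $\Phi_{\UDM}$ directly. Your section argument via \cite[Lemma 7.23]{LocalStructureStacks_AHR25} can therefore be dropped; in any case, only $\mathrm{pr}_2\circ s$ is actually used.
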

\begin{proof}
    We can assume $\cX$ to be connected. Let $\cU\rightarrow\cX$ be a finite étale cover, with Galois closure $\cY\xrightarrow{\varphi}\cU\rightarrow\cX$. Let $\psi$ be the composite of the two morphisms, which is a torsor under a finite group $G$, while $\varphi$ is an $H$-torsor for some subgroup $H\subset G$. As $\psi$ is affine, by~\cite[Lemma 5.2.11]{AdequateModAlp14} we can construct a commutative diagram
    \[
    \begin{tikzcd}
        \cY\arrow[d]\arrow[r,"\varphi"] & \cU\arrow[d]\arrow[r] & \cX\arrow[d,"f"]\\
        \cYMOD\arrow[r] & {[\cYMOD/H]}\arrow[r] & {[\cYMOD/G]}
    \end{tikzcd}
    \]
    with cartesian squares. By the universal property of $\cXUDM$, the morphism $f:\cX\rightarrow[\cYMOD/H]$ factors through $\cXUDM$. Then, $[\cYMOD/H]\times_{[\cYMOD/G]}\cXUDM$ is a finite étale cover over $\cXUDM$ whose pullback to $\cX$ is $\cU\rightarrow\cX$. This shows that the morphism between fundamental groups is injective. By~\cite[Tag 0BN6]{Sta24}, the surjectivity follows from the fact that the target of an adequate moduli space morphism is connected if and only if the source is~(\cite[Theorem 5.3.1]{AdequateModAlp14}), and that $\PhiUDM$ is an adequate moduli space morphism by Theorem~\ref{thm: existence of universal morphism strongly modular case}.
\end{proof}

Proposition~\ref{prop: XUDM isomorphism of fundamental groups} is useful to reduce problems about fundamental groups of algebraic stacks to the simpler case of uniformizable stacks. See for instance the later Remark~\ref{rmk: generalization Noohi}.

The above discussion and Proposition~\ref{prop: XUDM isomorphism of fundamental groups} also suggest that we can use Noohi's theory of fundamental groups to give an alternative construction of $\cXUDM$, which we will do next. First, let us recall some results of~\cite{Noo04} that we will need.
When $\cX$ is Noetherian and Deligne-Mumford, Noohi showed that $\cX$ is uniformizable if and only if for every geometric point $x$ of $\cX$ with residual gerbe $\cG_x$, the pushforward
\[
\begin{tikzcd}
    \omega_x:\pi_1(\cG_x,x)\arrow[r] & \pi_1(\cX,x)
\end{tikzcd}
\]
between fundamental groups is injective. Moreover, when $\cX$ is connected and admits a moduli space $\cXMOD$ in the sense of~\cite[Definition 7.1]{Noo04}, the surjective morphism $\pi_{\cX*}:\pi_1(\cX,x)\rightarrow\pi_1(\cXMOD,\pi_{\cX}(x))$ has kernel equal to the closed normal subgroup $N$ generated by the images of all $\omega_{x'}$, see~\cite[Theorem 7.11]{Noo04}.

\begin{remark}\label{rmk: generalization Noohi}
In~\cite[Definition 7.1]{Noo04}, the author requires the moduli space map to induce a bijection between geometric points (up to 2-isomorphism), which is too restrictive in general. Proposition~\ref{prop: XUDM isomorphism of fundamental groups} reduces statements about fundamental groups to the simpler case of uniformizable stacks, where~\cite[Theorem 7.11]{Noo04} applies directly. Alternatively, one can generalize Noohi's argument to prove the result for adequate moduli spaces, using the more general version~\cite[Theorem 3.14]{LocalStructureStacks_AHR25} of Luna's fundamental lemma.
\end{remark}

Now, we present an alternative construction of $\cXUDM$ using the results of~\cite{Noo04}.

\begin{construction}\label{construction XUDMvarphi}
    Let $\cX$ be a connected Noetherian algebraic stack that admits an adequate moduli space $\pi_{\cX}:\cX\rightarrow\cXMOD$. By~\cite[Theorem 11.4]{Noo04}, there exists a finite étale cover $\varphi:\cU\rightarrow\cX$ from a connected algebraic stack such that $\omega_u$ is trivial for every geometric point $u$ of $\cU$. The same property is satisfied by $\cV$ for any connected finite étale cover $\cV\rightarrow\cU$. Therefore, eventually after passing to the Galois closure, we can assume $\cU\rightarrow\cX$ to be a connected Galois cover under a finite group $G$. Since $\cU\rightarrow\cX$ is affine, by~\cite[Lemma 5.2.11]{AdequateModAlp14} $\cU$ admits an adequate moduli space $\pi_{\cU}:\cU\rightarrow\cUMOD$. The action of $G$ on $\cU$ induces an action of $G$ on $\cUMOD$, and we define $\cXUDM^{\varphi}:=[\cUMOD/G]$. Since the composite $\cU\rightarrow\cUMOD\rightarrow[\cUMOD/G]$ is $G$-invariant by construction, it induces a morphism $\Phi^{\varphi}_{\mathrm{UDM}}:\cX\rightarrow\cXUDM^{\varphi}$ making the diagram
    \[
    \begin{tikzcd}[column sep=large, row sep=small]
        \cU\arrow[d,"\varphi"']\arrow[r] & \cUMOD\arrow[d]\\
        \cX\arrow[r,"\Phi_{\UDM}^{\varphi}"] & \cX_{\UDM}^{\varphi}
    \end{tikzcd}
    \]
    cartesian. By~\cite[Theorem 7.11]{Noo04} and Remark~\ref{rmk: generalization Noohi}, the pushforward $\pi_{\cU*}:\pi_1^{\text{ét}}(\cU,u)\rightarrow\pi_1^{\text{ét}}(\cUMOD,\Phi(u))$ is an isomorphism for every geometric point $u$ of $\cU$.
\end{construction}

\begin{lemma}\label{lem: basic properties of XUDM}
    For any $\varphi$ as in Construction~\ref{construction XUDMvarphi}, $\Phi^{\varphi}_{\UDM}:\cX\rightarrow\cXUDM^{\varphi}$ is an adequate moduli space morphism and it induces an isomorphism between fundamental groups.
\end{lemma}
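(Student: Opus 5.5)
We have to show two things: that $\Phi^{\varphi}_{\UDM}$ is an adequate moduli space morphism, and that it induces an isomorphism on fundamental groups.

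\textbf{Adequate moduli space.} Since $\cUMOD\rightarrow\cXUDM^{\varphi}=[\cUMOD/G]$ is a $G$-torsor, it is finite étale, hence fpqc. The square in Construction~\ref{construction XUDMvarphi} is cartesian, and its base change of $\Phi^{\varphi}_{\UDM}$ is $\pi_{\cU}:\cU\rightarrow\cUMOD$, which is an adequate moduli space by~\cite[Lemma 5.2.11]{AdequateModAlp14}. Being an adequate moduli space morphism is fpqc-local on the target~\cite[Proposition 5.2.9 (2)]{AdequateModAlp14}. So it is enough to check that the square really is cartesian, i.e.\ that $\cU\simeq\cX\times_{[\cUMOD/G]}\cUMOD$. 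This holds because both sides are $G$-torsors over $\cX$ and there is a $G$-equivariant map between them induced by the $G$-equivariant map $\cU\rightarrow\cUMOD$. Here $G$ acts on $\cUMOD$ by the universal property~\cite[Theorem 3.12]{LocalStructureStacks_AHR25}, and the uniqueness in~\cite[Lemma 7.23]{LocalStructureStacks_AHR25} makes this a genuine action.

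\textbf{Isomorphism on fundamental groups.} We work with the Galois-type exact sequences attached to the two $G$-torsors. Since $\cU$ is connected, $\cUMOD$ is connected by~\cite[Theorem 5.3.1]{AdequateModAlp14}, and so $\cX$ and $\cXUDM^{\varphi}$ are connected too. A connected $G$-torsor gives an exact sequence
\[
1\rightarrow\pi_1(\cU,u)\rightarrow\pi_1(\cX,x)\rightarrow G\rightarrow1,
\]
and similarly
\[
1\rightarrow\pi_1(\cUMOD,\bar u)\rightarrow\pi_1(\cXUDM^{\varphi},\bar x)\rightarrow G\rightarrow1.
\]
Here $\cUMOD\rightarrow[\cUMOD/G]$ is a connected $G$-torsor, since its pullback to $\cX$ is the connected torsor $\cU$. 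The map $\Phi^{\varphi}_{\UDM}$ induces a morphism between these sequences. It is the identity on $G$, because the torsors correspond under pullback. On the kernels it is $\pi_{\cU*}$, which is an isomorphism by the end of Construction~\ref{construction XUDMvarphi}. The five lemma for profinite groups then gives that the middle map is an isomorphism.

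\textbf{Main obstacle.} The delicate point is the last input: that $\pi_{\cU*}$ is an isomorphism for $\cU$ with only an adequate moduli space. This needs Remark~\ref{rmk: generalization Noohi}, i.e.\ Noohi's theorem combined with the triviality of every $\omega_u$. The kernel of $\pi_{\cU*}$ is generated by the images of the $\omega_u$, so it is trivial, and surjectivity comes from connectedness of the fibres together with~\cite[Tag 0BN6]{Sta24}.

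A second point that needs care is compatibility of base points and the identification of the map on $G$-quotients. I would handle both by choosing $\bar u=\pi_{\cU}(u)$, so that the diagram of exact sequences commutes on the nose.
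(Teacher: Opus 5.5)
Your proposal is correct and follows essentially the same route as the paper. The adequate moduli space claim comes from flat descent along the $G$-torsor $\cUMOD\rightarrow\cXUDM^{\varphi}$. The isomorphism on fundamental groups comes from the morphism between the two Galois exact sequences, with $\pi_{\cU*}$ an isomorphism by Construction~\ref{construction XUDMvarphi}, the bottom row exact because $\cUMOD$ is connected, and the five lemma. You also spell out the cartesianness of the square and the connectedness, which the paper leaves implicit.
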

\begin{proof}
    The first claim follows by flat descent. For the second, let $x$ and $u$ be geometric points of $\cX$ and $U$ respectively, with $\varphi(u)=x$. By the theory of fundamental groups~\cite{Noo04}, we have a commutative diagram
    \[
	\begin{tikzcd}
		0\arrow[r] & \pi_1^{\text{ét}}(\cU,u)\arrow[r]\arrow[d,"\pi_{\cU*}","\simeq"'] & \pi_1^{\text{ét}}(\cX,x)\arrow[r]\arrow[d] & G\arrow[d,"="]\arrow[r] & 0\\
		0\arrow[r] & \pi_1^{\text{ét}}(\cUMOD,\pi_{\cU}(u))\arrow[r] & \pi_1^{\text{ét}}(\cX^{\varphi}_{\UDM},\Phi^{\varphi}_{\UDM}(x))\arrow[r] & G\arrow[r] & 0
	\end{tikzcd}
	\]
    with cartesian squares, where the exactness of the bottom row follows from the fact that $\cUMOD$ is connected.
    The statement follows from the five lemma.
\end{proof}

Notice that the algebraic stack $\cX^{\varphi}_{\UDM}$ is uniformizable for every $\varphi$ as in Construction~\ref{construction XUDMvarphi}.
The next proposition shows that $\Phi_{\UDM}^{\varphi}$ is initial among morphisms to uniformizable stacks, thus recovering $\cXUDM$.

\begin{proposition}\label{prop: universality XUDMvarphi}
 	Let $\varphi$ be as in Construction~\ref{construction XUDMvarphi}. The morphism $\Phi^{\varphi}_{\UDM}:\cX\rightarrow\cX^{\varphi}_{\UDM}$ is initial in the category of morphisms from $\cX$ to uniformizable algebraic stacks. In particular, $\cXUDM^{\varphi}\simeq\cXUDM$ and it does not depend on $\varphi$.
 \end{proposition}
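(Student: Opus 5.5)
Given a morphism $f:\cX\rightarrow\cZ$ to a uniformizable algebraic stack, I will produce a factorization through $\Phi^{\varphi}_{\UDM}$, unique up to unique 2-isomorphism. Uniqueness is immediate from~\cite[Lemma 7.23]{LocalStructureStacks_AHR25}, because $\Phi^{\varphi}_{\UDM}$ is an adequate moduli space morphism by Lemma~\ref{lem: basic properties of XUDM}. The work is therefore entirely in existence.

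The plan is to work on the cover $\cU$ and then descend along $G$. Fix a finite étale presentation $Z\rightarrow\cZ$ by an algebraic space, and pull it back to $\cU_Z:=\cU\times_{\cZ}Z\rightarrow\cU$. This is a finite étale cover of $\cU$. By the defining property of $\varphi$, every $\omega_u$ is trivial, and the same holds for connected finite étale covers of $\cU$; this is the input from~\cite[Theorem 7.11]{Noo04}, used as in Construction~\ref{construction XUDMvarphi}. Consequently $\pi_{\cU*}$ is an isomorphism, and every finite étale cover of $\cU$ is pulled back from a finite étale cover of $\cUMOD$. Hence there is a finite étale $W\rightarrow\cUMOD$ with $\cU_Z\simeq\cU\times_{\cUMOD}W$. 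This says that the induced morphism $\cU\rightarrow\cZ$, after the finite étale base change $Z\rightarrow\cZ$, becomes a morphism $\cU\times_{\cUMOD}W\rightarrow Z$ to an algebraic space.

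Since $\cU\times_{\cUMOD}W\rightarrow W$ is an adequate moduli space by flat base change, this morphism factors uniquely through $W$. The same argument applies to $Z\times_{\cZ}Z$, giving compatible descent data. I will then argue that $W$ carries an $R$-groupoid structure compatible with $Z\times_{\cZ}Z\rightrightarrows Z$. From this, $\cU\rightarrow\cZ$ factors through $\cUMOD$ via $\cUMOD\simeq[W/(W\times_{\cUMOD}W)]\rightarrow[Z/Z\times_{\cZ}Z]=\cZ$. A cleaner way to organize this step is to invoke Theorem~\ref{thm: existence of relative moduli spaces}, which gives the universal factorization $\cU\rightarrow\cY\rightarrow\cZ$ with $\cY\rightarrow\cZ$ representable. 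Because $\cZ$ is uniformizable, $\cY$ is also uniformizable, so $\cY\times_{\cZ}Z\rightarrow\cY$ is a finite étale cover by an algebraic space. On the other hand, $\cU\rightarrow\cY$ is an adequate moduli space morphism inducing an isomorphism on $\pi_1$. The triviality of the $\omega_u$ on $\cU$, together with surjectivity of stabilizers onto the closed points of $\cY$ (here $\cY$ is DM, so Lemma~\ref{lem: surjectivity stabilizers good and smooth stab case} applies), forces the stabilizers of $\cY$ to inject into $\pi_1(\cY)$ and at the same time to be images of groups that die in $\pi_1(\cU)\simeq\pi_1(\cY)$. So all stabilizers of $\cY$ are trivial, $\cY$ is an algebraic space, and therefore $\cY\simeq\cUMOD$.

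To descend along $G$, I use that $\cU\rightarrow\cX$ is $G$-equivariant and $\cU\rightarrow\cZ$ is $G$-invariant. By the uniqueness in~\cite[Lemma 7.23]{LocalStructureStacks_AHR25}, the factorization $\cUMOD\rightarrow\cZ$ inherits a $G$-invariance structure, and the 2-isomorphisms satisfy the cocycle condition, again by uniqueness. It therefore descends to $[\cUMOD/G]=\cX^{\varphi}_{\UDM}\rightarrow\cZ$. Compatibility with $f$ follows because the square in Construction~\ref{construction XUDMvarphi} is cartesian and $\cU\rightarrow\cX$ is an fppf cover. The final assertion then follows: $\cX^{\varphi}_{\UDM}$ is uniformizable and initial, so it coincides with $\cXUDM$ (restricted to quasi-compact opens) and does not depend on $\varphi$.

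The main obstacle is the stabilizer argument showing $\cY$ is an algebraic space. The delicate point is to ensure that the images of $G_u$ in $G_y$ account for all of $G_y$, and that triviality of $\omega_u$ really kills them in $\pi_1(\cY)$. I expect this to use the commutative square relating $\pi_1(\cG_u)\rightarrow\pi_1(\cG_y)$, Noohi's injectivity criterion for the uniformizable stack $\cY$, and surjectivity of $G_u\rightarrow G_y$ at closed points.
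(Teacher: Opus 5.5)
Your proof is correct, but it is organized differently from the paper's.

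\textbf{The paper's route.} The paper never passes to $\cU$. It pulls back a Galois uniformization $Z\rightarrow\cZ$, with group $H$, to an $H$-torsor $\cV\rightarrow\cX$. It then uses Lemma~\ref{lem: basic properties of XUDM} (that $\Phi^{\varphi}_{\UDM}$ is an isomorphism on fundamental groups) to descend $\cV$ to an $H$-torsor $\cV^{\varphi}\rightarrow\cX^{\varphi}_{\UDM}$. Next it factors $\cV\rightarrow Z$ through the relative adequate moduli space morphism $\cV\rightarrow\cV^{\varphi}$. Here $\cV^{\varphi}$ is only a stack, so the universal property for maps to algebraic spaces is used in its relative form. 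Finally it takes $H$-quotients.

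\textbf{Your route.} You take the $\pi_1$-isomorphism for $\pi_{\cU}$ directly from Construction~\ref{construction XUDMvarphi}. You descend the cover to $W\rightarrow\cUMOD$, whose target is an honest algebraic space, so the absolute universal property suffices. The cost is a second descent along $G$, using the uniqueness in \cite[Lemma 7.23]{LocalStructureStacks_AHR25}.

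\textbf{Trade-offs.} The paper needs one descent and no $G$-equivariance bookkeeping. Your version bypasses the five-lemma argument of Lemma~\ref{lem: basic properties of XUDM}. If you choose $Z\rightarrow\cZ$ Galois, as the paper does, you can replace the groupoid $Z\times_{\cZ}Z\rightrightarrows Z$ by an $H$-action and shorten your first step.

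\textbf{Caveat on the ``cleaner'' variant.} Theorem~\ref{thm: existence of relative moduli spaces} requires $\cZ$ to have Zariski-locally affine diagonal. An arbitrary uniformizable stack $[Z/H]$ need not have this, so that variant proves universality only among such targets. The descent-of-covers argument is the one that proves the statement as written.

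\textbf{The step you flagged as the main obstacle.} It is easy and does not need $\pi_1(\cU)\simeq\pi_1(\cY)$. For $u$ closed in its fiber over $y$, the composite $\omega_y\circ(\pi_0(G_u)\twoheadrightarrow G_y)$ equals the image of $\omega_u$ in $\pi_1(\cY)$, which is trivial. Moreover, $\omega_y$ is injective on any uniformizable stack, by the elementary direction of Noohi's criterion: $G_y$ embeds into the Galois group of a uniformizing torsor. Hence $G_y$ is trivial.
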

 \begin{proof}
 	Let $f:\cX\rightarrow\cZ$ be a morphism to a uniformizable algebraic stack; we want to construct a factorization of $f$ through $\Phi^{\varphi}_{\UDM}$. For this, we can assume $\cZ$ to be connected and Noetherian, as $\cX$ is. Let $\pi:Z\rightarrow\cZ$ be a uniformization that is Galois under a group $H$. Let $\cV:=\cX\times_{\cZ}Z$, whose projection $\cV\rightarrow\cX$ is then an $H$-torsor $\cX$. By Lemma~\ref{lem: basic properties of XUDM} and the theory of fundamental groups, there exists an $H$-torsor $\cV^{\varphi}\rightarrow\cX^{\varphi}_{\UDM}$ and a morphism $\cV\rightarrow\cV^{\varphi}$ making the following diagram of solid arrows
    \begin{equation}\label{diag: universal property XUDMvarphi}
    \begin{tikzcd}
        \cV\arrow[d]\arrow[r]\arrow[rr,bend left=20] & \cV^{\varphi}\arrow[d]\arrow[r,dashed] & Z\arrow[d]\\
        \cX\arrow[rr,bend right=20]\arrow[r,"\Phi^{\varphi}_{\UDM}"] & \cX^{\varphi}_{\UDM}\arrow[r,dashed] & \cZ
    \end{tikzcd}
    \end{equation}
    commutative, with cartesian square. By construction, also the outer rectangle is cartesian. Again by Lemma~\ref{lem: basic properties of XUDM}, $\Phi^{\varphi}_{\UDM}$ is an adequate moduli space, thus the same holds for $\cV\rightarrow\cV^{\varphi}$. By the universal property of adequate moduli spaces~\cite[Theorem 3.12]{LocalStructureStacks_AHR25}, this induces a morphism $\cV^{\varphi}\rightarrow Z$, whose composite with $Z\rightarrow\cZ$ is $H$-invariant. This in turn induces a morphism $\cXUDM^{\varphi}\rightarrow\cZ$ completing~\eqref{diag: universal property XUDMvarphi} to a commutative diagram with cartesian squares.
    The uniqueness of the factorization follows from~\cite[Lemma 7.23]{LocalStructureStacks_AHR25}.
 \end{proof}
 \begin{remark}\label{rmk: characterization of XUDM in terms of fundamental group}
     The proof above shows that $\PhiUDM$ can be characterized as the adequate moduli space morphism with target a uniformizable stack and such that it induces an isomorphism between étale fundamental groups.
 \end{remark}

\subsection{On Base Change Properties}\label{subsec: base change properties}

In general, the formation of initial morphisms $\pi_{\cX}:\cX\rightarrow\cX_{\cP}$ does not commute with base change, even along open immersions or smooth surjective morphisms. In this section, we explore under which assumptions the base change property holds, for $\cP$ between $\UDM$, $\DM$, and $\cI_{\mathrm{fi}}$.

\subsubsection{Base Change Property of $\cXUDM$}
We specialize the discussion to $\cP=\UDM$. Unfortunately, the construction of $\PhiUDM:\cX\rightarrow\cXUDM$ does not commute with base change, even along open immersions, as the following example shows.

\begin{example}\label{exm: root XUDM}
	Let $\cX$ be the $\mu_2$-root gerbe over $\P^1$ associated to the line bundle $\cO_{\P^1}(1)$; this is the only non-trivial $\mu_2$-gerbe over $\P^1$. Then, $\cX$ is simply connected by~\cite[Example 5.10, Example 9.2]{Noo04}, in particular $\cXUDM\simeq\cXMOD\simeq\P^1$. Consider any open immersion $\A^1\subset\P^1$. As the restriction of $\cO_{\P^1}(1)$ to $\A^1$ is trivial, the restriction of the root gerbe $\cX\rightarrow\P^1$ over $\A^1$ is $\cY:=\A^1\times B\mu_2$. On the other hand, $\cY_{\mathrm{UDM}}\simeq\cY$, in particular $\cY_{\mathrm{UDM}}\not\simeq\cXUDM|_{\A^1}$.
	This phenomenon is typical of root gerbes and weighted projective stacks.
\end{example}

Nevertheless, the construction behaves well with respect to finite étale morphisms, as predictable.

\begin{proposition}\label{prop: base change property of XUDM}
	Let $\cX$ be a quasi-separated locally-Noetherian algebraic stack admitting an adequate moduli space, and let $\Phi_{\UDM}:\cX\rightarrow\cXUDM$ be the initial morphism to uniformizable stacks. Let $\cZ\rightarrow\cXUDM$ be a finite, étale morphism, and let $\cY:=\cX\times_{\cXUDM}\cZ$ with projection $\psi:\cY\rightarrow\cZ$. Then, $\psi$ is initial in the category of morphisms from $\cY$ to uniformizable stacks. In other words, the formation of $\PhiUDM:\cX\rightarrow\cXUDM$ commutes with base change along finite, étale morphisms to $\cXUDM$.
\end{proposition}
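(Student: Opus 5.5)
We want to show that $\psi:\cY\rightarrow\cZ$ is initial among morphisms from $\cY$ to uniformizable stacks. By Remark~\ref{rmk: characterization of XUDM in terms of fundamental group}, it suffices to check two things:
\begin{itemize}
\item $\psi$ is an adequate moduli space morphism with uniformizable target;
\item $\psi$ induces an isomorphism on étale fundamental groups of each connected component.
\end{itemize}
Alternatively, we can verify the universal property directly. Below I check the properties of $\psi$ first and then prove universality directly, using the fundamental group only as a guide.

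\textbf{Step 1: $\psi$ has the right properties.} The morphism $\cZ\rightarrow\cXUDM$ is finite and étale, hence flat and affine. Adequate moduli space morphisms commute with flat base change \cite[Proposition 5.2.9]{AdequateModAlp14}, so $\psi$ is an adequate moduli space morphism. Next, $\cZ$ is uniformizable. Take a finite étale presentation $W\rightarrow\cXUDM$ by an algebraic space. Then $W\times_{\cXUDM}\cZ\rightarrow\cZ$ is finite étale, and its source is an algebraic space because $\cZ\rightarrow\cXUDM$ is representable. In the semi-local case one runs the same argument on quasi-compact opens. Finally, $\cY\rightarrow\cX$ is finite étale, so $\cY$ is again quasi-separated and locally Noetherian and admits an adequate moduli space (\cite[Lemma 5.2.11]{AdequateModAlp14}). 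Hence $\cY_{\UDM}$ exists by Theorem~\ref{thm: existence of universal morphism strongly modular case}. This gives a canonical comparison map $\cY_{\UDM}\rightarrow\cZ$, which is an arrow in $\cC_{\cY}^{\UDM}$.

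\textbf{Step 2: universality.} Let $f:\cY\rightarrow\cT$ be a morphism to a uniformizable stack. I would use a Weil restriction (norm) construction along the finite étale map $q:\cZ\rightarrow\cXUDM$, which descends $f$ to something defined over $\cX$. Set $\cT':=q_*(\cZ\times\cT)$, the stack over $\cXUDM$ whose sections over $S\rightarrow\cXUDM$ are maps $S\times_{\cXUDM}\cZ\rightarrow\cT$. Since $q$ is finite étale and $\cT$ is uniformizable (Deligne-Mumford, with finite inertia and suitable diagonal), $\cT'$ should be an algebraic stack. It should also be uniformizable: étale-locally over $\cXUDM$ the cover $q$ is a disjoint union of $d$ copies, so $\cT'$ is étale-locally $\cT^d$, and a uniformization of $\cT^d$ globalizes by the usual product trick. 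The morphism $f$ corresponds by adjunction to a map $\cX\rightarrow\cT'$ over $\cXUDM$. By initiality of $\PhiUDM$, this map factors through $\cXUDM$, giving a section of $\cT'\rightarrow\cXUDM$. By adjunction again, the section gives a map $\cZ\rightarrow\cT$ through which $f$ factors. Uniqueness follows from \cite[Lemma 7.23]{LocalStructureStacks_AHR25}, since $\psi$ is an adequate moduli space morphism.

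\textbf{Expected obstacle.} The main difficulty is proving that the Weil restriction $\cT'$ is algebraic and uniformizable, and that factoring $\cX\rightarrow\cT'$ through $\cXUDM$ is compatible with the structure map to $\cXUDM$. For the compatibility, note that $\cT'\rightarrow\cXUDM$ is a morphism to a uniformizable stack, so the factorization automatically lies over $\cXUDM$ by uniqueness.

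If the Weil restriction route becomes too technical, the fallback is the fundamental-group route. By Proposition~\ref{prop: XUDM isomorphism of fundamental groups}, $\pi_1(\cX)\simeq\pi_1(\cXUDM)$. Finite étale covers of $\cY$ are then finite étale covers of $\cX$ that factor through $\cY$, and these are pulled back from $\cXUDM$ and factor through $\cZ$. So $\pi_1(\cY)\simeq\pi_1(\cZ)$ on each connected component, where $\cY$ and $\cZ$ have matching components by \cite[Theorem 5.3.1]{AdequateModAlp14}. Remark~\ref{rmk: characterization of XUDM in terms of fundamental group} then concludes. This route additionally requires $\cY$ to be connected and Noetherian, which holds after restricting to components and quasi-compact opens.
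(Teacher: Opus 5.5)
Your primary argument, via the Weil restriction $\mathcal{T}'=q_*(\mathcal{T}\times\cZ)$, is correct in outline and takes a genuinely different route from the paper.

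\emph{The paper's route.} It argues with fundamental groups. It pulls back a Galois uniformization $U\rightarrow\cU$ of the target to a $G$-torsor $\cV\rightarrow\cY$. It then uses $\pi_1(\cX)\simeq\pi_1(\cXUDM)$ (Proposition~\ref{prop: XUDM isomorphism of fundamental groups}) to descend the finite étale cover $\cV\rightarrow\cX$ to $\cXUDM$. After base changing to $\cZ$, it picks the connected component hit by the tautological section, which gives a $G$-torsor $\overline{\cU}\rightarrow\cZ$ with $\cV\simeq\cY\times_{\cZ}\overline{\cU}$. Finally it descends the equivariant map $\cV\rightarrow U$ through the adequate moduli space morphism $\cV\rightarrow\overline{\cU}$.

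\emph{Your fallback} is this same argument, packaged through Remark~\ref{rmk: characterization of XUDM in terms of fundamental group}. Its proposed reduction to Noetherian $\cY$ by passing to quasi-compact opens would not be legitimate. Restricting to an open can create new finite étale covers, so the $\pi_1$ isomorphism can fail there (Example~\ref{exm: root XUDM}). The reduction is also unnecessary, since the torsor argument only needs connectedness.

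\emph{Your main route.} The Weil restriction reduces the statement formally to the universal property of $\PhiUDM$, with no fundamental groups, Galois closures or component bookkeeping. It applies to any target class stable under Weil restriction along finite étale maps, so it should also give finite étale base change for $\cXDM$ with affine diagonal. Your check that the factorization $\cXUDM\rightarrow\mathcal{T}'$ is a section, using uniqueness for the target $\cXUDM$, is right.

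The one non-formal input needs a sharper argument than ``a uniformization of $\mathcal{T}^d$ globalizes by the usual product trick.'' Uniformizability is not étale-local on the base. Moreover $\mathcal{T}'\rightarrow\cXUDM$ is not representable, so $\mathcal{T}'$ carries the stackiness of $\cXUDM$ itself: if $\mathcal{T}$ is the base scheme, then $\mathcal{T}'=\cXUDM$.

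Here is an argument that works. Fix a uniformization $T\rightarrow\mathcal{T}$ and an étale cover $W\rightarrow\cXUDM$ trivializing $q$.
\begin{itemize}
\item $q_*(T\times\cZ)\rightarrow\mathcal{T}'$ is finite, étale and surjective, since over $W$ it becomes $T^d\times W\rightarrow\mathcal{T}^d\times W$.
\item $q_*(T\times\cZ)\rightarrow\cXUDM$ is representable, since over $W$ it becomes $T^d\times W\rightarrow W$.
\item Hence, for a uniformization $V\rightarrow\cXUDM$, the fibre product $V\times_{\cXUDM}q_*(T\times\cZ)$ is an algebraic space that is finite étale over $\mathcal{T}'$. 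In the merely locally Noetherian case, do this on quasi-compact opens.
\end{itemize}
Algebraicity of $\mathcal{T}'$, and whatever diagonal condition is built into the target class, follow from the same étale-local description.
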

\begin{proof}
	We can assume both $\cX$ and $\cZ$ to be connected, so the same holds for $\cXUDM$ and $\cY$. Suppose given a connected uniformizable stack $\cU$ and a morphism $f:\cY\rightarrow\cU$. Let $U\rightarrow\cU$ be a $G$-torsor from a connected algebraic space, and let $\cV:=\cY\times_{\cU}U\rightarrow\cY$ be the pullback. Since the composite of $\cV\rightarrow\cY\rightarrow\cX$ is finite and étale, and the fundamental groups of $\cX$ and $\cXUDM$ are isomorphic by Proposition~\ref{prop: XUDM isomorphism of fundamental groups}, there exists a finite étale cover $\overline{\cV}\rightarrow\cXUDM$ such that $\cV\simeq\cX\times_{\cXUDM}\overline{\cV}$. Let $\cV_1:=\cZ\times_{\cXUDM}\overline{\cV}$. Then, $\cY\times_{\cZ}\cV_1\simeq\cY\times_{\cX}\cV\rightarrow\cV$ is finite and étale, and admits a section $s$. The composite of $s$ with the projection to $\cV_1$ has image a connected component $\overline{\cU}$ of $\cV_1$. Since the formation of adequate moduli spaces commutes with flat morphisms, the resulting morphism $\cV\rightarrow\overline{\cU}$ is an adequate moduli space morphism such that $\cV\simeq\cY\times_{\cZ}\overline{\cU}$. In particular, $\overline{\cU}\rightarrow\cZ$ is a $G$-torsor, and there is an induced morphism $\overline{\cU}\rightarrow U$ that is $G$-equivariant. This yields a morphism $\cZ\rightarrow\cU$ between quotients, as wanted. The uniqueness up to unique 2-isomorphism follows from~\cite[Lemma 7.23]{LocalStructureStacks_AHR25}.
\end{proof}

\subsubsection{Base Change Properties of $\cXDM$ and $\cX_{\mathrm{fi}}$}

In general, the same problem as for $\cXUDM$ is present for $\cP=\DM$ or $\cP=\cI_{\mathrm{fi}}$, as the following example suggested by David Rydh shows.

\begin{example}\label{exm: Rydh counterexample for DM and fi}
    Let $k$ be a field of characteristic different from 2. Consider the action of $\Gm$ on $\A^2$ with weights $(2,-2)$, and set $\cX=[\A^2/\Gm]$. Then, $\cX_{\mathrm{fi}}\simeq\cXDM\simeq\cXMOD\simeq\A^1$. Let $\cU=\cX\times_{\cXMOD}(\A^1\setminus0)\simeq[(\A^2\setminus0)/\Gm]$. Then, $\cU$ is already a DM-stack with finite inertia, even uniformizable. See Example~\ref{exm: counterexample non-commutativity finite inertia} for the characteristic 2 case.
\end{example}

We characterize when the formation of $\cXDM$ and $\cX_{\mathrm{fi}}$ commutes with base change, under the assumption that $\pi_{\cX}:\cX\rightarrow\cXMOD$ is a good moduli space. For simplicity, we will work over an algebraically closed field. In this setting, if the base change property holds then it is also possible to give a local presentation of $\cXDM$ and $\cX_{\mathrm{fi}}$.

For every geometric point $x$ of $\cX$, we denote by $G_x$ its stabilizer group scheme, and by $G_x^{\circ}$ its connected component of the identity. By~\cite[Tag 0B7R]{Sta24}, $G_x^{\circ}$ is an open and closed characteristic subgroup scheme.

\begin{proposition}\label{prop: characterization commutation with base change DM case}
    Let $\cX$ be an algebraic stack with affine diagonal and of finite type over an algebraically closed field $k$. Assume that it admits a good moduli space $\pi_{\cX}:\cX\rightarrow\cXMOD$, and let $\PhiDM:\cX\rightarrow\cXDM$ be the initial morphism from $\cX$ to a Deligne-Mumford stack with affine diagonal. The following are equivalent:
    \begin{enumerate}
        \item\label{point: arbitrary base change} $\PhiDM$ commutes with base change along morphisms with affine diagonal from Deligne-Mumford stacks to $\cXDM$;
        \item\label{point: étale base change} $\PhiDM$ commutes with base change along affine étale morphisms to $\cXMOD$;
        \item\label{point: simultaneous local structure} for every point $x\in\cX(k)$, there exists an affine étale neighborhood $U\rightarrow\cXMOD$ of $\pi_{\cX}(x)$ such that $\cX\times_{\cXMOD}U\simeq[\Spec A/G_x]$ and $\cXDM\times_{\cXMOD}U\simeq[\Spec A^{G_x^{\circ}}/(G_x/G_x^{\circ})]$;
        \item\label{point: characterization stabilizers} for every point $x\in\cX(k)$, with image $y:=\Phi_{\DM}(x)$ in $\cXDM$, the morphism $G_x\rightarrow G_y$ induces an isomorphism $G_x/G_x^{\circ}\simeq G_y$;
        \item\label{point: nice local structure} for every point $x\in\cX(k)$, there exists an affine étale neighborhood $U\rightarrow\cXMOD$ of $\pi_{\cX}(x)$ such that $\cX\times_{\cXMOD}U\simeq[\Spec A/G_x]$ and $G_{x'}^{\circ}=G_{x'}\cap G_x^{\circ}$ for any other closed point $x'$ in $\cX\times_{\cXMOD}U$.
    \end{enumerate}
\end{proposition}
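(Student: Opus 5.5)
I will prove the cycle of implications \eqref{point: arbitrary base change}$\Rightarrow$\eqref{point: étale base change}$\Rightarrow$\eqref{point: simultaneous local structure}$\Rightarrow$\eqref{point: characterization stabilizers}$\Rightarrow$\eqref{point: nice local structure}$\Rightarrow$\eqref{point: simultaneous local structure}$\Rightarrow$\eqref{point: arbitrary base change}. The main input is Theorem~\ref{thm: intro characterization CXDM}: good moduli space morphisms $\cX\to\cY$ with $\cY$ Deligne-Mumford correspond to open and closed subgroups $N\subset\cI_{\cX}$. Under this correspondence $\cXDM$ is the maximum, i.e.\ the \emph{smallest} open and closed subgroup $N_{\min}$ of $\cI_{\cX}$. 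Every open and closed subgroup contains the identity component $G_x^{\circ}$ fibrewise. So the whole question is whether the fibrewise identity components $\bigcup_x G_x^{\circ}$ form an open and closed subgroup. Condition \eqref{point: characterization stabilizers} says exactly that $N_{\min}$ has fibre $G_x^{\circ}$ at every point, since $G_x\to G_y$ is surjective with kernel $(N_{\min})_x$ by Lemma~\ref{lem: surjectivity stabilizers good and smooth stab case}.

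\eqref{point: arbitrary base change}$\Rightarrow$\eqref{point: étale base change} is formal, since an affine étale $U\to\cXMOD$ pulls back to an affine étale (hence DM, affine diagonal) morphism to $\cXDM$. For \eqref{point: étale base change}$\Rightarrow$\eqref{point: simultaneous local structure}, I use the local structure theorem of \cite{LocalStructureStacks_AHR25} to choose $U$ with $\cX_U\simeq[\Spec A/G_x]$. Then $[\Spec A/G_x]\to[\Spec A^{G_x^\circ}/(G_x/G_x^\circ)]$ is a good moduli space morphism to a DM stack, because $A\to A^{G_x^\circ}$ is a good moduli space for the linearly reductive $G_x^\circ$ and $G_x/G_x^\circ$ is finite étale. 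It is also initial among maps to DM stacks. The argument for this: any map from $BG_x^\circ$ to a DM stack with affine diagonal factors through the point, because $G_x^\circ$ is connected and its image in a quasi-finite unramified stabilizer is trivial. So $\ker(G_x\to G_{z})\supset G_x^\circ$, and I conclude by Theorem~\ref{thm: analogous thm 722 DM case}, using that $G_x$ has $G_x^{\circ}$ as its identity component and checking only at the closed point $x$ after shrinking $U$. Hence $(\cX_U)_{\DM}$ is this quotient, and by \eqref{point: étale base change} it equals $\cXDM\times_{\cXMOD}U$.

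For \eqref{point: simultaneous local structure}$\Rightarrow$\eqref{point: characterization stabilizers}, I read off the stabilizer of the image of $x$ in $[\Spec A^{G_x^\circ}/(G_x/G_x^\circ)]$. This requires $x$ to be closed so that its image is fixed by $G_x/G_x^\circ$, and I reduce to closed points in fibres using upper semicontinuity of the relevant components. For \eqref{point: characterization stabilizers}$\Rightarrow$\eqref{point: nice local structure}, in the local chart the relative inertia $\cI_{\cX/\cXDM}$ is open and closed in $\cI_{\cX}\subset G_x\times\Spec A$. Its fibre over $x$ is $G_x^\circ$, so after shrinking it is contained in $G_x^\circ\times\Spec A$, and by connectedness of $G_x^{\circ}$ it equals $\cI_{\cX}\cap(G_x^\circ\times\Spec A)$. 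Taking fibres at $x'$ gives $G_{x'}^\circ=G_{x'}\cap G_x^\circ$. For \eqref{point: nice local structure}$\Rightarrow$\eqref{point: simultaneous local structure}, the subgroup $N:=\cI_{\cX}\cap(G_x^\circ\times\Spec A)$ is open and closed in $\cI_{\cX_U}$ with fibres the identity components, so it is the minimal one. Theorem~\ref{thm: intro characterization CXDM} then identifies $(\cX_U)_{\DM}$ with the quotient above. These local minimal subgroups glue to an open and closed subgroup of $\cI_{\cX}$ that is fibrewise $G^\circ$, and by the uniqueness in Theorem~\ref{thm: intro characterization CXDM} its quotient is $\cXDM$; restricting to $U$ gives \eqref{point: simultaneous local structure}.

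Finally, \eqref{point: simultaneous local structure}$\Rightarrow$\eqref{point: arbitrary base change}: given $\cZ\to\cXDM$ with $\cZ$ DM with affine diagonal, set $\cX_{\cZ}:=\cX\times_{\cXDM}\cZ$. The map $\cX_{\cZ}\to\cZ$ is a good moduli space morphism to a DM stack, with relative inertia the pullback of the fibrewise-identity-component subgroup. Since $\cX_{\cZ}\to\cX$ is representable, the stabilizers of $\cX_{\cZ}$ are subgroups of the $G_x$. The step I expect to be the main obstacle is checking that these subgroups still have identity component equal to the pulled-back one, i.e.\ that the relative inertia is again the minimal open and closed subgroup of $\cI_{\cX_{\cZ}}$. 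I would try to get this from the fact that $\cZ\to\cXDM$ is representable on the DM parts, so the stabilizers of $\cX_{\cZ}$ are extensions of subgroups of $G_x/G_x^{\circ}$ by the full $G_x^\circ$. Theorem~\ref{thm: intro characterization CXDM} then identifies $\cZ$ with $(\cX_{\cZ})_{\DM}$.
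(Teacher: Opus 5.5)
Your reduction to minimal open and closed subgroups matches the paper's overall strategy, but the proposal has a genuine gap: the implication that yields (1) is missing, and the route you sketch for it rests on a false premise. Nothing in (1) makes $\cZ\to\cXDM$ representable. So $\cX_{\cZ}\to\cX$ need not be representable, and the stabilizers of $\cX_{\cZ}$ need not be subgroups of the $G_x$: for $\cZ=\cXDM\times B\Gamma$ with $\Gamma$ finite, $G_u=G_x\times\Gamma$.

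The missing observation is that representability is irrelevant. If $u$ is a point of $\cX_{\cZ}$ with images $x$, $x_0=\PhiDM(x)$ and $z$, then $G_u\simeq G_x\times_{G_{x_0}}G_z$. Hence $\ker(G_u\to G_z)\simeq\ker(G_x\to G_{x_0})=G_x^{\circ}$ by (4), which you already have from (3). This kernel is connected, and it is open and closed because $G_z$ is \'etale, so it equals $G_u^{\circ}$. Thus $\cI_{\cX_{\cZ}/\cZ}$ is fibrewise $G_u^{\circ}$, which every map to a Deligne--Mumford stack kills. Now apply \cite[Theorem 7.22]{LocalStructureStacks_AHR25} to the finitely presented \emph{relative} good moduli space morphism $\cX_{\cZ}\to\cZ$; this is the paper's proof of (4)$\Rightarrow$(1). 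Do not finish with Theorem~\ref{thm: intro characterization CXDM} instead: it needs $\cX_{\cZ}$ to have a good moduli space of finite presentation, which need not exist when $\cZ$ is an arbitrary Deligne--Mumford stack with affine diagonal.

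Two shrinking steps are also unjustified. In (2)$\Rightarrow$(3), Theorem~\ref{thm: analogous thm 722 DM case} needs the kernel condition at every closed point $x'$ of the chart, not only at $x$. At $x'$, the relative inertia of $[\Spec A/G_x]\to[\Spec A^{G_x^{\circ}}/(G_x/G_x^{\circ})]$ is $G_{x'}\cap G_x^{\circ}$, which can be strictly larger than $G_{x'}^{\circ}$; that discrepancy is exactly (5). Any shrinking of $U$ would also depend on the target, so initiality is not established. The paper instead uses (2) to factor $\cX_U\to[\Spec A^{G_x^{\circ}}/(G_x/G_x^{\circ})]$ through $\cXDM\times_{\cXMOD}U$. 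The comparison map is a good moduli space morphism that is an isomorphism on stabilizers at the image of $x$ (Lemma~\ref{lem: surjectivity stabilizers good and smooth stab case}), hence an isomorphism over a neighbourhood of $\pi_{\cX}(x)$.

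In (4)$\Rightarrow$(5), the pullback $N$ of $\cI_{\cX/\cXDM}$ and $J:=\cI_{\cX}\cap(G_x^{\circ}\times\Spec A)$ are open and closed subgroups with the same fibre over $x$. Since $G_x\times\Spec A\to\Spec A$ is not closed, this alone does not let you shrink away $N\setminus J$ and $J\setminus N$. You need the unpunctured inertia of \cite{ExistenceModSpacesAHLH23}, as in Proposition~\ref{prop: generalization Rydh Prop A15}. Connectedness of $G_x^{\circ}$ does not help: $G_{x'}\cap G_x^{\circ}$ can be disconnected (Example~\ref{exm: Rydh counterexample for DM and fi}), and $N=J$ is (5) itself. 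It is simpler to reorganize as the paper does: prove (4)$\Rightarrow$(1) as above, and read (3)$\Rightarrow$(5) off the chart. Your (5)$\Rightarrow$(3) gluing argument is essentially the paper's (5)$\Rightarrow$(4) and is fine.

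Finally, drop the ``upper semicontinuity'' reduction in (3)$\Rightarrow$(4), because (4) genuinely fails at non-closed $k$-points. Take $\mathrm{O}(2)=\Gm\rtimes\Z/2\Z$ acting on $\A^2$ by $t\cdot(a,b)=(ta,t^{-1}b)$ and $(a,b)\mapsto(b,a)$, in characteristic $\neq2$. Then $\cXDM\simeq\A^1\times B(\Z/2\Z)$, yet the non-closed point $(1,0)$ has trivial stabilizer while its image has stabilizer $\Z/2\Z$. So all conditions must be read at closed points.
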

\begin{proof}
    First, recall that $\cXDM$ exists by Theorem~\ref{thm: existence of universal morphism strongly modular case}, and it has affine diagonal (Lemma~\ref{lem: inheriting affine diagonal}) and finite inertia.
    Clearly,~\eqref{point: arbitrary base change} implies~\eqref{point: étale base change}. Now, we show that~\eqref{point: étale base change} implies~\eqref{point: simultaneous local structure}. By~\cite[Theorem 4.12]{AHR20} and the assumption that the formation of $\cXDM$ commutes with base change along affine étale morphisms to $\cXMOD$, we can assume that $\cX\simeq[\Spec A/G_x]$, $G_x=\underline{\mathrm{Aut}}_{\cX}(x)$. By definition, the natural morphism $f:\cX\rightarrow[\Spec A^{G_x^{\circ}}/(G_x/G_x^{\circ})]=:\cZ$ factors as $\cX\rightarrow\cXDM\xrightarrow{\varphi}\cZ$. By construction and Lemma~\ref{lem: surjectivity stabilizers good and smooth stab case}, $\varphi$ induces an isomorphism between stabilizers at $\PhiDM(x)$ and $f(x)$. Since both $f$ and $\PhiDM$ are good moduli space morphisms, also $\varphi$ is by Lemma~\ref{lem: cancellation properties}\eqref{lem: cancellation properties 3}. Therefore, $\varphi$ is an isomorphism in a neighborhood of $f(x)$. Since $\cZ\rightarrow\cXMOD$ is a universal homeomorphism, $\varphi$ is an isomorphism over a neighborhood of $\pi_{\cX}(x)$ in $\cXMOD$. Restricting to that open yields the presentation in point~\eqref{point: simultaneous local structure}. Notice that~\eqref{point: simultaneous local structure} immediately implies~\eqref{point: characterization stabilizers}.
    
    Now, we show that point~\eqref{point: characterization stabilizers} implies the arbitrary base change property in~\eqref{point: arbitrary base change}. First, notice that the isomorphism $G_y\simeq G_x/G_x^{\circ}$ holds for all geometric points $x$ in $\cX$, $y=\PhiDM(x)$. Let $\cY\rightarrow\cXDM$ be a morphism with affine diagonal from a Deligne-Mumford stack, and let $\cU:=\cX\times_{\cXDM}\cY$. Let $f:\cU\rightarrow\cZ$ be a morphism to a Deligne-Mumford stack with affine diagonal; we need to show that $f$ factors through $\cY$. Since $\cZ$ is a Deligne-Mumford stack, $\cI_{\cU/\cZ}$ is an open substack of $\cI_{\cU}$ whose fiber over a geometric point $u$ in $\cU$ contains the connected component $G_u^{\circ}$ of the stabilizer group of $u$. If $x$ is the image of $u$ in $\cX$, $x_0:=\PhiDM(x)$, and $y$ is the image of $u$ in $\cY$, we have $G_u\simeq G_x\times_{G_{x_0}}G_y$. Since $\cY$ is Deligne-Mumford, we also have $G_u^{\circ}\simeq G_x^{\circ}\times_{G_{x_0}}G_y$, hence $G_u/G_u^{\circ}\simeq G_y$. It follows that $\cI_{\cU/\cY}\subset\cI_{\cU/\cZ}$. As $\cU\rightarrow\cY$ is also of finite presentation,~\cite[Theorem 7.22]{LocalStructureStacks_AHR25} yields the desired factorization.
    We have proved that points~\eqref{point: arbitrary base change},~\eqref{point: étale base change},~\eqref{point: simultaneous local structure} and~\eqref{point: characterization stabilizers} are all equivalent.
    
    Now, we show that~\eqref{point: simultaneous local structure} implies point~\eqref{point: nice local structure}. Let $x'$ be another closed point of $\cX$, which we assume to be as in point~\eqref{point: simultaneous local structure}. Notice that $G_{x'}^{\circ}\subset G_{x'}\cap G_x^{\circ}$ is always true. By Lemma~\ref{lem: surjectivity stabilizers good and smooth stab case} and diagram chasing, the equality holds if and only if the induced morphism $G_{x'}/G_{x'}^{\circ}\rightarrow G_x/G_x^{\circ}$ is injective, which is automatic given the presentation. To conclude the proof of the proposition, we show that~\eqref{point: nice local structure} implies point~\eqref{point: characterization stabilizers}. Let $U\rightarrow\cXMOD$ as in~\eqref{point: nice local structure}, and let $\cU:=\cX\times_{\cXMOD}U$.
    The good moduli space morphism $\cU\rightarrow[\Spec A^{G_x^{\circ}}/(G_x/G_x^{\circ})]$ has open and closed relative inertia subgroup $\cJ_U\subset\cI_{\cU}$. By construction and our assumption, if $u'$ is a point over $x'$, we have $(\cJ_U)_{u'}=G_{x'}\cap G_x^{\circ}=G_{x'}^{\circ}$. Moreover, $\cI_{\cU}\simeq\cI_{\cX}\times_{\cX}\cU$, hence the image $\widetilde{\cJ}_U$ of $\cJ_U$ in $\cI_{\cX}$ its open. Notice that $\cJ_U\subset\widetilde{\cJ}_U\times_{\cX}\cU$ are both open substacks of $\cI_{\cU}$, locally of finite type over $k$ and with the same closed points, hence they coincide. Denote by $\cI_{\cX}^{\circ}$ the union of all $\widetilde{\cJ}_U$ with $U\rightarrow\cXMOD$ varying among affine étale morphisms as in~\eqref{point: nice local structure}. For two algebraic spaces $U_1$ and $U_2$ as in~\eqref{point: nice local structure}, $\widetilde{\cJ}_{U_1}$ coincides with $\widetilde{\cJ}_{U_2}$ over the open intersection of the images of $U_1$ and $U_2$ in $\cXMOD$. It follows that for all $U\rightarrow\cXMOD$ as in~\eqref{point: nice local structure}, we have $\cI_{\cX}^{\circ}\times_{\cX}\cU\simeq\cJ_U$. Therefore, $\cI_{\cX}^{\circ}\subset\cI_{\cX}$ is an open and closed subgroup. By Theorem~\ref{thm: intro characterization CXDM}, there exists a good moduli space morphism $\varphi:\cX\rightarrow\cY$ with relative inertia equal to $\cI_{\cX}^{\circ}$, and $\cY$ a Deligne-Mumford stack. Notice that the stabilizers of $\cY$ satisfy property~\eqref{point: characterization stabilizers}. Using Lemma~\ref{lem: surjectivity stabilizers good and smooth stab case}, it follows easily that the induced morphism $\cXDM\rightarrow\cY$ is an isomorphism, showing that also $\cXDM$ satisfies~\eqref{point: characterization stabilizers}.
\end{proof}

A similar statement holds for the initial morphism $\Phi_{\mathrm{fi}}:\cX\rightarrow\cX_{\mathrm{fi}}$ to algebraic stacks with finite inertia. One substitutes the connected component $G_x^{\circ}$ of the stabilizer $G_x$ of $x\in\cX(k)$ with its reduced structure $(G_x)_{\circ}:=(G_x^{\circ})_{\mathrm{red}}$. Now, $(G_x)_{\circ}$ is a closed subgroup scheme of $G_x$, and it coincides with $G_x^{\circ}$ when $\mathrm{char}\ \! k=0$. When $k$ is of positive characteristic and $G_x$ is linearly reductive, $G_x^{\circ}$ is of multiplicative type, thus $(G_x)_{\circ}$ is again normal in $G_x$. See also the discussion in~\cite[Appendix B.1]{ER17}.

\begin{proposition}\label{prop: characterization commutation with base change finite inertia case}
    Let $\cX$ be an algebraic stack with affine diagonal and of finite type over an algebraically closed field $k$. Assume that it admits a good moduli space $\pi_{\cX}:\cX\rightarrow\cXMOD$, and let $\Phi_{\mathrm{fi}}:\cX\rightarrow\cX_{\mathrm{fi}}$ be the initial morphism from $\cX$ to an algebraic stack with finite inertia and affine diagonal. The following are equivalent:
    \begin{enumerate}
        \item\label{condition 1} $\Phi_{\mathrm{fi}}$ commutes with base change along morphisms with affine diagonal from algebraic stacks with finite inertia to $\cX_{\mathrm{fi}}$.
        \item\label{condition 2} $\Phi_{\mathrm{fi}}$ commutes with base change along affine étale morphisms to $\cXMOD$.
        \item\label{condition 3} for every point $x\in\cX(k)$, there exists an affine étale neighborhood $U\rightarrow\cXMOD$ of $\pi_{\cX}(x)$ such that $\cX\times_{\cXMOD}U\simeq[\Spec A/G_x]$
        and $\cX_{\mathrm{fi}}\times_{\cXMOD}U\simeq[\Spec A^{(G_x)_{\circ}}/(G_x/(G_x)_{\circ})]$.
        \item\label{condition 4} for every point $x\in\cX(k)$, with image $y:=\Phi_{\mathrm{fi}}(x)$ in $\cX_{\mathrm{fi}}$, the morphism $G_x\rightarrow G_y$ induces an isomorphism $G_x/(G_x)_{\circ}\simeq G_y$.
        \item\label{condition 5} for every point $x\in\cX(k)$, there exists an affine étale neighborhood $U\rightarrow\cXMOD$ of $\pi_{\cX}(x)$ such that $\cX\times_{\cXMOD}U\simeq[\Spec A/G_x]$
        and $(G_{x'})_{\circ}=G_{x'}\cap(G_x)_{\circ}$ for every other closed point $x'$ in $\cX\times_{\cXMOD}U$.
    \end{enumerate}
    Moreover, if the above equivalent conditions hold, then also the formation of $\PhiDM:\cX\rightarrow\cXDM$ commutes with base change along morphisms from Deligne-Mumford stacks to $\cXDM$.
\end{proposition}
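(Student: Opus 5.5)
The plan is to run the proof of Proposition~\ref{prop: characterization commutation with base change DM case} verbatim, replacing $G_x^{\circ}$ by $(G_x)_{\circ}$ and Deligne-Mumford targets by targets with finite inertia and affine diagonal, and then to deduce the final claim about $\PhiDM$ separately. First note that $\cX_{\mathrm{fi}}$ exists by Corollary~\ref{cor: finite inertia initial map}, is a good moduli space morphism, and has affine diagonal by Lemma~\ref{lem: inheriting affine diagonal}. The implication \eqref{condition 1}$\Rightarrow$\eqref{condition 2} is immediate. For \eqref{condition 2}$\Rightarrow$\eqref{condition 3}, I would use the local structure theorem~\cite[Theorem 4.12]{AHR20} to reduce to $\cX\simeq[\Spec A/G_x]$. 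Since $(G_x)_{\circ}$ is normal (of multiplicative type and reduced, hence a torus) and $G_x/(G_x)_{\circ}$ is finite linearly reductive, the stack $\cZ:=[\Spec A^{(G_x)_{\circ}}/(G_x/(G_x)_{\circ})]$ has finite inertia and affine diagonal. So $\cX\to\cZ$ factors through $\cX_{\mathrm{fi}}$, and the factor $\cX_{\mathrm{fi}}\to\cZ$ is a good moduli space morphism by Lemma~\ref{lem: cancellation properties}\eqref{lem: cancellation properties 3}. It is an isomorphism on stabilizers at $\Phi_{\mathrm{fi}}(x)$, since by Lemma~\ref{lem: surjectivity stabilizers good and smooth stab case} both stabilizers are quotients of $G_x$, and $(G_x)_{\circ}$ must die in any finite group receiving $G_x$. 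Hence it is an isomorphism near that point, and therefore over a neighborhood in $\cXMOD$. The implication \eqref{condition 3}$\Rightarrow$\eqref{condition 4} is then immediate.

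For \eqref{condition 4}$\Rightarrow$\eqref{condition 1}, take $\cY\to\cX_{\mathrm{fi}}$ with $\cY$ of finite inertia, set $\cU=\cX\times_{\cX_{\mathrm{fi}}}\cY$, and let $f:\cU\to\cZ$ have finite-inertia target. Compute $G_u\simeq G_x\times_{G_{x_0}}G_y$. Since $G_y$ is finite, $(G_u)_{\circ}=(G_x)_{\circ}\times_{G_{x_0}}\{1\}$ up to reduced identity components, so $G_u/(G_u)_{\circ}\hookrightarrow G_y$, and hence $\ker(G_u\to G_y)=(G_u)_{\circ}$. Any map to a stack with finite inertia kills $(G_u)_{\circ}$, because the image of a connected reduced group in a finite group scheme is trivial. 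Therefore $\ker(G_u\to G_y)\subset\ker(G_u\to G_{f(u)})$ at closed points, and~\cite[Theorem 7.22]{LocalStructureStacks_AHR25} gives the factorization through $\cY$. One must check that $\cU\to\cY$ is a good moduli space morphism of finite presentation. It is, because good moduli space morphisms are stable under base change and~\cite[Theorem 6.3.3]{AdequateModAlp14} applies.

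For \eqref{condition 3}$\Rightarrow$\eqref{condition 5}$\Rightarrow$\eqref{condition 4}, I would copy the DM argument. Let $\cJ_U\subset\cI_{\cU}$ be the relative inertia of $\cU\to[\Spec A^{(G_x)_{\circ}}/(G_x/(G_x)_{\circ})]$. These subgroups glue to a closed subgroup $\cI^{\circ}_{\cX}\subset\cI_{\cX}$, now closed but typically not open. This is the main obstacle, since Theorem~\ref{thm: intro characterization CXDM} only handles open and closed subgroups. To get around it, I would avoid the global construction and instead show \eqref{condition 5}$\Rightarrow$\eqref{condition 2} directly. Over each $U$ the stack $\cZ_U$ has finite inertia, and the gluing data on overlaps agree by~\cite[Lemma 7.23]{LocalStructureStacks_AHR25}, using the equality $(G_{x'})_{\circ}=G_{x'}\cap(G_x)_{\circ}$. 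This equality guarantees that the stabilizers of $\cZ_U$ at every closed point $x'$ are $G_{x'}/(G_{x'})_{\circ}$, independently of the chart. We thus obtain a global $\cY$ with finite inertia. Lemma~\ref{lem: nice chains are determined by relative inertia} (good case), together with the maximality of $\cX_{\mathrm{fi}}$, then identifies $\cY\simeq\cX_{\mathrm{fi}}$ and yields \eqref{condition 4}.

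Finally, for the claim about $\PhiDM$: condition~\eqref{condition 4} gives $G_y\simeq G_x/(G_x)_{\circ}$. The stack $\cXDM$ is obtained from $\cX_{\mathrm{fi}}$ as $(\cX_{\mathrm{fi}})_{\DM}$, since a DM stack has finite inertia here. It therefore suffices to verify condition~\eqref{point: characterization stabilizers} of Proposition~\ref{prop: characterization commutation with base change DM case}, i.e.\ $G_{x}/G_x^{\circ}\simeq G_{\PhiDM(x)}$. I would obtain this from the local presentation~\eqref{condition 3}: inside it, the quotient by the finite group $G_x^{\circ}/(G_x)_{\circ}$ (infinitesimal, multiplicative) is still a good quotient, and the stabilizers vary as required.
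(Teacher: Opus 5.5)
\textbf{The five-way equivalence.} This part is sound. For (1)$\Rightarrow$(2)$\Rightarrow$(3)$\Rightarrow$(4), (3)$\Rightarrow$(5) and (5)$\Rightarrow$(4) it follows the paper.

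For (4)$\Rightarrow$(1) you take a shorter route. You compute $\ker(G_u\to G_y)=(G_u)_{\circ}$ and apply \cite[Theorem 7.22]{LocalStructureStacks_AHR25} directly, as in the Deligne--Mumford case. The paper instead uses Lemmas~\ref{lem: technical throw in lemma} and~\ref{lem: refinement property of CX} to reduce to a factorization $\cU\to\cZ\to\cY$ by good moduli space morphisms, and then checks that $\cZ\to\cY$ is an isomorphism on stabilizers. Both arguments work.

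Two minor corrections:
\begin{itemize}
\item In characteristic $0$, $(G_x)_{\circ}=G_x^{\circ}$ is connected reductive, not a torus. Normality still holds, since it is the identity component.
\item In (5)$\Rightarrow$(4), \cite[Lemma 7.23]{LocalStructureStacks_AHR25} only gives uniqueness of the identifications on overlaps, and hence the cocycle condition. Their existence comes from the fact that each local model satisfies (4) for $\cX_U$. It is therefore $(\cX_U)_{\mathrm{fi}}$ and commutes with étale base change by your (4)$\Rightarrow$(1). This is how the paper builds the groupoid $(\cX_R)_{\mathrm{fi}}\rightrightarrows(\cX_U)_{\mathrm{fi}}$.
\end{itemize}

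\textbf{The gap: the final claim about $\PhiDM$.} You cannot read off $G_{\PhiDM(x)}$ from the Deligne--Mumford quotient $[\Spec A^{G_x^{\circ}}/(G_x/G_x^{\circ})]$ of the chart in (3). A local model only gives a morphism $(\cX_U)_{\DM}\to\cXDM\times_{\cXMOD}U$. That yields the surjection $G_x/G_x^{\circ}\twoheadrightarrow G_{\PhiDM(x)}$, but not injectivity.

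Whether the local stabilizer survives globally is exactly the base-change question. In Example~\ref{exm: Rydh counterexample for DM and fi}, the chart over $\A^1\setminus 0$ is Deligne--Mumford with stabilizers $\mu_2$, yet $\cXDM\simeq\A^1$. So ``the stabilizers vary as required'' is where the whole argument lies, and it is unproved.

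The missing step is to verify condition (5) of Proposition~\ref{prop: characterization commutation with base change DM case} on the same chart, namely $G_{x'}^{\circ}=G_{x'}\cap G_x^{\circ}$.
\begin{itemize}
\item Both sides are open subschemes of $G_{x'}$.
\item Their underlying sets are those of $(G_{x'})_{\circ}$ and $G_{x'}\cap(G_x)_{\circ}$ respectively, and these are equal by your (5).
\end{itemize}
Then (5)$\Rightarrow$(1) of that proposition concludes.

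Alternatively, you can keep your route through $\cXDM\simeq(\cX_{\mathrm{fi}})_{\DM}$ by applying that proposition to $\cX_{\mathrm{fi}}$ itself. Its stabilizers $H$ are finite, so $H_{z'}\cap H_z^{\circ}$ is an open subgroup of $H_{z'}$ supported at the identity, hence equal to $H_{z'}^{\circ}$. Condition (5) is therefore automatic for $\cX_{\mathrm{fi}}$. This gives $G_{\PhiDM(x)}\simeq H_z/H_z^{\circ}=G_x/G_x^{\circ}$ using your (4), and (4)$\Rightarrow$(1) of that proposition for $\cX$ then finishes the claim.
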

\begin{proof}
    The initial morphism $\Phi_{\mathrm{fi}}$ exists by Corollary~\ref{cor: finite inertia initial map}, and has affine diagonal by Lemma~\ref{lem: inheriting affine diagonal}.
    The proof that~\eqref{condition 1} implies~\eqref{condition 2}, that~\eqref{condition 2} implies~\eqref{condition 3}, which in turn implies~\eqref{condition 4}, are as in Proposition~\ref{prop: characterization commutation with base change DM case}. The same is true for the implication from~\eqref{condition 3} to~\eqref{condition 5}. Now, we show that~\eqref{condition 4} implies~\eqref{condition 1}.
    
    Assume that~\eqref{condition 4} holds, and notice the same is true for all geometric points. Let $\cY\rightarrow\cX_{\mathrm{fi}}$ be a morphism with affine diagonal from an algebraic stack with finite inertia, and construct the commutative diagram with left cartesian square
    \[
    \begin{tikzcd}
        \cU\arrow[r,"\varphi"]\arrow[d] & \cY\arrow[r]\arrow[d] & \cYMOD\arrow[d]\\
        \cX\arrow[r,"\Phi_{\mathrm{fi}}"] & \cXDM\arrow[r] & \cXMOD
    \end{tikzcd}
    \]
    where the horizontal arrows are good moduli space morphisms by~\cite[Proposition 4.7 (1)]{goodmoduli_Alp13}. Since the left square is cartesian and $\cY$ has finite inertia, $\cU\rightarrow\cY$ again satisfies condition~\eqref{condition 4}. Let $f:\cU\rightarrow\cZ$ be a morphism to an algebraic stack with finite inertia; we want to show that $f$ factors through $\cY$. By Lemma~\ref{lem: technical throw in lemma}, we can assume $f$ to be a good moduli space morphism. By Lemma~\ref{lem: refinement property of CX}, we can further assume to be in a situation where $\varphi$ is factored as $\cU\xrightarrow{f}\cZ\xrightarrow{g}\cY$. We just need to show that $g$ induces an isomorphism between all geometric stabilizers, as it is a good moduli space morphism by Lemma~\ref{lem: cancellation properties}\eqref{lem: cancellation properties 3}. Clearly, for every geometric point $u$ in $\cU$ the kernel of $G_u\rightarrow G_{f(u)}$ contains $(G_u)_{\circ}$. By Lemma~\ref{lem: surjectivity stabilizers good and smooth stab case}, $g$ induces a surjection between stabilizers, thus these are actually isomorphism by the assumption~\eqref{condition 4}.

    Now, we prove that condition~\eqref{condition 5} implies property~\eqref{condition 4}. Given a geometric point $x$ in $\cX$, let $U\rightarrow\cXMOD$ be the neighborhood provided by~\eqref{condition 5}, and let $\cX_U:=\cX\times_{\cXMOD}U$. Then, there is a morphism $\cX_U\rightarrow[\Spec A^{(G_x)_{\circ}}/(G_x/(G_x)_{\circ})]$. From the assumption, it follows that this morphism satisfies property~\eqref{condition 4}, hence $[\Spec A^{(G_x)_{\circ}}/(G_x/(G_x)_{\circ})]\simeq(\cX_U)_{\mathrm{fi}}$ and this is true after further restriction to any open substack. Now, assume further that $U\rightarrow\cXMOD$ is also surjective. Let $R:=U\times_{\cXMOD}U$, and $\cX_R:=R\times_{\cXMOD}\cX$, which does not depend on the projection $R\rightarrow U$ chosen. Then, we get the following commutative diagram of solid arrows
    \begin{equation}\label{diag: finite inertia}
    \begin{tikzcd}
        \cX_R\arrow[d,shift right]\arrow[d,shift left]\arrow[r] & (\cX_R)_\mathrm{fi}\arrow[d,shift right]\arrow[d,shift left]\arrow[r] & R\arrow[d,shift right]\arrow[d,shift left]\\
        \cX_U\arrow[d]\arrow[r] & (\cX_U)_\mathrm{fi}\arrow[d,dashed,"\pi"]\arrow[r] & U\arrow[d]\\
        \cX\arrow[rr, bend right=20]\arrow[r,dashed,"\varphi"] & \cY\arrow[r,dashed] & \cXMOD
    \end{tikzcd}
    \end{equation}
    where the two vertical morphisms in the middle are induced by the universal property of $(\cX_R)_{\mathrm{fi}}$. Notice that $\cX_R\simeq\cX_U\times_{U}R$ under both projections from $R$ to $U$. Since $\cX_U\rightarrow(\cX_U)_{\mathrm{fi}}$ satisfies property~\eqref{condition 4}, its construction commutes with base change, hence the top right square is cartesian. All vertical arrows are stabilizer preserving and étale.
    Therefore, $(\cX_R)_{\mathrm{fi}}\rightrightarrows(\cX_U)_{\mathrm{fi}}$ forms a stabilizer-preserving étale groupoid object in the 2-category of algebraic stacks. Because $(\cX_U)_{\mathrm{fi}}$ and $(\cX_R)_{\mathrm{fi}}$ are defined as initial objects, the structural 1-morphisms of this groupoid are unique up to unique 2-isomorphisms. Consequently, all 2-categorical coherence conditions are satisfied automatically. This avoids many of the $\infty$-categorical complexities of groupoids in stacks (as studied, for instance, in~\cite[\S3]{FactorizationStacks_Har17} in the case of Deligne-Mumford stacks). In particular, by~\cite[Lemma 2.1.4]{ToroidalOrbifoldsATW20}, there exists a quotient $\cY$, and we can complete diagram~\eqref{diag: finite inertia} with the dotted arrows.
    The squares of the resulting diagram are all cartesian, and the quotient map $\pi:(\cX_U)_{\mathrm{fi}}\rightarrow\cY$ is étale and stabilizer preserving. It follows that $\cY$ has finite inertia. By the universal property, we get an induced morphism $\phi:\cX_{\mathrm{fi}}\rightarrow\cY$ factoring $\varphi$. By construction, $\cY$ has stabilizer at $\varphi(x')$ equal to $G_{x'}/(G_{x'})_{\circ}$, hence $\phi$ is an isomorphism, by Lemma~\ref{lem: surjectivity stabilizers good and smooth stab case}. Therefore, when $U\rightarrow\cXMOD$ is surjective, we have constructed $\cX_{\mathrm{fi}}$ and showed that it satisfies properties~\eqref{condition 1}-\eqref{condition 4}, hence in particular its construction commutes with open immersions. In the general case, we obtain local models of $\cX_{\mathrm{fi}}$ around each geometric point $x$, that commute with base change along open immersions, hence they glue. It follows that the global $\cX_{\mathrm{fi}}$ satisfies~\eqref{condition 5}.
    
    Finally, notice that property~\eqref{condition 5} implies $G_x'^{\circ}=G_{x'}\cap G_x^{\circ}$ in the same étale neighborhood. Indeed, both right and left-hand side are open subschemes of $G_{x'}$ with the same underlying topological space. In particular, the formation of $\PhiDM:\cX\rightarrow\cXDM$ commutes with base change by Proposition~\ref{prop: characterization commutation with base change DM case}.
\end{proof}

\begin{example}\label{exm: counterexample non-commutativity finite inertia}
    Let $k$ be a field of characteristic 2, and let $\cX=[\A^2/\Gm]$, where the action has weights $(2,-2)$. Then, $\cX_{\mathrm{fi}}\simeq\cXDM\simeq\cXMOD\simeq\A^1$. Let $x_0$ be the origin, and $x$ any other point of $\cX$. Then, $G_x=G_x^{\circ}=G_x\cap G_{x_0}^{\circ}=\mu_2$, but $(G_x)_{\circ}=\Spec k\not=\mu_2=G_x\cap (G_{x_0})_{\circ}$. Propositions~\ref{prop: characterization commutation with base change DM case} and~\ref{prop: characterization commutation with base change finite inertia case} show that the formation of $\cXDM$ commutes with base change while $\cX_{\mathrm{fi}}$ does not. Indeed, if $\cU=\cX\times_{\cXMOD}(\A^1\setminus0)$, then $\cU_{\mathrm{fi}}\simeq\cU$.
\end{example}

As an application of Proposition~\ref{prop: characterization commutation with base change finite inertia case}, we recover~\cite[Proposition B.2]{ER17} over an algebraically closed field.

\begin{corollary}[{\cite[Proposition B.2]{ER17}}]\label{cor: Edidin Rydh result}
    Let $\cX$ be a reduced algebraic stack with affine diagonal and of finite type over an algebraically closed field $k$. Assume that it admits a good moduli space $\pi_{\cX}:\cX\rightarrow\cXMOD$. If the dimension of the stabilizers of geometric points of $\cX$ is constant, then $\cXDM$ and $\cX_{\mathrm{fi}}$ satisfy the equivalent conditions of Propositions~\ref{prop: characterization commutation with base change DM case} and~\ref{prop: characterization commutation with base change finite inertia case}. Moreover, $\Phi_{\mathrm{fi}}$ is a smooth gerbe.
    In particular, if $\cX$ is smooth, then $\cX_{\mathrm{fi}}$ is smooth and $\cXMOD$ has tame quotient singularities.
\end{corollary}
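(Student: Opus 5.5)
The plan is to verify condition~\eqref{condition 5} of Proposition~\ref{prop: characterization commutation with base change finite inertia case}; since that proposition also gives the $\DM$ statement, both $\cXDM$ and $\cX_{\mathrm{fi}}$ are then handled at once. Fix $x\in\cX(k)$. By the local structure theorem~\cite[Theorem 4.12]{AHR20}, used as in the proof of Proposition~\ref{prop: characterization commutation with base change DM case}, we may choose an affine étale neighborhood $U\rightarrow\cXMOD$ of $\pi_{\cX}(x)$ with $\cX_U:=\cX\times_{\cXMOD}U\simeq[\Spec A/G_x]$ and $G_x$ linearly reductive. Put $H:=(G_x)_{\circ}$. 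Since $G_x$ is linearly reductive, $H$ is a torus (the reduced identity component of a group of multiplicative type), and it is normal in $G_x$.

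For any other closed point $x'$ of $\cX_U$, the stabilizer $G_{x'}$ is a closed subgroup of $G_x$. The inclusion $(G_{x'})_{\circ}\subset G_{x'}\cap H$ always holds, so the whole content is the reverse inclusion. Both groups have the same dimension, namely the constant stabilizer dimension $d=\dim G_x=\dim H$. Hence $(G_{x'})_{\circ}$ is a $d$-dimensional subtorus of the $d$-dimensional torus $H$, and therefore equals $H$. It follows that $H\subset G_{x'}$, so $G_{x'}\cap H=H=(G_{x'})_{\circ}$, which is condition~\eqref{condition 5}.

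I expect the main subtlety here to be that the stabilizers of non-closed points must also be controlled. The constant-dimension hypothesis forces $H$ to fix every point of $\Spec A$. Indeed, the locus where $H$ acts trivially is closed, and every closed orbit has stabilizer containing $H$. Every point specializes within its fiber to a closed orbit, and upper semicontinuity of stabilizer dimension together with constancy of that dimension forces the identity component of every stabilizer to be $H$. Reducedness of $\cX$ is then what lets us conclude that $H$ acts trivially on $\Spec A$, not just on its points, so that $\cX_U\simeq[\Spec A/G_x]\rightarrow[\Spec A/(G_x/H)]$ is a gerbe banded by the torus $H$. Since $G_x/H$ is finite and linearly reductive, the target has finite inertia, and by condition~\eqref{condition 3} it is $(\cX_U)_{\mathrm{fi}}$. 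A torus gerbe is smooth, so $\Phi_{\mathrm{fi}}$ is a smooth gerbe, being so étale locally on $\cXMOD$.

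Finally, suppose $\cX$ is smooth. Then $\cX_{\mathrm{fi}}$ is smooth, because $\Phi_{\mathrm{fi}}$ is smooth and surjective and smoothness descends. Locally $\cX_{\mathrm{fi}}\simeq[\Spec A/(G_x/H)]$ with $\Spec A$ smooth and $G_x/H$ a finite linearly reductive group. Hence $\cXMOD$ is étale locally the quotient $\Spec A^{G_x/H}$ of a smooth scheme by a finite linearly reductive group, which means $\cXMOD$ has tame quotient singularities.
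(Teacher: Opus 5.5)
Your proof follows the paper's. You reduce to condition~\eqref{condition 5} of Proposition~\ref{prop: characterization commutation with base change finite inertia case} and compare dimensions on a chart $[\Spec A/G_x]$. You then use reducedness to get that $H=(G_x)_{\circ}$ acts trivially, so $A^{H}=A$, and read off the smooth gerbe from condition~\eqref{condition 3}.

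One step is false as stated: ``since $G_x$ is linearly reductive, $H$ is a torus.'' That holds only in positive characteristic. In characteristic $0$, linearly reductive means reductive, so $H=G_x^{\circ}$ can be $\GL_n$ or $\mathrm{SL}_2$, and the corollary is stated over an arbitrary algebraically closed field. The argument does not need the torus structure. $(G_{x'})_{\circ}$ is a closed irreducible subset of the irreducible $d$-dimensional group $H$, of the same dimension, so it equals $H$ as a set, and both are reduced. This is exactly the paper's remark that $G_{x'}$ is set-theoretically a union of connected components of $G_x$. Likewise, the gerbe is smooth because $H$ is smooth, not because it is a torus.

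Your paragraph on non-closed points is unnecessary, and as written it is not a valid argument: a closed locus containing the closed orbits need not contain points that specialize to them. The direct argument is simpler. $\cX_U\rightarrow\cX$ is a base change of an \'etale map of algebraic spaces, so it preserves stabilizers. Hence every geometric point $a$ of $\Spec A$ has stabilizer $G_a\subseteq G_x$ of dimension $d$, and the same dimension count gives $(G_a)_{\circ}=H$. This is how the paper obtains $(G_{x'})_{\circ}=(G_x)_{\circ}$ for every geometric point before invoking reducedness.

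In the smooth case you also assert that $\Spec A$ is smooth. This is not automatic when $G_x$ is non-reduced, since $\Spec A\rightarrow\cX_U$ is then only a flat torsor. You need to choose the chart accordingly, e.g.\ via the Luna \'etale slice theorem at smooth points, which makes $\Spec A$ \'etale over the normal space. The paper leaves this last step implicit as well.
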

\begin{proof}
    By Proposition~\ref{prop: characterization commutation with base change finite inertia case}, to prove the base change property it is enough to show that $\cX$ satisfies property~\eqref{condition 5} of the same proposition. For this, we can assume $\cX\simeq[\Spec A/G_x]$ where $G_x$ is the stabilizer group scheme of a point $x\in\cX(k)$ in $\cX$, and show that $(G_{x'})_{\circ}=G_{x'}\cap(G_{x})_{\circ}$ for every other geometric point $x'$. Since $G_{x'}\subset G_x$ and have the same dimension, $G_{x'}$ is set-theoretically the union of some connected components of $G_x$. It follows that the stronger equality $(G_{x'})_{\circ}=(G_x)_{\circ}$ holds.

    By what we have just proved and Proposition~\ref{prop: characterization commutation with base change finite inertia case}, to prove that $\Phi_{\mathrm{fi}}$ is a smooth gerbe we can again assume that $\cX\simeq[\Spec A/G_x]$ and $\cX_{\mathrm{fi}}\simeq[\Spec A^{(G_x)_{\circ}}/(G_x/(G_x)_{\circ})]$. Since $(G_{x'})_{\circ}=(G_x)_{\circ}$ for every geometric point $x'$, and $\cX$ is reduced, we have that $[\Spec A^{(G_x)_{\circ}}/(G_x/(G_x)_{\circ})]\simeq[\Spec A/(G_x/(G_x)_{\circ})]$. In particular, $\Phi_{\mathrm{fi}}$ is a smooth $(G_x)_{\circ}$-gerbe.
\end{proof}

\section{Inadequacy of Adequate Moduli Morphisms}\label{sec: counterexample adequate}

In this section, we show that the stabilization property (Proposition~\ref{prop: generalization of stabilization}) does not always hold when the moduli space morphisms are only assumed to be adequate and the targets have non-reduced stabilizers.
The construction is inspired by Nagata's example (\cite{RationalActions_Nagata},\cite[Example 6.5.1]{QuotientSpaces_Kol97}) of a Noetherian ring with a $\Z/p\Z$-action whose ring of invariants is not Noetherian.

\subsection{The Construction}

Let $k$ be an infinite field of characteristic $p$, and let $y$ be transcendental over $k$. Then, we choose general polynomials $q_i(y)\in k(y)$ such that $K_n:=k(y)[x_1,\ldots,x_n]/(x_i^p-q_i(y))$ is a degree $p$ field extension of $K_{n-1}$, where $K_0:=k(y)$. Let $K_{\infty}=\colim K_n$. For every $n$, the morphism $\Spec K_n\rightarrow\Spec K_0$ is a $\mu_p^n$-torsor, where the $i$-th factor acts on $x_i$ via multiplication. This action is compatible with the $\mu_p$-torsors $\Spec K_n\rightarrow\Spec K_{n-1}$ and the projections $\mu_p^n\rightarrow\mu_p^{n-1}$ forgetting the last factor. Therefore, there is an induced action of $\mu_p^{\infty}:=\lim\mu_p^n$ on $\Spec K_{\infty}$, making it a torsor over $K_0$.
For every $n\geq0$, set $R_n:=K_n[\epsilon]/(\epsilon^2)$, and $R_{\infty}:=\lim R_n\simeq K_{\infty}[\epsilon]/(\epsilon^2)$. By base change, there is a $\mu_p^\infty$-action on $\Spec R_{\infty}$ making it a torsor over $\Spec R_0$.

Our goal now is to construct a $\Z/p\Z$-action on $\Spec R_{\infty}$ commuting with the $\mu_p^\infty$-action, and such that $R_{\infty}$ is not finitely generated over $R_{\infty}^{\Z/p\Z}$.
Let $D:K_{\infty}\rightarrow K_{\infty}$ be the $k(y)$-derivation defined as
\begin{equation}\label{eq: def derivation}
    D(f)=\sum_i x_i^{1+p}\frac{\partial f}{\partial x_i}=\sum_i q_i(y)x_i\frac{\partial f}{\partial x_i},
\end{equation}
which is well defined as $D(x_i^p)=0=D(q_i(y))$. Then, we choose a generator $1\in\Z/p\Z$ and define a $\Z/p\Z$-action on $\Spec K_\infty$ by
\[
    n\cdot(f+\epsilon g)\mapsto f+\epsilon(g+n\cdot D(f)).
\]
Since we are working in characteristic $p$, this is indeed an action. Notice that this defines an action on $\Spec R_n$ for all $n\geq0$, which is trivial for $n=0$.

\begin{lemma}\label{lem: invariants Z/pZ action}
    Let $F_{\infty}:=\{f\in K_{\infty}\ |\ D(f)=0\}\subset K_{\infty}$.
    \begin{enumerate}
        \item $F_{\infty}$ is a field and $R_{\infty}^{\Z/p\Z}=F_{\infty}\oplus\epsilon K_{\infty}$.
        \item $F_{\infty}\subset K_{\infty}$ is an infinite field extension.
        \item In particular, $R_{\infty}$ is not of finite type over $R_{\infty}^{\Z/p\Z}$.
    \end{enumerate}
\end{lemma}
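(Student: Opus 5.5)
The plan is to compute everything explicitly in the monomial basis of $K_\infty$ over $K_0=k(y)$, where $D$ acts diagonally. Part (3) then follows from (1) and (2) by reducing modulo $\epsilon$.

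\emph{Step 1: invariants.} I will note that $D$ is a $k(y)$-derivation of $K_\infty$, so $F_\infty=\ker D$ is a subring containing $k(y)$. It is a field, because $D(1/f)=-D(f)/f^2$. For the invariants, the generator $1\in\Z/p\Z$ sends $f+\epsilon g\mapsto f+\epsilon(g+D(f))$. Hence $f+\epsilon g$ is fixed if and only if $D(f)=0$, with $g$ arbitrary. This gives $R_\infty^{\Z/p\Z}=F_\infty\oplus\epsilon K_\infty$.

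\emph{Step 2: computing $F_\infty$.} Since $D(x_i)=q_i x_i\in K_n$ for $i\le n$, the derivation $D$ preserves each $K_n$, so it suffices to compute $F_n:=\ker(D|_{K_n})$. As a $K_0$-vector space, $K_n$ has basis the monomials $x^a=x_1^{a_1}\cdots x_n^{a_n}$ with $0\le a_i\le p-1$. For these,
\[
D(x^a)=\Big(\sum_i a_i q_i(y)\Big)x^a ,
\]
so $D$ is diagonal in this basis, with eigenvalues in $K_0$. Therefore $f=\sum_a c_a x^a$ lies in $\ker D$ if and only if $c_a\sum_i a_iq_i=0$ for every $a$.

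Here I will use the genericity of the $q_i$. Besides the requirement that the $K_n$ have degree $p$ over $K_{n-1}$, I will choose the $q_i$ to be linearly independent over $\mathbb{F}_p$; for instance, take them to be distinct elements of $k(y)$ that are linearly independent over $\mathbb{F}_p$. Then $\sum_i a_iq_i\neq0$ for every $a\neq0$. Hence $F_n=K_0$ for all $n$, and so $F_\infty=\bigcup_n F_n=k(y)$. Since $[K_n:K_0]=p^n$, the extension $F_\infty\subset K_\infty$ is infinite.

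\emph{Step 3: not of finite type.} Let $A:=R_\infty^{\Z/p\Z}$. Every $r\in R_\infty$ is a root of the monic polynomial $\prod_{g\in\Z/p\Z}(t-g\cdot r)$, whose coefficients lie in $A$, so $R_\infty$ is integral over $A$. If $R_\infty$ were of finite type over $A$, it would therefore be a finite $A$-module. Reducing modulo $\epsilon$ gives $R_\infty/\epsilon R_\infty=K_\infty$, and the image of $A$ in it is $F_\infty$ by Step 1. Then $K_\infty$ would be a finite $F_\infty$-module, contradicting Step 2.

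The only delicate point is ensuring that the choice of the $q_i$ is compatible with both conditions: the degree-$p$ requirement and linear independence over $\mathbb{F}_p$ of the nonzero combinations $\sum_i a_iq_i$. Since $k$ is infinite and the $q_i$ are otherwise free, a general choice satisfies both. The statement's "general polynomials" is what I will invoke to justify this.
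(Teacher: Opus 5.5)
Your proof is correct, and for part (2) it follows a different route from the paper. The paper never computes $F_\infty$. It adapts Nagata's argument from Koll\'ar: take a minimal nontrivial relation $\sum_{i\le m} f_i x_i=0$ with $f_i\in F_\infty$, apply $D$ to get $\sum_i f_i q_i x_i=0$, and subtract $q_m$ times the original relation to obtain a shorter one. The only inputs there are that the $q_i$ lie in $F_\infty$ and are pairwise distinct.

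You instead diagonalize $D$ on the monomial $K_0$-basis of $K_n$, with eigenvalue $\sum_i a_i q_i$ on $x^a$, and obtain the sharper conclusion $F_\infty=k(y)$. This gives $R_\infty^{\Z/p\Z}=k(y)\oplus\epsilon K_\infty$. The cost is that you need the $q_i$ to be $\mathbb{F}_p$-linearly independent, not merely distinct. For part (3), you also make explicit what the paper compresses into ``taking quotients by $(\epsilon)$'': integrality over the invariants (alternatively Zariski's lemma) upgrades finite type to finiteness.

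One point to tighten is how you justify $\mathbb{F}_p$-linear independence. You present it as an additional generic choice justified by $k$ being infinite. But the $q_i$ are already fixed in the construction, and the infinitude of $k$ is not what makes the condition hold. It follows directly from the standing hypothesis that each $K_n$ is a field. Suppose $\sum_i a_i q_i=0$ with $a_i\in\mathbb{F}_p$. Then $\bigl(\sum_i a_i x_i\bigr)^p=\sum_i a_i q_i=0$ in the field $K_n$, so $\sum_i a_i x_i=0$. Since $x_1,\dots,x_n$ belong to the monomial $K_0$-basis, this forces all $a_i=0$. The paper's hypothesis $q_i\neq q_m$ is the special case $a_i=1$, $a_m=-1$ of the same argument. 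Replace the genericity remark with this line and your argument is complete.
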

\begin{proof}
    The first part is true by construction and the defining properties of derivations.

    To prove that $F_{\infty}\subset K_{\infty}$ is an infinite extension, one can use the same argument as in~\cite[Example 6.5.1]{QuotientSpaces_Kol97} to show that the elements $x_1,x_2,\ldots$ are linearly independent, as follows. Suppose by contradiction that there exists a relation $\sum_{i=1}^m f_ix_i=0$ with $f_i\in F_{\infty}$ not all 0, and take $m\geq1$ to be the smallest integer for which such a relation exists. In particular, $m\geq2$ and $f_m\not=0$. Applying the derivation $D$ we get
    \[
        0=D\left(\sum_{i=1}^m f_ix_i\right)=\sum_{i=1}^m f_iD(x_i)=\sum_{i=1}^m (f_iq_i(y))x_i.
    \]
    Since $f_iq_i(y)\in F_{\infty}$ for all $i$, we get a new non-trivial relation $\sum_{i=1}^{m-1}(f_i(q_m(y)-q_i(y)))x_i=0$, contradicting the minimality of $m$. Notice that we are using that $q_m(y)\not=q_i(y)$ for all $i\not=m$.
    
    The last point follows from the second, by taking the quotients by the ideal $(\epsilon)$.
\end{proof}

We only need one more property to conclude the construction.

\begin{lemma}\label{lem: commutativity of actions}
    For every $n\in\mathbb{N}\cup\{\infty\}$, the $\Z/p\Z$-action on $\Spec R_{n}$ commutes with the action of $\mu_p^n$.
\end{lemma}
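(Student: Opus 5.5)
Since $\mu_p^n$ is not a constant group, I will phrase the $\mu_p^n$-action as a grading. An action of the diagonalizable group $\mu_p^n$ on $\Spec R_n$ is the same as a $(\Z/p\Z)^n$-grading on $R_n$: the $i$-th factor acts by $x_i\mapsto \zeta_i x_i$ and trivially on $k(y)$ and $\epsilon$. The monomial $x^a=x_1^{a_1}\cdots x_n^{a_n}$ with $0\le a_i<p$ spans the piece of degree $a \bmod p$ over $k(y)$, and $\epsilon$ has degree $0$.

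The $\Z/p\Z$-action is given by ring automorphisms $\sigma_m(f+\epsilon g)=f+\epsilon(g+mD(f))$. The two actions commute exactly when each $\sigma_m$ is a graded automorphism, i.e.\ when $\sigma_m$ preserves every homogeneous piece. Equivalently, I will check commutativity on $R$-points functorially: for any $k(y)$-algebra $S$ and $\zeta\in\mu_p^n(S)$, the coaction $c\colon R_n\to R_n\otimes\cO(\mu_p^n)$ should satisfy $c\circ\sigma_m=(\sigma_m\otimes 1)\circ c$.

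The key step is to show that $D$ is homogeneous of degree $0$. On generators, $D(x_i)=q_i(y)x_i$ has the same degree as $x_i$, and $D$ kills $k(y)$. Since $D$ is a $k(y)$-linear derivation, the Leibniz rule gives $D(x^a)=\bigl(\sum_i a_iq_i(y)\bigr)x^a$. So each monomial is an eigenvector of $D$, and $D$ preserves the grading.

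It follows that $\sigma_m(x^a+\epsilon x^b)=x^a+\epsilon\bigl(x^b+m\lambda_a x^a\bigr)$, with $\lambda_a=\sum_i a_iq_i(y)$. This is compatible with the grading: the map $f+\epsilon g\mapsto f+\epsilon(g+mD(f))$ sends each graded piece $(R_n)_d=(K_n)_d\oplus\epsilon(K_n)_d$ into itself. Writing the coaction explicitly as $x_i\mapsto x_i\otimes t_i$, the identity $c\circ\sigma_m=(\sigma_m\otimes1)\circ c$ then holds on the $K_n$-basis elements $x^a$ and $\epsilon x^a$. Since both sides are $k(y)$-linear, it holds everywhere.

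For $n=\infty$, I will pass to the limit. The actions on $R_\infty=\colim R_n$ are induced compatibly by the maps $R_{n-1}\to R_n$ and the projections $\mu_p^n\to\mu_p^{n-1}$, and $D$ restricts to the derivation on each $K_n$. The coaction of $\mu_p^\infty$, whose coordinate ring is $\colim\cO(\mu_p^n)$, is the colimit of the finite-level coactions. So the identity $c\circ\sigma_m=(\sigma_m\otimes1)\circ c$ holds on each $R_n$, hence on the colimit.

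The main obstacle is being careful that $\mu_p^n$ is non-reduced, so one cannot argue with $k$-points alone. That is why I use the grading/coaction formulation rather than checking on points. Beyond that, the argument reduces to the monomial eigenvector computation for $D$.
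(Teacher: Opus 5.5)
Your proof is correct and is essentially the paper's argument in different language. The paper checks on $T$-points that the $\Z/p\Z$-automorphism and multiplication by $(t_i)\in\mu_p^n(T)$ agree on the generators $x_j$, using $t_j^{p+1}=t_j$; this is exactly your observation that $D(x_i)=q_i(y)x_i$ has the same $\mu_p^n$-weight as $x_i$, i.e.\ that $D$ is homogeneous of degree $0$, while your grading/coaction formulation makes this transparent and your explicit colimit step handles $n=\infty$, which the paper leaves implicit.
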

\begin{proof}
    We show this by working with the functor of points. Suppose given a $k(y)$-scheme $T$ and morphisms from it to $\Z/p\Z$, $\mu_p^{n}$ and $\Spec R_{n}$. The first map corresponds to a section $z\in\Gamma(T,\cO_T)$ such that $z^p=z$, the second to a collection of $t_i\in\Gamma(T,\cO_T)$ with $t_i^p=1$, while the last map corresponds to $x_i\in\Gamma(T,\cO_T)$, with $x_i^p=q_i(y)$. Then,
    \begin{align*}
        (t_i)_i\cdot(z\cdot(x_j)_j)=(t_i)_i\cdot(x_j+\epsilon x_j^{1+p})_j=(t_jx_j+t_j^{1+p}x_j^{1+p})_j=(t_jx_j+t_jx_j^{1+p})_j=z\cdot((t_i)_i\cdot(x_j)_j).
    \end{align*}
    Since both actions are $K_0$-linear, this proves the commutativity.
\end{proof}

We are ready to construct our non-algebraic relative adequate moduli space. Consider the following diagram of solid arrows
\begin{equation}\label{diag: main construction}
\begin{tikzcd}
    \Spec R_{\infty}\arrow[d,"\phi"]\arrow[r] & {[\Spec R_{\infty}/(\Z/p\Z)]}\arrow[d,dashed,"\overline{\phi}"]\arrow[r] & \Spec R_{\infty}^{\Z/p\Z}\arrow[d,dashed,"\widetilde{\phi}"]\arrow[rd]\\
    \Spec R_0\arrow[r] & \cX:=B(\Z/p\Z)_{R_0}\arrow[rr,bend right=15,"\pi_{\cX}"]\arrow[r,dashed,"\psi"] & \cY\arrow[r,dashed] & \cXMOD
\end{tikzcd}
\end{equation}
where $\phi$ is a $\mu_p^{\infty}$-torsor. Notice that we have defined $\cX$ as $\cX=B(\Z/p\Z)_{R_0}$.

Since the composite $\Spec R_{\infty}\rightarrow\Spec R_0\rightarrow B(\Z/p\Z)_{R_0}$ is $\Z/p\Z$-invariant, we can complete the left square with the dashed arrow $\overline{\phi}$. This square is cartesian, as it is commutative and with rows being $\Z/p\Z$-torsors. By Lemma~\ref{lem: commutativity of actions}, the $\mu_p^\infty$-action on $\Spec R_{\infty}$ descends to an action on $[\Spec R_{\infty}/(\Z/p\Z)]$, making $\overline{\phi}$ a $\mu_p^{\infty}$-torsor under this action.

By the universal property of adequate moduli spaces~\cite[Theorem 3.12]{LocalStructureStacks_AHR25}, the $\mu_p^\infty$-action further descends to an action on $\Spec R_{\infty}^{\Z/p\Z}$, and we let $\widetilde{\phi}:\Spec R_{\infty}^{\Z/p\Z}\rightarrow\cY$ be the associated quotient stack; this is a stack in the fpqc topology, which is not guaranteed to be algebraic (see Example~\ref{exm: example fpqc gerbe}). Since the top-right horizontal arrow is $\mu_p^\infty$-equivariant, by the definition of quotients we get an induced morphism $B(\Z/p\Z)_{R_0}\rightarrow\cY$. This completes the square on the right, which is also cartesian. Again by the universal properties of quotients, there exists an arrow $\cY\rightarrow\cXMOD$ whose composite with $\cX\rightarrow\cY$ is the moduli space map.

As we have already remarked, the stack $\cY$ is not necessarily algebraic, as we have taken the quotient by a group scheme that is not finitely presented over the base, that is, $\mu_p^\infty$. However, $\overline{\phi}$ is a representable, affine, fpqc morphism from a scheme, hence $\cY$ is an fpqc stack. Moreover, the diagonal of $\cY$ is representable, even affine; in particular, $\cY$ is quasi-algebraic (Definition~\ref{def: fpqc stacks}). See~\S\ref{subsec: fpqc stacks} for these notions and generalities on fpqc stacks.

\begin{lemma}\label{lemma: not algebraicity example 1}
    The morphism $\psi:\cX\rightarrow\cY$ is representable by algebraic stacks and $\cY$ is a non-algebraic quasi-algebraic stack. Moreover, $\psi$ is an adequate moduli space morphism that is not of finite type.
\end{lemma}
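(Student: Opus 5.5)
Everything will be read off from the right-hand cartesian square of diagram~\eqref{diag: main construction}, namely $\cX\times_{\cY}\Spec R_{\infty}^{\Z/p\Z}\simeq[\Spec R_{\infty}/(\Z/p\Z)]$, with $\widetilde{\phi}$ an fpqc presentation.

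\emph{Representability by algebraic stacks.} We already know that $\cY$ has affine, hence representable, diagonal, so $\cY$ is quasi-algebraic. Lemma~\ref{lem: quasi-algebraicity and representability by algebraic stacks} then shows directly that $\psi$ is representable by algebraic stacks. Alternatively, one can check the definition by hand: for any $T\to\cY$ from a scheme, $\cX\times_{\cY}T$ is the pullback of $[\Spec R_\infty/(\Z/p\Z)]$ along the algebraic space $T\times_{\cY}\Spec R_\infty^{\Z/p\Z}$, after descending.

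\emph{$\psi$ is an adequate moduli space morphism.} By Example~\ref{exm: moduli for fpqc stacks} and Definition~\ref{def: properties of fpqc stacks}, it suffices to check this after base change along the fpqc presentation $\widetilde{\phi}$. The base change is $[\Spec R_\infty/(\Z/p\Z)]\to\Spec R_\infty^{\Z/p\Z}$. This is the adequate moduli space of a quotient of an affine scheme by a finite group, by \cite{AdequateModAlp14}: finite groups are geometrically reductive, so invariants give an adequate moduli space. Independence of the chosen presentation is built into Definition~\ref{def: properties of fpqc stacks}.

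\emph{$\psi$ is not of finite type.} Finite type is fpqc-local on the target, so $\psi$ would be of finite type only if $[\Spec R_\infty/(\Z/p\Z)]\to\Spec R_\infty^{\Z/p\Z}$ were. Precomposing with the finite étale cover $\Spec R_\infty\to[\Spec R_\infty/(\Z/p\Z)]$ would then make $R_\infty$ of finite type over $R_\infty^{\Z/p\Z}$. This contradicts Lemma~\ref{lem: invariants Z/pZ action}(3).

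\emph{$\cY$ is not algebraic.} This is the main obstacle. Suppose $\cY$ were algebraic. Then $\cX\to\cY$ would be an adequate moduli space morphism between algebraic stacks with $\cX$ Noetherian. I would first try to use that adequate moduli space morphisms from stacks of finite type over a Noetherian base are of finite type (\cite[Theorem 6.3.3]{AdequateModAlp14}, as used in Proposition~\ref{prop: stabilization good case}). Here $\cX$ is of finite type over $\cXMOD=\Spec R_0$, which is Noetherian. So $\cY\to\cXMOD$ would be of finite type, and hence $\psi$ would be of finite type (its source is of finite type over the base and the target is quasi-separated), contradicting the previous paragraph.

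A fallback argument, if the citation needs more care: algebraicity would force $\widetilde{\phi}$, the base change of $\overline{\phi}$, to be a $\mu_p^\infty$-torsor over an algebraic stack. Pulling back along a smooth atlas and comparing with Example~\ref{exm: example fpqc gerbe}, a non-finitely-presented group would then have to act with algebraic quotient; one would derive a contradiction from the fact that $B\mu_p^\infty$ is not algebraic. The first route looks cleaner, and I expect the subtle point to be justifying the finite-type conclusion for a possibly non-separated algebraic target.
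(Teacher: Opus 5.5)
Your proposal is correct and follows essentially the same route as the paper. Representability comes from Lemma~\ref{lem: quasi-algebraicity and representability by algebraic stacks}; adequacy and the failure of finite type come from base change along $\widetilde{\phi}$ together with Lemma~\ref{lem: invariants Z/pZ action}; and non-algebraicity comes from showing that algebraicity of $\cY$ would force $\psi$ to be of finite type via \cite[Theorem 6.3.3]{AdequateModAlp14}. Your worry about a non-separated target does not arise, since $\cY$ has affine diagonal, and the cancellation you invoke (source of finite type over $\cXMOD$, target quasi-separated) already gives finite type of $\psi$, so the fallback argument is unnecessary.
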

\begin{proof}
    Since $\cY$ is quasi-algebraic, $\psi:\cX\rightarrow\cY$ is representable by algebraic stacks by Lemma~\ref{lem: quasi-algebraicity and representability by algebraic stacks}. In particular, it makes sense to say that $\psi$ is an adequate moduli space morphism, which follows by definition and diagram~\eqref{diag: main construction}, and to ask whether it is of finite type or not; see~\S\ref{subsec: finiteness properties}. Suppose $\psi$ were of finite type, then the same would hold for $\Spec R_0\rightarrow\cY$. By base change along $\widetilde{\phi}$, this would imply that $R_{\infty}$ is finitely generated over $R_{\infty}^{\Z/p\Z}$, which contradicts Lemma~\ref{lem: invariants Z/pZ action}.    
    Finally, suppose by contradiction that $\cY$ is algebraic, and let $Y\rightarrow\cY$ and $X\rightarrow\cX\times_{\cY}Y$ be smooth (or fppf) presentations. Then, $X\rightarrow\cXMOD$ is again of finite type, hence the same holds for $X\rightarrow Y$, by~\cite[Theorem 6.3.3]{AdequateModAlp14}. By definition, it follows that $\cX\rightarrow\cY$ is of finite type, contradicting our earlier conclusion.
\end{proof}

We are ready to prove Theorem~\ref{thm: intro main counterexample}.

\begin{proof}[proof of Theorem~\ref{thm: intro main counterexample}]
    We have already constructed an adequate moduli space morphism $\psi:\cX\rightarrow\cY$ to a non-algebraic fpqc stack, so we just need to construct the tower of adequate moduli space morphisms and show that its limit is $\cY$. Indeed, this chain cannot stabilize as otherwise $\cY$ would automatically be algebraic.

    The idea is to apply the same construction with $R_{\infty}$ replaced by $R_n$, and $\mu_p^{\infty}$ replaced by $\mu_p^n$. Indeed, by Lemma~\ref{lem: commutativity of actions} the actions of $\Z/p\Z$ and $\mu_p^n$ on $\Spec R_n$ commute, hence we can construct a diagram analogous to~\eqref{diag: main construction}, with $R_{\infty}$ replaced by $R_n$. For every $\infty\geq m\geq n\geq0$, the $\mu_p^{m-n}$-torsor $\Spec R_m\rightarrow\Spec R_n$ is equivariant with respect to both the $\Z/p\Z$-action and the projection $q_{m,n}:\mu_p^m\rightarrow\mu_p^n$ to the first $n$ factors. Therefore, we obtain a morphism $\Spec R_m^{\Z/p\Z}\rightarrow\Spec R_n^{\Z/p\Z}$ that is equivariant with respect to $q_{m,n}$. Let $\cY_n:=[\Spec R_n^{\Z/p\Z} / \mu_p^n]$. We obtain a tower of morphisms
    \begin{equation}\label{diag: tower relative adequate moduli space}
    \begin{tikzcd}
        \cX\arrow[r] & \cY\arrow[r] & \ldots\arrow[r] & \cY_n\arrow[r] & \cY_{n-1}\arrow[r] & \ldots\arrow[r] & \cY_1\arrow[r] & \cXMOD
    \end{tikzcd}
    \end{equation}
    that factor the adequate moduli space morphism $\cX\rightarrow\cXMOD$. Notice that all $\cY_i$ are algebraic stacks with affine diagonal. Since the base change of $\psi_n:\cX\rightarrow\cY_n$ along the fppf cover $\Spec R_n^{\Z/p\Z}\rightarrow\cY_n$ is the adequate moduli space morphism $[\Spec R_n/(\Z/p\Z)]\rightarrow\Spec R_n^{\Z/p\Z}$, by~\cite[Proposition 5.2.9]{AdequateModAlp14} also $\psi_n$ is a relative adequate moduli space. By the same argument, each $\cY_m\rightarrow\cY_n$ is a relative adequate moduli space for all $m\geq n$.

    Now, we show that $\lim\cY_n\simeq\cY$. First of all, by~\cite[Proposition 2.15]{LocalStructureStacks_AHR25}, we have a natural isomorphism $\Spec R_{\infty}^{\Z/p\Z}\simeq\lim\Spec R_n^{\Z/p\Z}$. Since the morphisms $\Spec R_n^{\Z/p\Z}\rightarrow\cY_n$ form a compatible system of $\mu_p^n$-torsors, the induced morphism $\Spec R_{\infty}^{\Z/p\Z}\rightarrow\lim\cY_n$ is a $\mu_p^{\infty}$-torsor. As $\Spec R_{\infty}^{\Z/p\Z}\rightarrow\cY$ is also a $\mu_p^{\infty}$-torsor, it follows that $\cY\rightarrow\lim\cY_n$ is an isomorphism.
\end{proof}

\begin{remark}\label{rmk: algebraic closure}
    Notice that we can base change the whole construction along the algebraic closure morphism $\varphi:\Spec\overline{k(y)}\rightarrow\Spec k(y)$, and the chain would still not stabilize. Indeed, $\varphi$ is fpqc, hence $\cY_n\rightarrow\cY_{n-1}$ is an isomorphism if and only if its base change along $\varphi$ is. Moreover, the base change of the limit $\cY$ also remains non-algebraic. Indeed, if it were algebraic, then $\cX_{\overline{k(y)}}\rightarrow\cY_{\overline{k(y)}}$ would be of finite type by~\cite[Theorem 6.3.3]{AdequateModAlp14}, hence the same would hold for $\cX\rightarrow\cY$ by fpqc descent, contradicting Lemma~\ref{lemma: not algebraicity example 1}.
    Moreover, $\Spec\!(R_n\otimes_{k(y)}\overline{k(y)})\simeq\mu_p^n\times\Spec\overline{k(y)}$, by the change of coordinates $x_i\mapsto x_i\cdot\sqrt[p]{q_i(y)}^{-1}$ for some $p$-th root $\sqrt[p]{q_i(y)}$ of $q_i(y)$. The action of $\Z/p\Z$ on $\mu_p^{\infty}\times\Spec\overline{k(y)}[\epsilon]/(\epsilon^2)$ is again defined by the derivation in~\eqref{eq: def derivation}.
\end{remark}

\bibliographystyle{amsalpha}
\bibliography{library}

$\,$\
\noindent

$\,$\
\noindent
\textsc{Department of Pure Mathematics, Brown University, 151 Thayer Street, Providence, RI 02912, USA}

\textit{e-mail address:} \href{mailto:alberto_landi@brown.edu}{alberto\_landi@brown.edu}

\end{document}